\newtheorem{theorem}{Theorem}[section]
\newtheorem{proposition}[theorem]{Proposition}
\newtheorem{lemma}[theorem]{Lemma}
\newtheorem{corollary}[theorem]{Corollary}
\newtheorem{definition}[theorem]{Definition}
\newtheorem{example}[theorem]{Example}
\newtheorem{remark}[theorem]{Remark}
\newenvironment{proof}[1][Proof]{\textbf{#1.} }{\ \rule{0.5em}{0.5em}}
\newenvironment{notes}[1][Notes]{\medskip\footnotesize\textbf{#1}}{\smallskip}
\def\RM{\rm}
\def\endofdocument{

\begin{document}

\author{Alexander Grigor'yan \thanks{%
Partially supported by SFB 701 of German Research Council and by Visiting
Grants of Harvard University and MSC, Tsinghua University} \\
Department of Mathematics\\
University of Bielefeld\\
33613 Bielefeld, Germany \and Yong Lin \thanks{%
Supported by the Fundamental Research Funds for the Central Universities,
the Research Funds of Renmin University of China(11XNI004), and a Visiting
Grant of Harvard University} \\
Department of Mathematics\\
Information School\\
Renmin University of China\\
Beijing 100872, China \and Yuri Muranov\thanks{%
Partially supported by the CONACyT Grant 98697, SFB 701 of German Research
Council, and a Visiting Grant of Harvard University} \\
Department of Mathematics\\
Grodno State University\\
Grodno 230023, Republic of Belarus \and Shing-Tung Yau \\
Department of Mathematics\\
Harvard University\\
Cambridge MA 02138, USA}
\title{Homologies of path complexes and digraphs }
\date{May 2013}
\maketitle
\tableofcontents

\vfil\eject%

\section{Introduction}

\label{Sec1}In this paper we introduce a new notion -- a path complex that
can be regarded as a generalization of the notion of a simplicial complex.
In short, a path complex $P$ on a finite set $V$ is a collection of paths
(=sequences of points) on $V$ such that if a path $v$ belongs to $P$ then a
truncated path that is obtained from $v$ by removing either the first or the
last point, is also in $P$. Given a path complex $P$, all the paths in $P$
are called \emph{allowed}, while the paths outside $P$ are called \emph{%
non-allowed}.

Any simplicial complex $S$ determines naturally a path complex by
associating with any simplex from $S$ the sequence of its vertices (see
Section \ref{Secpath} for details).

However, the main motivation for considering path complexes comes from
directed graphs (digraphs). A digraph $G$ is a pair $\left( V,E\right) $
where $V$ is a set as above and $E$ is a binary relation on $V$ that is, $E$
is a subset of $V\times V$. If $\left( a,b\right) \in E$ then the pair $%
\left( a,b\right) $ is called a directed edge; this fact is also denoted by $%
a\rightarrow b$. Any digraph naturally gives rise to a path complex where
allowed paths go along the arrows of the digraph.

One of our key observations is that any path complex $P$ gives rise to a
chain complex with an appropriate boundary operator $\partial $ that leads
to the notion of homology groups of $P$. We refer to this notion as a \emph{%
path homology}.

In the case when $P$ arises from a simplicial complex \thinspace $S$, the
path homology of $P$ coincides with the simplicial homology of $S$. If $P$
arises from a digraph $G$ then we obtain a new notion: the path homology of
a digraph. The path complexes of digraphs are the central objects of this
paper. Although most of the results are presented for arbitrary path
complexes, we always have in mind applications for digraphs. On the other
hand, the notion of a path complex provides an alternative approach to the
classical results about simplicial complexes.

There has been a number of attempts to define the notions of homology and
cohomology for graphs. At a trivial level, any graph can be regarded as an
one-dimensional simplicial complex, so that its simplicial homologies are
defined. However, all homology groups of order $2$ and higher are trivial,
which makes this approach uninteresting.

Another way to make a graph into a simplicial complex is to consider all its
cliques (=complete subgraphs) as simplexes of the corresponding dimensions
(cf. \cite{ChenYauYeh}, \cite{Ivash}). Then higher dimensional homologies
may be non-trivial, but in this approach the notion of graph looses its
identity and becomes a particular case of the notion of a simplicial
complex. Besides, some desirable functorial properties of homologies fail,
for example, the K\"{u}nneth formula is not true for Cartesian product of
graphs.

Yet another approach to homologies of digraphs can be realized via
Hochschild homology. Indeed, allowed paths on a digraph have a natural
operation of product, which allows to define the notion of a \emph{path
algebra} of a digraph. The Hochschild homology of the path algebra is a
natural object to consider. However, it was shown in \cite{Happel} that
Hochschild homologies of order $\geq 2$ are trivial, which makes this
approach not so attractive.

The path homologies of digraphs that we introduce in this paper have many
advantages in comparison with the previously studied notions of graph
homologies.

Firstly, the homologies of all dimensions could be non-trivial. Also, the
chain complex associated with a path complex has a richer structure that
simplicial chain complexes. It contains not only cliques but also binary
hypercubes and many other subgraphs. By the way, the dimensions of the chain
spaces are themselves non-trivial invariants of the digraphs.

Secondly, this notion is well linked to graph-theoretical operations. For
example, the K\"{u}nneth formula is true for join of two digraphs as well as
for Cartesian product of two digraphs.

Thirdly, there is a dual cohomology theory with the coboundary operator $d$
that arises independently and naturally as an exterior derivative on the
algebra of functions on the vertex set of the graph. The latter approach to
the cohomology of digraphs, that is based on the classification of exterior
derivations on algebras (cf. \cite[III, \S 10.2]{Bourbaki}), was developed
by Dimakis and M\"{u}ller-Hoissen in \cite{DimMuDiff} and \cite{DimMuGauge}.
In the present paper we introduce the notion of cohomology of path complexes
independently, using the duality with homologies. The reader is referred to 
\cite{GrigMuralg} where the equivalence of the two approaches is explained.

We feel that the notion of path homology of digraphs has a rich mathematical
content and hope that it will become a useful tool in various areas of pure
and applied mathematics. For example, this notion was employed in \cite%
{GrigMurYauhh} to give a new elementary proof of a theorem of Gerstenhaber
and Schack \cite{GShh} that represents simplicial homology as a Hochschild
homology. A link between path homologies of digraphs and cubical homologies
was revealed in \cite{GrigMurYaucubic}. On the other hand, it is conceivable
that the notion of path homology can be used in practical applications such
as hole detection in graphs or coverage verification in sensor networks (cf. 
\cite{Ali}).

Let us briefly describe the structure of the paper and the main results. In
Section \ref{Sec2}, we define the notions of $p$-paths and $p$-forms on a
finite set $V$, as well as the dual operators $\partial $ and $d$. We also
define the notions of join of paths and concatenation of forms and prove
that they satisfy the product rule.

In Section \ref{Sec4} we define the notions of a path complex, a $\partial $%
-invariant path (element of a chain space), and a path homology. Then we
define also the dual notions of $d$-invariant form and path cohomology.

In Section \ref{Sec5} we apply the aforementioned notions to digraphs and
give numerous examples of $\partial $-invariant paths on digraphs. We prove
some basic results about path homologies of digraphs. For example, we
describe chain spaces and homologies of digraphs without squares (Theorem %
\ref{Tsq}) and prove the Poincar\'{e} lemma for star-shaped digraphs
(Theorem \ref{Tstar}).

In Section in \ref{Sec7} we prove some relations between path homologies of
a digraph and its subgraphs (Theorems \ref{Tend}, \ref{Tabac}, and \ref{Tcab}%
).

In Section \ref{SecJoin} we introduce the operation \emph{join} of two path
complexes and prove the K\"{u}nneth formula for homologies of join (Theorem %
\ref{Tjoin}). Particular cases of join are operation of cone and suspension
of a digraph that behave homologically in the same way as those in the
classical algebraic topology.

In Section \ref{SecProduct} we introduce the notions of cross product of
paths and Cartesian product of path complexes. The latter matches the notion
of Cartesian product of digraph. The main result of this section is the K%
\"{u}nneth formula for Cartesian product (Theorem \ref{TK} that is based on
Theorem \ref{Tuivi}).

In Section \ref{Sec8} we sketch an approach to hole detection on digraphs.
In Section \ref{Sec3} (Appendix) we revise for the convenience of the reader
some elementary results of homological algebra that are used in the main
body of the paper.

\section{Paths and forms a finite set}

\setcounter{equation}{0}\label{Sec2}

\subsection{Space of paths and the boundary operator}

\label{Secpchain}Let $V$ be an arbitrary non-empty finite set whose elements
are called vertices. Fix a field $\mathbb{K}$ whose elements are called
scalars.

\begin{definition}
\RM For any non-negative integer $p$, an \emph{elementary} $p$-\emph{path}
on a set $V$ is any sequence $\left\{ i_{k}\right\} _{k=0}^{p}$ of $p+1$
vertices of $V$ (a priori the vertices in the path do not have to be
distinct). For $p=-1$, an elementary $p$-path is the empty set $\emptyset $.
\end{definition}

The $p$-path $\left\{ i_{k}\right\} _{k=0}^{p}$ will also be denoted simply
by $i_{0}...i_{p}$, without delimiters between the vertices.

Denote by $\Lambda _{p}=\Lambda _{p}\left( V,\mathbb{K}\right) $ the $%
\mathbb{K}$-linear space that consists of all formal linear combinations of
all elementary $p$-paths with the coefficients from $\mathbb{K}$.

\begin{definition}
\RM The elements of $\Lambda _{p}$ are called $p$-\emph{paths} on $V$.
\end{definition}

An elementary $p$-path $i_{0}...i_{p}$ as an element of $\Lambda _{p}$ will
be denoted by $e_{i_{0}...i_{p}}$. The empty set as an element of $\Lambda
_{-1}$ will be denoted by $e$.

By definition, the family $\left\{ e_{i_{0}...i_{p}}:i_{0},...,i_{p}\in
V\right\} $ is a basis in $\Lambda _{p}.$ Each $p$-path $v$ has a unique
representation in the form%
\begin{equation}
v=\sum_{i_{0},...,i_{p}\in V}v^{_{i_{0}...i_{p}}}e_{i_{0}...i_{p}},
\label{ve}
\end{equation}%
where $v^{_{i_{0}...i_{p}}}\in \mathbb{K}.$ For example, the space $\Lambda
_{0}$ consists of all linear combinations of the elements $e_{i}$ that are
just the vertices of $V$, the space $\Lambda _{1}$ consists of all linear
combinations of the elements $e_{ij}$ that are pairs of vertices, etc. Note
that $\Lambda _{-1}$ consists of all multiples of $e$, so that $\Lambda
_{-1}\cong \mathbb{K}$.

\begin{definition}
\RM For any $p\geq 0$, the \emph{boundary operator} $\partial :\Lambda
_{p}\rightarrow \Lambda _{p-1}$ is a linear operator that is defined on the
elementary paths by%
\begin{equation}
\partial e_{i_{0}...i_{p}}=\dsum\limits_{q=0}^{p}\left( -1\right)
^{q}e_{i_{0}...\widehat{i_{q}}...i_{p}},  \label{dev}
\end{equation}%
where the hat $\widehat{i_{q}}$ means omission of the index $i_{q}$.
\end{definition}

For example, we have 
\begin{equation}
\partial e_{i}=e\text{,\ \ \ \ }\partial e_{ij}=e_{j}-e_{i},\ \ \ \partial
e_{ijk}=e_{jk}-e_{ik}+e_{ij}.  \label{deij}
\end{equation}%
For an arbitrary $p$-path (\ref{ve}) with $p\geq 0$, we have 
\begin{equation*}
\partial v=\sum_{i_{0},...,i_{p}}v^{_{i_{0}...i_{p}}}\partial
e_{i_{0}...i_{p}}=\sum_{i_{0},...,i_{p}}\dsum\limits_{q=0}^{p}\left(
-1\right) ^{q}v^{_{i_{0}...i_{p}}}e_{i_{0}...\widehat{i_{q}}...i_{p}}
\end{equation*}%
whence%
\begin{eqnarray}
\left( \partial v\right) ^{j_{0}...j_{p-1}}
&=&\sum_{i_{0},...,i_{p}}\dsum\limits_{q=0}^{p}\left( -1\right)
^{q}v^{_{i_{0}...i_{p}}}(e_{i_{0}...\widehat{i_{q}}%
...i_{p}})^{j_{0}...j_{p-1}}  \notag \\
&=&\sum_{k\in V}\sum_{q=0}^{p}\left( -1\right)
^{q}v^{j_{0}...j_{q-1}k\,j_{q}...j_{p-1}}  \label{dv}
\end{eqnarray}%
where the index $k$ is inserted in the path $j_{0}...j_{p-1}$ between $%
j_{q-1}$ and $j_{q}$ if $1\leq q<p$, before $j_{0}$ if $q=0$, and after $%
j_{p-1}$ if $q=p$.

For example, or any $v\in \Lambda _{0}$, we have%
\begin{equation}
\partial v=\sum_{k\in V}v^{k},  \label{dv0}
\end{equation}%
for $v\in \Lambda _{1}$ we have%
\begin{equation*}
\left( \partial v\right) ^{i}=\sum_{k}\left( v^{ki}-v^{ik}\right)
\end{equation*}%
and for $v\in \Lambda _{2}$ we have 
\begin{equation*}
\left( \partial v\right) ^{ij}=\sum_{k}\left( v^{kij}-v^{ikj}+v^{ijk}\right)
.
\end{equation*}

Set also $\Lambda _{-2}=\left\{ 0\right\} $ and define $\partial :\Lambda
_{-1}\rightarrow \Lambda _{-2}$ to be zero.

\begin{lemma}
We have $\partial ^{2}=0.$
\end{lemma}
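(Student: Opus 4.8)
The plan is to verify $\partial^2 = 0$ by a direct computation on the basis elements $e_{i_0\dots i_p}$, since both $\partial$ and $\partial^2$ are linear and so it suffices to check the identity on each elementary path. The standard approach, familiar from simplicial homology, is to apply the definition \eqref{dev} twice and observe that every surviving term in $\partial^2 e_{i_0\dots i_p}$ arises from omitting a pair of indices, and that each such pair is obtained in exactly two ways with opposite signs, so that all terms cancel.

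Concretely, I would first write
\begin{equation*}
\partial^2 e_{i_0\dots i_p} = \partial\Bigl(\sum_{q=0}^{p}(-1)^q\, e_{i_0\dots\widehat{i_q}\dots i_p}\Bigr)
= \sum_{q=0}^{p}(-1)^q\,\partial\, e_{i_0\dots\widehat{i_q}\dots i_p}.
\end{equation*}
Each inner $\partial$ again deletes a single index from the length-$(p-1)$ path $i_0\dots\widehat{i_q}\dots i_p$, producing terms in which two of the original indices, say $i_r$ and $i_q$ with $r\neq q$, have been removed. The key bookkeeping step is to track the sign attached to the term $e_{i_0\dots\widehat{i_r}\dots\widehat{i_q}\dots i_p}$ in which both $i_r$ and $i_q$ are omitted. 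Fixing a pair $r<q$, I would compare the two ways this term is generated: deleting $i_q$ first (outer index $q$) and then $i_r$ from the remaining path, versus deleting $i_r$ first (outer index $r$) and then $i_q$. After accounting for the fact that removing the smaller index shifts the position of the larger one, the two contributions carry signs $(-1)^{q}(-1)^{r}$ and $(-1)^{r}(-1)^{q-1}$ respectively, which are opposite, so they cancel in pairs.

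The main obstacle, and the only point requiring genuine care, is the sign accounting: after $i_q$ has been removed, the index $i_r$ (with $r<q$) still occupies position $r$ in the shortened path, whereas after $i_r$ has been removed, the index $i_q$ sits in position $q-1$ rather than $q$. Getting the positional shift exactly right is what produces the crucial off-by-one in the exponent and hence the sign mismatch that drives the cancellation. I would organize this by reindexing the double sum so that one sums over all ordered pairs of distinct positions, splitting into the ranges $r<q$ and $r>q$, and then exhibiting an explicit involution pairing a term from one range with its sign-reversed partner in the other. Since this is precisely the classical argument for the simplicial boundary operator, and formula \eqref{dev} has exactly the simplicial form, no new difficulty arises beyond this standard sign check, and the conclusion $\partial^2 = 0$ follows.
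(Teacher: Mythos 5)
Your proposal is correct and follows essentially the same route as the paper's proof: both expand $\partial^{2}e_{i_{0}\dots i_{p}}$ via (\ref{dev}), split the inner sum according to whether the second deleted index lies before or after the first (picking up the crucial $(-1)^{r}$ versus $(-1)^{r-1}$ shift), and cancel the two resulting double sums in pairs by swapping the summation indices. Your sign bookkeeping for the positional shift matches the paper exactly, so nothing further is needed.
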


\begin{proof}
The operator $\partial ^{2}$ acts from $\Lambda _{p}$ to $\Lambda _{p-2},$
so that the identity $\partial ^{2}=0$ makes sense for all $p\geq 0$. In the
case $p=0$ the identity $\partial ^{2}=0$ is trivial. For $p\geq 1$, we have
by (\ref{dev})%
\begin{eqnarray*}
\partial ^{2}e_{i_{0}...i_{p}} &=&\dsum\limits_{q=0}^{p}\left( -1\right)
^{q}\partial e_{i_{0}...\widehat{i_{q}}...i_{p}} \\
&=&\dsum\limits_{q=0}^{p}\left( -1\right) ^{q}\left( \sum_{r=0}^{q-1}\left(
-1\right) ^{r}e_{i_{0}...\widehat{i_{r}}...\widehat{i_{q}}%
...i_{p}}+\sum_{r=q+1}^{p}\left( -1\right) ^{r-1}e_{i_{0}...\widehat{i_{q}}%
...\widehat{i_{r}}...i_{p}}\right) \\
&=&\sum_{0\leq r<q\leq p}\left( -1\right) ^{q+r}e_{i_{0}...\widehat{i_{r}}...%
\widehat{i_{q}}...i_{p}}-\sum_{0\leq q<r\leq p}\left( -1\right)
^{q+r}e_{i_{0}...\widehat{i_{q}}...\widehat{i_{r}}...i_{p}}.
\end{eqnarray*}%
After switching $q$ and $r$ in the last sum we see that the two sums cancel
out, whence $\partial ^{2}e_{i_{0}...i_{p}}=0$. This implies $\partial
^{2}v=0$ for all $v\in \Lambda _{p}$.
\end{proof}

Consequently, we have the following chain complex of the set $V$:%
\begin{equation}
0\leftarrow \mathbb{K}\leftarrow \Lambda _{0}\leftarrow ...\leftarrow
\Lambda _{p-1}\leftarrow \Lambda _{p}\leftarrow ...  \label{cLa}
\end{equation}%
where the arrows are given by the operator $\partial $.

\subsection{Join of paths}

\label{SecProductPath}

\begin{definition}
\RM For all $p,q\geq -1$ and for any two paths $u\in \Lambda _{p}$ and $v\in
\Lambda _{q}$ define their\emph{\ join} $uv\in \Lambda _{p+q+1}$ as follows:%
\begin{equation}
\left( uv\right)
^{i_{0}...i_{p}j_{0}...j_{q}}=u^{i_{0}...i_{p}}v^{j_{0}...j_{q}}.
\label{uvdef}
\end{equation}
\end{definition}

Clearly, join of paths is a bilinear operation that satisfies the
associative law (but is not commutative). For $u=e_{i_{0}...i_{p}}$ and $%
v=e_{j_{0}...j_{q}},$ we obtain from (\ref{uvdef}) 
\begin{equation}
e_{i_{0}...i_{p}}e_{j_{0}...j_{q}}=e_{i_{0}...i_{p}j_{0}...j_{q}}.
\label{epeq}
\end{equation}%
If $p=-2$ and $q\geq -1$ then set $uv=0\in \Lambda _{q-1}.$ A similar rule
applies if $q=-2$ and $p\geq -1.$

Now we can state and prove the product rule for the operation of joining the
paths.

\begin{lemma}
\label{Lemuv}\emph{(Product rule)} For all $p,q\geq -1$ and $u\in \Lambda
_{p}$, $v\in \Lambda _{q}$ we have%
\begin{equation}
\partial \left( uv\right) =(\partial u)v+\left( -1\right) ^{p+1}u\partial v.
\label{duv}
\end{equation}
\end{lemma}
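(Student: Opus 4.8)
The plan is to prove the product rule by reducing to the case of elementary paths, since both sides of (\ref{duv}) are bilinear in $u$ and $v$. So I would set $u=e_{i_0\dots i_p}$ and $v=e_{j_0\dots j_q}$, use (\ref{epeq}) to write $uv=e_{i_0\dots i_p j_0\dots j_q}$, and then apply the definition (\ref{dev}) of the boundary operator directly to this elementary $(p+q+1)$-path. The boundary $\partial(uv)$ is then an alternating sum over all $p+q+2$ positions from which a vertex can be deleted, with the sign $(-1)^s$ where $s$ ranges over the positions in the combined index string $i_0\dots i_p j_0\dots j_q$.

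The key step is to split this alternating sum at the junction between the $i$-block and the $j$-block. The first $p+1$ deletions (omitting some $i_q$) produce paths of the form $e_{i_0\dots\widehat{i_r}\dots i_p j_0\dots j_q}$, which by (\ref{epeq}) factor as $\bigl(e_{i_0\dots\widehat{i_r}\dots i_p}\bigr)e_{j_0\dots j_q}$; summing these with their signs reassembles exactly $(\partial u)v$. The remaining $q+1$ deletions (omitting some $j_s$) produce $e_{i_0\dots i_p}\bigl(e_{j_0\dots\widehat{j_s}\dots j_q}\bigr)$, and here I must track how the index of the deleted vertex shifts: a vertex $j_s$ sits at position $p+1+s$ in the combined string, so its sign is $(-1)^{p+1+s}=(-1)^{p+1}(-1)^s$. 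Factoring out the common $(-1)^{p+1}$, the second group sums to $(-1)^{p+1}u(\partial v)$, which is precisely the second term on the right-hand side of (\ref{duv}).

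The main obstacle, and the only place where genuine care is required, is the bookkeeping of the sign shift across the block boundary, together with correctly handling the degenerate boundary cases $p=-1$ or $q=-1$ (and the convention $uv=0$ when $p=-2$ or $q=-2$). When $p=-1$ we have $u=e$ and $\partial u=0$, so the formula should reduce to $\partial(ev)=(-1)^0 e\,\partial v=\partial v$, which matches since $ev=v$; I would verify this boundary case separately against the stated conventions, and similarly for $q=-1$, to confirm the sign $(-1)^{p+1}$ is consistent there. Once the sign accounting is pinned down on elementary paths, bilinearity extends (\ref{duv}) to all $u\in\Lambda_p$ and $v\in\Lambda_q$, completing the proof.
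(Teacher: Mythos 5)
Your proposal is correct and takes essentially the same route as the paper's own proof: reduce by bilinearity to elementary paths $u=e_{i_{0}...i_{p}}$, $v=e_{j_{0}...j_{q}}$, expand $\partial e_{i_{0}...i_{p}j_{0}...j_{q}}$ by the definition (\ref{dev}), and split the alternating sum at the block boundary, where the deleted vertex $j_{s}$ sitting at position $p+1+s$ produces exactly the factor $\left( -1\right) ^{p+1}$ in front of $u\partial v$. Your separate verification of the degenerate case $p=-1$ (where $u=e$, $\partial u=0$, and $ev=v$) is consistent with the paper's conventions and, if anything, makes the argument slightly more complete than the paper's terse version.
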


\begin{proof}
It suffices to prove (\ref{duv}) for $u=e_{i_{0}...i_{p}}$ and $%
v=e_{j_{0}...j_{q}}.$ We have 
\begin{eqnarray*}
\partial \left( uv\right) &=&\partial
e_{i_{0}...i_{p}j_{0}...j_{q}}=e_{i_{1}...i_{p}j_{0}...j_{q}}-e_{i_{0}i_{2}...i_{p}j_{0}...j_{q}}+...
\\
&&+\left( -1\right) ^{p+1}\left(
e_{i_{0}...i_{p}j_{1}...j_{q}}-e_{i_{0}...i_{p}j_{0}j_{2}...j_{q}}+...\right)
\\
&=&\left( \partial e_{i_{0}...i_{p}}\right) e_{j_{0}...j_{q}}+\left(
-1\right) ^{p+1}e_{i_{0}...i_{p}}\partial e_{j_{0}...j_{q}},
\end{eqnarray*}%
whence (\ref{duv}) follows.
\end{proof}

\subsection{Regular paths}

\label{SecRegpath}

\begin{definition}
\RM We say that an elementary path $i_{0}...i_{p}$ is \emph{non}-\emph{%
regular} if $i_{k-1}=i_{k}$ for some $k=1,...,p$, and \emph{regular}
otherwise.
\end{definition}

For example, a $1$-path $ii$ is non-regular, while a $2$-path $iji$ is
regular provided $i\neq j$.

For any $p\geq -1$, consider the following subspace of $\Lambda _{p}$
spanned by the regular elementary paths: 
\begin{equation*}
\mathcal{R}_{p}=\mathcal{R}_{p}\left( V\right) :=\limfunc{span}\left\{
e_{i_{0}...i_{p}}:i_{0}...i_{p}\text{ is regular}\right\} .
\end{equation*}%
Note that $\mathcal{R}_{p}=\Lambda _{p}$ for $p\leq 0$, but $\mathcal{R}_{p}$
is strictly smaller than $\Lambda _{p}$ for $p\geq 1$. Similarly, consider
the subspace of $\Lambda _{p}$ spanned by non-regular elementary paths: 
\begin{equation*}
I_{p}=I_{p}\left( V\right) =\limfunc{span}\left\{
e_{i_{0}...i_{p}}:i_{0}...i_{p}\text{ is non-regular}\right\} .
\end{equation*}%
For example, we have $I_{0}=\left\{ 0\right\} $ and $I_{1}=\limfunc{span}%
\left\{ e_{ii}\right\} $.

For $p=-2$ we set $\mathcal{R}_{-2}=I_{-2}=\left\{ 0\right\} $. Then we have 
$\Lambda _{p}=\mathcal{R}_{p}\tbigoplus I_{p}$ for all $p\geq -2$, which
implies that the quotient space%
\begin{equation*}
\widetilde{\mathcal{R}}_{p}=\widetilde{\mathcal{R}}_{p}\left( V\right)
:=\Lambda _{p}/I_{p}
\end{equation*}%
is isomorphic to $\mathcal{R}_{p}$.

In what follows, the elements of a quotient space $U/W$ will be denoted by $u%
\func{mod}W$ where $u\in U.$ Also, we write $u_{1}=u_{2}\func{mod}W$ if $%
u_{1}-u_{2}\in W.$

\begin{definition}
\RM The elements of $\mathcal{R}_{p}$ are called \emph{regular} $p$-paths.
The elements of $\widetilde{\mathcal{R}}_{p}$, that is, the equivalence
classes $v\func{mod}I_{p}$ (where $v\in \Lambda _{p}$), are called \emph{%
regularized} $p$-paths.
\end{definition}

Obviously, any regularized $p$-path has exactly one representative in
regular $p$-paths.

We would like to consider the operator $\partial $ on the spaces $\mathcal{R}%
_{p}$. However, $\partial $ is not invariant on spaces of regular paths. For
example, $e_{iji}\in \mathcal{R}_{2}$ for $i\neq j$ while its boundary $%
\partial e_{iji}=e_{ji}-e_{ii}+e_{ij}$ is not in $\mathcal{R}_{1}$ as it has
a non-regular component $e_{ii}$. The same applies to the notion of join of
paths: the join of two regular path does not have to be regular, for
example, $e_{i}e_{i}=e_{ii}$.

Below we will modify the definitions of $\partial $ and join to make them
invariant on the spaces $\mathcal{R}_{p}$. The basic idea is that when
applying $\partial $ or join on regular paths, one should drop from the
result all the non-regular components, so that it becomes regular.
Technically it is easier (and cleaner) to define $\partial $ and join first
on the quotient spaces $\widetilde{\mathcal{R}}_{p}$, and then pass to $%
\mathcal{R}_{p}$ by isomorphism.

\begin{lemma}
\label{LemdIp}Let $p,q\geq -1.$

\begin{itemize}
\item[$\left( a\right) $] If $v_{1},v_{2}\in \Lambda _{p}$ and $v_{1}=v_{2}%
\func{mod}I_{p}$ then $\partial v_{1}=\partial v_{2}\func{mod}I_{p-1}$.

\item[$\left( b\right) $] Let $u_{1},u_{2}\in \Lambda _{p}$, $v_{1},v_{2}\in
\Lambda _{q}$. If $u_{1}=u_{2}\func{mod}I_{p}$ and $v_{1}=v_{2}\func{mod}%
I_{q}$ then $u_{1}v_{1}=u_{2}v_{2}\func{mod}I_{p+q+1}.$
\end{itemize}
\end{lemma}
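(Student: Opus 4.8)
The plan is to prove the two parts separately, reducing each to a statement about how $\partial$ and join interact with the subspaces $I_p$ of non-regular paths. For part $(a)$, by linearity it suffices to show that $v_1 - v_2 \in I_p$ implies $\partial(v_1 - v_2) \in I_{p-1}$, so I would set $w = v_1 - v_2 \in I_p$ and prove $\partial w \in I_{p-1}$. Again by linearity it is enough to treat a single non-regular basis element $w = e_{i_0 \dots i_p}$, where $i_{k-1} = i_k$ for some $k$. The idea is to look at $\partial e_{i_0\dots i_p} = \sum_{q=0}^{p}(-1)^q e_{i_0\dots\widehat{i_q}\dots i_p}$ and check that every term is again non-regular, with one crucial exception that must cancel.

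Here is the mechanism I expect to drive part $(a)$. Fix $k$ with $i_{k-1}=i_k$. For each omitted index $q$, the resulting path $e_{i_0\dots\widehat{i_q}\dots i_p}$ still contains the adjacent equal pair $i_{k-1}=i_k$ — hence is non-regular — unless we delete one of the two repeated vertices, i.e.\ unless $q = k-1$ or $q = k$. For those two values of $q$ the repeated pair is broken, and the two resulting paths are in fact \emph{identical}: deleting $i_{k-1}$ or deleting $i_k$ from the sequence $\dots i_{k-2}\,i_{k-1}\,i_k\,i_{k+1}\dots$ yields the same path $\dots i_{k-2}\,i_k\,i_{k+1}\dots$ (using $i_{k-1}=i_k$). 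Since the two corresponding signs are $(-1)^{k-1}$ and $(-1)^{k}$, which are opposite, these two potentially regular terms cancel exactly. Every remaining term is non-regular, so $\partial w \in I_{p-1}$, as required. A small caveat I would address: if $i_0\dots i_p$ has \emph{several} adjacent repetitions, then for some $q$ the deletion might break one repeated pair but leave another intact, so the term stays non-regular; the cancellation argument only needs \emph{one} fixed repeated pair to handle the two borderline terms, and every other term is manifestly non-regular, so the argument is robust.

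For part $(b)$, I would reduce to showing that if either factor is non-regular then the join is non-regular. By bilinearity, write $u_1 v_1 - u_2 v_2 = (u_1 - u_2)v_1 + u_2(v_1 - v_2)$, so it suffices to prove that $u \in I_p$ implies $uv \in I_{p+q+1}$ for any $v$, and symmetrically $v \in I_q$ implies $uv \in I_{p+q+1}$ for any $u$. This is immediate from the formula $e_{i_0\dots i_p}e_{j_0\dots j_q} = e_{i_0\dots i_p j_0\dots j_q}$ in~(\ref{epeq}): if $e_{i_0\dots i_p}$ is non-regular then its defining adjacent repetition survives in the concatenated path, so the join is non-regular, and likewise for a non-regular second factor, since appending vertices on either side of a non-regular path cannot destroy the internal repeated pair. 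Hence $(u_1-u_2)v_1$ and $u_2(v_1-v_2)$ both lie in $I_{p+q+1}$, and the claim follows.

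The main obstacle is the subtle cancellation in part $(a)$: one must verify carefully that the \emph{only} terms of $\partial e_{i_0\dots i_p}$ that can be regular are exactly the two that delete a vertex from a chosen repeated pair, that these two terms are genuinely equal as elementary paths, and that they carry opposite signs. Part $(b)$ is essentially bookkeeping once~(\ref{epeq}) is invoked, and poses no real difficulty. The only delicate boundary cases to mention are the degenerate ranges $p = -1$ or $q = -1$, where one factor is the empty path $e$; there the join reduces to the other factor and regularity is preserved trivially.
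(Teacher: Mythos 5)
Your proposal is correct and follows essentially the same route as the paper's proof: in part $\left( a\right) $ you reduce by linearity to a single non-regular elementary path, observe that the two boundary terms omitting a vertex of a fixed repeated pair coincide as paths and cancel by the opposite signs $\left( -1\right) ^{k-1}$ and $\left( -1\right) ^{k}$, while every other term retains an adjacent repetition, and in part $\left( b\right) $ you use the identical bilinear decomposition $u_{1}v_{1}-u_{2}v_{2}=\left( u_{1}-u_{2}\right) v_{1}+u_{2}\left( v_{1}-v_{2}\right) $ together with the fact that the join of a non-regular path with any path is non-regular. Your explicit handling of multiple adjacent repetitions and of the degenerate ranges where $I_{p}=\left\{ 0\right\} $ is a slightly more careful write-up of the same argument, not a different one.
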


\begin{proof}
$\left( a\right) $ If $p\leq 0$ there is nothing to prove since $%
I_{p}=\left\{ 0\right\} $. In the case $p\geq 1$, it suffices to prove that
if $v=0\func{mod}I_{p}$ then $\partial v=0\func{mod}I_{p-1}$. Since $v$ is a
linear combination of elementary non-regular paths $e_{i_{0}...i_{p}}$, it
suffices to prove that if $e_{i_{0}...i_{p}}$ is non-regular then $\partial
e_{i_{0}...i_{p}}$ is non-regular, too. Indeed, for a non-regular path $%
i_{0}...i_{p}$ there exists an index $k$ such that $i_{k}=i_{k+1}.$ Then we
have%
\begin{eqnarray}
\partial e_{i_{0}...i_{p}} &=&e_{i_{1}...i_{p}}-e_{i_{0}i_{2}...i_{p}}+... 
\notag \\
&&+\left( -1\right) ^{k}e_{i_{0}...i_{k-1}i_{k+1}i_{k+2}...i_{p}}+\left(
-1\right) ^{k+1}e_{i_{0}...i_{k-1}i_{k}i_{k+2}...i_{p}}  \label{e-e} \\
&&+...+\left( -1\right) ^{p}e_{i_{0}...i_{p-1}}.  \notag
\end{eqnarray}%
By $i_{k}=i_{k+1}$ the two terms in the middle line of (\ref{e-e}) cancel
out, whereas all other terms are non-regular, whence $\partial
e_{i_{0}...i_{p}}\in I_{p-1}.$

$\left( b\right) $ Let us first verify that if $u=0\func{mod}I_{p}$ or $v=0%
\func{mod}I_{q}$ then $uv=0\func{mod}I_{p+q+1}$ . Let, for example, $u=0%
\func{mod}I_{p}$. If $p\leq 0$ then this implies $u=0$, and the claim is
trivially satisfied. If $p\geq 1$ then $u$ is a linear combination of
non-regular paths $e_{i_{0}...i_{p}}$. Since the join of an non-regular path
with any path is obviously non-regular, we obtain that $uv$ is non-regular,
which proves the claim.

Since 
\begin{equation*}
u_{1}v_{1}-u_{2}v_{2}=\left( u_{1}-u_{2}\right) v_{1}+u_{2}\left(
v_{1}-v_{2}\right)
\end{equation*}%
and by hypothesis 
\begin{equation*}
u_{1}-u_{2}=0\func{mod}I_{p},\ \ \ v_{1}-v_{2}=0\func{mod}I_{q},
\end{equation*}%
we conclude that%
\begin{equation*}
u_{1}v_{1}=u_{2}v_{2}\func{mod}I_{p+q+1}.
\end{equation*}
\end{proof}

Lemma \ref{LemdIp} shows that the boundary operator $\partial $ and the join
are well-defined on the quotient spaces $\Lambda _{p}/I_{p}=\widetilde{%
\mathcal{R}}_{p}$ through the operations with the representatives of the
classes. In particular, the identity $\partial ^{2}=0$ and the product rule
are satisfied in the spaces $\widetilde{\mathcal{R}}_{p}$.

Now we define the operations $\partial $ and join on the spaces $\mathcal{R}%
_{p}$ simply as pullbacks from $\widetilde{\mathcal{R}}_{p}$ using the
natural linear isomorphism $\mathcal{R}_{p}\rightarrow \widetilde{\mathcal{R}%
}_{p}$ .

\begin{definition}
\RM The operator $\partial $ on $\mathcal{R}_{p}$ will be called a \emph{%
regular} boundary operator, and the join on the spaces $\mathcal{R}_{p}$
will be called a \emph{regular} join. To distinguish them from the
operations $\partial $ and join on the spaces $\Lambda _{p}$, the latter
operations will be referred to as \emph{non-regular}.
\end{definition}

When applying the formulas for the regular boundary operator $\partial $ and
join, one should make the following adjustments:

\begin{enumerate}
\item[$\left( I\right) $] all the components $v^{i_{0}...i_{p}}$ of $v\in 
\mathcal{R}_{p}$ for non-regular paths $i_{0}...i_{p}$ are equal to $0$ by
definition;

\item[$\left( II\right) $] all non-regular paths $e_{i_{0}...i_{p}}$, should
they arise as a result of an operation, are treated as zeros (because after
applying the operation on representatives, we should pass in the end to a
regular representative).
\end{enumerate}

Thus, the formula (\ref{dv}) for the component $\left( \partial v\right)
^{j_{0}...j_{p-1}}$ is valid only for regular paths $j_{0}...j_{p-1}$, while
for non-regular $j_{0}...j_{p-1}$ we have by definition $\left( \partial
v\right) ^{j_{0}...j_{p-1}}=0$. Similarly, the formula (\ref{uvdef}) for $%
\left( uv\right) ^{i_{0}...i_{p}j_{0}...j_{q}}$ is valid only for regular
paths $i_{0}...i_{p}j_{0}...j_{q}$.

On the other hand, the formula (\ref{dev}) for $\partial e_{i_{0}...i_{p}}$
and the formula (\ref{epeq}) for $e_{i_{0}...i_{p}}e_{j_{0}...j_{q}}$ remain
valid for all sequences of indices as it follows from Lemma \ref{LemdIp},
provided one applies adjustment $\left( II\right) $.

For example, we have for the non-regular operator $\partial $%
\begin{equation*}
\partial e_{iji}=e_{ji}-e_{ii}+e_{ij},
\end{equation*}%
whereas for the regular operator $\partial $%
\begin{equation*}
\partial e_{iji}=e_{ji}+e_{ij}
\end{equation*}%
since $e_{ii}$ is non-regular and, hence, is replaced by $0$. For
non-regular join we have%
\begin{equation*}
e_{ij}e_{ji}=e_{ijji}
\end{equation*}%
whereas for the regular join $e_{ij}e_{ji}=0$ since $e_{ijji}$ is
non-regular.

Consequently, we obtain the \emph{regular }chain complex of the set $V$:%
\begin{equation}
0\leftarrow \mathbb{K}\leftarrow \mathcal{R}_{0}\leftarrow ...\leftarrow 
\mathcal{R}_{p-1}\leftarrow \mathcal{R}_{p}\leftarrow ...  \label{cR}
\end{equation}%
where all the arrows are given by regular operator $\partial $.

Let $V^{\prime }$ be a subset of $V$. Clearly, every elementary regular $p$%
-path $e_{i_{0}...i_{p}}$ on $V^{\prime }$ is also a regular $p$-path on $V$%
, so that we have a natural inclusion 
\begin{equation}
\mathcal{R}_{p}\left( V^{\prime }\right) \subset \mathcal{R}_{p}\left(
V\right) .  \label{V'inV}
\end{equation}%
By (\ref{dev}), $\partial e_{i_{0}...i_{p}}$ has the same expression in the
both spaces $\mathcal{R}_{p}\left( V^{\prime }\right) $,$\ \mathcal{R}%
_{p}\left( V\right) $ so that $\partial $ commutes with the inclusion (\ref%
{V'inV}).

\subsection{Form and exterior differential}

\label{Secd}For any integer $p\geq -1$, denote by $\Lambda ^{p}=\Lambda
^{p}\left( V\right) $ the linear space of all $\mathbb{K}$-valued functions
on $V^{p+1}=\underset{p+1\ \text{terms}}{\underbrace{V\times ...\times V}}$.
\ In particular, $\Lambda ^{0}$ is the linear space of all $\mathbb{K}$%
-valued functions on $V$, and $\Lambda ^{-1}$ is the space of all $\mathbb{K}
$-values functions on $V^{0}:=\left\{ 0\right\} $, that is, $\Lambda ^{-1}$
can (and will) be identified with $\mathbb{K}$. Set also $\Lambda
^{-2}=\left\{ 0\right\} .$

\begin{definition}
\RM The elements of $\Lambda ^{p}$ are called $p$-\emph{forms} on $V$.
\end{definition}

The value of a $p$-form $\omega $ at a point $\left(
i_{0},i_{1},...,i_{p}\right) \in V^{p+1}$ will be denoted by $\omega
_{i_{0}i_{1}...i_{p}}$. In particular, the value of a function $\omega \in
\Lambda ^{0}\left( V\right) $ at $i\in V$ will be denoted by $\omega _{i}$.
Each element $\omega \in \Lambda ^{-1}$ is determined by its value at $0$
that will be denoted by the same letter $\omega $.

Denote by $e^{j_{0}...j_{p}}$ a $p$-form that takes value $1_{\mathbb{K}}$
at the point $\left( j_{0},j_{1},...,j_{p}\right) $ and $0$ at all other
points. For example, $e^{j}$ is a function on $V$ that is equal to $1$ at $j$
and $0$ away from $j$. Also, $e$ stands for the function on $\left\{
0\right\} $ taking value $1_{\mathbb{K}}.$ Let us refer to $%
e^{j_{0}...j_{p}} $ as an \emph{elementary} $p$-form. Clearly, the family $%
\left\{ e^{j_{0}...j_{p}}\right\} $ of all elementary $p$-forms is a basis
in the linear space $\Lambda ^{p}$ and, for any $\omega \in \Lambda ^{p},$ 
\begin{equation*}
\omega =\sum_{j_{0},...,j_{p}\in V}\omega _{j_{0}...j_{p}}e^{j_{0}...j_{p}}.
\end{equation*}%
We have a natural pairing of $p$-forms and $p$-paths as follows:%
\begin{equation}
\left( \omega ,v\right) :=\sum_{i_{0},...,i_{p}\in V}\omega
_{i_{0}...i_{p}}v^{i_{0}...i_{p}}  \label{omv}
\end{equation}%
for all $\omega \in \Lambda ^{p}$ and $v\in \Lambda _{p}$. Obviously, the
spaces $\Lambda ^{p}$ and $\Lambda _{p}$ are dual with respect to this
pairing.

\begin{definition}
\RM Define \emph{the exterior differential }$d:\Lambda ^{p}\rightarrow
\Lambda ^{p+1}$ by 
\begin{equation}
\left( d\omega \right) _{i_{0}...i_{p+1}}=\dsum\limits_{q=0}^{p+1}\left(
-1\right) ^{q}\omega _{i_{0}...\widehat{i_{q}}...i_{p+1}},  \label{dom}
\end{equation}%
for any $\omega \in \Lambda ^{p}$.
\end{definition}

For example, for any $\omega \in \Lambda ^{-1}$ we have%
\begin{equation*}
\left( d\omega \right) _{i}=\omega ,
\end{equation*}%
for any function $\omega \in \Lambda ^{0}$ we have%
\begin{equation*}
\left( d\omega \right) _{ij}=\omega _{j}-\omega _{i},
\end{equation*}%
for a $1$-form $\omega $%
\begin{equation*}
\left( d\omega \right) _{ijk}=\omega _{jk}-\omega _{ik}+\omega _{ij}.
\end{equation*}%
It follows from (\ref{dom}) that%
\begin{equation}
de^{i_{0}...i_{p}}=\dsum\limits_{k\in V}\dsum\limits_{q=0}^{p+1}\left(
-1\right) ^{q}e^{i_{0}...i_{q-1}ki_{q}...i_{p}}  \label{de}
\end{equation}%
(cf. (\ref{dv})). For example, we have%
\begin{equation*}
de=\sum_{k}e^{k}=1
\end{equation*}%
where $1$ stands for the function on $V$ with constant value $1_{\mathbb{K}}$%
. Also, we have%
\begin{equation*}
de^{i}=\sum_{k}\left( e^{ki}-e^{ik}\right)
\end{equation*}%
and%
\begin{equation*}
de^{ij}=\sum_{k}\left( e^{kij}-e^{ikj}+e^{ijk}\right) .
\end{equation*}

\begin{lemma}
\label{LemStokes}Let $p\geq -2$. For any $p$-form $\omega $ and any $\left(
p+1\right) $-path $v$ the following identity holds%
\begin{equation*}
\left( d\omega ,v\right) =\left( \omega ,\partial v\right) .
\end{equation*}%
Consequently, the operators $d:\Lambda ^{p}\rightarrow \Lambda ^{p+1}$ and $%
\partial :\Lambda _{p+1}\rightarrow \Lambda _{p}$ are dual.
\end{lemma}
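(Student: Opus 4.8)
The plan is to prove the duality identity
$\left(d\omega,v\right)=\left(\omega,\partial v\right)$
directly from the defining formulas, reducing everything to elementary forms and paths by bilinearity. Since both sides are bilinear in $\omega$ and $v$, it suffices to check the identity when $\omega=e^{j_0\ldots j_p}$ is an elementary $p$-form and $v=e_{i_0\ldots i_{p+1}}$ is an elementary $(p+1)$-path. This collapses the pairing $(\cdot,\cdot)$ to a single coefficient, because $(e^{a_0\ldots a_m},e_{b_0\ldots b_m})$ equals $1$ when the index strings coincide and $0$ otherwise.

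First I would expand the right-hand side. By the definition of $\partial$ in (\ref{dev}) we have
$\partial e_{i_0\ldots i_{p+1}}=\sum_{q=0}^{p+1}(-1)^q e_{i_0\ldots\widehat{i_q}\ldots i_{p+1}}$,
and pairing with $\omega=e^{j_0\ldots j_p}$ extracts, via (\ref{omv}), the sum
$\left(\omega,\partial v\right)=\sum_{q=0}^{p+1}(-1)^q\,\omega_{i_0\ldots\widehat{i_q}\ldots i_{p+1}}$,
where $\omega_{a_0\ldots a_p}$ denotes the component of $e^{j_0\ldots j_p}$ at $(a_0,\ldots,a_p)$, namely $1$ exactly when $a_0\ldots a_p=j_0\ldots j_p$.

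Next I would expand the left-hand side. By the definition of $d$ in (\ref{dom}),
$\left(d\omega\right)_{i_0\ldots i_{p+1}}=\sum_{q=0}^{p+1}(-1)^q\,\omega_{i_0\ldots\widehat{i_q}\ldots i_{p+1}}$,
and pairing this $(p+1)$-form with $v=e_{i_0\ldots i_{p+1}}$ again extracts exactly this one value. Thus both $(d\omega,v)$ and $(\omega,\partial v)$ reduce to the identical expression $\sum_{q=0}^{p+1}(-1)^q\,\omega_{i_0\ldots\widehat{i_q}\ldots i_{p+1}}$, and the identity follows immediately for elementary forms and paths, hence for all $\omega$ and $v$ by bilinearity. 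The final sentence of the statement is then automatic: an identity of the form $(d\omega,v)=(\omega,\partial v)$ holding for all $\omega\in\Lambda^p$ and all $v\in\Lambda_{p+1}$ is precisely the assertion that $d$ and $\partial$ are adjoint with respect to the pairing.

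I do not expect a genuine obstacle here, since the formulas for $d$ and $\partial$ are designed to be mirror images under the pairing; the proof is essentially a matching of sign-weighted index deletions. The only point that requires a little care is the bookkeeping of the boundary cases $p=-2$ and $p=-1$, where $\Lambda^{-2}=\{0\}$ and $\Lambda^{-1}\cong\mathbb{K}$, so one should confirm that the displayed low-degree instances (for example $(d\omega)_i=\omega$ when $\omega\in\Lambda^{-1}$, matching $\partial e_i=e$) are consistent with the general reduction; these are exactly the special values already computed just before the lemma, so they serve as a sanity check rather than a separate argument.
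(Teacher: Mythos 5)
Your proof is correct and follows essentially the same route as the paper: dispose of $p=-2$ separately, reduce to an elementary path $v=e_{i_{0}...i_{p+1}}$, and observe via (\ref{dom}) and (\ref{dev}) that both $\left( d\omega ,v\right) $ and $\left( \omega ,\partial v\right) $ collapse to the same sum $\sum_{q=0}^{p+1}\left( -1\right) ^{q}\omega _{i_{0}...\widehat{i_{q}}...i_{p+1}}$. Your additional reduction of $\omega $ to an elementary form is harmless but unnecessary, since the computation works verbatim for arbitrary $\omega \in \Lambda ^{p}$, which is what the paper does.
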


\begin{proof}
For $p=-2$ the both sides are $0$. For $p\geq -1$ it suffices to prove this
identity for $v=e_{i_{0}...i_{p+1}}$. Using (\ref{dom}) and (\ref{dev}), we
obtain 
\begin{equation*}
\left( d\omega ,v\right) =\left( d\omega \right)
_{i_{0}...i_{p+1}}=\dsum\limits_{q=0}^{p+1}\left( -1\right) ^{q}\omega
_{i_{0}...\widehat{i_{q}}...i_{p+1}}
\end{equation*}%
and%
\begin{equation*}
\left( \omega ,\partial v\right) =\left( \omega
,\dsum\limits_{q=0}^{p+1}\left( -1\right) ^{q}e_{i_{0}...\widehat{i_{q}}%
...i_{p+1}}\right) =\dsum\limits_{q=0}^{p+1}\left( -1\right) ^{q}\omega
_{i_{0}...\widehat{i_{q}}...i_{p+1}},
\end{equation*}%
whence the required identity follows.
\end{proof}

\begin{corollary}
We have $d^{2}=0.$
\end{corollary}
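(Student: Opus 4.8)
The plan is to deduce $d^{2}=0$ from the already-established identity $\partial ^{2}=0$ by exploiting the duality between $d$ and $\partial $ proved in Lemma \ref{LemStokes}, rather than repeating a direct index computation. Fix $p\geq -2$ and a $p$-form $\omega $; the goal is to show that $d^{2}\omega =0$ as an element of $\Lambda ^{p+2}$. Since the spaces $\Lambda ^{p+2}$ and $\Lambda _{p+2}$ are dual with respect to the pairing (\ref{omv}), it suffices to verify that $\left( d^{2}\omega ,v\right) =0$ for every $(p+2)$-path $v\in \Lambda _{p+2}$.

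The verification is a double application of Lemma \ref{LemStokes}. Treating $d\omega \in \Lambda ^{p+1}$ as a form paired with the $(p+2)$-path $v$, the lemma gives $\left( d\left( d\omega \right) ,v\right) =\left( d\omega ,\partial v\right) $. Applying the lemma once more, this time with the form $\omega $ and the $(p+1)$-path $\partial v$, yields $\left( d\omega ,\partial v\right) =\left( \omega ,\partial ^{2}v\right) $. Since $\partial ^{2}=0$ by the earlier lemma, the right-hand side equals $\left( \omega ,0\right) =0$. Chaining these equalities gives $\left( d^{2}\omega ,v\right) =0$ for all $v$, and non-degeneracy of the pairing (that is, the duality of $\Lambda ^{p+2}$ and $\Lambda _{p+2}$) then forces $d^{2}\omega =0$, as desired.

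I expect no genuine obstacle here: the substance has already been front-loaded into the identity $\partial ^{2}=0$ together with the Stokes-type duality, so the corollary is essentially formal. The only points deserving a moment of care are the degree bookkeeping --- checking that both invocations of Lemma \ref{LemStokes} fall within its admissible range so that the edge cases $\Lambda _{-2}=\left\{ 0\right\} $ and $\Lambda ^{-2}=\left\{ 0\right\} $ cause no trouble --- and the explicit appeal to non-degeneracy of the pairing in order to pass from \emph{vanishing against every} $v$ to \emph{being zero}. A direct computation paralleling the proof of $\partial ^{2}=0$ is of course also available, but it would merely reproduce the same pairwise cancellation of terms and is strictly less economical.
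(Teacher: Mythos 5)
Your proof is correct and is exactly the argument the paper intends: the corollary is stated immediately after Lemma \ref{LemStokes} with no separate proof precisely because $\left( d^{2}\omega ,v\right) =\left( d\omega ,\partial v\right) =\left( \omega ,\partial ^{2}v\right) =0$ together with the non-degeneracy of the pairing (\ref{omv}) is the intended deduction, which you have merely spelled out. Your degree bookkeeping and the handling of the edge cases $\Lambda ^{-2}=\Lambda _{-2}=\left\{ 0\right\} $ are also fine.
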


Hence, we obtain a cochain complex%
\begin{equation}
0\rightarrow \mathbb{K}\rightarrow \Lambda _{0}\rightarrow ...\rightarrow
\Lambda _{p}\rightarrow \Lambda _{p+1}\rightarrow ...  \label{coLa}
\end{equation}%
where all arrows are given by $d$, and this cochain complex is dual to the
chain complex (\ref{cLa}).

\subsection{Concatenation of forms}

\label{SecCon}

\begin{definition}
\RM For $p,q\geq 0$ and for any two forms $\varphi \in \Lambda ^{p}$ and $%
\psi \in \Lambda ^{q}$, define their \emph{concatenation} $\varphi \psi \in
\Lambda ^{p+q}$ by%
\begin{equation}
\left( \varphi \psi \right) _{i_{0}...i_{p+q}}=\varphi _{i_{0}...i_{p}}\psi
_{i_{p}i_{p+1}...i_{p+q}}.  \label{concat}
\end{equation}
\end{definition}

Clearly, concatenation is a bilinear operation that satisfies the
associative law. It is obviously non-commutative. For example, if $\varphi $
is a function, that is, $p=0$, then $\varphi \psi \in \Lambda ^{q}$ and%
\begin{equation*}
\left( \varphi \psi \right) _{i_{0}...i_{q}}=\varphi _{i_{0}}\psi
_{i_{0}...i_{q}},
\end{equation*}%
while%
\begin{equation*}
\left( \psi \varphi \right) _{i_{0}...i_{q}}=\psi _{i_{0}...i_{q}}\varphi
_{i_{q}}.
\end{equation*}%
For the elementary forms $e^{i_{0}...i_{p}}$ and $e^{j_{0}...j_{q}}$ we have%
\begin{equation}
e^{i_{0}...i_{p}}e^{j_{0}...j_{q}}=\left\{ 
\begin{array}{ll}
0, & i_{p}\neq j_{0}, \\ 
e^{i_{0}...i_{p}j_{1}...i_{q}}, & i_{p}=j_{0}.%
\end{array}%
\right.  \label{econ}
\end{equation}%
The operation of concatenation is reminiscent of the operation of cup
product in algebraic topology. Let us emphasize that concatenation of forms
is essentially different from join of paths, which can be seen from
comparison of (\ref{uvdef}) and (\ref{concat}): in the former the index $%
i_{p}$ is used twice in the right hand side whereas in the latter -- only
once. Consequently, concatenation acts from $\Lambda ^{p}\times \Lambda ^{q}$
to $\Lambda ^{p+q}$, whereas join acts from $\Lambda _{p}\times \Lambda _{q}$
to $\Lambda _{p+q+1}$. Despite the differences, the both operations do
satisfy the product rules with respect to the operators $d$ and $\partial $,
respectively.

\begin{lemma}
\label{Lemdff}For all $p,q\geq 0$ and $\varphi \in \Lambda ^{p}$, $\psi \in
\Lambda ^{q}$, we have%
\begin{equation}
d\left( \varphi \psi \right) =\left( d\varphi \right) \psi +\left( -1\right)
^{p}\varphi d\psi .  \label{Leib}
\end{equation}
\end{lemma}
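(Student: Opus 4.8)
The plan is to establish (\ref{Leib}) by comparing the two sides componentwise; since all three operations occurring in (\ref{Leib}) are bilinear in $(\varphi,\psi)$, and since a component computation is automatically linear, it suffices to manipulate the defining formulas (\ref{dom}) and (\ref{concat}) directly, with no reduction to elementary forms needed. First I would fix a sequence of indices $i_0\dots i_{p+q+1}$ and expand the left-hand side by (\ref{dom}):
\begin{equation*}
\left( d(\varphi\psi)\right)_{i_0\dots i_{p+q+1}} = \sum_{k=0}^{p+q+1}(-1)^k (\varphi\psi)_{i_0\dots\widehat{i_k}\dots i_{p+q+1}}.
\end{equation*}
The heart of the argument is to track, term by term, how deleting the vertex $i_k$ interacts with the concatenation (\ref{concat}), whose splitting vertex of a $(p+q)$-form sits in position $p$.

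I would split the sum according to whether $0\le k\le p$ or $p+1\le k\le p+q+1$. In the first range the deleted vertex lies in the $\varphi$-block and the splitting vertex of the resulting $(p+q)$-form becomes $i_{p+1}$, so (\ref{concat}) gives $(\varphi\psi)_{i_0\dots\widehat{i_k}\dots i_{p+q+1}} = \varphi_{i_0\dots\widehat{i_k}\dots i_{p+1}}\,\psi_{i_{p+1}\dots i_{p+q+1}}$; in the second range the splitting vertex remains $i_p$ and the deletion falls in the $\psi$-block, giving $\varphi_{i_0\dots i_p}\,\psi_{i_p\dots\widehat{i_k}\dots i_{p+q+1}}$.

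Next I would recognize each partial sum as an exterior differential with a single boundary term removed. Factoring $\psi_{i_{p+1}\dots i_{p+q+1}}$ out of the first partial sum, the remaining $\varphi$-sum $\sum_{k=0}^p(-1)^k\varphi_{i_0\dots\widehat{i_k}\dots i_{p+1}}$ is exactly $(d\varphi)_{i_0\dots i_{p+1}}$ minus its $k=p+1$ term $(-1)^{p+1}\varphi_{i_0\dots i_p}$; since by (\ref{concat}) one has $((d\varphi)\psi)_{i_0\dots i_{p+q+1}}=(d\varphi)_{i_0\dots i_{p+1}}\psi_{i_{p+1}\dots i_{p+q+1}}$, the first partial sum equals $((d\varphi)\psi)_{i_0\dots i_{p+q+1}}$ minus a correction. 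Re-indexing the second partial sum by $j=k-p$ identifies it, up to the global sign $(-1)^p$, with $(d\psi)_{i_p\dots i_{p+q+1}}$ minus its $j=0$ term $\psi_{i_{p+1}\dots i_{p+q+1}}$; using $(\varphi(d\psi))_{i_0\dots i_{p+q+1}}=\varphi_{i_0\dots i_p}(d\psi)_{i_p\dots i_{p+q+1}}$, the second partial sum equals $(-1)^p(\varphi(d\psi))_{i_0\dots i_{p+q+1}}$ minus a second correction.

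Finally I would verify that the two corrections cancel. Each is a scalar multiple of $\varphi_{i_0\dots i_p}\psi_{i_{p+1}\dots i_{p+q+1}}$, with coefficients $-(-1)^{p+1}$ and $-(-1)^p$ respectively; since $-(-1)^{p+1}=(-1)^p$, their sum vanishes, leaving precisely (\ref{Leib}). I expect the only real obstacle to be bookkeeping: correctly locating the splitting vertex after the deletion of $i_k$, and checking that the two leftover \emph{interface} terms --- the one arising from omitting $i_{p+1}$ in the $\varphi$-block and the one from omitting $i_p$ in the $\psi$-block --- have the same functional form, so that their signs genuinely cancel. Apart from this careful index-tracking, no structural difficulty is anticipated.
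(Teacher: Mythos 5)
Your proposal is correct and takes essentially the same route as the paper's own proof: expand $\left(d(\varphi\psi)\right)_{i_0\dots i_{p+q+1}}$ componentwise, split the sum at $k\le p$ versus $k\ge p+1$, recognize the two partial sums as $\left((d\varphi)\psi\right)$ and $(-1)^p\left(\varphi\, d\psi\right)$ each minus one interface term, and cancel the two corrections via $-(-1)^{p+1}=(-1)^p$. Your sign bookkeeping and the location of the splitting vertex in each range match the paper exactly, so nothing further is needed.
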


\begin{proof}
Denoting $\omega =\varphi \psi ,$ we have%
\begin{eqnarray*}
\left( d\omega \right) _{i_{0}...i_{p+q+1}}
&=&\dsum\limits_{r=0}^{p+q+1}\left( -1\right) ^{r}\omega _{i_{0}...\widehat{%
i_{r}}...i_{p+q+1}} \\
&=&\dsum\limits_{r=0}^{p}\left( -1\right) ^{r}\omega _{i_{0}...\widehat{i_{r}%
}...i_{p+1}...i_{p+q+1}}+\dsum\limits_{r=p+1}^{p+q+1}\left( -1\right)
^{r}\omega _{i_{0}...i_{p}...\widehat{i_{r}}...i_{p+q+1}} \\
&=&\dsum\limits_{r=0}^{p}\left( -1\right) ^{r}\varphi _{i_{0}...\widehat{%
i_{r}}...i_{p+1}}\psi
_{i_{p+1}...i_{p+q+1}}+\dsum\limits_{r=p+1}^{p+q+1}\left( -1\right)
^{r}\varphi _{i_{0}...i_{p}}\psi _{i_{p}...\widehat{i_{r}}...i_{p+q+1}}.
\end{eqnarray*}%
Noticing that%
\begin{equation*}
\left( d\varphi \right) _{i_{0}...i_{p+1}}=\dsum\limits_{r=0}^{p+1}\left(
-1\right) ^{r}\varphi _{i_{0}...\widehat{i_{r}}...i_{p+1}}
\end{equation*}%
and 
\begin{equation*}
\left( d\psi \right) _{i_{p}...i_{p+q+1}}=\dsum\limits_{r=p}^{p+q+1}\left(
-1\right) ^{r-p}\psi _{i_{p}...\widehat{i_{r}}...i_{p+q+1}},
\end{equation*}%
we obtain 
\begin{eqnarray*}
\left( d\omega \right) _{i_{0}...i_{p+q+1}} &=&\left[ \left( d\varphi
\right) _{i_{0}...i_{p+1}}-\left( -1\right) ^{p+1}\varphi _{i_{0}...i_{p}}%
\right] \psi _{i_{p+1}...i_{p+q+1}} \\
&&+\left( -1\right) ^{p}\varphi _{i_{0}...i_{p}}\left[ \left( d\psi \right)
_{i_{p}...i_{p+q+1}}-\psi _{i_{p+1}...i_{p+q+1}}\right] \\
&=&\left( \left( d\varphi \right) \psi \right) _{i_{0}...i_{p+q+1}}+\left(
-1\right) ^{p}\left( \varphi d\psi \right) _{i_{0}...i_{p+q+1}}
\end{eqnarray*}%
which was to be proved.
\end{proof}

\begin{remark}
\RM\label{RemDGAL}The direct sum of vector spaces 
\begin{equation*}
\Lambda =\bigoplus_{p\geq 0}\Lambda ^{p}
\end{equation*}%
with the additional operation concatenation is a graded algebra over field $%
\mathbb{K}$. It is easy to see that the constant function $1$ on $V$ is a
unity of this algebra. As it follows from Lemma \ref{Lemdff}, the couple $%
\left( \Lambda ,d\right) $ is a \emph{differential} graded algebra.

Note that $\left( \Lambda ,d\right) $ does not satisfy the \emph{minimality}
condition. The latter condition says that the minimal left $\Lambda ^{0}$%
-module generated by $d\Lambda ^{p}$ must coincide with $\Lambda ^{p+1}$,
which is not the case here. Indeed, each element of the left $\Lambda ^{0}$%
-module generated by $d\Lambda ^{0}$ is a finite some of the terms like $fdg$
where $f,g\in \Lambda ^{0}.$ For each $i\in V$, we have%
\begin{equation*}
\left( fdg\right) _{ii}=f_{i}\left( dg\right) _{ii}=f_{i}\left(
g_{i}-g_{i}\right) =0.
\end{equation*}%
Hence, the sum of such terms cannot be equal to $e^{ii}\in \Lambda ^{1}.$ In
Section \ref{SecRegforms} we will consider the spaces of regularized forms
that do satisfy the minimality condition.
\end{remark}

\subsection{Regular forms}

\label{SecRegforms}For any integer $p\geq -2$, consider the following
subspace of $\Lambda ^{p}$: 
\begin{eqnarray*}
\mathcal{R}^{p} &=&\mathcal{R}^{p}\left( V\right) =\limfunc{span}\left\{
e^{i_{0}...i_{p}}:i_{0}...i_{p}\text{ is regular}\right\} \\
&=&\left\{ \omega \in \Lambda ^{p}:\omega _{i_{0}...i_{p}}=0\text{\ if }%
i_{0}...i_{p}\text{ is non-regular}\right\} .
\end{eqnarray*}

\begin{definition}
\RM The elements of $\mathcal{R}^{p}$ are called \emph{regular} $p$-forms.
\end{definition}

For example, a $1$-form $\omega $ belongs to $\mathcal{R}^{1}$ if $\omega
_{ii}\equiv 0$, and a $2$-form $\omega $ belongs to $\mathcal{R}^{2}$ if $%
\omega _{iij}\equiv \omega _{jii}\equiv 0.$ For $p\leq 0$ the condition $%
f\in \mathcal{R}^{p}$ has no additional restriction so that $\mathcal{R}%
^{p}=\Lambda ^{p}$.

The next lemma shows that the operations of exterior differentiation,
concatenation and pairing can be restricted to regular forms.

\begin{lemma}
\label{LemE0}

\begin{itemize}
\item[$\left( a\right) $] If $\omega \in \mathcal{R}^{p}$ then $d\omega \in 
\mathcal{R}^{p+1}$.

\item[$\left( b\right) $] If $\varphi \in \mathcal{R}^{p}$ and $\psi \in 
\mathcal{R}^{q}$ then $\varphi \psi \in \mathcal{R}^{p+q}$, assuming that $%
p,q\geq 0$.

\item[$\left( c\right) $] If $\omega \in \mathcal{R}^{p}$, $v_{1},v_{2}\in
\Lambda _{p}$ and $v_{1}=v_{2}\func{mod}I_{p}$ then $\left( \omega
,v_{1}\right) =\left( \omega ,v_{2}\right) $.
\end{itemize}
\end{lemma}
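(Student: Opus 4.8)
The plan is to prove the three parts separately, in each case reducing to a statement about which elementary forms can appear with nonzero coefficient in the output, and then checking that none of the forbidden (non-regular) indices survive. Throughout, recall that a form $\omega$ is regular precisely when $\omega_{i_0...i_p}=0$ for every non-regular index sequence, so the task in each part is to verify that the relevant components vanish on non-regular sequences.

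For part $(a)$ I would take $\omega\in\mathcal{R}^p$ and examine $(d\omega)_{i_0...i_{p+1}}$ on a non-regular sequence $i_0...i_{p+1}$, using the defining formula (\ref{dom}). Non-regularity means $i_{k}=i_{k+1}$ for some $k$. In the alternating sum $\sum_{q=0}^{p+1}(-1)^q\omega_{i_0...\widehat{i_q}...i_{p+1}}$, each term in which $q\notin\{k,k+1\}$ retains the repeated adjacent pair $i_k=i_{k+1}$, hence is a value of $\omega$ on a non-regular sequence and vanishes since $\omega\in\mathcal{R}^p$. The only two surviving terms are $q=k$ and $q=k+1$; but deleting either of the two equal indices produces the same sequence $i_0...\widehat{i_k}...i_{p+1}$, and these two terms carry opposite signs $(-1)^k$ and $(-1)^{k+1}$, so they cancel. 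Thus $(d\omega)_{i_0...i_{p+1}}=0$ on every non-regular sequence, which is exactly $d\omega\in\mathcal{R}^{p+1}$. This is the same cancellation mechanism already used in Lemma \ref{LemdIp}(a), now phrased for forms.

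For part $(b)$ I would use the concatenation formula (\ref{concat}): $(\varphi\psi)_{i_0...i_{p+q}}=\varphi_{i_0...i_p}\psi_{i_p...i_{p+q}}$. Suppose the full index sequence $i_0...i_{p+q}$ is non-regular, so $i_{k}=i_{k+1}$ for some $k$ with $0\le k<p+q$. If $k<p$ then the repeated pair lies inside the index block $i_0...i_p$, making $\varphi$ non-regular there, so the first factor vanishes; if $k\ge p$ then the pair lies inside $i_p...i_{p+q}$, making the second factor vanish by regularity of $\psi$. Either way the product is $0$, so $\varphi\psi\in\mathcal{R}^{p+q}$. For part $(c)$ I would write $v_1-v_2\in I_p$ and use the pairing (\ref{omv}): $(\omega,v_1)-(\omega,v_2)=(\omega,v_1-v_2)=\sum_{i_0...i_p}\omega_{i_0...i_p}(v_1-v_2)^{i_0...i_p}$. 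Since $v_1-v_2$ is a linear combination of non-regular elementary paths, its only nonzero components $(v_1-v_2)^{i_0...i_p}$ occur on non-regular sequences, where $\omega_{i_0...i_p}=0$ because $\omega\in\mathcal{R}^p$; hence every term vanishes and $(\omega,v_1)=(\omega,v_2)$.

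I do not expect any genuine obstacle here, as all three parts are direct bookkeeping arguments dual to facts already established for paths. The only point requiring slight care is the index-range split in part $(b)$: one must confirm that the boundary case $k=p$ (where the repeated pair straddles the shared index $i_p$ common to both factors) still forces vanishing, which it does since $i_p=i_{p+1}$ makes the second factor $\psi_{i_p i_{p+1}...i_{p+q}}$ non-regular.
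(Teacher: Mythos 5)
Your proposal is correct and matches the paper's proof essentially step for step: part $(a)$ uses the same cancellation of the $q=k$ and $q=k+1$ terms (all other terms vanishing because they retain the repeated pair), part $(b)$ uses the same observation that a repeated pair $i_k=i_{k+1}$ lies entirely within one of the two index blocks $i_0...i_p$ or $i_p...i_{p+q}$ and kills the corresponding factor, and part $(c)$ pairs $\omega$ against $v_1-v_2\in I_p$ exactly as in the paper. Your explicit handling of the boundary case $k=p$ in part $(b)$ is a slight refinement of the paper's wording but not a different argument.
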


\begin{proof}
$\left( a\right) $ To prove that $d\omega \in \mathcal{R}^{p+1}$, we must
show that 
\begin{equation}
\left( d\omega \right) _{i_{0}...i_{p+1}}=0  \label{dw=0i}
\end{equation}%
whenever $i_{0}...i_{p+1}$ is non-regular, say $i_{k}=i_{k+1}.$ We have by (%
\ref{dom})%
\begin{equation*}
\left( d\omega \right) _{i_{0}...i_{p+1}}=\dsum\limits_{q=0}^{p+1}\left(
-1\right) ^{q}\omega _{i_{0}...\widehat{i_{q}}...i_{p+1}}.
\end{equation*}%
If $q\neq k,k+1$ then both $i_{k},i_{k+1}$ are present in $\omega _{i_{0}...%
\widehat{i_{q}}...i_{p+1}}$ which makes this term equal to $0$ since $\omega 
$ is regular. In the remaining two cases $q=k$ and $q=k+1$ the term $\omega
_{i_{0}...\widehat{i_{q}}...i_{p+1}}$ has the same values (because the
sequences $i_{0}...\widehat{i_{q}}...i_{p+1}$ are the same) but the signs $%
\left( -1\right) ^{q}$ are opposite. Hence, they cancel out, which proves (%
\ref{dw=0i}).

$\left( b\right) $ By (\ref{concat}), we have%
\begin{equation*}
\left( \varphi \psi \right) _{i_{0}...i_{p+q}}=\varphi _{i_{0}...i_{p}}\psi
_{i_{p}...i_{p+q}}.
\end{equation*}%
If the sequence $i_{0}...i_{p+q}$ is non-regular, say $i_{k}=i_{k+1}$ then
the both indices $i_{k},i_{k+1}$ are present either in the sequence $%
i_{0}...i_{p}$ or in $i_{p}...i_{p+q}$, which implies that one of the terms $%
\varphi _{i_{0}...i_{p}}$,$\ \psi _{i_{p}...i_{p+q}}$ vanishes. It follows
that $\left( \varphi \psi \right) _{i_{0}...i_{p+q}}=0$ and, hence, $\varphi
\psi \in \mathcal{R}^{p+q}.$

$\left( c\right) $ Indeed, $v_{1}-v_{2}\in I_{p}$ is a linear combination of
non-regular paths $e_{i_{0}...i_{p}}.$ Since $\left( \omega
,e_{i_{0}...i_{p}}\right) =0$ for non-regular paths, it follows that $\left(
\omega ,v_{1}-v_{2}\right) =0$ and $\left( \omega ,v_{1}\right) =\left(
\omega ,v_{2}\right) .$
\end{proof}

It follows from \ref{LemE0}$\left( c\right) $ that the pairing $\left(
\omega ,v\right) $ is well defined for $\omega \in \mathcal{R}^{p}$ and $%
v\in \widetilde{\mathcal{R}}_{p}$. In particular, the spaces $\mathcal{R}%
^{p} $ and $\widetilde{\mathcal{R}}_{p}$ are dual. It follows from Lemmas %
\ref{LemdIp} and \ref{LemStokes} that, for all $\omega \in \mathcal{R}^{p}$
and $v\in \widetilde{\mathcal{R}}_{p+1}$,%
\begin{equation}
\left( d\omega ,v\right) =\left( \omega ,\partial v\right) .  \label{Stokes}
\end{equation}%
In particular, the operators $d:\mathcal{R}^{p}\rightarrow \mathcal{R}^{p+1}$
and $\partial :\widetilde{\mathcal{R}}_{p+1}\rightarrow \widetilde{\mathcal{R%
}}_{p}$ are dual. Replacing the regularized paths by their regular
representatives, we obtain that the spaces $\mathcal{R}^{p}$ and $\mathcal{R}%
_{p}$ are dual (which is obvious directly from their definitions, though)
and that the operator $d:\mathcal{R}^{p}\rightarrow \mathcal{R}^{p+1}$ is
dual to the regular operator $\partial :\mathcal{R}_{p+1}\rightarrow 
\mathcal{R}_{p}.$ In particular, we obtain the \emph{regular} cochain
complex of $V$%
\begin{equation*}
0\rightarrow \mathbb{K}\rightarrow \mathcal{R}^{0}\rightarrow ...\rightarrow 
\mathcal{R}^{p}\rightarrow \mathcal{R}^{p+1}\rightarrow ...
\end{equation*}%
that is dual to the regular chain complex (\ref{cR}).

\begin{remark}
\RM\label{RemDGAR}Similarly to Remark \ref{RemDGAL}, the direct sum of
vector spaces%
\begin{equation*}
\mathcal{R}=\bigoplus_{p\geq 0}\mathcal{R}^{p}
\end{equation*}%
with the operations $d$ and concatenation is a graded differential algebra
over $\mathbb{K}$. This algebra does satisfy the minimality condition: the
minimal left $\mathcal{R}^{0}$-module generated by $d\mathcal{R}^{p}$
coincides with $\mathcal{R}^{p+1}$; that is, any element of $\mathcal{R}%
^{p+1}$ is a finite sum of the terms like $fd\omega $ with $f\in \mathcal{R}%
^{0}$ and $\omega \in \mathcal{R}^{p}$.

Recall that there is a standard procedure of construction of the \emph{%
universal }differential graded algebra starting with any associative unital
algebra. If one starts with the algebra of all $\mathbb{K}$-valued functions
on $V$ (that is $\mathcal{R}^{0}$), then one obtains in this way exactly $%
\left( \mathcal{R},d\right) .$ The universality of $\left( \mathcal{R}%
,d\right) $ means that any minimal differential graded algebra over $%
\mathcal{R}^{0}$ is a certain quotient of $\left( \mathcal{R},d\right) $.
The details can be found in \cite{DimMuGauge} and \cite{GrigMuralg}.
\end{remark}

\section{Path complexes}

\label{Sec4}\setcounter{equation}{0}

\subsection{Path complexes, simplicial complexes, digraphs}

\label{Secpath}

\begin{definition}
\RM A \emph{path complex} over a set $V$ is a non-empty collection $P$ of
elementary paths on $V$ with the following property: for any $n\geq 0$, 
\begin{equation}
\text{if\ \ }i_{0}...i_{n}\in P\ \text{then\ also\ the truncated\ paths\ }%
i_{0}...i_{n-1}\ \text{and\ }i_{1}...i_{n}\ \text{belong to }P.  \label{t}
\end{equation}
\end{definition}

The set of $n$-paths from $P$ is denoted by $P_{n}$. Then a path complex $P$
can be regarded as a collection $\left\{ P_{n}\right\} _{n=-1}^{\infty }$
satisfying (\ref{t}). When a path complex $P$ is fixed, all the paths from $%
P $ are called \emph{allowed}, whereas all the elementary paths that are not
in $P$ are called \emph{non-allowed}.

The set $P_{-1}$ consists of a single empty path $e$. The elements of $P_{0}$
(that is, allowed $0$-paths) are called the \emph{vertices} of $P$. Clearly, 
$P_{0}$ is a subset of $V$. By the property (\ref{t}), if $i_{0}...i_{n}\in
P $ then all $i_{k}$ are vertices. Hence, we can (and will) remove from the
set $V$ all non-vertices so that $V=P_{0}.$ The elements of $P_{1}$ (that
is, allowed $1$-paths) are called (directed) \emph{edges} of $P$. By (\ref{t}%
), if $i_{0}...i_{n}\in P$ then all $\,1$-paths $i_{k-1}i_{k}$ are edges.

\begin{example}
\RM By definition, an abstract finite simplicial complex $S$ is a collection
of subsets of a finite vertex set $V$ that satisfies the following property: 
\begin{equation*}
\text{if\ }\sigma \in S\ \text{then any subset of }\sigma \ \text{is also in 
}S.
\end{equation*}%
Let us enumerate the elements of $V$ by distinct reals and identify any
subset $s$ of $V$ with the elementary path that consists of the elements of $%
s$ put in the (strictly) increasing order. Hence, we can regard $S$ as a
collection of elementary paths on $V$. Then the defining property of a
simplex can be restated the following: 
\begin{equation}
\text{if an elementary path belongs to }S\text{ then its any subsequence
also belongs to }S\text{.}  \label{s}
\end{equation}%
Consequently, the family $S$ satisfies the property (\ref{t}) so that $S$ is
a path complex. The allowed $n$-paths in $S$ are exactly the $n$-simplexes.

For example, a simplicial complex on Fig. \ref{pic32} has the following
allowed paths (=simplexes):

\begin{enumerate}
\item[$0$-paths:] $0,...,8$

\item[$1$-paths:] $01,02,03,04,05,06,07,08,12,34,35,45,67,68,78$

\item[$2$-paths:] $012,678,034,035,045,678$

\item[$3$-paths:] $0345$ \FRAME{ftbhFU}{6.365in}{1.996in}{0pt}{\Qcb{A
simplicial complex}}{\Qlb{pic32}}{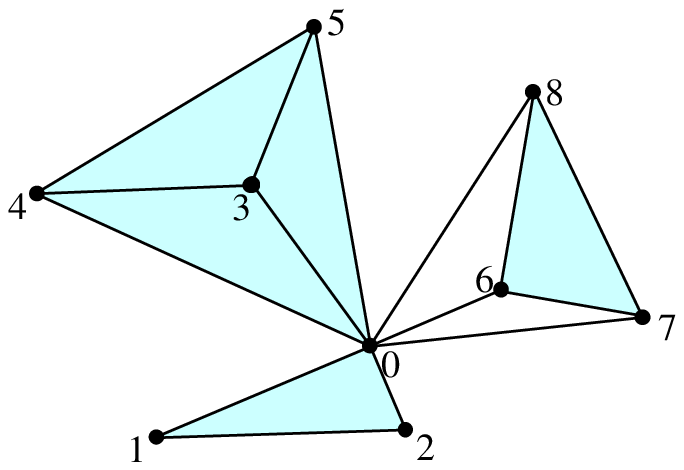}{\special{language "Scientific
Word";type "GRAPHIC";maintain-aspect-ratio TRUE;display "USEDEF";valid_file
"F";width 6.365in;height 1.996in;depth 0pt;original-width
6.3027in;original-height 1.9571in;cropleft "0";croptop "1";cropright
"1";cropbottom "0";filename 'pic32.eps';file-properties "XNPEU";}}
\end{enumerate}
\end{example}

\begin{example}
\RM Let $G=\left( V,E\right) $ be a finite digraph, where $V$ is a finite
set of vertices and $E$ is the set of directed edges, that is, $E\subset
V\times V$. Equivalently, one can say that a digraph is a set $V$ endowed
with a binary relation $E$. The fact that $\left( i,j\right) \in E$ will
also be denoted by $i\rightarrow j$. \ 

An elementary $n$-path $i_{0}...i_{n}$ on $V$ is called allowed if $%
i_{k-1}\rightarrow i_{k}$ for any $k=1,...,n$. Denote by $P_{n}=P_{n}\left(
G\right) $ the set of all allowed $n$-paths. In particular, we have $P_{0}=V$
and $P_{1}=E$. Clearly, the family $\left\{ P_{n}\right\} $ of all allowed
paths satisfies the condition (\ref{t}) so that $\left\{ P_{n}\right\} $ is
a path complex. This path complex is naturally associated with the digraph $%
G $ and will be denoted by $P\left( G\right) $.

For example, a digraph on Fig. \ref{pic33} has the following path complex:

\begin{enumerate}
\item[$0$-paths:] $0,...,8$

\item[$1$-paths:] $01,02,03,04,05,06,07,08,12,34,35,45,67,68,78$

\item[$2$-paths:] $012,678,034,035,045,067,068,678$

\item[$3$-paths:] $0345,0678$\FRAME{ftbhFU}{6.365in}{1.996in}{0pt}{\Qcb{A
digraph}}{\Qlb{pic33}}{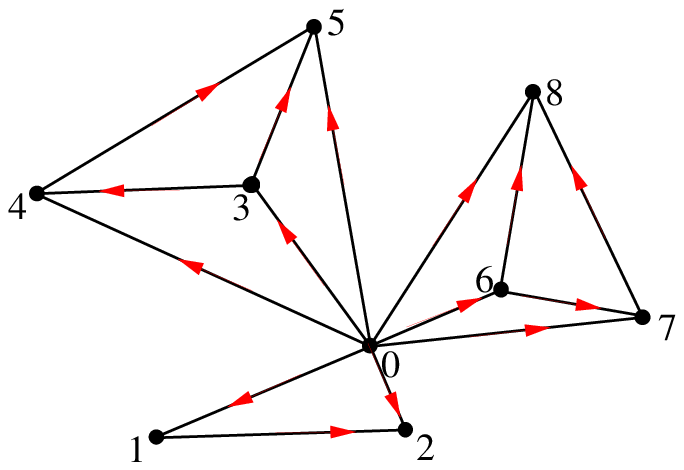}{\special{language "Scientific Word";type
"GRAPHIC";maintain-aspect-ratio TRUE;display "USEDEF";valid_file "F";width
6.365in;height 1.996in;depth 0pt;original-width 6.3027in;original-height
1.9571in;cropleft "0";croptop "1";cropright "1";cropbottom "0";filename
'pic33.eps';file-properties "XNPEU";}}
\end{enumerate}
\end{example}

The path complexes of digraphs are the central objects of this paper.
Although most of the results are proved for arbitrary path complexes, we
always have in mind possible applications to digraphs. On the other hand,
the notion of a path complex provides an alternative approach to the
classical results about simplicial complexes.

It is easy to see that a path complex arises from a digraph if and only if
it satisfies the following additional condition: if in a path $i_{0}...i_{n}$
all pairs $i_{k-1}i_{k}$ are allowed then the whole path $i_{0}...i_{n}$ is
allowed.

Let us describe explicitly those path complexes that arise from simplicial
complexes.

\begin{definition}
\RM We say that a path complex $P$ is \emph{perfect}, if any subsequence of
any allowed elementary path of $P$ is also an allowed path.
\end{definition}

\begin{definition}
\RM We say that a path complex $P$ is \emph{monotone}, if there is an
injective real-valued function on the vertex set of $P$ that is strictly
monotone increasing along any path from $P$.
\end{definition}

\begin{proposition}
\label{LemPM}\label{PropPM}A path complex $P$ is the path complex of a
simplicial complex if and only if it is perfect and monotone.
\end{proposition}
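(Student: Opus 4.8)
The plan is to prove both directions of the equivalence separately, treating perfection and monotonicity as the two independent ingredients that together characterize simplicial complexes.

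\textbf{The forward direction} (simplicial complex $\Rightarrow$ perfect and monotone) is essentially bookkeeping. Suppose $P$ arises from a simplicial complex $S$ via the construction in the example, where each simplex is written as the increasing sequence of its vertices (using the chosen enumeration of $V$ by distinct reals). Monotonicity is then immediate: the enumerating function itself is strictly increasing along every allowed path, since every allowed path is an increasing sequence by construction. For perfection, I would invoke property~(\ref{s}): any subsequence of an allowed path $i_0\dots i_n$ corresponds to a subset of the simplex $\{i_0,\dots,i_n\}$, which is again a simplex and hence an allowed path. So any subsequence of an allowed path is allowed, which is exactly perfection.

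\textbf{The converse} (perfect and monotone $\Rightarrow$ simplicial) is where the real content lies. Given a perfect, monotone path complex $P$ with monotone function $f$, I would define $S$ to be the collection of all \emph{subsets} $\sigma\subseteq V$ such that, when the elements of $\sigma$ are arranged in increasing order of $f$, the resulting elementary path is allowed in $P$. The plan is then to verify three things. First, $S$ is a well-defined simplicial complex: if $\sigma\in S$ and $\tau\subseteq\sigma$, then the increasing arrangement of $\tau$ is a subsequence of the increasing arrangement of $\sigma$, so by perfection $\tau$'s path is allowed, giving $\tau\in S$. Second, the path complex associated with $S$ equals $P$. Here I would use monotonicity crucially: every allowed path of $P$ is strictly increasing under $f$ (if it had a repeat or a decrease, $f$ could not be strictly increasing along it), so each allowed path is \emph{already} in the canonical increasing form and corresponds to the simplex given by its underlying vertex set. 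Conversely, the path-complex construction on $S$ produces exactly the increasing arrangements of members of $S$, which by definition of $S$ are precisely the allowed paths of $P$.

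\textbf{The main obstacle} I anticipate is the matching in the second bullet above: one must check that the passage ``path $\mapsto$ underlying set $\mapsto$ increasing arrangement'' is the identity on allowed paths of $P$, and this is exactly where monotonicity is indispensable. Without monotonicity an allowed path could repeat a vertex or traverse vertices out of $f$-order, and then it would not coincide with the canonical increasing representative of any simplex, so $P$ could fail to be simplicial even if perfect. I would therefore state explicitly as a preliminary observation that monotonicity forces every allowed path to be a strictly $f$-increasing (in particular regular, repetition-free) sequence, and that distinct allowed paths with the same vertex set cannot exist; this observation is what makes the set-valued construction of $S$ consistent and is the linchpin of the whole argument. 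The remaining verifications are then routine translations between the subsequence condition~(\ref{s}) for simplices and the truncation condition~(\ref{t}) for path complexes.
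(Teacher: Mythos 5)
Your proof is correct and follows essentially the same route as the paper: the paper builds $S$ by sending each allowed path $i_0\dots i_n$ (whose vertices are distinct by monotonicity) to the simplex $[i_0,\dots,i_n]$, while you define $S$ by which subsets have allowed increasing arrangements --- the same collection, since monotonicity makes every allowed path the unique $f$-increasing arrangement of its vertex set. Your explicit preliminary observation that monotonicity forces every allowed path to be strictly $f$-increasing, so that the passage path $\mapsto$ vertex set $\mapsto$ increasing arrangement is the identity, is precisely the (largely implicit) linchpin of the paper's own argument, so nothing is missing.
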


\begin{proof}
The path complex of a simplicial complex is both perfect and monotone by
definition. Let us prove the converse. By the monotonicity condition, the
vertices in any path $i_{0}...i_{n}\in P$ are all distinct. Hence, with any
path $i_{0}...i_{n}\in P$ we can associate a simplex $[i_{0},...,i_{n}]$;
denote by $S$ the collection of all such simplexes. Then the perfectness of $%
P$ implies that $S$ is a simplicial complex. Ordering the vertex set of $S$
using the monotone function from the monotonicity condition, we see that
each simplex from $S$ gives back a path from $P$.
\end{proof}

Observe that the path complex of a digraph $G=(V,E)$ is perfect if and only
if the edge relation $\rightarrow $ is transitive, that is, if

\begin{equation}
\qquad i\rightarrow j\rightarrow k\Rightarrow i\rightarrow k.  \label{ijk}
\end{equation}%
In particular, this condition holds for \emph{posets} (=partially ordered
sets). Indeed, by definition a poset is a digraph $G$ where the edge
relation \U{2192} is reflexive, antisymmetric and transitive. Hence, the
path complex of a poset is perfect, but satisfies also the following
additional properties: all $1$-paths $ii$ are allowed, while 2-paths $iji$
with $i\neq j$ are non-allowed.

It is easy to see that the path complex of a digraph $G$ is monotone if
there is a function $\Phi :V\rightarrow \mathbb{R}$ such that%
\begin{equation*}
i\rightarrow j\Rightarrow \Phi \left( i\right) <\Phi \left( j\right) .
\end{equation*}%
For example, the path complex of the digraph on Fig. \ref{pic33} is both
monotone and perfect.

The path complex of a poset is not monotone as it has allowed $1$-paths $ii$%
. However, if we reduce the set of edges on a poset by removing all loops $%
i\rightarrow i$, then the resulting digraph is perfect and monotone.

\subsection{Allowed paths}

Given an arbitrary path complex $P=\left\{ P_{n}\right\} _{n=0}^{\infty }$
with a finite vertex set $V$, consider for any integer $n\geq -1$ the $%
\mathbb{K}$-linear space $\mathcal{A}_{n}$ that is spanned by all the
elementary $n$-paths from $P$, that is%
\begin{equation*}
\mathcal{A}_{n}=\mathcal{A}_{n}\left( P\right) =\left\{
\sum_{i_{0},...,i_{n}\in
V}v^{i_{0}...i_{n}}e_{i_{0}...i_{n}}:i_{0}...i_{n}\in P_{n},\
v^{i_{0}...i_{n}}\in \mathbb{K}\right\} .
\end{equation*}%
By construction, $\mathcal{A}_{n}$ is a subspace of the space $\Lambda _{n}$
defined in Section \ref{Secpchain}. For example, $\mathcal{A}_{0}$ is
spanned by all vertexes of $P$ so that $\mathcal{A}_{0}=\Lambda _{0}$. The
space $\mathcal{A}_{1}$ is spanned by all edges of $P$ and can be smaller
than $\Lambda _{1}.$ It is clear that $\mathcal{A}_{-1}\cong \mathbb{K}$.
Set also $\mathcal{A}_{-2}=\left\{ 0\right\} .$

\begin{definition}
\RM The elements of $\mathcal{A}_{n}$ are called \emph{allowed} $n$-paths.
\end{definition}

We would like to restrict the boundary operator $\partial $ on the spaces $%
\Lambda _{n}$ to the spaces $\mathcal{A}_{n}$. For some path complexes it
can happen that 
\begin{equation}
\partial \mathcal{A}_{n}\subset \mathcal{A}_{n-1},  \label{AA}
\end{equation}%
so that the restriction is straightforward. If it is not the case then an
additional construction is needed as will be explained below. Let us
describe first the setting when the inclusion (\ref{AA}) takes place.
Namely, let us show that for a perfect path complex the inclusion (\ref{AA})
takes places. For $n\leq 1$ this is obvious as $\mathcal{A}_{n-1}=\Lambda
_{n-1}.$ Assuming $n\geq 2$, let us show that if $e_{i_{0}...i_{n}}$ is
allowed then $\partial e_{i_{0}...i_{n}}$ is also allowed. Indeed, we have
by (\ref{dev})%
\begin{equation}
\partial e_{i_{0}...i_{n}}=\sum_{q=0}^{n}\left( -1\right) ^{q}e_{i_{0}...%
\widehat{i_{q}}...i_{n}},  \label{dei}
\end{equation}%
where all the terms $e_{i_{0}...\widehat{i_{q}}...i_{n}}$ in the right hand
side are allowed because of the perfectness. Hence, (\ref{AA}) follows.
Consequently, we obtain a chain complex%
\begin{equation}
0\leftarrow \mathbb{K}\leftarrow \mathcal{A}_{0}\leftarrow ...\leftarrow 
\mathcal{A}_{n-1}\leftarrow \mathcal{A}_{n}\leftarrow ...  \label{cAe}
\end{equation}%
Its homology groups are denoted by $\widetilde{H}_{\bullet }\left( P\right) $
are referred to as the \emph{reduced path homologies} of $P$. Consider also
the truncated complex 
\begin{equation}
0\leftarrow \mathcal{A}_{0}\leftarrow ...\leftarrow \mathcal{A}%
_{n-1}\leftarrow \mathcal{A}_{n}\leftarrow ...  \label{cA}
\end{equation}%
whose homology groups are denoted by $H_{\bullet }\left( P\right) $ and are
referred to as the \emph{path homologies} of $P$ (in the latter case the
operator $\partial $ on $\mathcal{A}_{0}$ is set to be zero).

If $P$ is the path complex of a simplicial complex $S$, then the boundary
operator (\ref{dei}) matches the classical boundary operator on simplexes:%
\begin{equation*}
\partial \left[ i_{0},...,i_{n}\right] =\sum_{q=0}^{n}\left( -1\right)
^{q}[i_{0},...,\widehat{i_{q}},...i_{n}].
\end{equation*}%
In this case, (\ref{cA}) coincides with the classical chain complex of a
simplicial complex, and the path homologies $H_{\bullet }\left( P\right) $
are identical to the simplicial homologies $H_{\bullet }\left( S\right) $.

If $P$ is a path complex of a digraph $G$, satisfying the transitivity
condition (\ref{ijk}) then $P$ is perfect and, hence, its homology groups
are defined as above. In this case we denote them also by $H_{\bullet
}\left( G\right) .$

\begin{example}
\RM Let $S$ be a finite simplicial complex. Consider the digraph $G$ whose
set of vertices is $S$, while the edges are defined as follows: if $s,t$ are
simplexes from $\ S$, then 
\begin{equation*}
s\rightarrow t\Leftrightarrow s\supset t\ \text{and\ \ }s\neq t.
\end{equation*}%
Clearly, the graph $G$ satisfies the transitivity condition (\ref{ijk}) so
that the path complex $P=P\left( G\right) $ of the digraph $G$ is perfect.
Let us prove that $H_{n}\left( G\right) \cong H_{n}\left( S\right) $ for all 
$n\geq 0.$

Let us first show that the path complex $P$ is monotone. For that, enumerate
the vertices of $S$ by numbers $1,2,2^{2},2^{3},...$ and assign to each
simplicial complex $s\in S$ (that is, to a vertex of $G$) the sum of the
numbers of all its vertices. The resulting function on $G$ is injective and
strictly monotone decreasing along each edge and, hence, along any allowed
path. By Lemma \ref{LemPM} the path complex $P$ arises from a simplicial
complex.

It is easy to see that the latter simplicial complex is nothing else but the
barycentric subdivision $B\left( S\right) $ of $S$ (cf. Fig. \ref{pic34}).%
\FRAME{ftbphFU}{5.7891in}{1.8014in}{0pt}{\Qcb{Simplicial complex $S$ and the
digraph $G$}}{\Qlb{pic34}}{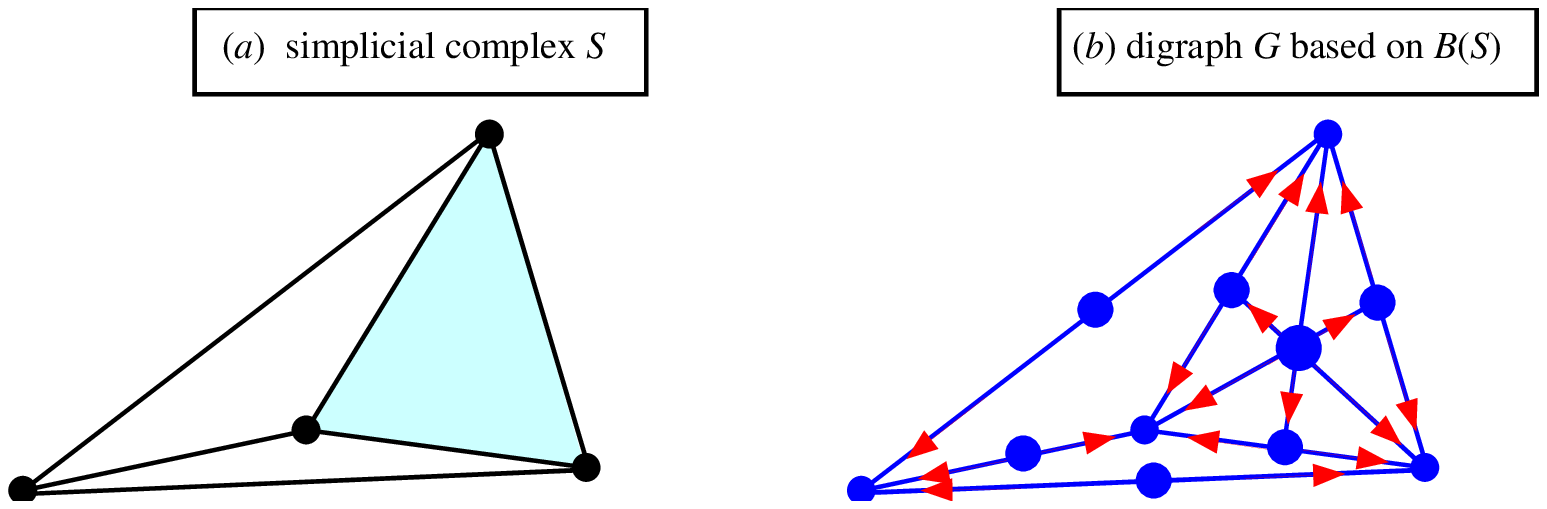}{\special{language "Scientific
Word";type "GRAPHIC";maintain-aspect-ratio TRUE;display "USEDEF";valid_file
"F";width 5.7891in;height 1.8014in;depth 0pt;original-width
6.4013in;original-height 1.97in;cropleft "0";croptop "1";cropright
"1";cropbottom "0";filename 'pic34.eps';file-properties "XNPEU";}}

Indeed, any allowed path $s_{0}...s_{n}$ on the digraph $G$ consists of a
sequence of simplexes from $S$ such that $s_{k}$ is a face of $s_{k-1}$. If
each simplex $s_{k}$ in this path is replaced by its barycenter $b\left(
s_{k}\right) $ (assuming that the simplicial complex $S$ is geometrically
realized in a higher dimensional space $\mathbb{R}^{N}$) then the sequence $%
\left\{ b\left( s_{k}\right) \right\} _{k=0}^{n}$ forms a $n$-simplex of the
barycentric subdivision $B\left( S\right) $ of $S$. Converse is obviously
also true. Hence, the path complex $P$ of the digraph $G$ coincides with the
path complex of the simplicial complex $B\left( S\right) .$ It follows that%
\begin{equation*}
H_{\bullet }\left( G\right) =H_{\bullet }\left( P\right) =H_{\bullet }\left(
B\left( S\right) \right) .
\end{equation*}%
The proof is finished by citing a classical result that $H_{\bullet }\left(
B\left( S\right) \right) \cong H_{\bullet }\left( S\right) .$
\end{example}

\subsection{$\partial $-invariant paths}

\label{SecOmega}Now consider a general case when $\partial \mathcal{A}_{n}$
does not have to be a subspace of $\mathcal{A}_{n-1}.$ Consider first a
simple example.

\begin{example}
\RM Consider the digraph $G=\left( V,E\right) $ as on the diagram%
\begin{equation*}
\begin{array}{c}
_{_{\nearrow }}\overset{1}{\bullet }_{_{\searrow }} \\ 
^{0}\bullet \ \ \text{\ \ \ \ \ }\ \bullet ^{2}%
\end{array}%
\end{equation*}%
that is $V=\left\{ 0,1,2\right\} $ and $E=\left\{ 01,12\right\} .$ Then the $%
2$-path $e_{012}$ is allowed, while%
\begin{equation*}
\partial e_{012}=e_{12}-e_{02}+e_{01}
\end{equation*}%
is non-allowed because $e_{02}$ is non-allowed.
\end{example}

For any $n\geq -1$ consider the following subspaces of $\mathcal{A}_{n}$:%
\begin{equation}
\Omega _{n}=\Omega _{n}\left( P\right) =\left\{ v\in \mathcal{A}%
_{n}:\partial v\in \mathcal{A}_{n-1}\right\} .  \label{Omdef}
\end{equation}%
For example, we have:

\begin{itemize}
\item $\Omega _{-1}=\mathcal{A}_{-1}=\mathbb{K}$;

\item $\Omega _{0}=\mathcal{A}_{0}$ is the space of linear combinations of
all the vertices of $P$;

\item $\Omega _{1}=\mathcal{A}_{1}$ is the space of linear combinations of
all the edges of $P$ (indeed, $\partial e_{ij}=e_{j}-e_{i}$ is always in $%
\mathcal{A}_{0}$).
\end{itemize}

The spaces $\Omega _{n}$ with $n\geq 2$ can actually be smaller than $%
\mathcal{A}_{n}$ as will be seen from many examples in the subsequent
sections. We claim that always%
\begin{equation*}
\partial \Omega _{n}\subset \Omega _{n-1}.
\end{equation*}%
Indeed, if $v\in \Omega _{n}$ then $\partial v\in \mathcal{A}_{n-1}$ and $%
\partial \left( \partial v\right) =0\in \mathcal{A}_{n-2}$ whence it follows
that $\partial v\in \Omega _{n-1}$, which was to be proved.

\begin{definition}
\RM The elements of $\Omega _{n}$ are called $\partial $-\emph{invariant} $n$%
-paths.
\end{definition}

Thus, we obtain the chain complex of $\partial $-invariant paths:%
\begin{equation}
0\leftarrow \mathbb{K}\leftarrow \Omega _{0}\leftarrow ...\leftarrow \Omega
_{n-1}\leftarrow \Omega _{n}\leftarrow \Omega _{n+1}\leftarrow ...\ 
\label{cOmr}
\end{equation}%
where all arrows are given by $\partial $. We consider also its \emph{%
truncated} version%
\begin{equation}
0\leftarrow \Omega _{0}\leftarrow ...\leftarrow \Omega _{n-1}\leftarrow
\Omega _{n}\leftarrow \Omega _{n+1}\leftarrow ...  \label{cOm}
\end{equation}%
where the definition of the boundary operator $\partial $ on $\Omega _{0}$
is modified by setting $\partial \equiv 0.$ We refer to this modification of 
$\partial $ as a \emph{truncated} boundary operator. Note that this
modification does not affect $\partial $ on $\Omega _{n}$ with $n\geq 1.$

\label{Secreg}There is a different kind of modification of the above
procedure as follows.

\begin{definition}
\RM A path complex $P$ is called \emph{regular} if it contains no $1$-path
of the form $ii.$
\end{definition}

Equivalently, $P$ is regular if all the paths $i_{0}...i_{n}\in P$ are
regular\footnote{%
Recall that a path $i_{0}...i_{n}$ is called regular if $i_{k-1}\neq i_{k}$
for all $k=1,...,n.$%
\par
{}}. For example, the path complex of a simplicial complex is always regular
as all the vertices in any allowed elementary path are distinct. The path
complex of a digraph is regular if and only if the digraph is loopless, that
is, if the $1$-paths $ii$ are not edges.

For a regular path complex the above construction of the spaces $\Omega _{n}$
allows the following variation. As the space $\mathcal{A}_{n}$ of allowed $n$%
-path is in this case a subspace of the space $\mathcal{R}_{n}$ of regular $%
n $-paths, we can replace in (\ref{Omdef}) a non-regular boundary operator $%
\partial $ on $\Lambda _{n}$ by a regular boundary operator on $\mathcal{R}%
_{n}$ as described in Section \ref{SecRegpath}. The resulting space $\Omega
_{n}$ will be referred to as a \emph{regular} space of $\partial $-invariant
paths. Hence, if the path complex $P$ is regular then we can consider also
regular versions of the chain complexes (\ref{cOmr}) and (\ref{cOm}).

The both chain complexes (\ref{cOmr}) and (\ref{cOm}) (regular and
non-regular versions) are denoted shortly by $\Omega _{\bullet }\left(
P\right) $ and are referred to as the chain complex of the path complex $P$.
In order not to overload the notation, we do not reflect the variations in
definition in the notation $\Omega _{\bullet }\left( P\right) $. However,
whenever using it, one should specify which of four possible versions
(regular versus non-regular and truncated versus full) is being considered.

\begin{definition}
\RM The homology groups of the truncated complex (\ref{cOm}) are referred to
as the \emph{path homology groups} of the path complex $P$ and are denoted
by $H_{n}\left( P\right) ,n\geq 0$. The homology groups of the complex (\ref%
{cOmr}) are called the \emph{reduced path homology groups} of $P$ and are
denoted by $\widetilde{H}_{n}\left( P\right) ,n\geq -1$.
\end{definition}

Note that for a regular path complex $P$ the both homologies $H_{\bullet
}\left( P\right) $ and $\widetilde{H}_{\bullet }\left( P\right) $ admit
regular and non-regular versions.

Hence, by definition we have for any $n\geq 0$%
\begin{equation}
H_{n}\left( P\right) =H_{n}\left( \Omega _{\bullet }\left( P\right) \right)
=\ker \partial |_{\Omega _{n}}\left/ \func{Im}\partial |_{\Omega
_{n+1}}\right. ,  \label{Hndef}
\end{equation}%
so that $H_{n}\left( P\right) $ are linear spaces over $\mathbb{K}$. Recall
that the paths of $\ker \partial |_{\Omega _{n}}$ are called \emph{closed},
and the paths from $\func{Im}\partial |_{\Omega _{n+1}}$ -- exact.

The reduced homologies $\widetilde{H}_{n}\left( P\right) $ are defined
similarly for all $n\geq -1$. We clearly have%
\begin{equation*}
\widetilde{H}_{n}\left( P\right) =H_{n}\left( P\right) \ \ \text{for all }%
n\geq 1
\end{equation*}%
and $\widetilde{H}_{-1}\left( P\right) =\left\{ 0\right\} $. To describe $%
\widetilde{H}_{0}\left( P\right) $ observe that the pairing $\left(
1,v\right) $ between $0$-form $1$ (=the constant function on $V$ that is
equal to $1_{\mathbb{K}}$ at all vertices) and $0$-path $v$ is extended to $%
\left( 1,h\right) $ for any homology class $h\in H_{0}\left( P\right) $
because for any exact path $v=\partial u$ we have $\left( 1,v\right) =\left(
1,\partial u\right) =\left( d1,u\right) =0.$ Then we have%
\begin{equation}
\widetilde{H}_{0}\left( P\right) =\left\{ h\in H_{0}\left( P\right) :\left(
1,h\right) =0\right\}  \label{H0v}
\end{equation}%
and, in particular,%
\begin{equation*}
\dim \widetilde{H}_{0}\left( P\right) =\dim H_{0}\left( P\right) -1.
\end{equation*}

If the path complex $P$ is perfect then we obtain $\Omega _{n}\left(
P\right) =\mathcal{A}_{n}\left( P\right) $ for all $n$ (in this case there
is no difference between regular and non-regular versions). Hence, in this
case the chain complex (\ref{cOmr}) is identical to (\ref{cAe}), and (\ref%
{cOm}) is identical to (\ref{cA}).

If $P\left( G\right) $ is the path complex of a digraph $G$ then we use the
notation 
\begin{equation*}
\Omega _{n}\left( G\right) :=\Omega _{n}\left( P\left( G\right) \right) .
\end{equation*}%
The corresponding homology groups will be denoted by $H_{n}\left( G\right) ,%
\widetilde{H}\left( G\right) $ and are referred to as the\emph{\ path
homologies of the digraph} $G$.

Since the chain complex (\ref{cOm}) is finite dimensional, the following
identity always takes places:%
\begin{equation}
\dim H_{n}=\dim \Omega _{n}-\dim \partial \Omega _{n}-\dim \partial \Omega
_{n+1}  \label{dimHn}
\end{equation}%
as it follows from (\ref{Hndef}) and the rank-nullity theorem (a similar
identity holds also for reduced homologies). The Euler characteristic of the
path complex is defined by%
\begin{equation}
\chi \left( P\right) =\sum_{p=0}^{n}\left( -1\right) ^{p}\dim H_{p}\left(
P\right)  \label{Edef}
\end{equation}%
provided $n$ is so big that 
\begin{equation}
\dim H_{p}\left( P\right) =0\ \text{for\ all\ }p>n.  \label{dimH=0}
\end{equation}%
There are examples showing that the condition (\ref{dimH=0}) is not always
fulfilled. In the latter case $\chi \left( P\right) $ is not defined. For a
regular path complex $P$ there is a regular and non-regular versions of $%
\chi \left( P\right) $ that do not have to match.

If $\dim \Omega _{p}=0$ for $p>n$, then it follows from (\ref{dimHn}) that 
\begin{equation}
\chi \left( P\right) =\sum_{p=0}^{n}\left( -1\right) ^{p}\dim \Omega
_{p}\left( P\right) .  \label{EulerP0}
\end{equation}%
The definition (\ref{Edef}) has an advantage that it may work even when $%
\dim \Omega _{p}>0$ for all $p.$\label{rem: generating function of dimOm_n}

Sometimes it is useful to be able to determine the homology groups $H_{n}$
directly via the spaces $\mathcal{A}_{n}$, without $\Omega _{n}$, as in the
next statement.

\begin{proposition}
We have 
\begin{equation}
H_{n}=\left. \ker \partial |_{\mathcal{A}_{n}}\right/ \left( \mathcal{A}%
_{n}\cap \partial \mathcal{A}_{n+1}\right)  \label{HnAn}
\end{equation}%
and%
\begin{equation}
\dim H_{n}=\dim \mathcal{A}_{n}-\dim \partial \mathcal{A}_{n}-\dim \left( 
\mathcal{A}_{n}\cap \partial \mathcal{A}_{n+1}\right) .  \label{dimHnAn}
\end{equation}
\end{proposition}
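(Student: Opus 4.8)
The plan is to rewrite both the numerator and the denominator of the homology quotient
\[
H_n = \ker \partial|_{\Omega_n} \left/ \func{Im} \partial|_{\Omega_{n+1}} \right.
\]
purely in terms of the larger spaces $\mathcal{A}_n$, using only the defining relation $\Omega_n = \{v \in \mathcal{A}_n : \partial v \in \mathcal{A}_{n-1}\}$ from \eqref{Omdef} together with the identity $\partial^2 = 0$. First I would handle the cycles, showing that passing from $\Omega_n$ to $\mathcal{A}_n$ does not change the kernel: $\ker \partial|_{\Omega_n} = \ker \partial|_{\mathcal{A}_n}$. The inclusion $\subseteq$ is immediate from $\Omega_n \subseteq \mathcal{A}_n$; for $\supseteq$, if $v \in \mathcal{A}_n$ satisfies $\partial v = 0$, then in particular $\partial v = 0 \in \mathcal{A}_{n-1}$, so $v \in \Omega_n$ by definition of \eqref{Omdef}. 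This already replaces the numerator by $\ker \partial|_{\mathcal{A}_n}$.

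The key step is to identify the boundaries: I claim $\func{Im}\partial|_{\Omega_{n+1}} = \partial \Omega_{n+1} = \mathcal{A}_n \cap \partial \mathcal{A}_{n+1}$. For $\subseteq$, take $v \in \Omega_{n+1}$; then $\partial v \in \mathcal{A}_n$ by the definition of $\Omega_{n+1}$, and $\partial v \in \partial \mathcal{A}_{n+1}$ since $\Omega_{n+1} \subseteq \mathcal{A}_{n+1}$, so $\partial v$ lies in the intersection. For $\supseteq$, take $w \in \mathcal{A}_n \cap \partial \mathcal{A}_{n+1}$ and write $w = \partial u$ with $u \in \mathcal{A}_{n+1}$; since $\partial u = w \in \mathcal{A}_n$, the element $u$ satisfies exactly the membership condition of $\Omega_{n+1}$, hence $w = \partial u \in \partial \Omega_{n+1}$. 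Combined with the first step, and noting that $\partial^2 = 0$ guarantees $\mathcal{A}_n \cap \partial \mathcal{A}_{n+1} \subseteq \ker \partial|_{\mathcal{A}_n}$ so that the quotient is well defined, this yields \eqref{HnAn}. I expect this identity to be the main obstacle, though it is really just a careful unwinding of the definition of $\Omega$; everything hinges on the observation that membership $u \in \Omega_{n+1}$ is precisely the condition $\partial u \in \mathcal{A}_n$.

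Finally, the dimension formula \eqref{dimHnAn} follows by dimension counting. Applying the rank--nullity theorem to $\partial|_{\mathcal{A}_n} : \mathcal{A}_n \to \Lambda_{n-1}$ gives $\dim \ker \partial|_{\mathcal{A}_n} = \dim \mathcal{A}_n - \dim \partial \mathcal{A}_n$, and then from \eqref{HnAn} we subtract the dimension of the subspace $\mathcal{A}_n \cap \partial \mathcal{A}_{n+1}$ to obtain
\[
\dim H_n = \dim \mathcal{A}_n - \dim \partial \mathcal{A}_n - \dim \left( \mathcal{A}_n \cap \partial \mathcal{A}_{n+1} \right),
\]
which is \eqref{dimHnAn}. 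The same argument applies uniformly to the truncated complex at $n=0$, where $\partial \equiv 0$ on $\mathcal{A}_0$ forces $\ker \partial|_{\mathcal{A}_0} = \mathcal{A}_0$ and $\partial \mathcal{A}_0 = \{0\}$.
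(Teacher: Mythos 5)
Your proposal is correct and follows essentially the same route as the paper's own proof: the same observation that $\ker \partial|_{\Omega_n}=\ker \partial|_{\mathcal{A}_n}$, the same unwinding of the definition (\ref{Omdef}) to get $\partial \Omega_{n+1}=\mathcal{A}_n\cap \partial \mathcal{A}_{n+1}$, and the same rank--nullity count for (\ref{dimHnAn}). Your closing remark on the truncated convention at $n=0$ (so that $\partial\mathcal{A}_0=\{0\}$) is a harmless clarification that the paper leaves implicit.
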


\begin{proof}
Observe first that%
\begin{equation*}
\ker \partial |_{\mathcal{A}_{n}}=\ker \partial |_{\Omega _{n}}
\end{equation*}%
because $v\in \mathcal{A}_{n}$ and $\partial v=0$ imply $v\in \Omega _{n}$.
Next, it follows from the definition of $\Omega _{n+1}$ that 
\begin{equation*}
u\in \partial \Omega _{n+1}\Leftrightarrow u\in \mathcal{A}_{n}\text{\ \
and\ \ }u=\partial v\text{ for some }v\in \mathcal{A}_{n+1},
\end{equation*}%
which is equivalent to 
\begin{equation*}
\partial \Omega _{n+1}=\mathcal{A}_{n}\cap \partial \mathcal{A}_{n+1}.
\end{equation*}%
Then (\ref{HnAn}) follows from (\ref{Hndef}). Finally, (\ref{dimHnAn})
follows from (\ref{HnAn}) and the rank-nullity theorem.
\end{proof}

Let us present a simple example showing a distinction between regular and
non-regular versions of $\Omega _{\bullet }\left( P\right) $ and $H_{\bullet
}\left( P\right) .$ For the rest of this section we use the superscript $%
^{reg}$ to refer to all regular notions. For example, $\partial ^{reg}$ will
denote the regular boundary operator on the spaces $\mathcal{R}_{n}$, and $%
\Omega _{n}^{reg}$ will denote the space of regular $\partial $-invariant
paths, that is,%
\begin{equation*}
\Omega _{n}^{reg}=\left\{ v\in \mathcal{A}_{n}:\partial ^{reg}v\in \mathcal{A%
}_{n-1}\right\} .
\end{equation*}

\begin{example}
\RM Consider the digraph $^{0}\bullet \longleftrightarrow \bullet ^{1}$ with 
$V=\left\{ 0,1\right\} $ and $E=\left\{ 01,10\right\} ,$ and let $P$ be its
path complex, that is,%
\begin{equation}
P=\left\{ 0,1,01,10,010,101,...\right\} .  \label{P01}
\end{equation}%
Clearly, $P$ is regular. The spaces $\left\{ \mathcal{A}_{n}\right\} $ of
allowed paths are as follows:%
\begin{eqnarray*}
\mathcal{A}_{0} &=&\limfunc{span}\left\{ e_{0},e_{1}\right\} \\
\mathcal{A}_{1} &=&\limfunc{span}\left\{ e_{01},e_{10}\right\} \\
\mathcal{A}_{2} &=&\limfunc{span}\left\{ e_{010},e_{101}\right\} \\
\mathcal{A}_{3} &=&\limfunc{span}\left\{ e_{0101},e_{1010}\right\}
\end{eqnarray*}%
etc. Then we have $\Omega _{0}=\mathcal{A}_{0}$, $\Omega _{1}=\mathcal{A}%
_{1} $ whereas all non-regular spaces $\Omega _{n}$ with $n\geq 2$ are
trivial. Indeed, consider, for example,%
\begin{equation*}
\Omega _{2}=\left\{ v\in \mathcal{A}_{2}:\partial v\in \mathcal{A}%
_{1}\right\} .
\end{equation*}%
We have%
\begin{equation}
\begin{array}{l}
\partial e_{010}=e_{10}-e_{00}+e_{01}\ , \\ 
\partial e_{101}=e_{01}-e_{11}+e_{10}\ .%
\end{array}
\label{e101}
\end{equation}%
Since $e_{00}$ and $e_{11}$ are non-allowed, the only allowed linear
combination of $e_{010}$ and $e_{101}$ is zero. Hence, $\Omega _{2}=\left\{
0\right\} .$ In the same way also $\Omega _{n}=\left\{ 0\right\} $ for all $%
n\geq 2$.

It is easy to see that 
\begin{equation*}
\partial \Omega _{1}=\limfunc{span}\left\{ e_{1}-e_{0}\right\} ,
\end{equation*}%
while $\partial \Omega _{n}=\left\{ 0\right\} $ for all $n\neq 1.$ Hence, we
obtain by (\ref{dimHn})%
\begin{eqnarray*}
\dim H_{0} &=&\dim \Omega _{0}-\dim \partial \Omega _{0}-\dim \partial
\Omega _{1}=2-0-1=1 \\
\dim H_{1} &=&\dim \Omega _{1}-\dim \partial \Omega _{1}-\dim \partial
\Omega _{2}=2-1-0=1
\end{eqnarray*}%
and $\dim H_{n}=0$ for all $n\geq 2.$ Note that $\dim H_{n}$ can also be
computed using (\ref{dimHnAn}). One can also show that%
\begin{equation*}
H_{0}\cong \limfunc{span}\left\{ e_{0}+e_{1}\right\} \ \ \text{and\ \ \ }%
H_{1}\cong \limfunc{span}\left\{ e_{01}+e_{10}\right\} .
\end{equation*}%
The spanning $1$-path $e_{01}+e_{10}$ of $H_{1}$ can be regarded as a kind
of \textquotedblleft hole\textquotedblright\ in the digraph. The non-regular
Euler characteristic is%
\begin{equation*}
\chi =\dim H_{0}-\dim H_{1}=0.
\end{equation*}

Consider now regular spaces $\Omega _{n}^{reg}$. The spaces $\Omega
_{0}^{reg}$ and $\Omega _{1}^{reg}$ are the same as $\Omega _{0}$ and $%
\Omega _{1}$, respectively. However, the formulas (\ref{e101}) for the case
of the regular operator $\partial ^{reg}$ should be modified as follows:%
\begin{equation*}
\begin{array}{l}
\partial ^{reg}e_{010}=e_{10}+e_{01} \\ 
\partial ^{reg}e_{101}=e_{01}+e_{10}%
\end{array}%
\end{equation*}%
where we have replaced the non-regular $1$-paths $e_{00}$ and $e_{11}$ by $0$%
. It follows that both $\partial e_{010}$ and $\partial e_{101}$ belong to $%
\mathcal{A}_{1}$ whence both $e_{010}$ and $e_{101}$ belong to $\Omega
_{2}^{reg}.$ Hence, in this case $\Omega _{2}^{reg}=\mathcal{A}_{2}$.
Similarly, one can verify that $\Omega _{n}^{reg}=\mathcal{A}_{n}$ for all $%
n\geq 2$.

It is easy to see that $\partial \Omega _{1}^{reg}=\partial \Omega _{1}$,
while 
\begin{eqnarray*}
\partial \Omega _{2}^{reg} &=&\limfunc{span}\left\{ e_{01}+e_{10}\right\} \\
\partial \Omega _{3}^{reg} &=&\limfunc{span}\left\{ e_{010}-e_{101}\right\}
\end{eqnarray*}%
etc. We obtain $\dim H_{0}^{reg}=1$ as in non-regular case, while%
\begin{equation*}
\dim H_{1}^{reg}=\dim \Omega _{1}^{reg}-\dim \partial \Omega _{1}^{reg}-\dim
\partial \Omega _{2}^{reg}=2-1-1=0
\end{equation*}%
and in the same way $\dim H_{n}^{reg}=0$ for all $n\geq 1.$ Hence, the
regular homologies do not see the \textquotedblleft hole\textquotedblright\ $%
e_{01}+e_{10}$. The regular Euler characteristic is $\chi ^{reg}=1.$
\end{example}

Of course, it is a matter of convention whether a two-way path $%
e_{01}+e_{10} $ should qualify as a \textquotedblleft hole\textquotedblright
. We are inclined to think that it should not. For this and for other
reasons, in the subsequent sections of this paper we deal mostly with
regular homologies unless otherwise mentioned.

To finish the discussion \textquotedblleft regular versus
non-regular\textquotedblright , let us provide a condition ensuring the
identity $\Omega _{n}=\Omega _{n}^{reg}$.

\begin{definition}
\RM We say that a path complex $P$ is \emph{strictly regular }if it is
regular and contains no path of the form $iji.$
\end{definition}

Note that the path complex of a simplicial complex is always strictly
regular because the sequences of indices in allowed paths are strictly
increasing. The path complex of a digraph is strictly regular if and only if
the digraph is loopless (that is, $ii$ is never an edge) and contains no
two-way edges (that is, $i\rightarrow j$ implies $j\not\rightarrow i$).
Clearly, the path complex (\ref{P01}) is regular but not strictly regular.

\begin{proposition}
Let $P$ be a regular path complex.

\begin{itemize}
\item[$\left( a\right) $] For all $n\geq -1$ we have $\Omega _{n}\subset
\Omega _{n}^{reg}.$

\item[$\left( b\right) $] If $P$ is strictly regular then $\Omega
_{n}=\Omega _{n}^{reg}$ for all $n\geq -1$.

\item[$\left( c\right) $] If $\Omega _{2}=\Omega _{2}^{reg}$ then $P$ is
strictly regular.
\end{itemize}
\end{proposition}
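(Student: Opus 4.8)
The plan is to compare the regular and non-regular boundary operators directly on elementary allowed paths, tracking precisely the non-regular terms that regularization discards. The key observation is that for an allowed elementary path $e_{i_0\ldots i_n}$, the non-regular operator gives $\partial e_{i_0\ldots i_n}=\sum_q(-1)^q e_{i_0\ldots\widehat{i_q}\ldots i_n}$, and regularization $\partial^{reg}$ differs from $\partial$ exactly by deleting those summands $e_{i_0\ldots\widehat{i_q}\ldots i_n}$ that happen to be non-regular. A summand becomes non-regular precisely when omitting $i_q$ brings two equal consecutive vertices together, i.e.\ when $i_{q-1}=i_{q+1}$. Since $P$ is already regular, the vertices satisfy $i_{k-1}\neq i_k$, so the only way to produce a non-regular term is via a subpattern $i_{q-1}\,i_q\,i_{q-1}$ of the form $aba$. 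Thus the discrepancy between $\partial$ and $\partial^{reg}$ is governed entirely by occurrences of $iji$-patterns in allowed paths.

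\textbf{Part (a).} First I would show $\Omega_n\subset\Omega_n^{reg}$. Take $v\in\Omega_n$, so $\partial v\in\mathcal{A}_{n-1}$. Write $\partial v=\partial^{reg}v + r$, where $r\in I_{n-1}$ collects precisely the non-regular components produced by the non-regular operator (by the preceding paragraph these are the terms killed under regularization). Since $v\in\mathcal{A}_n\subset\mathcal{R}_n$ and $\partial^{reg}v\in\mathcal{R}_{n-1}$, while $\mathcal{A}_{n-1}\subset\mathcal{R}_{n-1}$ and $\mathcal{R}_{n-1}\cap I_{n-1}=\{0\}$, the decomposition $\partial v=\partial^{reg}v+r$ with $\partial v\in\mathcal{A}_{n-1}\subset\mathcal{R}_{n-1}$ forces $r=0$ and $\partial^{reg}v=\partial v\in\mathcal{A}_{n-1}$. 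Hence $v\in\Omega_n^{reg}$, giving the inclusion.

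\textbf{Parts (b) and (c).} For (b), if $P$ is strictly regular then no allowed path contains an $iji$ pattern, so by the opening observation $\partial$ and $\partial^{reg}$ coincide on every allowed elementary path, hence on all of $\mathcal{A}_n$; consequently the defining conditions for $\Omega_n$ and $\Omega_n^{reg}$ are literally the same, giving $\Omega_n=\Omega_n^{reg}$ for all $n$. (Combined with (a), this is actually an equality either way.) For (c), I would argue contrapositively: suppose $P$ is not strictly regular, so (being regular) it contains an allowed path of the form $iji$ with $i\neq j$. Then $e_{iji}\in\mathcal{A}_2$, and I would compute $\partial^{reg}e_{iji}=e_{ji}+e_{ij}\in\mathcal{A}_1$ (the term $e_{ii}$ being dropped), so $e_{iji}\in\Omega_2^{reg}$; but $\partial e_{iji}=e_{ji}-e_{ii}+e_{ij}$ has the non-allowed component $e_{ii}\notin\mathcal{A}_1$, so $e_{iji}\notin\Omega_2$. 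This exhibits an element of $\Omega_2^{reg}\setminus\Omega_2$, contradicting $\Omega_2=\Omega_2^{reg}$.

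\textbf{The main obstacle} is making the bookkeeping in part (a) airtight: one must verify rigorously that \emph{every} non-regular term appearing in $\partial v$ (for an allowed, hence regular, path $v$) arises only from the $aba$-collapse and lies in $I_{n-1}$, so that the clean direct-sum splitting $\Lambda_{n-1}=\mathcal{R}_{n-1}\oplus I_{n-1}$ can be invoked to conclude $r=0$. Once this combinatorial characterization of the discrepancy is pinned down — essentially the content already implicit in Lemma~\ref{LemdIp} and the worked example $\partial e_{iji}$ — parts (b) and (c) follow with little additional work.
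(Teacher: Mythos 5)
Your proposal is correct and takes essentially the same route as the paper: in (a) you note that an allowed boundary has no non-regular component so $\partial^{reg}v=\partial v$ (you merely formalize this via the splitting $\Lambda_{n-1}=\mathcal{R}_{n-1}\oplus I_{n-1}$), in (b) you use the same observation that a non-regular term can only arise from a collapse $i_{q-1}=i_{q+1}$, which strict regularity forbids, and in (c) you produce the paper's exact witness $e_{iji}\in\Omega_2^{reg}\setminus\Omega_2$. The one step you leave implicit in (b) — that the consecutive subpath $i_{q-1}i_qi_{q+1}$ of an allowed path is itself allowed, so that strict regularity applies to it — follows immediately from the truncation property (\ref{t}) and is stated explicitly in the paper's proof.
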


\begin{proof}
$\left( a\right) $ By (\ref{Omdef}) if $v\in \Omega _{n}$ then $v\in 
\mathcal{A}_{n}$ and $\partial v\in \mathcal{A}_{n-1}.$ Recall that $%
\partial ^{reg}v$ is obtained from $\partial v$ by removing all the
components $e_{i_{0}...i_{n-1}}$ with non-regular $i_{0}...i_{n-1}.$
However, if $\partial v\in \mathcal{A}_{n-1}$ then $\partial v$ is allowed
and, hence, has no non-regular component. Therefore, $\partial
^{reg}v=\partial v$ whence $v\in \Omega _{n}^{reg},$ which proves the
inclusion $\Omega _{n}\subset \Omega _{n}^{reg}$.

$\left( b\right) $ Let us prove the opposite inclusion for the case of
strictly regular $P$. It suffices to show that if $v\in \Omega _{n}^{reg}$
then $\partial ^{reg}v=\partial v.$ Suppose this is not the case, that is, $%
\partial v$ contains a non-regular component. All the components of $%
\partial v$ comes from differentiating of the components of $v$. If $%
e_{i_{0}...i_{n}}$ is one of the components of $v$ then $\partial
e_{i_{0}...i_{n}}$ consists of the terms of the form $e_{i_{0}...\widehat{%
i_{k}}...i_{n}}$ with an omitted index $i_{k}$. Since $i_{0}...i_{n}$ is
allowed and, hence, regular, the only way $e_{i_{0}...\widehat{i_{k}}%
...i_{n}}$ can be non-regular if $i_{k-1}=i_{k+1}.$ However, the path $%
i_{k-1}i_{k}i_{k+1}$ is allowed, and by the strict regularity the identity $%
i_{k-1}=i_{k+1}$ is not possible. Hence, $\partial v$ cannot contain
non-regular terms, which proves that $\partial v=\partial ^{reg}v$ and,
hence, $\Omega _{n}=\Omega _{n}^{reg}$.

$\left( c\right) $ Assume from the contrary that $P$ is not strictly
regular, that is, $P$ contains a path $iji$ for some $i,j\in V$. Since%
\begin{eqnarray*}
\partial e_{iji} &=&e_{ji}-e_{ii}+e_{ij} \\
\partial ^{reg}e_{iji} &=&e_{ji}+e_{ij}
\end{eqnarray*}%
we see that $\partial ^{reg}e_{iji}\in \mathcal{A}_{2}$ whereas $\partial
e_{iji}\notin \mathcal{A}_{2}$. It follows that $e_{iji}\in \Omega
_{2}^{reg}\setminus \Omega _{2},$ which contradict the hypothesis.
\end{proof}

\subsection{$d$-invariant forms}

Given a (regular or non-regular) chain complex $\Omega _{\bullet }$ of a
path complex $P$, we define here the dual cochain complex $\Omega ^{\bullet
} $ of forms and the exterior differential $d$ on forms.

Denote by $\mathcal{N}^{n}$ the subspace of $\Lambda ^{n},$ spanned by the
non-allowed elementary $n$-forms, that is,%
\begin{eqnarray*}
\mathcal{N}^{n} &=&\limfunc{span}\left\{ e^{i_{0}...i_{n}}:i_{0}...i_{n}%
\text{ is non-allowed}\right\} \\
&=&\left\{ \omega \in \Lambda ^{n}:\omega _{i_{0}...i_{n}}=0\text{ for all
allowed }i_{0}...i_{n}\right\} .
\end{eqnarray*}%
The elements of $\mathcal{N}^{n}$ are referred to as non-allowed $n$-forms.
Then set 
\begin{equation}
\mathcal{J}^{n}=\mathcal{N}^{n}+d\mathcal{N}^{n-1},  \label{Omnn}
\end{equation}%
and 
\begin{equation}
\Omega ^{n}=\Omega ^{n}\left( P\right) =\Lambda ^{n}\left/ \mathcal{J}%
^{n}\right. .  \label{OmL}
\end{equation}%
Denoting by $\mathcal{A}^{n}$ the subspace of $\Lambda ^{n}$ spanned by
allowed elementary $n$-forms and noticing that $\Lambda ^{n}=\mathcal{A}%
^{n}\oplus \mathcal{N}^{n}$, we obtain that%
\begin{equation}
\Omega ^{n}\cong \mathcal{A}^{n}\left/ \left( \mathcal{A}^{n}\cap \mathcal{J}%
^{n}\right) \right. .  \label{A/J}
\end{equation}

\begin{lemma}
\label{LemJ}$\left( a\right) \ $If $\omega \in \mathcal{J}^{n}$ then $%
d\omega \in \mathcal{J}^{n+1}.$ Consequently, $d$ is well defined on spaces $%
\Omega ^{n}$.

$\left( b\right) $ If $\omega \in \mathcal{J}^{n}$ then $\left( \omega
,v\right) =0$ for all $v\in \Omega _{n}.$
\end{lemma}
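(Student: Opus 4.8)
The plan is to use the explicit two-term description $\mathcal{J}^{n}=\mathcal{N}^{n}+d\mathcal{N}^{n-1}$ from (\ref{Omnn}): every $\omega \in \mathcal{J}^{n}$ can be written as $\omega =\alpha +d\beta$ with $\alpha \in \mathcal{N}^{n}$ and $\beta \in \mathcal{N}^{n-1}$, and I would treat the two summands separately throughout.

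For part $(a)$, I would apply $d$ and use the identity $d^{2}=0$. This gives $d\omega =d\alpha +d(d\beta )=d\alpha$, and since $\alpha \in \mathcal{N}^{n}$ we have $d\alpha \in d\mathcal{N}^{n}\subset \mathcal{N}^{n+1}+d\mathcal{N}^{n}=\mathcal{J}^{n+1}$, which is exactly the assertion. The well-definedness of $d$ on the quotient $\Omega ^{n}=\Lambda ^{n}/\mathcal{J}^{n}$ is then immediate: if $\omega _{1}-\omega _{2}\in \mathcal{J}^{n}$ then $d\omega _{1}-d\omega _{2}\in \mathcal{J}^{n+1}$, so the induced map $\Omega ^{n}\rightarrow \Omega ^{n+1}$ is independent of the choice of representative.

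For part $(b)$, the strategy is to split the pairing as $(\omega ,v)=(\alpha ,v)+(d\beta ,v)$ and show that each term vanishes for $v\in \Omega _{n}$. For the first term I would use the orthogonality of the dual bases: $\alpha$ is supported on non-allowed multi-indices while $v\in \Omega _{n}\subset \mathcal{A}_{n}$ is supported on allowed ones, so the pairing (\ref{omv}) vanishes term by term. For the second term I would invoke the Stokes-type duality of Lemma \ref{LemStokes} to write $(d\beta ,v)=(\beta ,\partial v)$. Here the defining property of $\Omega _{n}$ enters decisively: by (\ref{Omdef}), $v\in \Omega _{n}$ forces $\partial v\in \mathcal{A}_{n-1}$, i.e.\ $\partial v$ is allowed, whereas $\beta \in \mathcal{N}^{n-1}$ has only non-allowed components; the same orthogonality argument gives $(\beta ,\partial v)=0$, and hence $(\omega ,v)=0$.

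There is no serious obstacle here; the lemma is essentially a bookkeeping exercise, and the only point that must be identified correctly is why the definition (\ref{Omnn}) bundles together exactly $\mathcal{N}^{n}$ and $d\mathcal{N}^{n-1}$. The first piece is annihilated by any allowed path, and the second is annihilated precisely by those $v$ for which $\partial v$ is again allowed --- that is, by $\Omega _{n}$ rather than by all of $\mathcal{A}_{n}$. Keeping the roles of $\alpha$ and $d\beta$ straight, and pairing each against the correct object ($v$ versus $\partial v$), is the only place where care is required.
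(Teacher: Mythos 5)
Your proof is correct and follows essentially the same route as the paper: the same decomposition $\omega=\alpha+d\beta$ with $\alpha\in\mathcal{N}^{n}$, $\beta\in\mathcal{N}^{n-1}$, the identity $d^{2}=0$ for part $(a)$, and for part $(b)$ the support-disjointness of $\mathcal{N}^{\bullet}$ against allowed paths together with the duality $(d\beta,v)=(\beta,\partial v)$ of Lemma \ref{LemStokes} and the defining property $\partial v\in\mathcal{A}_{n-1}$ from (\ref{Omdef}). Nothing is missing, and your closing remark correctly identifies why $\mathcal{J}^{n}$ is annihilated exactly by $\Omega_{n}$ rather than by all of $\mathcal{A}_{n}$.
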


\begin{proof}
$\left( a\right) $ Since $d^{2}=0$, it follows from (\ref{Omnn}) that 
\begin{equation*}
d\mathcal{J}^{n}\subset d\mathcal{N}^{n}+d^{2}\mathcal{N}^{n-1}=d\mathcal{N}%
^{n}\subset \mathcal{J}^{n+1}.
\end{equation*}%
Hence, if $\omega _{1}=\omega _{2}\func{mod}\mathcal{J}^{n}$ then $d\omega
_{1}=d\omega _{2}\func{mod}\mathcal{J}^{n}$ so that $d$ is well-defined on
the cosets $\omega \func{mod}\mathcal{J}^{n}$ that are the elements of the
quotient space $\Lambda ^{n}\left/ \mathcal{J}^{n}\right. =\Omega ^{n}$.
\end{proof}

\begin{proof}
$\left( b\right) $ By (\ref{Omnn}) $\omega =\varphi +d\psi $ where $\varphi
\in \mathcal{N}^{n}$ and $\psi \in \mathcal{N}^{n-1}$. Note that $\varphi
\in \mathcal{N}^{n}$ and $v\in \mathcal{A}_{n}$ imply that 
\begin{equation*}
\left( \varphi ,v\right) =\sum \varphi _{i_{0}...i_{n}}v^{i_{0}...i_{n}}=0
\end{equation*}%
because if $i_{0}...i_{n}$ is allowed then $\varphi _{i_{0}...i_{n}}=0$
while for non-allowed $i_{0}...i_{n}$ we have $v^{i_{0}...i_{n}}=0.$ Next,
we have 
\begin{equation*}
\left( d\psi ,v\right) =\left( \psi ,dv\right) =0
\end{equation*}%
because $\psi \in \mathcal{N}^{n-1}$ and $\partial v\in \mathcal{A}_{n-1}$.
Hence, combining these two lines, we conclude $\left( \omega ,v\right) =0.$
\end{proof}

\begin{definition}
\RM The elements of the dual space $\Omega ^{n}$ are called $d$-\emph{%
invariant} $n$-forms of the path complex $P$, and the operator $d:\Omega
^{n}\rightarrow \Omega ^{n+1}$ is called the \emph{exterior differential}.
\end{definition}

\label{here}By Lemma \ref{LemJ}$\left( b\right) $, any element $\omega \func{%
mod}\mathcal{J}^{n}$ of $\Omega ^{n}$ determines a linear functional on $%
\Omega _{n}$ by%
\begin{equation*}
\left( \omega \func{mod}J^{n},v\right) =\left( \omega ,v\right) .
\end{equation*}%
Hence, we obtain a mapping 
\begin{equation}
\Omega ^{n}\rightarrow \left( \Omega _{n}\right) ^{\prime }  \label{nn}
\end{equation}%
where $\left( \Omega _{n}\right) ^{\prime }$ is the dual space to $\Omega
_{n}$.

\begin{lemma}
The mapping \emph{(\ref{nn})} is a linear isomorphism, that is, $\Omega ^{n}$
can be identified as a dual space of $\Omega _{n}$.
\end{lemma}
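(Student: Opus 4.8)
The plan is to identify $\mathcal{J}^n$ with the annihilator of $\Omega_n$ under the perfect pairing between $\Lambda^n$ and $\Lambda_n$, and then to invoke the standard duality of finite-dimensional spaces. For a subspace $W\subset\Lambda_n$ write $W^{\perp}=\{\omega\in\Lambda^n:(\omega,v)=0\text{ for all }v\in W\}$, and for $U\subset\Lambda^n$ write $U^{\perp}=\{v\in\Lambda_n:(\omega,v)=0\text{ for all }\omega\in U\}$. Since the pairing $(\cdot,\cdot)$ is perfect, I may freely use the identities $(U_1+U_2)^{\perp}=U_1^{\perp}\cap U_2^{\perp}$ and $(U^{\perp})^{\perp}=U$ valid for all subspaces.

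The crux is to show that $\Omega_n=(\mathcal{J}^n)^{\perp}$. First, by definition of the non-allowed forms, the condition $v\in\mathcal{A}_n$ is exactly $v\perp\mathcal{N}^n$, i.e. $(\mathcal{N}^n)^{\perp}=\mathcal{A}_n$, and likewise $(\mathcal{N}^{n-1})^{\perp}=\mathcal{A}_{n-1}$. Next, using the duality $(\mu,\partial v)=(d\mu,v)$ of Lemma \ref{LemStokes}, the remaining requirement $\partial v\in\mathcal{A}_{n-1}$, i.e. $\partial v\perp\mathcal{N}^{n-1}$, is equivalent to $(d\mu,v)=0$ for all $\mu\in\mathcal{N}^{n-1}$, that is to $v\perp d\mathcal{N}^{n-1}$. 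Combining the two conditions gives
\begin{equation*}
\Omega_n=(\mathcal{N}^n)^{\perp}\cap(d\mathcal{N}^{n-1})^{\perp}=\left(\mathcal{N}^n+d\mathcal{N}^{n-1}\right)^{\perp}=(\mathcal{J}^n)^{\perp}.
\end{equation*}
Applying the double-annihilator identity then yields $(\Omega_n)^{\perp}=\bigl((\mathcal{J}^n)^{\perp}\bigr)^{\perp}=\mathcal{J}^n$.

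Finally I would conclude by the elementary fact that, for any subspace $W\subset\Lambda_n$, restriction of functionals gives a surjection $\Lambda^n\cong(\Lambda_n)'\to W'$, $\omega\mapsto(\omega,\cdot)|_W$, whose kernel is precisely $W^{\perp}$; hence $\Lambda^n/W^{\perp}\cong W'$ canonically through the pairing. Taking $W=\Omega_n$ and using $(\Omega_n)^{\perp}=\mathcal{J}^n$ gives
\begin{equation*}
\Omega^n=\Lambda^n/\mathcal{J}^n=\Lambda^n/(\Omega_n)^{\perp}\cong(\Omega_n)',
\end{equation*}
and unwinding the definitions shows this isomorphism is exactly the map (\ref{nn}), whose well-definedness is Lemma \ref{LemJ}$(b)$. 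The one step that must be handled carefully is the middle one: translating the defining condition $\partial v\in\mathcal{A}_{n-1}$ of $\Omega_n$ into the orthogonality $v\perp d\mathcal{N}^{n-1}$ by means of the Stokes identity. Once $\Omega_n=(\mathcal{J}^n)^{\perp}$ is in hand, the remainder is formal finite-dimensional duality.
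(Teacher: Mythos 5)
Your proof is correct and takes essentially the same route as the paper's: the heart of both arguments is the identity $\Omega _{n}=\left( \mathcal{J}^{n}\right) ^{\bot }$, obtained from $\left( \mathcal{N}^{n}\right) ^{\bot }=\mathcal{A}_{n}$ together with the Stokes duality $\left( d\omega ,v\right) =\left( \omega ,\partial v\right) $ translating $\partial v\in \mathcal{A}_{n-1}$ into $v\bot d\mathcal{N}^{n-1}$. Your endgame, via the double annihilator $\left( \Omega _{n}\right) ^{\bot }=\mathcal{J}^{n}$ and the identification of $\mathcal{J}^{n}$ as the kernel of the restriction map $\Lambda ^{n}\rightarrow \left( \Omega _{n}\right) ^{\prime }$, is only a cosmetic repackaging of the paper's two steps (surjectivity by extending functionals from $\Omega _{n}$ to $\Lambda _{n}$, injectivity by the dimension count $\dim \Omega ^{n}=\dim \Lambda ^{n}-\dim \mathcal{J}^{n}=\dim \Omega _{n}$).
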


\begin{proof}
Every linear functions on $\Omega _{n}$ can be extended to that on $\Lambda
_{n}$ and, hence, is given by $v\mapsto \left( \omega ,v\right) $ for some $%
\omega \in \Lambda ^{n}$. Therefore, it is determined also by $\omega \func{%
mod}J^{n}\in \Omega ^{n}$, which means that the mapping (\ref{nn}) is
surjective. To prove the injectivity of (\ref{nn}) is suffices to show that $%
\dim \Omega ^{n}=\dim \Omega _{n}$. For that, let us first show that%
\begin{equation}
\Omega _{n}=\left( \mathcal{J}^{n}\right) ^{\bot },  \label{OmJ}
\end{equation}%
where $\left( \mathcal{J}^{n}\right) ^{\bot }$ denotes the annihilator in $%
\Lambda _{n}$ of $\mathcal{J}^{n}$ as a subspace of $\Lambda ^{n}$. Indeed,
for $v\in \Lambda _{n}$ the condition $v\in \left( \mathcal{J}^{n}\right)
^{\bot }$ means that%
\begin{equation*}
v\bot \mathcal{N}^{n}\ \ \text{and\ \ }v\bot d\mathcal{N}^{n-1}.
\end{equation*}%
The first condition here is equivalent to $v\in \left( \mathcal{N}%
^{n}\right) ^{\bot }=\mathcal{A}_{n}$ while the second condition is
equivalent to%
\begin{equation*}
\left( d\omega ,v\right) =0\ \ \forall \omega \in \mathcal{N}%
^{n-1}\Leftrightarrow \left( \omega ,\partial v\right) =0\ \forall \omega
\in \mathcal{N}^{n-1}\Leftrightarrow \partial v\bot \mathcal{N}%
^{n-1}\Leftrightarrow \partial v\in \left( \mathcal{N}^{n-1}\right) ^{\bot },
\end{equation*}%
that is, to $\partial v\in \mathcal{A}_{n-1}$. We are left to recall that $%
v\in \mathcal{A}_{n}$ and $\partial v\in \mathcal{A}_{n-1}$ is equivalent to 
$v\in \Omega _{n}$, which proves (\ref{OmJ}).

Finally, we obtain%
\begin{equation*}
\dim \Omega _{n}=\dim \left( \mathcal{J}^{n}\right) ^{\bot }=\dim \Lambda
^{n}-\dim \mathcal{J}^{n}=\dim \Lambda ^{n}\left/ \mathcal{J}^{n}\right.
=\dim \Omega ^{n},
\end{equation*}%
which finishes the proof.
\end{proof}

It follows that the operators $\partial :\Omega _{n+1}\rightarrow \Omega
_{n} $ and $d:\Omega ^{n}\rightarrow \Omega ^{n+1}$ are dual, that is, for
any $v\in \Omega _{n+1}$ and $\omega \in \Omega ^{n}$%
\begin{equation*}
\left( d\omega ,v\right) =\left( \omega ,\partial v\right) .
\end{equation*}%
because this identity is true for any representative of $\omega $ in $%
\Lambda ^{n}$.

We obtain a cochain complex $\Omega ^{\bullet }\left( P\right) $ of $P$,
that is,%
\begin{equation}
0\rightarrow \mathbb{K}\rightarrow \Omega ^{0}\rightarrow \dots \rightarrow
\Omega ^{n}\rightarrow \Omega ^{n+1}\rightarrow \dots  \label{cochain}
\end{equation}%
where all arrows are given by $d$. Its cohomologies are referred to as \emph{%
reduced path cohomologies} of $P$ and are denoted by $\widetilde{H}^{\bullet
}\left( P\right) ,$ that is 
\begin{equation*}
\widetilde{H}^{n}\left( P\right) =H^{n}\left( \Omega ^{\bullet }\left(
P\right) \right) =\ker d|_{\Omega ^{n}}\left/ \func{Im}d|_{\Omega
^{n-1}}\right. ,
\end{equation*}%
for any $n\geq -1$. It follows from the construction that $\widetilde{H}%
_{n}\left( P\right) $ and $\widetilde{H}^{n}\left( P\right) $ are dual
vector spaces over $\mathbb{K}$, in particular, their dimensions are the
same.

The cohomologies of the truncated cochain complex%
\begin{equation}
0\rightarrow \Omega ^{0}\rightarrow \dots \rightarrow \Omega ^{n}\rightarrow
\Omega ^{n+1}\rightarrow \dots  \label{cochaintrun}
\end{equation}%
are called \emph{path cohomologies} of $P$ and are defined by $H^{n}\left(
P\right) $, $n\geq 0$. Clearly, $H^{n}\left( P\right) $ is a dual space to $%
H_{n}\left( P\right) .$ Similarly to (\ref{dimHn}), we have%
\begin{equation*}
\dim H^{n}=\dim \Omega ^{n}-\dim d\Omega ^{n}-\dim d\Omega ^{n-1},
\end{equation*}%
and an analogous identity holds for reduced cohomologies $\widetilde{H}^{n}$.

Let now $P$ be a \emph{regular} path complex. Then a similar construction
works using the regular spaces $\Omega _{n}$ that are subspaces of $\mathcal{%
R}_{n}.$ Setting 
\begin{eqnarray*}
\mathcal{N}^{n} &=&\limfunc{span}\left\{ e^{i_{0}...i_{n}}:i_{0}...i_{n}%
\text{ is regular and non-allowed}\right\} \\
&=&\left\{ \omega \in \mathcal{R}^{n}:\omega _{i_{0}...i_{n}}=0\text{ for
all allowed }i_{0}...i_{n}\right\}
\end{eqnarray*}%
and defining $\mathcal{J}^{n}$ as before by (\ref{Omnn}), we set 
\begin{equation}
\Omega ^{n}=\mathcal{R}^{n}\left/ \mathcal{J}^{n}\right.  \label{OmE}
\end{equation}%
and show as above that $d$ is well-defined on $\Omega ^{n}$, that $\Omega
^{n}$ can be identified with $\left( \Omega _{n}\right) ^{\prime }$ and that
operators $d$ and $\partial $ are dual. Since $\mathcal{R}^{n}=\mathcal{A}%
^{n}\oplus \mathcal{N}^{n}$, the formula (\ref{A/J}) holds in the regular
case, too.

Let us prove that the concatenation is well-defined on the (regular and
non-regular) spaces $\Omega ^{n}$.

\begin{lemma}
\label{Lemcon}Let $\varphi $ be a $p$-form and $\psi $ be a $q$-form. If $%
\varphi \in \mathcal{J}^{p}$ or $\psi \in \mathcal{J}^{q}$ then $\varphi
\psi \in \mathcal{J}^{p+q},$ that is, $\left\{ \mathcal{J}^{p}\right\} $ is
a graded ideal for the concatenation. Consequently, the concatenation of two
forms is well-defined on the spaces $\mathcal{J}^{p}$ as well as on $\Omega
^{p}$, and it satisfies the product rule \emph{(\ref{Leib})}.
\end{lemma}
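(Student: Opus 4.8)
The plan is to prove that $\{\mathcal{J}^p\}$ is a two-sided graded ideal for the concatenation, i.e. that $\varphi\psi\in\mathcal{J}^{p+q}$ whenever one of the factors lies in the corresponding $\mathcal{J}$; the remaining assertions (well-definedness on $\Omega^\bullet$ and the product rule) then follow formally. Since $\mathcal{J}^n=\mathcal{N}^n+d\mathcal{N}^{n-1}$ by (\ref{Omnn}), it suffices to treat separately the case where the distinguished factor lies in $\mathcal{N}$ and the case where it lies in $d\mathcal{N}$.

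First I would show that $\mathcal{N}^\bullet$ is itself a two-sided ideal: if $\varphi\in\mathcal{N}^p$ then $\varphi\psi\in\mathcal{N}^{p+q}$ for every $\psi\in\Lambda^q$, and symmetrically on the right. This is immediate from the definition of concatenation (\ref{concat}): the component $(\varphi\psi)_{i_0\dots i_{p+q}}=\varphi_{i_0\dots i_p}\psi_{i_p\dots i_{p+q}}$ can be nonzero only when $\varphi_{i_0\dots i_p}\neq 0$, and $\varphi\in\mathcal{N}^p$ forces this to happen only for a \emph{non-allowed} prefix $i_0\dots i_p$. By the truncation property (\ref{t}), every contiguous subpath of an allowed path is allowed; contrapositively, a non-allowed prefix forces the whole path $i_0\dots i_{p+q}$ to be non-allowed. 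Hence $\varphi\psi$ is supported on non-allowed sequences, i.e. $\varphi\psi\in\mathcal{N}^{p+q}$ (in the regular case one also invokes Lemma \ref{LemE0}$(b)$ to keep $\varphi\psi$ regular). The right-hand version uses the suffix $i_p\dots i_{p+q}$ in the same way.

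Next I would handle the $d\mathcal{N}$ part by trading a derivative for terms already known to lie in the ideal, using the product rule (\ref{Leib}). If $\varphi=d\alpha$ with $\alpha\in\mathcal{N}^{p-1}$, then Lemma \ref{Lemdff} gives $(d\alpha)\psi=d(\alpha\psi)-(-1)^{p-1}\alpha\,d\psi$. By the previous step $\alpha\psi\in\mathcal{N}^{p+q-1}$, so $d(\alpha\psi)\in d\mathcal{N}^{p+q-1}\subset\mathcal{J}^{p+q}$, while $\alpha\,d\psi\in\mathcal{N}^{p+q}\subset\mathcal{J}^{p+q}$; hence $(d\alpha)\psi\in\mathcal{J}^{p+q}$. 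The mirror computation $\varphi(d\beta)=(-1)^p\big[d(\varphi\beta)-(d\varphi)\beta\big]$ for $\beta\in\mathcal{N}^{q-1}$, together with $\varphi\beta,(d\varphi)\beta\in\mathcal{N}^\bullet$, settles the right factor. (The edge case $p=0$ is vacuous since $\mathcal{N}^{-1}=\{0\}$.) Combining the two steps shows that $\mathcal{J}^\bullet$ absorbs concatenation on either side.

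Finally, the descent is purely formal. Writing $\varphi_1\psi_1-\varphi_2\psi_2=(\varphi_1-\varphi_2)\psi_1+\varphi_2(\psi_1-\psi_2)$ and applying the ideal property to each summand shows that concatenation respects the cosets $\func{mod}\mathcal{J}$, so it descends to a well-defined bilinear map $\Omega^p\times\Omega^q\to\Omega^{p+q}$; and since both $d$ (Lemma \ref{LemJ}$(a)$) and concatenation descend, the identity (\ref{Leib}) valid on $\Lambda^\bullet$ passes verbatim to $\Omega^\bullet$. The only genuinely delicate point is the $d\mathcal{N}$ step: rather than arguing about $d\mathcal{N}$ directly one must use the Leibniz rule to move the differential outside, reducing everything to the already-established fact that $\mathcal{N}^\bullet$ is an ideal; in the regular setting one must also track that Lemma \ref{LemE0}$(b)$ keeps all intermediate products inside $\mathcal{R}^\bullet$.
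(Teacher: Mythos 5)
Your proof is correct and follows essentially the same route as the paper's: you split $\mathcal{J}^{\bullet}=\mathcal{N}^{\bullet}+d\mathcal{N}^{\bullet-1}$, show $\mathcal{N}^{\bullet}$ is a two-sided ideal via the truncation property of path complexes, absorb the $d\mathcal{N}$ part by the Leibniz rule (\ref{Leib}), and descend to $\Omega^{\bullet}$ by the same telescoping identity $\varphi_{1}\psi_{1}-\varphi_{2}\psi_{2}=(\varphi_{1}-\varphi_{2})\psi_{1}+\varphi_{2}(\psi_{1}-\psi_{2})$. The only cosmetic differences are that you cite Lemma \ref{LemE0}$\left( b\right) $ explicitly for regularity where the paper notes it parenthetically, and you record the harmless edge case $\mathcal{N}^{-1}=\left\{ 0\right\} $.
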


\begin{proof}
Observe first that if $\varphi \in \mathcal{N}^{p}$ then $\varphi \psi \in 
\mathcal{N}^{p+q}$. Indeed, it suffices to prove this for elementary forms $%
\varphi =e^{i_{0}...i_{p}}$ and $\psi =e^{j_{0}...j_{q}}$ where the claim is
obvious: if the $p$-path $i_{0}...i_{p}$ is non-allowed then so is the
concatenated $\left( p+q\right) $-path $i_{0}...i_{p}j_{1}...j_{q}$, by the
definition of a path complex (in the regular case we use in addition the
fact that concatenation of regular paths is regular).

If $\varphi \in \mathcal{J}^{p}$ then $\varphi =\varphi _{0}+d\varphi _{1}$
where $\varphi _{0}\in \mathcal{N}^{p}$ and $\varphi _{1}\in \mathcal{N}%
^{p-1}$. Then we have%
\begin{eqnarray*}
\varphi \psi &=&\varphi _{0}\psi +\left( d\varphi _{1}\right) \psi \\
&=&\varphi _{0}\psi +d\left( \varphi _{1}\psi \right) -\left( -1\right)
^{p-1}\varphi _{1}d\psi .
\end{eqnarray*}%
By the above observation, all the forms $\varphi _{0}\psi $,$\ \varphi
_{1}\psi $, $\varphi _{1}d\psi $ are in $\mathcal{N}^{\bullet }.$ It follows
that $d\left( \varphi _{1}\psi \right) \in \mathcal{J}^{p+q}$ and, hence, $%
\varphi \psi \in \mathcal{J}^{p+q}.$ In the same way one handles the case $%
\psi \in \mathcal{J}^{q}.$

To prove that concatenation is well defined on $\Omega ^{p}$, we need to
verify that if $\varphi =\varphi ^{\prime }\func{mod}\mathcal{J}^{p}$ and $%
\psi =\psi ^{\prime }\func{mod}\mathcal{J}^{q}$ then $\varphi \psi =\varphi
\psi \func{mod}\mathcal{J}^{p+q}.$ Indeed, we have%
\begin{equation*}
\varphi \psi -\varphi ^{\prime }\psi ^{\prime }=\varphi \left( \psi -\psi
^{\prime }\right) +\left( \varphi -\varphi ^{\prime }\right) \psi ^{\prime },
\end{equation*}%
and each of the terms in the right hand side belong to $J^{p+q}$ by the
first part. Finally, the product rule for equivalence classes follows from
that for their representatives.
\end{proof}

Lemma \ref{Lemcon} and the product rule allow to extend concatenation to an
operation of homology classes.

\begin{proposition}
If $\varphi \in \Omega ^{p}$ and $\psi \in \Omega ^{q}$ are closed forms and
one of the forms $\varphi ,\psi $ is exact then $\varphi \psi $ is also
exact. Consequently, concatenation is well defined as an operation from $%
H^{p}\times H^{q}$ to $H^{p+q}.$
\end{proposition}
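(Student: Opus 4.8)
The plan is to reduce the statement to Lemma~\ref{Lemcon} together with the product rule (\ref{Leib}), which have already been established on the spaces $\Omega^{n}$. First I would unwind what needs to be shown: that if $\varphi \in \Omega^{p}$ and $\psi \in \Omega^{q}$ are both closed (i.e.\ $d\varphi = 0$ and $d\psi = 0$) and at least one of them is exact, then $\varphi\psi$ is exact. By Lemma~\ref{Lemcon} the concatenation $\varphi\psi$ already lives in $\Omega^{p+q}$, so there is no well-definedness issue to resolve separately; the only content is the exactness.

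The key computation is a direct application of the product rule. Suppose first that $\varphi$ is exact, say $\varphi = d\alpha$ for some $\alpha \in \Omega^{p-1}$. Then by (\ref{Leib}) applied to $\alpha$ and $\psi$,
\begin{equation*}
d\left( \alpha \psi \right) = \left( d\alpha \right)\psi + \left( -1\right)^{p-1}\alpha\, d\psi = \varphi\psi + \left( -1\right)^{p-1}\alpha\, d\psi.
\end{equation*}
Since $\psi$ is closed, $d\psi = 0$, and the second term vanishes, giving $\varphi\psi = d\left( \alpha\psi\right)$, which exhibits $\varphi\psi$ as exact. The symmetric case, when $\psi = d\beta$ is the exact form and $\varphi$ is closed, is handled in the same way: by (\ref{Leib}) applied to $\varphi$ and $\beta$,
\begin{equation*}
d\left( \varphi\beta \right) = \left( d\varphi \right)\beta + \left( -1\right)^{p}\varphi\, d\beta = \left( -1\right)^{p}\varphi\psi,
\end{equation*}
using $d\varphi = 0$, so $\varphi\psi = \left( -1\right)^{p} d\left( \varphi\beta\right)$ is again exact.

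There is no genuine obstacle here: the entire argument is the standard cup-product computation on a differential graded algebra, and the only nontrivial input is that concatenation descends to the quotient spaces $\Omega^{\bullet}$ and still satisfies (\ref{Leib}), which is exactly the content of Lemma~\ref{Lemcon}. The one point to watch is simply that all the intermediate forms $\alpha\psi$ and $\varphi\beta$ lie in the appropriate $\Omega^{\bullet}$ so that the product rule is legitimately applicable; this is guaranteed by Lemma~\ref{Lemcon}, which shows $\{\mathcal{J}^{p}\}$ is a graded ideal and hence concatenation is well defined on all of $\Omega^{\bullet}$. Finally, the descent to an operation $H^{p}\times H^{q}\to H^{p+q}$ is the formal consequence: the concatenation of two closed forms is closed (again by (\ref{Leib})), and the displayed identities show that changing either factor by an exact form changes the product by an exact form, so the induced bilinear map on cohomology classes is well defined.
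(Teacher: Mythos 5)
Your proof is correct and follows essentially the same route as the paper: both apply the product rule (\ref{Leib}), made legitimate on $\Omega ^{\bullet }$ by Lemma \ref{Lemcon}, to write $\varphi \psi $ as $d$ of a concatenation when one factor is exact and the other closed, and then deduce well-definedness on cohomology by the standard bilinear decomposition. The only cosmetic difference is that you spell out both cases ($\varphi $ exact and $\psi $ exact) explicitly, whereas the paper computes one case and absorbs the other into the symmetric decomposition $\varphi _{1}\psi _{1}-\varphi _{2}\psi _{2}=\varphi _{1}\left( \psi _{1}-\psi _{2}\right) +\left( \varphi _{1}-\varphi _{2}\right) \psi _{2}$.
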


\begin{proof}
If $\varphi =d\omega $ then 
\begin{equation*}
d\left( \omega \psi \right) =\left( d\omega \right) \psi +\left( -1\right)
^{p}\omega d\psi =\varphi \psi ,
\end{equation*}%
so that $\varphi \psi $ is exact. Hence, if $\varphi _{1},\varphi _{2}$ and $%
\psi _{1},\psi _{2}$ are closed forms such that $\varphi _{1}=\varphi _{2}%
\func{mod}\func{Im}d$ and $\psi _{1}=\psi _{2}\func{mod}\func{Im}d$ then 
\begin{equation*}
\varphi _{1}\psi _{1}-\varphi _{2}\psi _{2}=\varphi _{1}\left( \psi
_{1}-\psi _{2}\right) +\left( \varphi _{1}-\varphi _{2}\right) \psi _{2}=0%
\func{mod}\func{Im}d,
\end{equation*}%
that is, $\varphi _{1}\psi _{1}$ and $\varphi _{2}\psi _{2}$ represent the
same homology class, which was to be proved.
\end{proof}

Hence, concatenation is analogous to operations of cup product for
simplicial complexes and wedge product for differential forms on manifolds.

\begin{remark}
\RM We see that the direct sum%
\begin{equation*}
\Omega =\bigoplus_{n\geq 0}\Omega ^{n}
\end{equation*}%
with the operations $d$ and concatenation is a graded differential algebra.
Note that $\Omega ^{0}$ coincides with the space $\mathcal{R}^{0}$ of all $%
\mathbb{K}$-valued functions on $V$. As it was mentioned in Remark \ref%
{RemDGAR}, all minimal graded differential algebras over $\mathcal{R}^{0}$
are the quotients of $\mathcal{R}.$ In particular, in the case of a regular
path complex, $\Omega $ is explicitly given by (\ref{OmE}) as a quotient of $%
\mathcal{R}$.
\end{remark}

\subsection{A condition for $\dim \Omega ^{p}=0$}

\label{SecLow}Let us consider the following equivalence relation. For two $n$%
-forms $\varphi ,\psi $ (from $\Lambda ^{n}$ or $\mathcal{R}^{n}$) we write%
\begin{equation*}
\varphi \simeq \psi \ \ \text{if\ \ }\varphi =\psi \func{mod}\mathcal{J}^{n},
\end{equation*}%
where $\mathcal{J}^{n}$ is defined by (\ref{Omnn}). Then we already know
that 
\begin{equation*}
\varphi \simeq 0\Rightarrow d\varphi \simeq 0,
\end{equation*}%
and 
\begin{equation*}
\varphi \simeq 0\ \ \text{or\ \ }\psi \simeq 0\Rightarrow \varphi \psi
\simeq 0
\end{equation*}%
(cf. \ref{Lemcon}). By (\ref{OmL}) or (\ref{OmE}), the equivalence classes
of $\simeq $ can be identified with the elements of $\Omega ^{n}$.

\begin{proposition}
\label{Pdimn}

\begin{itemize}
\item[$\left( a\right) $] If $\dim \Omega ^{n}=0$ then $\dim \Omega ^{p}=0$
for all $p>n.$

\item[$\left( b\right) $] If the spaces $\Omega ^{\bullet }$ are regular
then $\dim \Omega ^{n}\leq 1$ implies that $\dim \Omega ^{p}=0$ for all $%
p>n. $
\end{itemize}
\end{proposition}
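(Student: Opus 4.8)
The plan is to push everything through the concatenation product and the equivalence relation $\simeq$ (where $\varphi\simeq\psi$ means $\varphi=\psi\func{mod}\mathcal{J}^n$), using that $\{\mathcal{J}^p\}$ is a graded ideal for concatenation (Lemma \ref{Lemcon}): concatenation descends to the quotients $\Omega^\bullet$, and a fixed $0$-form acts on each $\Omega^n$ by left concatenation. I regard $\Omega^n$ as spanned by the classes $[e^{i_0\dots i_n}]$ of the (regular, in the regular case) elementary $n$-forms, and I use repeatedly the factorization coming from \eqref{econ},
\[
e^{i_0\dots i_{n+1}}=e^{i_0\dots i_n}\,e^{i_n i_{n+1}}=e^{i_0 i_1}\,e^{i_1\dots i_{n+1}},
\]
which holds for every elementary $(n+1)$-form and stays regular whenever $e^{i_0\dots i_{n+1}}$ is regular, since then all consecutive indices differ. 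Both parts reduce, by an immediate induction on $p$, to establishing the single step from level $n$ to level $n+1$.

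For part $(a)$, assume $\Omega^n=0$, so $e^{i_0\dots i_n}\simeq 0$ for every elementary $n$-form. Given any elementary $(n+1)$-form, factor it as $e^{i_0\dots i_n}\,e^{i_n i_{n+1}}$; since the front factor is $\simeq 0$ and the ideal property of Lemma \ref{Lemcon} preserves $\simeq 0$ under concatenation on either side, we get $e^{i_0\dots i_{n+1}}\simeq 0$. As these classes span $\Omega^{n+1}$, we conclude $\Omega^{n+1}=0$, and iterating gives $\dim\Omega^p=0$ for all $p>n$.

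For part $(b)$ it suffices, by the reduction above together with part $(a)$ applied at level $n+1$, to show that in the regular case $\dim\Omega^n\le 1$ forces $\dim\Omega^{n+1}=0$; when $\dim\Omega^n=0$ this is part $(a)$, so assume $\dim\Omega^n=1$ and argue by contradiction that $\dim\Omega^{n+1}\ge 1$ would force $\dim\Omega^n\ge 2$. Pick a regular elementary $(n+1)$-form $\sigma=e^{i_0\dots i_{n+1}}$ with $[\sigma]\neq 0$. Factoring $\sigma$ both ways and using that concatenation annihilates any $\simeq 0$ factor, neither the front face $\alpha:=e^{i_0\dots i_n}$ nor the back face $\beta:=e^{i_1\dots i_{n+1}}$ can be $\simeq 0$; thus $[\alpha],[\beta]$ are two nonzero classes in $\Omega^n$, and it remains to see they are independent.

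The key device is the endomorphism $T\colon\Omega^n\to\Omega^n$, $T[\psi]=[e^{i_0}\psi]$, well defined by Lemma \ref{Lemcon} (left concatenation by the fixed $0$-form $e^{i_0}\in\mathcal{R}^0$). From $(e^{i_0}\psi)_{j_0\dots j_n}=(e^{i_0})_{j_0}\psi_{j_0\dots j_n}$ one reads off $T[\alpha]=[\alpha]$, since $\alpha$ has first index $i_0$, whereas $T[\beta]=0$, since $\beta$ has first index $i_1$ and $i_0\neq i_1$ by regularity of $\sigma$. Were $[\alpha]=\lambda[\beta]$, applying $T$ would give $[\alpha]=\lambda\,T[\beta]=0$, contradicting $[\alpha]\neq 0$; hence $\dim\Omega^n\ge 2$. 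The one place where regularity is genuinely used — and the only real subtlety I anticipate — is exactly this separation step $T[\beta]=0$, which rests on $i_0\neq i_1$: this is automatic for regular elementary paths but fails in general non-regular paths, which is precisely why the sharper bound $\le 1$ in $(b)$ is special to regular complexes. Everything else is routine bookkeeping with the definitions of concatenation and $\simeq$.
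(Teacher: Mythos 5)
Your proof is correct, and part $(a)$ coincides with the paper's argument verbatim: factor off the last edge via (\ref{econ}) and invoke the ideal property of Lemma \ref{Lemcon}. In part $(b)$ you use the same ingredients as the paper (the two factorizations $e^{i_0\dots i_{n+1}}=e^{i_0\dots i_n}e^{i_ni_{n+1}}=e^{i_0i_1}e^{i_1\dots i_{n+1}}$, Lemma \ref{Lemcon}, and regularity in the single form $i_0\neq i_1$), but you organize the endgame differently. The paper works with an arbitrary $p>n$ at once: when both $n$-faces are nonzero in the one-dimensional $\Omega^n$ it writes $e^{i_1\dots i_{n+1}}\simeq \alpha\, e^{i_0\dots i_n}$, substitutes this into the triple factorization, and kills the result through the mismatched concatenation $e^{i_0i_1}e^{i_0\dots i_n}=0$, concluding directly that every elementary class vanishes. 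You instead reduce to the single step $n\to n+1$ (then climb with part $(a)$), and package the same mismatch phenomenon into the left-concatenation operator $T[\psi]=[e^{i_0}\psi]$ on $\Omega^n$, for which $T[\alpha]=[\alpha]$ and $T[\beta]=0$; this shows the two faces are linearly \emph{independent}, contradicting $\dim\Omega^n\leq 1$. What your version buys is a cleaner isolation of where regularity enters and the explicit sharper statement that, for regular complexes, $\Omega^{n+1}\neq\{0\}$ forces $\dim\Omega^n\geq 2$ (only the contrapositive is visible in the paper's proof); what the paper's version buys is that it handles all $p>n$ in one stroke without the auxiliary induction. All the supporting details in your write-up check out: $T$ is well defined on the quotient by Lemma \ref{Lemcon}, the computation $(e^{i_0}\psi)_{j_0\dots j_n}=(e^{i_0})_{j_0}\psi_{j_0\dots j_n}$ is the $p=0$ case of (\ref{concat}), and your use of regular (rather than allowed) elementary forms as a spanning set of $\Omega^{n+1}$ is legitimate since non-allowed ones lie in $\mathcal{N}^{n+1}\subset\mathcal{J}^{n+1}$.
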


\begin{proof}
In the both cases, we need to show that, for any $p$-path $i_{0}...i_{p}$
with $p>n$, 
\begin{equation*}
e^{i_{0}...i_{p}}\simeq 0.
\end{equation*}

$\left( a\right) $ By hypothesis $\Omega ^{n}=\left\{ 0\right\} $ we have $%
e^{i_{0}...i_{n}}\simeq 0.$ Since%
\begin{equation*}
e^{i_{0}...i_{p}}=e^{i_{0}...i_{n}}e^{i_{n}...i_{p}},
\end{equation*}%
it follows by Lemma \ref{Lemcon} that also $e^{i_{0}...i_{p}}\simeq 0.$

$\left( b\right) $ It suffices to treat the case $\dim \Omega ^{n}=1$. Since
for a non-allowed path $i_{0}...i_{p}$ the relation $e^{i_{0}...i_{p}}\simeq
0$ holds by definition, we can assume that $i_{0}...i_{p}$ is allowed. We
have%
\begin{equation}
e^{i_{0}...i_{p}}=e^{i_{0}...i_{n}}e^{i_{n}...i_{p}}=e^{i_{0}i_{1}}e^{i_{1}...i_{n+1}}e^{i_{n+1}...i_{p}}
\label{e=e}
\end{equation}%
If 
\begin{equation}
\text{either\ \ \ }e^{i_{0}...i_{n}}\simeq 0\ \ \text{or\ \ \ }%
e^{i_{1}...i_{n+1}}\simeq 0,  \label{eo}
\end{equation}%
then we obtain $e^{i_{0}...i_{p}}\simeq 0.$ If (\ref{eo}) fails then the
both forms $e^{i_{0}...i_{n}}$ and $e^{i_{1}...i_{n+1}}$ represent non-zero
elements of $\Omega ^{n}$. Since the latter space has dimension $1$, it
follows that for some constant $\alpha \in \mathbb{K}$,%
\begin{equation*}
e^{i_{1}...i_{n+1}}\simeq \alpha e^{i_{0}...i_{n}}.
\end{equation*}%
Substituting into (\ref{e=e}), we obtain%
\begin{equation*}
e^{i_{0}...i_{p}}\simeq \alpha
e^{i_{0}i_{1}}e^{i_{0}...i_{n}}e^{i_{n+1}...i_{p}}.
\end{equation*}%
Since the path $i_{0}...i_{p}$ is allowed and the path complex in question
is regular, this path is regular and, hence, $i_{0}\neq i_{1}$. It follows
that $e^{i_{0}i_{1}}e^{i_{0}...i_{n}}=0,$ whence $e^{i_{0}...i_{p}}\simeq 0$%
, which finishes the proof.
\end{proof}

\subsection{Connected components and $H^{0}$}

\label{SecH0}Given a path complex $P$ with a vertex set $V$, by a \emph{%
connected component} of $P$ we mean any minimal\footnote{%
The minimality of $U$ means that no proper subset of $U$ satisfies the same
property.} subset $U$ of $V$ that if $i\in U$ then $U$ contains any vertex $%
j\in V$ such that $ij$ or $ji$ is an allowed $1$-path. Clearly, any two
connected components are either disjoint or identical, and the vertex set $V$
is a disjoint union of the connected components. If $V$ itself is a
connected component then the path complex $P$ is called connected.

For example, if $P$ is the path complex of a digraph then the connected
components of $P$ coincide with those of the underlying undirected graph.

\begin{proposition}
\label{PH0}For any path complex $P$ we have%
\begin{equation}
\dim H^{0}\left( P\right) =C,  \label{dimH0}
\end{equation}%
where $C$ is the number of connected components of $P$. In particular, if $P$
is connected then $\dim H^{0}\left( P\right) =1$ and, hence, $\dim 
\widetilde{H}^{0}\left( P\right) =0.$
\end{proposition}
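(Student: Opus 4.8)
The plan is to compute $H^0(P)$ directly from the cochain complex (\ref{cochaintrun}), using the fact that $H^0 = \ker d|_{\Omega^0}$ since there is no incoming differential (the truncated complex starts at $\Omega^0$). Recall from the excerpt that $\Omega^0 = \mathcal{R}^0$ is precisely the space of all $\mathbb{K}$-valued functions on $V$. So I would start by identifying a $0$-form $\omega$ with a function $i \mapsto \omega_i$ on $V$, and then unpack what $d\omega = 0$ means in $\Omega^1$.

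The key computation is the following. By (\ref{dom}) we have $(d\omega)_{ij} = \omega_j - \omega_i$ for all $i,j \in V$. But $d\omega$ lives in $\Omega^1 = \Lambda^1/\mathcal{J}^1$ (or $\mathcal{R}^1/\mathcal{J}^1$ in the regular case), so the condition $d\omega = 0$ in $\Omega^1$ does \emph{not} force $(d\omega)_{ij}=0$ for all pairs $ij$; it only forces $d\omega \in \mathcal{J}^1$. The cleanest route is to use the duality established earlier: $\Omega^1$ is the dual of $\Omega_1$, and by Lemma \ref{LemJ}$(b)$ the pairing descends, so $d\omega = 0$ in $\Omega^1$ if and only if $(d\omega, v) = 0$ for every $v \in \Omega_1$. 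Since $\Omega_1 = \mathcal{A}_1$ is spanned by the allowed edges $e_{ij}$, and $(d\omega, e_{ij}) = (d\omega)_{ij} = \omega_j - \omega_i$, I conclude that $d\omega = 0$ in $\Omega^1$ \emph{exactly} when $\omega_i = \omega_j$ whenever $ij$ is an allowed edge. Note this condition is symmetric in the direction of the edge: $\omega_i = \omega_j$ is required whether $ij$ or $ji$ is allowed.

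The final step identifies this kernel with functions constant on connected components. By the definition of connected component recalled just before the statement, the condition ``$\omega_i = \omega_j$ whenever $ij$ or $ji$ is an allowed $1$-path'' propagates along chains of edges, so it is equivalent to saying $\omega$ is constant on each connected component $U$ of $P$. A function constant on each of the $C$ connected components is determined by one scalar per component, and these choices are independent, so the kernel has dimension exactly $C$. This gives $\dim H^0(P) = C$, proving (\ref{dimH0}). The last sentence of the proposition is immediate: if $P$ is connected then $C=1$, and the duality between $\widetilde{H}^0$ and $\widetilde{H}_0$ together with $\dim \widetilde{H}_0(P) = \dim H_0(P) - 1 = \dim H^0(P) - 1 = 0$ (using (\ref{H0v})) gives $\dim \widetilde{H}^0(P) = 0$.

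The main obstacle, and the step I would be most careful about, is the passage from the naive reading $(d\omega)_{ij} = \omega_j - \omega_i$ to the correct condition in the quotient space $\Omega^1$: one must not mistakenly demand $\omega_j = \omega_i$ for \emph{all} pairs (which would wrongly force $\omega$ globally constant and give $\dim H^0 = 1$ always). Routing the argument through the duality $\Omega^1 \cong (\Omega_1)'$ and the explicit spanning set $\{e_{ij} : ij \text{ allowed}\}$ of $\Omega_1 = \mathcal{A}_1$ is what makes the edge-by-edge condition precise and correctly restricts the constraints to allowed edges only. I expect everything else to be routine linear algebra and an elementary connectivity argument.
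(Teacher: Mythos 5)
Your proof is correct, and structurally it follows the same plan as the paper's: both arguments reduce $H^{0}\left( P\right) $ to $\ker d|_{\Omega ^{0}}$ (there being no incoming differential in the truncated complex (\ref{cochaintrun})), identify this kernel with the functions satisfying $\omega _{i}=\omega _{j}$ along allowed $1$-paths, and then count one scalar per connected component. The one place where you take a genuinely different route is the middle step, namely the characterization of the condition $d\omega \simeq 0$. The paper reads it off from the definition (\ref{Omnn}) of $\mathcal{J}^{1}$: since every element of $V$ is a vertex of $P$ (the paper arranged $V=P_{0}$), one has $\mathcal{N}^{0}=\left\{ 0\right\} $, hence $\mathcal{J}^{1}=\mathcal{N}^{1}+d\mathcal{N}^{0}=\mathcal{N}^{1}$, and $d\omega \in \mathcal{N}^{1}$ says exactly $\left( d\omega \right) _{ij}=0$ for all allowed $ij$. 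You instead invoke the duality $\Omega ^{1}\cong \left( \Omega _{1}\right) ^{\prime }$ (Lemma \ref{LemJ}$\left( b\right) $ together with the isomorphism (\ref{nn})) and pair $d\omega $ against the spanning set $\left\{ e_{ij}\right\} $ of $\Omega _{1}=\mathcal{A}_{1}$. What your route buys is that you never need to compute $\mathcal{J}^{1}$ explicitly; in particular you sidestep the small but essential observation that $\mathcal{N}^{0}=\left\{ 0\right\} $, without which $d\mathcal{N}^{0}$ could contribute extra identifications and the condition would not reduce to vanishing on allowed edges alone. What the paper's route buys is economy: it uses only the quotient definition of $\Omega ^{1}$, with no appeal to the duality machinery. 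Your treatment of the correct quotient reading of $d\omega =0$ (rather than the naive global condition $\omega _{j}=\omega _{i}$ for all pairs, which would falsely give $\dim H^{0}=1$ always) is exactly the point the paper's proof also turns on, and your closing step, $\dim \widetilde{H}^{0}\left( P\right) =\dim \widetilde{H}_{0}\left( P\right) =\dim H_{0}\left( P\right) -1=0$ via (\ref{H0v}) and the duality of $\widetilde{H}^{n}$ and $\widetilde{H}_{n}$, is consistent with the paper's conventions. One minor remark: in the non-regular case an allowed loop $ii$ may occur, but it imposes only the vacuous condition $\omega _{i}=\omega _{i}$ and does not alter the components, so your connectivity count is unaffected.
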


\begin{proof}
By definition, we have 
\begin{equation*}
H^{0}\left( \Omega \right) =\ker d|_{\Omega ^{0}}=\left\{ f\in \Omega
^{0}:df\simeq 0\right\} .
\end{equation*}%
The condition $df\simeq 0$ means that $df\in \mathcal{J}^{0}=\mathcal{N}^{0}$%
, that is, $\left( df\right) _{ij}=0$ for all allowed $1$-paths $ij$.
Therefore, we have $f_{i}=f_{j}$ for all allowed $1$-paths $ij$. The latter
is equivalent to the fact that $f=\func{const}$ on any connected component
of $P$. Hence, the dimension of the space of such functions is equal to $C$.
\end{proof}

\subsection{Disjoint union and connected sum}

\label{Secdis}For any two path complexes $P^{\prime }$ and $P^{\prime \prime
}$ with the vertex sets $V^{\prime }$ and $V^{\prime \prime }$,
respectively, their union $P^{\prime }\cup P^{\prime \prime }$ is obviously
also a path complex with the vertex set $V^{\prime }\cup V^{\prime \prime }.$
We say that $P^{\prime }$ and $P^{\prime \prime }$ are disjoint if their
vertex sets are disjoint.

\begin{proposition}
\label{Pdisjoint}If $P^{\prime }$ and $P^{\prime \prime }$ are disjoint path
complexes then, for their union $P=P^{\prime }\cup P^{\prime \prime }$ we
have%
\begin{equation*}
\Omega ^{n}\left( P\right) =\Omega ^{n}\left( P^{\prime }\right) \oplus
\Omega ^{n}\left( P^{\prime \prime }\right)
\end{equation*}%
and, hence,%
\begin{equation*}
H^{n}\left( P\right) \cong H^{n}\left( P^{\prime }\right) \oplus H^{n}\left(
P^{\prime \prime }\right)
\end{equation*}%
for all $n\geq 0$.
\end{proposition}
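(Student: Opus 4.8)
The plan is to prove first the dual statement at the level of $\partial$-invariant paths, namely $\Omega_n(P)=\Omega_n(P')\oplus\Omega_n(P'')$, and then transport it to forms and cohomology via the duality established above. The decisive combinatorial observation is this: since $V'$ and $V''$ are disjoint and $P=P'\cup P''$, there is no allowed $1$-path (edge) joining a vertex of $V'$ to a vertex of $V''$. Consequently every allowed elementary $n$-path of $P$, being a sequence whose consecutive pairs are edges, lies entirely in $V'$ or entirely in $V''$. Hence the space of allowed paths splits,
\[
\mathcal{A}_n(P)=\mathcal{A}_n(P')\oplus\mathcal{A}_n(P''),
\]
and moreover $\partial$ carries $\mathcal{A}_n(P')$ into the span of paths lying in $V'$ and $\mathcal{A}_n(P'')$ into the span of paths lying in $V''$, because deleting a vertex from a path in $V'$ again yields a path in $V'$.

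The main step is to deduce from this the splitting of $\Omega_n$. Writing $v=v'+v''$ with $v'\in\mathcal{A}_n(P')$ and $v''\in\mathcal{A}_n(P'')$, the previous observation gives $\partial v=\partial v'+\partial v''$ with the two summands supported on paths in $V'$ and in $V''$ respectively. Since $\mathcal{A}_{n-1}(P)=\mathcal{A}_{n-1}(P')\oplus\mathcal{A}_{n-1}(P'')$ has the same disjoint supports, the membership $\partial v\in\mathcal{A}_{n-1}(P)$ decouples into the two separate conditions $\partial v'\in\mathcal{A}_{n-1}(P')$ and $\partial v''\in\mathcal{A}_{n-1}(P'')$. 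By the definition (\ref{Omdef}) of $\Omega_n$ this means precisely that $v\in\Omega_n(P)$ if and only if $v'\in\Omega_n(P')$ and $v''\in\Omega_n(P'')$, so that
\[
\Omega_n(P)=\Omega_n(P')\oplus\Omega_n(P''),
\]
and $\partial$ respects this decomposition. In other words, $\Omega_\bullet(P)$ is the direct sum of the chain complexes $\Omega_\bullet(P')$ and $\Omega_\bullet(P'')$. The same argument applies verbatim to the regular version, since the boundary of a path within one component never produces components from the other.

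Finally I would pass to forms and cohomology. Using the identification of $\Omega^n$ with the dual space of $\Omega_n$ and the fact that $d$ is dual to $\partial$, dualizing the chain-complex decomposition yields the direct-sum decomposition of the cochain complex, $\Omega^n(P)=\Omega^n(P')\oplus\Omega^n(P'')$, with $d$ respecting it. Since cohomology commutes with finite direct sums of cochain complexes, one concludes $H^n(P)\cong H^n(P')\oplus H^n(P'')$ for all $n\geq 0$; this is also consistent with Proposition \ref{PH0}, as the components of $P$ are the union of those of $P'$ and $P''$. The only genuinely substantive point is the decoupling of the $\partial$-invariance condition in the second paragraph, which rests entirely on the fact that no allowed path crosses between $V'$ and $V''$; everything else is the formal behaviour of duality and (co)homology under direct sums.
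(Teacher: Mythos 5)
Your proof is correct, but it takes a genuinely different route from the paper's. The paper works entirely on the form side: it asserts that each of the spaces $S^{n}$, for $S=\Lambda ,\mathcal{R},\mathcal{N},\mathcal{J}$, splits as $S^{n}\left( P^{\prime }\right) \oplus S^{n}\left( P^{\prime \prime }\right) $ compatibly with $d$, so that the quotient $\Omega ^{n}=\Lambda ^{n}\left/ \mathcal{J}^{n}\right. $ (resp. $\mathcal{R}^{n}\left/ \mathcal{J}^{n}\right. $ in the regular case) splits as well, and the cohomology splits with it. You instead prove the splitting of the chain complex $\Omega _{\bullet }$ of $\partial $-invariant paths, via $\mathcal{A}_{n}\left( P\right) =\mathcal{A}_{n}\left( P^{\prime }\right) \oplus \mathcal{A}_{n}\left( P^{\prime \prime }\right) $ and the decoupling of the condition $\partial v\in \mathcal{A}_{n-1}$, and then transport the result to $\Omega ^{n}$ and $H^{n}$ through the duality $\Omega ^{n}\cong \left( \Omega _{n}\right) ^{\prime }$ and the duality of $d$ and $\partial $ established earlier in the section; this is sound, and the dual of the projection $\Omega _{n}\left( P\right) \rightarrow \Omega _{n}\left( P^{\prime }\right) $ is exactly the natural embedding $\Omega ^{n}\left( P^{\prime }\right) \rightarrow \Omega ^{n}\left( P\right) $, so you recover the equality as stated. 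Your route buys two things. First, it yields the homological counterparts $\Omega _{n}\left( P\right) =\Omega _{n}\left( P^{\prime }\right) \oplus \Omega _{n}\left( P^{\prime \prime }\right) $ and $H_{n}\left( P\right) \cong H_{n}\left( P^{\prime }\right) \oplus H_{n}\left( P^{\prime \prime }\right) $ as a byproduct. Second, it sidesteps a point the paper's one-line argument glosses over: taken literally, the identities for $S=\Lambda ,\mathcal{R},\mathcal{N}$ fail for $n\geq 1$, since $\Lambda ^{n}\left( V\right) $ also contains the \emph{mixed} elementary forms $e^{i_{0}...i_{n}}$ whose vertices meet both vertex sets; these are non-allowed, hence lie in $\mathcal{N}^{n}\left( P\right) \subset \mathcal{J}^{n}\left( P\right) $ and die in the quotient (and $d$ of a form supported on $V^{\prime }$ differs from its $d$ within $\Lambda ^{\bullet }\left( V^{\prime }\right) $ only by mixed terms), but this needs saying --- on the path side there simply are no mixed allowed paths, so your decomposition is immediate. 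One small simplification: your opening argument that an allowed path cannot cross between the two vertex sets (propagating along consecutive allowed $1$-paths via the truncation property (\ref{t})) is valid but unnecessarily indirect, since by the definition of the union one has $P_{n}=P_{n}^{\prime }\sqcup P_{n}^{\prime \prime }$, so every allowed path lies entirely in one component outright.
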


\begin{proof}
This follows from the obvious identities 
\begin{equation*}
S^{n}\left( P\right) =S^{n}\left( P^{\prime }\right) \oplus S^{n}\left(
P^{\prime \prime }\right)
\end{equation*}%
for each space $S=\Lambda ,\mathcal{R},\mathcal{N},\mathcal{J}$, and the
fact that $d$ on $\Lambda ^{n}\left( P\right) $ splits into the direct sum
of the operators $d$ on $\Lambda ^{n}\left( P^{\prime }\right) $ and $%
\Lambda ^{n}\left( P^{\prime \prime }\right) $.
\end{proof}

\section{$\partial $-invariant paths on digraphs}

\setcounter{equation}{0}\label{Sec5}In this section, we fix a digraph $%
G=\left( V,E\right) $ without loops. Then its path complex $P\left( G\right) 
$ is regular. We study here the regular spaces $\Omega _{n}\left( G\right) =$
$\Omega _{n}\left( P\left( G\right) \right) $ of $\partial $-invariant paths
and the associated homology groups $H_{n}\left( G\right) =H_{n}\left(
P\left( G\right) \right) $ and $\widetilde{H}_{n}\left( G\right) =\widetilde{%
H}_{n}\left( P\left( G\right) \right) .$

\subsection{Semi-edges and $\partial $-invariant paths}

\label{Secsemi}Let us describe more explicitly the notion of $\partial $%
-invariant paths on a digraph $G$. Let us say that a pair $ij$ of vertices
is a \emph{semi-edge} if it is not an edge but there is a vertex $k$ (not
necessarily unique) such that $ik$ and $kj$ are edges. The $2$-path $ikj$ is
called a \emph{bridge} of the semi-edge $ij$. The semi-edge $ij$ will be
denoted by $i\rightharpoonup j$ as on the diagram: 
\begin{equation*}
\begin{array}{c}
_{_{\nearrow }}\overset{k}{\bullet }_{_{\searrow }} \\ 
^{i}\bullet \ \rightharpoonup \ \bullet ^{j}%
\end{array}%
\end{equation*}

Let us say that an elementary path $i_{0}...i_{p}$ is \emph{semi-allowed} if
among the pairs $i_{q-1}i_{q}$, $q=1,...,p$, there is exactly one semi-edge,
while all others are edges, as on the diagram:%
\begin{equation*}
\begin{array}{ccc}
& \overset{k}{\bullet } &  \\ 
\cdots \rightarrow \bullet \rightarrow _{i_{q-1}}\bullet _{\ }^{^{\nearrow }}
& \rightharpoonup & _{\ }^{^{\searrow }}\bullet _{i_{q}}\rightarrow \
\bullet \rightarrow \cdots%
\end{array}%
\end{equation*}%
A path $i_{0}...i_{q-1}ki_{q}...i_{p}$ that is obtained by replacing in $%
i_{0}...i_{q-1}i_{q}...i_{p}$ the semi-edge $i_{q-1}i_{q}$ by the bridge $%
i_{q-1}ki_{q}$, is obviously allowed and will be called an allowed extension
of $i_{0}...i_{p}$.

Let us use the following notation: if $i_{0}...i_{p}$ is semi-allowed with
the semi-edge $i_{q-1}i_{q}$ then, for any $p$-path $v$, define its \emph{%
deficiency} $\left[ v\right] ^{i_{0}...i_{p}}$ along the path $i_{0}...i_{p}$
by%
\begin{equation}
\left[ v\right] ^{i_{0}...i_{p}}:=\sum_{k\in
V}v^{i_{0}...i_{q-1}ki_{q}...i_{p}}.  \label{dd}
\end{equation}%
Clearly, it suffices to restrict the summation to those $k$ forming a bridge 
$i_{q-1}ki_{q}$. Alternatively, one can say that the summation in (\ref{dd})
is performed across all allowed extensions of the path $i_{0}...i_{p}$.

\begin{lemma}
\label{Lemsemialowed}Let $p\geq 1$. A path $v\in \mathcal{A}_{p+1}$ belongs
to $\Omega _{p+1}$ if and only if for all semi-allowed paths $i_{0}...i_{p}$%
, 
\begin{equation*}
\left[ v\right] ^{i_{0}...i_{p}}=0.
\end{equation*}
\end{lemma}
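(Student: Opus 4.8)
The plan is to unravel the definition of $\Omega_{p+1}$ and compute the coefficients of $\partial v$ at non-allowed paths explicitly. By definition $v\in\Omega_{p+1}$ means $v\in\mathcal{A}_{p+1}$ and $\partial v\in\mathcal{A}_{p}$, the latter being equivalent to saying that every component of $\partial v$ at a \emph{non-allowed} $p$-path vanishes. So the whole statement reduces to understanding, for a non-allowed elementary $p$-path $j_{0}\dots j_{p-1}$, wait --- let me index carefully: $\partial v\in\Lambda_{p}$, so I must examine $(\partial v)^{j_{0}\dots j_{p}}$ for each non-allowed $p$-path $j_{0}\dots j_{p}$ and show these vanish precisely when all deficiencies are zero.

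First I would write down, using formula~(\ref{dv}) (in its regular version, since the digraph is loopless and $P(G)$ regular), the component
\begin{equation*}
(\partial v)^{j_{0}\dots j_{p}}=\sum_{k\in V}\sum_{q=0}^{p+1}(-1)^{q}\,v^{j_{0}\dots j_{q-1}k\,j_{q}\dots j_{p}},
\end{equation*}
valid for regular $j_{0}\dots j_{p}$. The key observation is that $v$ is supported on allowed $(p+1)$-paths, so the term $v^{j_{0}\dots j_{q-1}k\,j_{q}\dots j_{p}}$ can be nonzero only when inserting $k$ between $j_{q-1}$ and $j_{q}$ produces an allowed path. For that to happen, the ambient path $j_{0}\dots j_{p}$ must already have all its consecutive pairs being edges \emph{except} at the one spot where $k$ is inserted, i.e.\ $j_{0}\dots j_{p}$ must be semi-allowed with semi-edge exactly $j_{q-1}j_{q}$, and $k$ must form a bridge. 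This is the heart of the matter and the step I expect to require the most care: I must argue that for a given non-allowed $j_{0}\dots j_{p}$, at most one value of $q$ contributes, and that a genuine contribution forces $j_{0}\dots j_{p}$ to be semi-allowed.

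The plan for that step is a case analysis on the non-allowed path $j_{0}\dots j_{p}$ according to how many of its consecutive pairs $j_{r-1}j_{r}$ fail to be edges. If two or more pairs are non-edges, then no single insertion of one vertex $k$ can repair all of them, so every summand vanishes and $(\partial v)^{j_{0}\dots j_{p}}=0$ automatically. If exactly one pair $j_{m-1}j_{m}$ is a non-edge while all others are edges, then a nonzero summand $v^{j_{0}\dots j_{q-1}k\,j_{q}\dots j_{p}}$ requires the inserted $k$ to sit at position $q=m$ and to bridge $j_{m-1}\to k\to j_{m}$; inserting $k$ anywhere else leaves the bad pair $j_{m-1}j_{m}$ intact and gives $0$. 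Moreover the non-edge $j_{m-1}j_{m}$ is exactly a semi-edge (it has a bridge, namely any such $k$), so $j_{0}\dots j_{p}$ is precisely a semi-allowed path. Finally, if all pairs are edges then $j_{0}\dots j_{p}$ is allowed, which is excluded. Collecting the surviving terms, for a semi-allowed path with semi-edge at position $m$ I get
\begin{equation*}
(\partial v)^{j_{0}\dots j_{p}}=(-1)^{m}\sum_{k\in V}v^{j_{0}\dots j_{m-1}k\,j_{m}\dots j_{p}}=(-1)^{m}\,[v]^{j_{0}\dots j_{p}},
\end{equation*}
using the definition~(\ref{dd}) of deficiency.

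Having established this coefficient formula, the conclusion is immediate: $\partial v\in\mathcal{A}_{p}$ iff $(\partial v)^{j_{0}\dots j_{p}}=0$ for every non-allowed $p$-path, and by the computation this is automatic except for semi-allowed paths, where it is equivalent (up to the nonzero sign $(-1)^{m}$) to $[v]^{j_{0}\dots j_{p}}=0$. Hence $v\in\Omega_{p+1}$ iff all deficiencies $[v]^{i_{0}\dots i_{p}}$ over semi-allowed paths vanish, which is the claim. One small point I would check along the way, to stay inside the regular framework, is that inserting a vertex $k$ into a regular semi-allowed path to form a bridge always yields a \emph{regular} allowed path (guaranteed since an allowed path in a loopless digraph is regular), so no non-regular components intrude and the regular and ordinary computations of the relevant coefficients agree.
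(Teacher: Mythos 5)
Your proof is correct and takes essentially the same route as the paper's: both reduce $v\in \Omega _{p+1}$ to the vanishing of the components $\left( \partial v\right) ^{j_{0}...j_{p}}$ at non-allowed regular $p$-paths via formula (\ref{dv}), observe that inserting a single vertex $k$ can eliminate at most one non-edge so that the condition is non-void only for semi-allowed paths, and identify the surviving sum with the deficiency $\left[ v\right] ^{j_{0}...j_{p}}$ up to a nonzero sign. Your explicit sign $\left( -1\right) ^{m}$ and the closing regularity check are harmless refinements of details the paper leaves implicit.
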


\begin{proof}
The condition $v\in \Omega _{p+1}$ is equivalent to $\partial v\in \mathcal{A%
}_{p}$, while the latter is equivalent to 
\begin{equation}
\left( \partial v\right) ^{i_{0}...i_{p}}=0  \label{dvk0}
\end{equation}%
for all non-allowed regular paths $i_{0}...i_{p-1}$. By (\ref{dv}) we have%
\begin{equation}
\left( \partial v\right)
^{i_{0}...i_{p}}=\dsum\limits_{q=0}^{p+1}\dsum\limits_{k}\left( -1\right)
^{q}v^{i_{0}...i_{q-1}ki_{q}...i_{p}}.  \label{dvk}
\end{equation}%
If $i_{0}...i_{p}$ is not semi-allowed then all the paths $%
i_{0}...i_{q-1}ki_{q}...i_{p}$ are not allowed, because by inserting $k$ one
can eliminate only one non-edge. Hence, for such $i_{0}...i_{p}$ the
condition (\ref{dvk0}) is satisfied automatically, so that (\ref{dvk0}) is
non-void only for semi-allowed paths. If the only semi-edge in $%
i_{0}...i_{p} $ is $i_{q-1}i_{q}$ then (\ref{dvk0}) amounts to 
\begin{equation*}
\sum_{k}v^{i_{0}...i_{q-1}ki_{q}...i_{p}}=0,
\end{equation*}%
which was to be proved.
\end{proof}

\subsection{Triangles, squares and $\dim \Omega _{p}$}

\label{Secsq}Recall that $\dim \Omega _{0}=\dim \mathcal{A}_{0}=\left\vert
V\right\vert $ and $\dim \Omega _{1}=\dim \mathcal{A}_{1}=\left\vert
E\right\vert $. Here we give an explicit formula for $\dim \Omega _{2}$
using the set $P_{2}$ of allowed $2$-paths and the set $\mathcal{S}$ of
semi-edges of the digraph $G$ (see Section \ref{Secsemi} for the definition
of a semi-edge).

\begin{proposition}
\label{PropS}We have%
\begin{equation}
\dim \Omega _{2}=\dim \mathcal{A}_{2}-\left\vert \mathcal{S}\right\vert
=\left\vert P_{2}\right\vert -\left\vert \mathcal{S}\right\vert .
\label{Om2}
\end{equation}
\end{proposition}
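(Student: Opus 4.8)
The plan is to exhibit $\Omega_2$ as the kernel of a concrete linear map and then invoke the rank-nullity theorem. Since $\mathcal{A}_2$ is spanned by the distinct elementary allowed $2$-paths $e_{ikj}$ (those with $i\to k\to j$), these form a basis and $\dim\mathcal{A}_2=|P_2|$, which already accounts for the second equality in (\ref{Om2}). By Lemma \ref{Lemsemialowed} with $p=1$, a path $v\in\mathcal{A}_2$ belongs to $\Omega_2$ if and only if its deficiency $[v]^{ij}=\sum_{k}v^{ikj}$ vanishes for every semi-allowed $1$-path $ij$; and the semi-allowed $1$-paths are exactly the semi-edges $ij\in\mathcal{S}$, since in the regular setting the constraints are imposed only at regular non-allowed $1$-paths, i.e. at pairs with $i\neq j$. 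Hence $\Omega_2=\ker L$, where $L\colon\mathcal{A}_2\to\mathbb{K}^{\mathcal{S}}$ sends $v$ to the tuple $\bigl([v]^{ij}\bigr)_{ij\in\mathcal{S}}$ of its deficiencies.

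First I would evaluate $L$ on the basis $\{e_{ikj}\}$. For a semi-edge $st\in\mathcal{S}$ one has $[e_{ikj}]^{st}=\sum_m(e_{ikj})^{smt}$, which equals $1$ exactly when the endpoint pair $st$ coincides with $ij$, and is $0$ otherwise. Therefore $L(e_{ikj})$ is the standard basis vector of $\mathbb{K}^{\mathcal{S}}$ indexed by $ij$ when $ij\in\mathcal{S}$, and $L(e_{ikj})=0$ when $ij$ is an edge (so $ij\notin\mathcal{S}$) or when $i=j$ (so $ij$ is non-regular and absent from $\mathcal{S}$). In particular every column of $L$ has at most one nonzero entry.

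The remaining point is surjectivity of $L$, equivalently $\operatorname{rank}L=|\mathcal{S}|$, and this is where the definition of a semi-edge does the work: each $ij\in\mathcal{S}$ possesses, by definition, a bridge $ikj$, i.e. an allowed (automatically regular, since $G$ is loopless) $2$-path with endpoints $i,j$, and for this basis element $L(e_{ikj})$ is precisely the standard basis vector indexed by $ij$. Hence every coordinate vector of $\mathbb{K}^{\mathcal{S}}$ lies in the image of $L$, so $L$ is onto and $\operatorname{rank}L=|\mathcal{S}|$. Applying the rank-nullity theorem then gives $\dim\Omega_2=\dim\ker L=\dim\mathcal{A}_2-\operatorname{rank}L=|P_2|-|\mathcal{S}|$, which is (\ref{Om2}). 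I expect no genuine obstacle; the only subtleties worth stating explicitly are that the deficiency functionals are indexed precisely by the regular semi-edges (so that two-way paths $iki$ contribute no constraint), and that the "at most one nonzero entry per column" structure combined with the guaranteed existence of a bridge forces the functionals $\{[\,\cdot\,]^{ij}\}_{ij\in\mathcal{S}}$ to be linearly independent.
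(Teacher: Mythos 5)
Your proof is correct and is essentially the paper's own argument in linear-algebra packaging: the paper likewise cuts $\Omega_2$ out of $\mathcal{A}_2$ by the conditions $\sum_b v^{abc}=0$, one per semi-edge $ac$, and justifies their independence by exactly your two observations (each allowed triple determines at most one semi-edge, and each semi-edge has a bridge), only computing $\partial e_{abc}=-e_{ac}\func{mod}\mathcal{A}_1$ directly instead of invoking Lemma \ref{Lemsemialowed}. Your explicit surjectivity of the deficiency map $L$ via bridges, together with rank-nullity, is a slightly more careful rendering of the paper's independence claim, but not a different route.
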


\begin{proof}
Recall that%
\begin{equation*}
\mathcal{A}_{2}=\limfunc{span}\left\{ e_{abc}:abc\text{ is allowed}\right\}
,\ \ \ \dim \mathcal{A}_{2}=\left\vert P_{2}\right\vert ,
\end{equation*}%
and%
\begin{equation*}
\Omega _{2}=\left\{ v\in \mathcal{A}_{2}:\partial v\in \mathcal{A}%
_{1}\right\} =\left\{ v\in \mathcal{A}_{2}:\partial v=0\func{mod}\mathcal{A}%
_{1}\right\} .
\end{equation*}%
If $abc$ is allowed then $ab$ and $bc$ are edges, whence%
\begin{equation*}
\partial e_{abc}=e_{bc}-e_{ac}+e_{ab}=-e_{ac}\func{mod}\mathcal{A}_{1}.
\end{equation*}%
If $ac$ is an edge then $e_{ac}=0\func{mod}\mathcal{A}_{1}$. If $ac$ is not
an edge then $ac$ is a semi-edge, and in this case 
\begin{equation*}
\partial e_{abc}\neq 0~\func{mod}\mathcal{A}_{1}.
\end{equation*}%
For any $v\in \Omega _{2}$, we have%
\begin{equation*}
v=\sum_{\left\{ abc\text{ is allowed}\right\} }v^{abc}e_{abc}
\end{equation*}%
hence it follows that%
\begin{equation*}
\partial v=-\sum_{\left\{ abc:\ ac\text{ is semi-edge}\right\}
}v^{abc}e_{ac}~\func{mod}\mathcal{A}_{1}.
\end{equation*}%
The condition $\partial v=0\func{mod}\mathcal{A}_{1}$ is equivalent to%
\begin{equation*}
\sum_{\left\{ abc:\ ac\text{ is semi-edge}\right\} }v^{abc}e_{ac}=0\func{mod}%
\mathcal{A}_{1},
\end{equation*}%
which is equivalent to $\sum_{b}v^{abc}=0$ for all semi-edges $ac.$ The
number of these conditions is exactly $\left\vert \mathcal{S}\right\vert $,
and they all are independent for different semi-edges, because a triple $abc$
determines at most one semi-edge. Hence, $\Omega _{2}$ is obtained from $%
\mathcal{A}_{2}$ by imposing $\left\vert \mathcal{S}\right\vert $ linearly
independent conditions, which implies (\ref{Om2}).
\end{proof}

Let us call by a \emph{triangle} a sequence of three distinct vertices $%
a,b,c\in V$ such that $a\rightarrow b,b\rightarrow c,a\rightarrow c$:%
\begin{equation*}
\begin{array}{ccc}
& \overset{b}{\bullet } &  \\ 
_{a}\bullet _{\ }^{^{\nearrow }} & \rightarrow & _{\ }^{^{\searrow }}\bullet
_{c}\ 
\end{array}%
\end{equation*}%
Note that a triangle determines a $2$-path $e_{abc}\in \Omega _{2}$ as $%
e_{abc}\in \mathcal{A}_{2}$ and 
\begin{equation*}
\partial e_{abc}=e_{bc}-e_{ac}+e_{ab}\in \mathcal{A}_{1}.
\end{equation*}

Let us called by a \emph{square} a sequence of four distinct vertices $%
a,b,b^{\prime },c\in V$ such that $a\rightarrow b,b\rightarrow
c,a\rightarrow b^{\prime },b^{\prime }\rightarrow c$:%
\begin{equation*}
\begin{array}{ccc}
_{b}\bullet & \longrightarrow & \bullet _{c} \\ 
\ \uparrow &  & \uparrow \  \\ 
_{a}\bullet & \longrightarrow & \bullet _{b^{\prime }}%
\end{array}%
\end{equation*}%
Note that a square determines a $2$-path $v:=e_{abc}-e_{ab^{\prime }c}\in
\Omega _{2}$ as $v\in \mathcal{A}_{2}$ and%
\begin{eqnarray*}
\partial v &=&\left( e_{bc}-e_{ac}+e_{ab}\right) -\left( e_{b^{\prime
}c}-e_{ac}+e_{ab^{\prime }}\right) \\
&=&e_{ab}+e_{bc}-e_{ab^{\prime }}-e_{b^{\prime }c}\in \mathcal{A}_{1}.
\end{eqnarray*}

\begin{theorem}
\label{Tsq}\label{CorS}Assume that a digraph $G=\left( V,E\right) $ contains
no squares (as subgraphs). Then $\dim \Omega _{2}\left( G\right) $ is equal
to the number of distinct triangles in $G$, and $\dim \Omega _{p}\left(
G\right) =0$ for all $p>2$.

In particular, if $G$ contains neither triangle nor square then $\dim \Omega
_{p}\left( G\right) =0$ for all $p\geq 2$. Consequently, $\dim H_{p}\left(
G\right) =0$ for all $p\geq 2.$
\end{theorem}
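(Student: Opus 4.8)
The plan is to prove the two assertions separately, using the tools already developed. First I would establish the formula $\dim \Omega_2 = (\text{number of triangles})$. By Proposition \ref{PropS} we already know that $\dim \Omega_2 = |P_2| - |\mathcal{S}|$, where $P_2$ is the set of allowed $2$-paths and $\mathcal{S}$ is the set of semi-edges. The key observation under the no-square hypothesis is that each semi-edge $a \rightharpoonup c$ has exactly one bridge: if there were two distinct bridges $abc$ and $ab'c$ with $b \neq b'$, then $a,b,b',c$ would form a square, contradicting the hypothesis. Thus every semi-edge is realized by a unique allowed $2$-path $abc$ with $ac$ a non-edge, and every allowed $2$-path $abc$ falls into exactly one of two classes: either $ac$ is an edge (so $abc$ is a triangle) or $ac$ is a semi-edge (uniquely associated to that semi-edge). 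This gives a partition $|P_2| = (\text{triangles}) + |\mathcal{S}|$, whence $\dim \Omega_2 = |P_2| - |\mathcal{S}| = (\text{number of triangles})$.

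For the second assertion, that $\dim \Omega_p(G) = 0$ for all $p > 2$, the natural route is to invoke the duality between $\Omega_p$ and $\Omega^p$ established in the excerpt (Lemma following \eqref{OmJ}), so that $\dim \Omega_p = \dim \Omega^p$, and then apply Proposition \ref{Pdimn}. Since we are working with regular path complexes, part $(b)$ of that proposition states that $\dim \Omega^n \leq 1$ forces $\dim \Omega^p = 0$ for all $p > n$. So it suffices to exhibit some level $n \geq 2$ at which $\dim \Omega^n \leq 1$. The cleanest approach would be to show directly that $\dim \Omega^3 = 0$, i.e. that every allowed $3$-form $e^{i_0 i_1 i_2 i_3}$ satisfies $e^{i_0 i_1 i_2 i_3} \simeq 0$ modulo $\mathcal{J}^3$; then Proposition \ref{Pdimn}$(a)$ immediately yields $\dim \Omega^p = 0$ for all $p > 3$, and one handles $p = 3$ itself in the same computation.

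I would carry out the $p = 3$ case using the concatenation identity from the proof of Proposition \ref{Pdimn}, writing $e^{i_0 i_1 i_2 i_3} = e^{i_0 i_1 i_2} e^{i_2 i_3}$ and analyzing the structure of $\Omega^2$. Concretely, the dual/primal correspondence tells us that an element of $\Omega_3$ is an allowed $3$-path combination whose deficiency along every semi-allowed $2$-path vanishes (Lemma \ref{Lemsemialowed}). Under the no-square assumption, I expect that any allowed $3$-path $i_0 i_1 i_2 i_3$ has the property that when you look at the semi-allowed paths obtained by collapsing adjacent edges, the deficiency conditions become overdetermined: each allowed $3$-path is pinned down by the unique-bridge property, leaving no room for a nonzero $\partial$-invariant combination. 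Alternatively, working on the form side via Proposition \ref{Pdimn}$(b)$ avoids this case analysis entirely, provided I can verify $\dim \Omega^2 \leq 1$ is \emph{not} what I need — rather I need $\dim\Omega^2$ to be finite and then show $\Omega^3 = 0$ directly.

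The main obstacle will be the $p = 3$ vanishing, $\dim \Omega_3(G) = 0$. The triangle count for $\Omega_2$ is a clean bijective argument, but showing $\Omega_3 = 0$ requires genuinely using that eliminating a single non-edge by inserting one vertex is the only mechanism in the boundary formula \eqref{dvk}, together with the fact that squares are the precise obstruction allowing two distinct such insertions to conspire. I would argue that for any candidate $v \in \Omega_3$ and any allowed $3$-path appearing in $v$, the semi-allowed $2$-paths adjacent to it force the coefficient to vanish, because the absence of squares prevents two allowed $3$-paths from sharing the same collapsed semi-allowed $2$-path. Making this pinning-down rigorous — matching each coefficient to an independent deficiency constraint — is the delicate step, and it is exactly where the no-square hypothesis does the essential work, mirroring how it enforced the unique-bridge property in the $\Omega_2$ computation.
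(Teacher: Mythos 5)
Your computation of $\dim \Omega _{2}$ is correct and is exactly the paper's argument: Proposition \ref{PropS} gives $\dim \Omega _{2}=\left\vert P_{2}\right\vert -\left\vert \mathcal{S}\right\vert $, and the no-square hypothesis makes the map $abc\mapsto ac$, from allowed $2$-paths with $ac$ a non-edge to semi-edges, a bijection, whence $\left\vert P_{2}\right\vert =\#\left\{ \text{triangles}\right\} +\left\vert \mathcal{S}\right\vert $. The genuine gap is in the step $\Omega _{3}=\left\{ 0\right\} $, and you have flagged its location yourself without closing it. You prove the \emph{uniqueness} half of the needed argument: absence of squares implies that a semi-allowed $2$-path $ijl$ (with $jl$ a semi-edge) has exactly one allowed extension $ijkl$, so the deficiency constraint $\left[ v\right] ^{ijl}=\sum_{k^{\prime }}v^{ijk^{\prime }l}=0$ of Lemma \ref{Lemsemialowed} collapses to the single equation $v^{ijkl}=0$. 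What you never establish is the \emph{existence} half: that every allowed $3$-path $ijkl$ admits at least one semi-allowed collapse, that is, that at least one of the pairs $ik$, $jl$ is a semi-edge. Without this, your plan of \textquotedblleft matching each coefficient to an independent deficiency constraint\textquotedblright\ cannot even start for a $3$-path in which both $ik$ and $jl$ are edges, since Lemma \ref{Lemsemialowed} imposes no condition along allowed $2$-paths. The missing observation --- and the second, distinct use of the no-square hypothesis in the paper's proof --- is the dichotomy: since $ijk$ and $jkl$ are bridges, each of $ik$ and $jl$ is either an edge or a semi-edge; if both were edges, then $i\rightarrow j$, $j\rightarrow l$, $i\rightarrow k$, $k\rightarrow l$ would exhibit $i,j,k,l$ as a square, a contradiction. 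With that lemma in hand your pinning-down argument goes through verbatim.

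A secondary point: your form-side detour is a dead end as formulated. Proposition \ref{Pdimn}$\left( b\right) $ requires $\dim \Omega ^{n}\leq 1$ at some level, but $\dim \Omega ^{2}=\dim \Omega _{2}$ equals the number of triangles, which is in general large; and the concatenation identity $e^{i_{0}i_{1}i_{2}i_{3}}=e^{i_{0}i_{1}i_{2}}e^{i_{2}i_{3}}$ does not by itself yield $e^{i_{0}i_{1}i_{2}i_{3}}\simeq 0$, because triangle forms represent non-zero elements of $\Omega ^{2}$. The correct route, which you eventually gesture at and which the paper follows, is to prove $\Omega _{3}=\left\{ 0\right\} $ directly by the deficiency argument and then apply Proposition \ref{Pdimn}$\left( a\right) $ (through the duality $\dim \Omega _{p}=\dim \Omega ^{p}$) to kill all $\Omega _{p}$ with $p>3$; the final claim $\dim H_{p}=0$ for $p\geq 2$ in the triangle- and square-free case is then immediate from (\ref{dimHn}).
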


\begin{proof}
Let us split the family $P_{2}$ of allowed $2$-paths into two subsets: an
allowed path $abc$ is of the first kind if $ac$ is an edge and of the second
kind otherwise:%
\begin{equation*}
1^{st}\text{ kind:\ }%
\begin{array}{ccc}
& \overset{b}{\bullet } &  \\ 
_{a}\bullet _{\ }^{^{\nearrow }} & \rightarrow & _{\ }^{^{\searrow }}\bullet
_{c}\ 
\end{array}%
,\ \ \ \ \ 2^{nd}\text{ kind:\ }%
\begin{array}{ccc}
& \overset{b}{\bullet } &  \\ 
_{a}\bullet _{\ }^{^{\nearrow }} &  & _{\ }^{^{\searrow }}\bullet _{c}\ 
\end{array}%
\end{equation*}%
Clearly, the paths of the first kind are in one-to-one correspondence with
triangles. Each path $abc$ of the second kind determines a semi-edge $ac.$
The mapping of $abc\mapsto ac$ from the paths of second kind to semi-edges
is also one-to-one: if $abc\mapsto ac$ and $ab^{\prime }c\mapsto ac$ then we
obtain a square $a,b,b^{\prime },c$ which contradicts the hypotheses. Hence,
the number of the path of the second kind is equal to $\left\vert \mathcal{S}%
\right\vert ,$ which implies that the number of the paths of the first kind
is equal to $\left\vert P_{2}\right\vert -\left\vert \mathcal{S}\right\vert $%
, and so is the number of triangles. Comparing with (\ref{Om2}) we obtain
that $\dim \Omega _{2}$ is equal to the number of triangles.

Let us prove that $\Omega _{3}=\left\{ 0\right\} ,$ that is, any $v\in
\Omega _{3}$ is identical $0$. It suffices to prove that $v^{ijkl}=0$ for
any allowed path $ijkl$ on $G.$ Fix an allowed path $ijkl$ and assume first $%
jl$ is a semi-edge. Then $ijl$ is semi-allowed, and by Lemma \ref%
{Lemsemialowed} we obtain $\left[ v\right] ^{ijl}=0,$ that is,%
\begin{equation*}
\sum_{k^{\prime }}v^{ijk^{\prime }l}=0.
\end{equation*}%
However, the only allowed path of the form $ijk^{\prime }l$ is $ijkl$
because of the absence of squares: 
\begin{equation*}
\begin{array}{ccc}
& \overset{k}{\bullet } &  \\ 
_{i}\bullet \rightarrow _{j}\bullet _{\searrow }^{\nearrow } & 
\rightharpoonup & _{\nearrow }^{\searrow }\bullet _{l} \\ 
& \underset{k^{\prime }}{\bullet } & 
\end{array}%
\end{equation*}%
We conclude that $v^{ijkl}=0$, provided $jl$ is a semi-edge. In the same way 
$v^{ijkl}=0$ provided $ik$ is a semi-edge.

Now we claim that, for any allowed path $ijkl,$ either $ik$ or $jl$ is a
semi-edge. Indeed, if neither of them is a semi-edge then both $ik$ and $jl$
must be edges, which implies that the sequence $i,j,k,l$ forms a square:%
\begin{equation*}
\begin{array}{ccc}
& \overset{k}{\bullet } &  \\ 
i\bullet _{\searrow }^{\nearrow } & \uparrow & _{\nearrow }^{\searrow
}\bullet _{l} \\ 
& \underset{j}{\bullet } & 
\end{array}%
,
\end{equation*}%
which contradicts the hypothesis. It follows that $v^{ijkl}=0$ for any
allowed path $ijkl$, which proves that $\Omega _{3}=\left\{ 0\right\} .$ By
Proposition \ref{Pdimn} we conclude that $\Omega _{p}=\left\{ 0\right\} $
for all $p\geq 3$.
\end{proof}

In the presence of squares one cannot relate directly $\dim \Omega _{2}$ to
the number of squares and triangles since there may be a linear dependence
between them as in the next example.

\begin{example}
\RM In the following digraph 
\begin{equation*}
\begin{array}{ccc}
& \overset{1}{\bullet } &  \\ 
0\overset{\nearrow }{\underset{\searrow }{\bullet \rightarrow }} & \overset{2%
}{\bullet } & \underset{\nearrow }{\overset{\searrow }{\rightarrow \bullet }}%
4\  \\ 
& \underset{3}{\bullet } & 
\end{array}%
\end{equation*}%
there are three squares $0,1,2,4$, $0,1,3,4$, and $0,2,3,4$, which determine
three $\partial $-invariant paths 
\begin{equation*}
e_{014}-e_{024},\ \ \ e_{024}-e_{034},\ \ e_{034}-e_{014}.
\end{equation*}%
These paths are linearly dependent as their sum is equal to $0$. It is easy
to see that $\dim \Omega _{2}=2$ as $\left\vert P_{2}\right\vert =3$ and $%
\left\vert \mathcal{S}\right\vert =1$ as $\mathcal{S}=\left\{ 04\right\} $.
For this digraph all homologies are trivial.
\end{example}

Also, in the presence of squares one may have non-trivial $\Omega _{p}$ for
arbitrary $p$ as one can see from numerous examples in the subsequent
sections.

\subsection{Snakes and simplexes}

\label{Secsim}A \emph{snake} of length $p$ is a digraph with $p+1$ vertices,
say $0,1,...,p$, and with the edges $i\left( i+1\right) $ and $i\left(
i+2\right) $ (see Fig. \ref{pic10}). In particular, any triple $i\left(
i+1\right) \left( i+2\right) $ is a triangle.

\FRAME{ftbphFU}{6.3304in}{1.414in}{0pt}{\Qcb{A snake}}{\Qlb{pic10}}{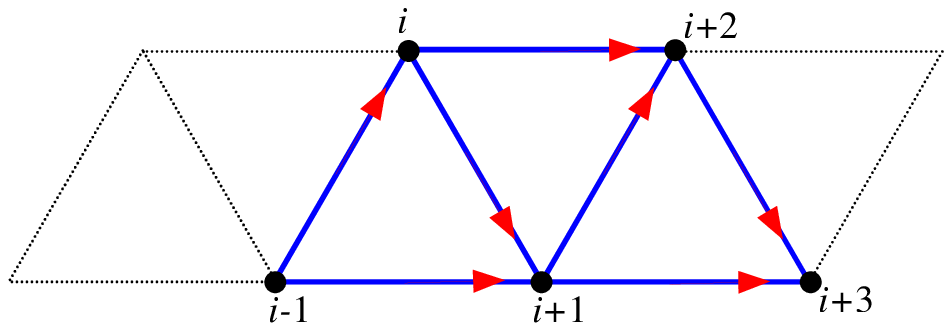%
}{\special{language "Scientific Word";type "GRAPHIC";maintain-aspect-ratio
TRUE;display "USEDEF";valid_file "F";width 6.3304in;height 1.414in;depth
0pt;original-width 6.4411in;original-height 1.3863in;cropleft "0";croptop
"1";cropright "1";cropbottom "0";filename 'pic10.eps';file-properties
"XNPEU";}}

A snake of length $p$ contains a $\partial $-invariant $p$-path $%
v=e_{01...p}.$ Indeed, this path is obviously allowed, its boundary%
\begin{equation*}
\partial v=\sum_{k=0}^{p}\left( -1\right) ^{k}e_{0...\widehat{k}...p}
\end{equation*}%
is also allowed (because $\left( k-1\right) \left( k+1\right) $ is an edge),
whence $v\in \Omega _{p}.$

\label{Exsimplex}\RM Let us define for any $n\geq 0$ a \emph{simplex-digraph}
$\func{Sm}_{n}$ as follows: its set of vertices is $\left\{
0,1,...,n\right\} $ and the edges are $i\rightarrow j$ for all $i<j$. For
example, we have 
\begin{equation*}
\func{Sm}_{1}=\ ^{0}\bullet \ \rightarrow \ \bullet ^{1},\ \ \ \ \func{Sm}%
_{2}=%
\begin{array}{c}
_{_{\nearrow }}\overset{2}{\bullet }_{_{\nwarrow }} \\ 
^{0}\bullet \ \rightarrow \ \bullet ^{1}%
\end{array}%
,
\end{equation*}%
and $\func{Sm}_{3}$ is shown on Fig. \ref{pic11}. \FRAME{ftbphFU}{6.9613in}{%
1.5221in}{0pt}{\Qcb{A $3$-simplex digraph $\func{Sm}_{3}$}}{\Qlb{pic11}}{%
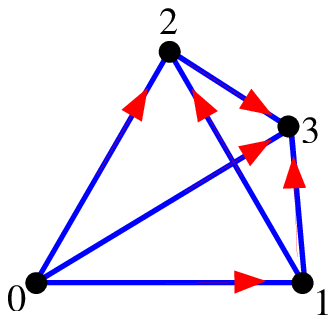}{\special{language "Scientific Word";type
"GRAPHIC";maintain-aspect-ratio TRUE;display "USEDEF";valid_file "F";width
6.9613in;height 1.5221in;depth 0pt;original-width 6.3027in;original-height
1.3586in;cropleft "0";croptop "1";cropright "1";cropbottom "0";filename
'pic11.eps';file-properties "XNPEU";}}

Since a simplex contains a snake as a subgraph, the $n$-path $v=e_{01...n}$
is $\partial $-invariant on $\func{Sm}_{n}.$

\subsection{Star-shaped digraphs and Poincar\'{e} lemma}

\label{SecStar}

\begin{definition}
\RM We say that a digraph $G$ is \emph{star-shaped} if there is a vertex $a$
(called a star center) such that $a\rightarrow b$ for all $b\neq a.$
Similarly, a digraph $G$ is called inverse star-shaped if if there is a
vertex $a$ (called a star center) such that $b\rightarrow a$ for all $b\neq
a $
\end{definition}

For example, a digraph $%
\begin{array}{c}
_{_{\nearrow }}\overset{1}{\bullet }_{_{\searrow }} \\ 
^{0}\bullet \ \leftrightarrows \ \bullet ^{2}%
\end{array}%
$ is star-shaped with the star center $0.$

\begin{theorem}
\label{Tstar}\emph{(A Poincar\'{e} lemma)} If $G$ is a (inverse) star-shaped
digraph, then all reduced homologies $\widetilde{H}_{n}\left( G\right) $ are
trivial.
\end{theorem}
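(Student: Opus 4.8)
The plan is to prove the Poincar\'{e} lemma by exhibiting a contracting chain homotopy on the reduced chain complex (\ref{cOmr}), imitating the classical cone argument with the star center $a$ playing the role of the cone point. Throughout one works with the regular spaces $\Omega _{\bullet }\left( G\right) $, and this regularity turns out to be essential below.

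For the star-shaped case I would define the cone operator $\tau \colon \Omega _{p}\rightarrow \Omega _{p+1}$ by $\tau v=e_{a}v$, the regular join of the center vertex $e_{a}$ with $v$. The first task is to check that $\tau $ is well-defined, i.e. $\tau v\in \Omega _{p+1}$ whenever $v\in \Omega _{p}$. Writing $v$ as a combination of allowed elementary paths $e_{i_{0}\dots i_{p}}$, each term produces $e_{ai_{0}\dots i_{p}}$: if $i_{0}\neq a$ then $a\rightarrow i_{0}$ (star center) makes this allowed, while if $i_{0}=a$ the path is non-regular and hence $0$ in the regular setting, so $\tau v\in \mathcal{A}_{p+1}$. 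To see $\partial \tau v\in \mathcal{A}_{p}$, apply the product rule (Lemma \ref{Lemuv}, valid in the regular setting by Lemma \ref{LemdIp}): since $\partial e_{a}=e$ by (\ref{deij}) and $ev=v$ (the empty path is a unit for the join, by (\ref{uvdef})), one obtains $\partial \left( e_{a}v\right) =v-e_{a}\partial v$. Here $v\in \mathcal{A}_{p}$, and as $v\in \Omega _{p}$ gives $\partial v\in \mathcal{A}_{p-1}$, also $e_{a}\partial v\in \mathcal{A}_{p}$; hence $\partial \tau v\in \mathcal{A}_{p}$ and $\tau v\in \Omega _{p+1}$.

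The computation just performed is simultaneously the homotopy identity: for every $v\in \Omega _{p}$ with $p\geq -1$ one has $\partial \tau v+\tau \partial v=v$. Granting this, acyclicity is immediate: if $v$ is closed then $v=\partial \tau v$ is exact, so $\ker \partial |_{\Omega _{p}}=\func{Im}\partial |_{\Omega _{p+1}}$ and $\widetilde{H}_{p}\left( G\right) =0$ for all $p$ (at $p=-1$ the same identity shows $\partial \colon \Omega _{0}\rightarrow \mathbb{K}$ is onto, recovering $\widetilde{H}_{-1}=0$). For the inverse star-shaped case I would instead set $\tau v=\left( -1\right) ^{p+1}v\,e_{a}$, the join on the right by the sink $a$; now $ve_{a}$ is allowed because $i_{p}\rightarrow a$ for allowed paths with $i_{p}\neq a$ (and the term vanishes when $i_{p}=a$), and the product rule gives $\partial \left( ve_{a}\right) =\left( \partial v\right) e_{a}+\left( -1\right) ^{p+1}v$. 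The degree-dependent sign is arranged precisely so that the $\left( \partial v\right) e_{a}$ contributions telescope and $\partial \tau +\tau \partial =\mathrm{id}$ holds again.

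The main obstacle is not the homotopy identity, which is a direct consequence of the product rule, but the verification that $\tau $ genuinely maps $\Omega _{p}$ into $\Omega _{p+1}$: one must confront the spurious terms $e_{ai_{0}\dots i_{p}}$ with $i_{0}=a$ (resp. $e_{i_{0}\dots i_{p}a}$ with $i_{p}=a$), and it is the passage to regular paths---legitimate since $G$ is loopless---that disposes of them. Keeping the sign convention straight in the inverse case and checking the boundary degrees $p=-1,0$ of the augmented complex are the remaining points that need care.
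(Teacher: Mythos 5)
Your proof is correct and takes essentially the same approach as the paper: the paper's argument is exactly your cone operator $u=e_{a}v$ applied to a cycle $v$, with the same two ingredients --- the regular product rule $\partial \left( e_{a}v\right) =v-e_{a}\partial v$ and the use of regularity (the digraph being loopless) to dispose of the non-regular terms $e_{ai_{0}...i_{n}}$ with $i_{0}=a$ --- and the same right-join variant for the inverse star-shaped case. The only difference is cosmetic: you package the construction as a full chain contraction $\partial \tau +\tau \partial =\mathrm{id}$ on $\Omega _{\bullet }$, which requires the (easy, and correctly carried out) extra check that $\tau \Omega _{p}\subset \Omega _{p+1}$ for arbitrary $\partial $-invariant paths rather than only for cycles, whereas the paper verifies directly that every closed path is exact.
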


\begin{proof}
To prove that $\widetilde{H}_{n}\left( G\right) =\left\{ 0\right\} $, we
need to show that if $v\in \Omega _{n}$ and $\partial v=0$ then $v=\partial
u $ for some $u\in \Omega _{n+1}.$ Set $u=e_{a}v.$ We claim that $u\in 
\mathcal{A}_{n+1}$. Since $v$ is a linear combination of allowed paths $%
e_{i_{0}...i_{n}},$ it suffices to show that $e_{ai_{0}...i_{n}}\in \mathcal{%
A}_{n+1}$ for any allowed path $e_{i_{0}...i_{n}}.$ Indeed, if $i_{0}=a$
then $e_{ai_{0}...i_{n}}=0\in \mathcal{A}_{n+1}.$ If $i_{0}\neq a$ then $%
e_{ai_{0}...i_{n}}$ is allowed by the star condition. Hence, we have $u\in 
\mathcal{A}_{n+1}.$

By the product rule (\ref{duv}) we have 
\begin{equation*}
\partial u=\partial \left( e_{a}v\right) =v-e_{a}\partial v=v,
\end{equation*}%
where we have used $\partial v=0.$ It follows that $\partial u\in \mathcal{A}%
_{n}$ and, hence, $u\in \Omega _{n+1}$, which finishes the proof.

In a similar manner one handles the inverse star-shaped graphs.
\end{proof}

For example, the simplex-digraph $\func{Sm}_{n}$ is star-shaped (and inverse
star-shaped), we obtain by Theorem \ref{Tstar} that all reduced homologies
of $\func{Sm}_{n}$ are trivial.

\subsection{Cycle-graphs}

\label{SecCycle}We say that a digraph $G=\left( V,E\right) $ is a \emph{%
cycle-graph} if it is connected (as an undirected graph) and every vertex
had the degree $2$ (see Fig. \ref{pic16}).

\FRAME{ftbphFU}{6.9612in}{0.9689in}{0pt}{\Qcb{A cycle-graph (directions of
edges are not shown)}}{\Qlb{pic16}}{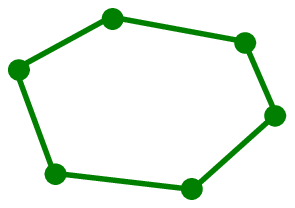}{\special{language "Scientific
Word";type "GRAPHIC";maintain-aspect-ratio TRUE;display "USEDEF";valid_file
"F";width 6.9612in;height 0.9689in;depth 0pt;original-width
6.4643in;original-height 0.9792in;cropleft "0";croptop "1";cropright
"1";cropbottom "0";filename 'pic16.eps';file-properties "XNPEU";}}

For a cycle-graph we have $\dim H_{0}\left( G\right) =1$ and 
\begin{equation}
\dim \Omega _{0}\left( G\right) =\left\vert V\right\vert =\left\vert
E\right\vert =\dim \Omega _{1}\left( G\right) .  \label{V=E}
\end{equation}

\begin{proposition}
\label{Pcycle}\label{Propcycle}Let $G$ be a cycle-graph. Then 
\begin{eqnarray*}
\dim \Omega _{p}\left( G\right) &=&0\ \ \text{for all }p\geq 3 \\
\dim H_{p}\left( G\right) &=&0\text{ for all }p\geq 2.
\end{eqnarray*}%
If $G$ is a triangle or a square then 
\begin{equation*}
\dim \Omega _{2}\left( G\right) =1,\ \dim H_{1}\left( G\right) =0,\ \ \chi
=1\ 
\end{equation*}%
whereas otherwise 
\begin{equation*}
\dim \Omega _{2}\left( G\right) =0,\ \ \ \dim H_{1}\left( G\right) =1,\ \
\chi =0.
\end{equation*}
\end{proposition}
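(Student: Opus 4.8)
The plan is to analyze the chain complex $\Omega_\bullet(G)$ of a cycle-graph degree by degree, leaning on the structural results already established. A cycle-graph has every vertex of degree $2$, meaning each vertex has exactly two incident edges (in the underlying undirected graph). The key observation is to classify which cycle-graphs contain triangles or squares: since the graph is a single cycle, a triangle occurs precisely when the cycle has length $3$, and a square (in the sense of Theorem~\ref{Tsq}, i.e. four vertices $a,b,b',c$ with $a\to b\to c$ and $a\to b'\to c$) occurs precisely when the cycle has length $4$ with a compatible orientation. So the dichotomy in the statement corresponds exactly to: the cycle is a triangle/square, versus it is a longer cycle (or a short cycle not forming a square pattern).

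First I would handle the claim $\dim\Omega_p(G)=0$ for $p\ge 3$. I would argue that a cycle-graph contains no square as a subgraph except in the length-$4$ case, and then apply Theorem~\ref{Tsq}: absent squares, $\dim\Omega_p(G)=0$ for all $p>2$. For the length-$4$ square case I would check directly using Proposition~\ref{Pdimn}, since one computes $\dim\Omega_2=1$ (from $|P_2|-|\mathcal{S}|$ via Proposition~\ref{PropS}, or just by exhibiting the single square-path) and hence $\dim\Omega_3=0$ by the dimension-propagation in Proposition~\ref{Pdimn}$(b)$, which then forces $\dim\Omega_p=0$ for all $p\ge 3$. This immediately gives $\dim H_p(G)=0$ for $p\ge 3$, and with $\dim\Omega_3=0$ also $\dim H_2(G)=0$ via the formula \eqref{dimHn}.

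Next I would compute $\dim\Omega_2(G)$ using Proposition~\ref{PropS}, $\dim\Omega_2=|P_2|-|\mathcal{S}|$. Here I would count allowed $2$-paths $P_2$ and semi-edges $\mathcal{S}$ directly from the cycle structure. For the triangle and the square, one checks that the single $\partial$-invariant $2$-path (the triangle-path $e_{abc}$, respectively the square-path $e_{abc}-e_{ab'c}$) spans $\Omega_2$, giving $\dim\Omega_2=1$; in all longer cycles no allowed $2$-path survives the semi-edge cancellation conditions of Lemma~\ref{Lemsemialowed}, giving $\dim\Omega_2=0$. Then the remaining invariants follow from the dimension counts: using \eqref{dimHn} together with \eqref{V=E} (which gives $\dim\Omega_0=\dim\Omega_1=|V|$) and $\dim H_0=1$ (Proposition~\ref{PH0}, since $G$ is connected), I would solve for $\dim H_1$ and $\chi$. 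In the triangle/square case $\dim\Omega_2=1$ forces $\dim H_1=0$ and $\chi=1$; in the longer-cycle case $\dim\Omega_2=0$ forces $\dim H_1=1$ and $\chi=0$.

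The main obstacle I expect is the careful case analysis of orientations. The statement treats a cycle-graph as an undirected cycle with some orientation on each edge, and the counts of $P_2$ and $\mathcal{S}$ genuinely depend on the orientation pattern (for instance, a consistently oriented $4$-cycle is \emph{not} a square in the technical sense, whereas the ``source-sink'' oriented $4$-cycle is). The crux is to verify that for every cycle of length $\ge 5$, regardless of orientation, one has $\dim\Omega_2=0$ (equivalently $|P_2|=|\mathcal{S}|$ and the surviving conditions kill all invariant $2$-paths), and that only the length-$3$ and suitably-oriented length-$4$ cases yield $\dim\Omega_2=1$. I would organize this by noting that along the cycle each consecutive pair of edges either forms a directed $2$-path (contributing to $P_2$) or not, and each such $2$-path $abc$ either closes a triangle/square or produces a genuine semi-edge $ac$; the bookkeeping of these local configurations around the closed cycle is where the real work lies, but it reduces to a finite, orientation-by-orientation verification once the length-$3$ and length-$4$ cases are separated out.
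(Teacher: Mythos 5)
Your overall route is the paper's own (Theorem \ref{Tsq} plus Proposition \ref{Pdimn} to kill degrees $p\geq 3$, Proposition \ref{PropS} to compute $\dim \Omega _{2}$, and the two Euler-characteristic expressions together with $\dim \Omega _{0}=\dim \Omega _{1}$ from (\ref{V=E}) and $\dim H_{0}=1$ to solve for $\dim H_{1}$), but your classification contains a genuine error. You assert that \textquotedblleft a triangle occurs precisely when the cycle has length $3$\textquotedblright\ and your closing summary repeats this (\textquotedblleft only the length-$3$ and suitably-oriented length-$4$ cases yield $\dim \Omega _{2}=1$\textquotedblright ). This is false: the cyclically oriented $3$-cycle $a\rightarrow b\rightarrow c\rightarrow a$ is a cycle-graph of length $3$ that is \emph{not} a triangle in the technical sense of Section \ref{Secsq} (which requires $a\rightarrow b$, $b\rightarrow c$ \emph{and} $a\rightarrow c$). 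For it, the three allowed $2$-paths $abc,bca,cab$ each produce a genuine semi-edge ($ac$, $ba$, $cb$), so $\dim \Omega _{2}=3-3=0$ by Proposition \ref{PropS}, and this digraph belongs to the \textquotedblleft otherwise\textquotedblright\ case of the proposition, with $\dim H_{1}=1$ and $\chi =0$. Executed as written, your proof would assign it $\dim \Omega _{2}=1$ and $\dim H_{1}=0$, i.e., the wrong homology. You correctly flag the orientation issue for $4$-cycles, but the identical caveat applies at length $3$.

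Two smaller points. First, in the triangle/square case the conclusion $\dim H_{2}=0$ is not \textquotedblleft immediate\textquotedblright\ from (\ref{dimHn}): with $\dim \Omega _{2}=1$ and $\dim \Omega _{3}=0$ that formula gives $\dim H_{2}=1-\dim \partial \Omega _{2}$, so you must still check $\ker \partial |_{\Omega _{2}}=0$, i.e., that $\partial e_{abc}\neq 0$ (resp.\ $\partial \left( e_{abc}-e_{ab^{\prime }c}\right) \neq 0$); the paper does this in one line, and without it the Euler-characteristic equation only yields $\dim H_{1}=\dim H_{2}$. Second, the orientation-by-orientation bookkeeping you anticipate for cycles of length $\geq 5$ is avoidable: since every vertex of a cycle-graph has degree exactly $2$, any triangle or square subgraph would use up all edges incident to its vertices and, by connectedness, would have to be the whole digraph; hence as soon as $G$ itself is neither the triangle nor the square digraph, it contains neither as a subgraph, and Theorem \ref{Tsq} gives $\dim \Omega _{2}=\#\left\{ \text{triangles}\right\} =0$ with no case analysis at all. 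This is exactly the paper's shortcut; it also makes your separate appeal to Lemma \ref{Lemsemialowed} and the counting of $\left\vert P_{2}\right\vert $ versus $\left\vert \mathcal{S}\right\vert $ along the cycle unnecessary.
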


\begin{proof}
Observe first that $\dim \Omega _{2}\leq 1$ will imply $\dim \Omega _{p}=0$
for all $p\geq 3$ by Proposition \ref{Pdimn}, whence $\dim H_{p}=0$ for $%
p\geq 3.$ Hence, we need only to handle the cases $p=1,2.$

Using two equivalent definition of the Euler characteristic, we have%
\begin{eqnarray*}
\chi &=&\dim H_{0}-\dim H_{1}+\dim H_{2} \\
&=&\dim \Omega _{0}-\dim \Omega _{1}+\dim \Omega _{2}
\end{eqnarray*}%
whence%
\begin{equation}
\chi =\dim \Omega _{2}=1-\dim H_{1}+\dim H_{2}.  \label{ksiOmH}
\end{equation}

Assume first that $G$ is neither triangle nor square. Then $G$ contains
neither triangle nor square. By Theorem \ref{Tsq} $\dim \Omega _{2}=0$
whence $\dim H_{2}=0$ and by (\ref{ksiOmH}) $\chi =0$ and $\dim H_{1}=1$.

Let us construct an $1$-path spanning $H_{1}$. For that let us identify $G$
with $\mathbb{Z}_{N}$ where $N=\left\vert V\right\vert $ so that in the
undirected graph based on $G$ the edges are $i\left( i+1\right) $. Hence, in
the digraph $G$ either $i\left( i+1\right) $ or $\left( i+1\right) i$ is an
edge. Consider an allowed $1$-path $\sigma $ with components%
\begin{equation}
\sigma ^{i\left( i+1\right) }=\left\{ 
\begin{array}{ll}
1, & \text{if }i\left( i+1\right) \text{ is an edge} \\ 
-1, & \text{if }\left( i+1\right) i\text{ is an edge,}%
\end{array}%
\right.  \label{vii+1}
\end{equation}%
and all other components of $\sigma $ vanish (see Fig. \ref{pic16a}).\FRAME{%
ftbphFU}{6.9596in}{1.1399in}{0pt}{\Qcb{The $1$-path $\protect\sigma %
=-e_{01}-e_{12}+e_{23}+e_{34}-e_{45}+e_{50}$ spans $H_{1}.$ }}{\Qlb{pic16a}}{%
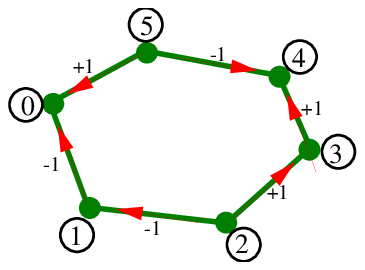}{\special{language "Scientific Word";type
"GRAPHIC";maintain-aspect-ratio TRUE;display "USEDEF";valid_file "F";width
6.9596in;height 1.1399in;depth 0pt;original-width 6.4643in;original-height
1.0777in;cropleft "0";croptop "1";cropright "1";cropbottom "0";filename
'pic16a.eps';file-properties "XNPEU";}}

Since $\sigma \neq 0$, $\sigma $ is not in $\func{Im}\partial |_{\Omega
_{2}}.$ However, $\sigma \in \ker \partial _{\Omega _{1}}$ because by
construction $\sigma ^{i\left( i+1\right) }-\sigma ^{\left( i+1\right)
i}\equiv 1$ whence for any $i$ 
\begin{equation*}
\left( \partial \sigma \right) ^{i}=\sum_{j\in V}\left( \sigma ^{ji}-\sigma
^{ij}\right) =\sigma ^{\left( i-1\right) i}+\sigma ^{\left( i+1\right)
i}-\sigma ^{i\left( i-1\right) }-\sigma ^{i\left( i+1\right) }=1-1=0.
\end{equation*}

Let $G$ be a triangle%
\begin{equation*}
\begin{array}{ccc}
& \overset{b}{\bullet } &  \\ 
_{a}\bullet _{\ }^{^{\nearrow }} & \rightarrow & _{\ }^{^{\searrow }}\bullet
_{c}\ 
\end{array}%
\end{equation*}
Then $\dim \mathcal{A}_{2}=1$, $\mathcal{S}=\emptyset $ whence $\dim \Omega
_{2}=1$ and $\chi =1.$ Clearly, we have $\Omega _{2}=\limfunc{span}\left\{
e_{abc}\right\} $. Since $\partial e_{abc}\neq 0$, we see that $\ker
\partial |_{\Omega _{2}}=0$ and, hence, $\dim H_{2}=0$. Then by (\ref{ksiOmH}%
) $\dim H_{1}=0$.

Let $G$ be a square, say $a,b,b^{\prime },c$: 
\begin{equation*}
\begin{array}{ccc}
_{b}\bullet & \longrightarrow & \bullet _{c} \\ 
\ \uparrow &  & \uparrow \  \\ 
_{a}\bullet & \longrightarrow & \bullet _{b^{\prime }}%
\end{array}%
\end{equation*}%
Then 
\begin{equation*}
\mathcal{A}_{2}=\limfunc{span}\left\{ e_{abc},e_{ab^{\prime }c}\right\} ,\ \ 
\mathcal{S}=\left\{ ac\right\}
\end{equation*}%
whence $\dim \Omega _{2}=2-1=1$ and $\chi =1.$ Note that in this case 
\begin{equation*}
\Omega _{2}=\limfunc{span}\left\{ e_{abc}-e_{ab^{\prime }c}\right\} .
\end{equation*}%
As in the case of a triangle, we obtain $\ker \partial |_{\Omega _{2}}=0$, $%
\dim H_{2}=0$ and $\dim H_{1}=0$.
\end{proof}

\subsection{An example of direct computation of $\dim H_{p}$}

Consider the digraph $G=\left( V,E\right) $ with $V=\left\{
0,1,2,3,5\right\} $ and $E=\left\{ 01,02,13,14,23,24,53,54\right\} ,$ see
Fig. \ref{pic14}.

\FRAME{ftbphFU}{5.0704in}{2.0487in}{0pt}{\Qcb{A digraph with $6$ vertices
and $8$ edges}}{\Qlb{pic14}}{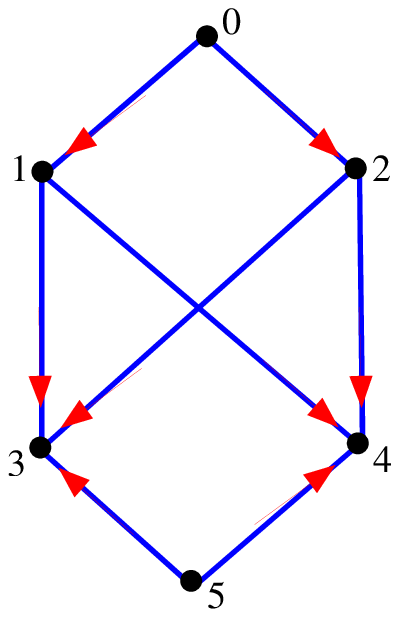}{\special{language "Scientific
Word";type "GRAPHIC";maintain-aspect-ratio TRUE;display "USEDEF";valid_file
"F";width 5.0704in;height 2.0487in;depth 0pt;original-width
6.3027in;original-height 2.5261in;cropleft "0";croptop "1";cropright
"1";cropbottom "0";filename 'pic14.eps';file-properties "XNPEU";}}

Let us compute the (regular) spaces $\Omega _{p}$ and their homologies $%
H_{p}.$ We have%
\begin{eqnarray*}
\Omega _{0} &=&\mathcal{A}_{0}=\limfunc{span}\left\{
e_{0},e_{1},e_{2},e_{3},e_{4},e_{5}\right\} ,\ \ \ \dim \Omega _{0}=6 \\
\Omega _{1} &=&\mathcal{A}_{1}=\limfunc{span}\left\{
e_{01},e_{02},e_{13},e_{14},e_{23},e_{24},e_{53},e_{54}\right\} ,\ \ \ \dim
\Omega _{1}=8 \\
\mathcal{A}_{2} &=&\limfunc{span}\left\{
e_{013},e_{014},e_{023},e_{024}\right\} ,\ \ \dim \mathcal{A}_{2}=4.
\end{eqnarray*}%
The set of semi-edges is $\mathcal{S}=\left\{ e_{03},e_{04}\right\} $ so
that $\dim \Omega _{2}=\dim \mathcal{A}_{2}-\left\vert S\right\vert =2.$ The
basis in $\Omega _{2}$ can be easily spotted as each of two squares $0,1,2,3$
and $0,1,2,4$ determine a $\partial $-invariant $2$-paths, whence%
\begin{equation*}
\Omega _{2}=\limfunc{span}\left\{ e_{013}-e_{023},e_{014}-e_{024}\right\} .
\end{equation*}%
Since there are no allowed $3$-paths, we see that $\mathcal{A}_{3}=\Omega
_{3}=\left\{ 0\right\} .$ It follows that 
\begin{equation*}
\chi =\dim \Omega _{0}-\dim \Omega _{1}+\dim \Omega _{2}=6-8+2=0.
\end{equation*}%
By (\ref{dimHn}) we obtain 
\begin{equation*}
\dim H_{2}=\dim \Omega _{2}-\dim \partial \Omega _{2}-\dim \partial \Omega
_{3}=2-\dim \partial \Omega _{2}.
\end{equation*}%
The image $\partial \Omega _{2}$ is spanned by two $1$-paths%
\begin{eqnarray*}
\partial \left( e_{013}-e_{023}\right) &=&e_{13}-e_{03}+e_{01}-\left(
e_{23}-e_{03}+e_{02}\right) =e_{13}+e_{01}-e_{23}-e_{02} \\
\partial \left( e_{014}-e_{024}\right) &=&e_{14}-e_{04}+e_{01}-\left(
e_{24}-e_{04}+e_{02}\right) =e_{14}+e_{01}-e_{24}-e_{02}
\end{eqnarray*}%
that are clearly linearly independent. Hence, $\dim \partial \Omega _{2}=2$
whence $\dim H_{2}=0.$ The dimension of $H_{1}$ can be computed similarly,
but we can do easier using the Euler characteristic: since 
\begin{equation*}
\dim H_{0}-\dim H_{1}+\dim H_{2}=\chi =0,
\end{equation*}%
it follows that $\dim H_{1}=1.$

In fact, $H_{1}$ is spanned by $1$-path 
\begin{equation*}
v=e_{13}-e_{14}-e_{53}+e_{54}.
\end{equation*}%
Indeed, by a direct computation $\partial v=0,$ so that $v\in \ker \partial
|_{\Omega _{1}}$, while $v\notin \func{Im}\partial |_{\Omega _{2}}$ because
any element of $\func{Im}\partial |_{\Omega _{2}}$ is a linear combination
of $\partial \left( e_{013}-e_{023}\right) $ and $\partial \left(
e_{014}-e_{024}\right) \ $that does not contain the term $e_{54}.$

\subsection{Triangulation as a closed path}

\label{Sectrian}Given a closed oriented $n$-dimensional manifold $M$, let $T$
be its triangulation, that is, a partition into $n$-dimensional simplexes.
Denote by $V$ the set of all vertices of the simplexes from $T$ and by $E$
-- the set of all edges, so that $\left( V,E\right) $ is a graph embedded on 
$M$. We would like to make $\left( V,E\right) $ into a digraph and define on
that digraph a closed $n$-path as a certain alternating sum of elementary $n$%
-paths arising from the simplexes from $T.$

Let us enumerate the set of vertices $V$ by distinct integers. For any
simplex from $T$ with the vertices $i_{0}...i_{n}$ define the quantity $%
\sigma ^{i_{0}...i_{n}}$ to be equal to $1$ if the orientation of the
simplex $i_{0}...i_{n}$ matches the orientation of the manifold $M$, and $-1$
otherwise. Note that each simplex gives rise to $n!$ different ordered
sequences of its vertices, each of them defining the quantity $\sigma
^{i_{0}...i_{n}}.$

Let us introduce the orientation on the set of edges $E$ by choosing on each
edge the direction from the vertex with a smaller number to the vertex with
a larger number. Then each simplex from $T$ becomes a simplex-digraph as
defined in Section \ref{Exsimplex}. Denote by $\overrightarrow{T}$ the set
of all digraph simplexes constructed in this way. That is, $i_{0}...i_{n}\in 
\overrightarrow{T}$ if $i_{0}...i_{n}$ is a monotone increasing sequence
that determines a simplex from $T$.

Then consider the following $n$-path on the digraph $G=\left( V,E\right) $:%
\begin{equation}
\sigma =\sum_{i_{0}...i_{n}\in \overrightarrow{T}}\sigma
^{i_{0}...i_{n}}e_{i_{0}...i_{n}}.  \label{sigma}
\end{equation}%
This path is allowed on $G$ by the definition of the orientation of the
edges.

We claim that the path $\sigma $ is closed, that is, $\partial \sigma =0$,
which, in particular, implies that $\sigma $ is $\partial $-invariant.
Observe that $\partial \sigma $ is the a linear combination with
coefficients $\pm 1$ of the terms $e_{j_{0}...j_{n-1}}$ where the sequence $%
j_{0},...,j_{n-1}$ is monotone increasing and forms an $\left( n-1\right) $%
-dimensional face of one of the $n$-simplexes from $T$. In fact, every $%
\left( n-1\right) $-face arises from \emph{two} $n$-simplexes, say $%
A=j_{0}...j_{k-1}aj_{k}...j_{n-1}$ and $B=j_{0}...j_{l-1}bj_{l}...j_{n-1}$
(cf. Fig. \ref{pic21}).

\FRAME{ftbphFU}{5.847in}{2.606in}{0pt}{\Qcb{Two $n$-simplexes $A,B$ having a
common $\left( n-1\right) $-dimensional face $j_{0}...j_{n-1}$}}{\Qlb{pic21}%
}{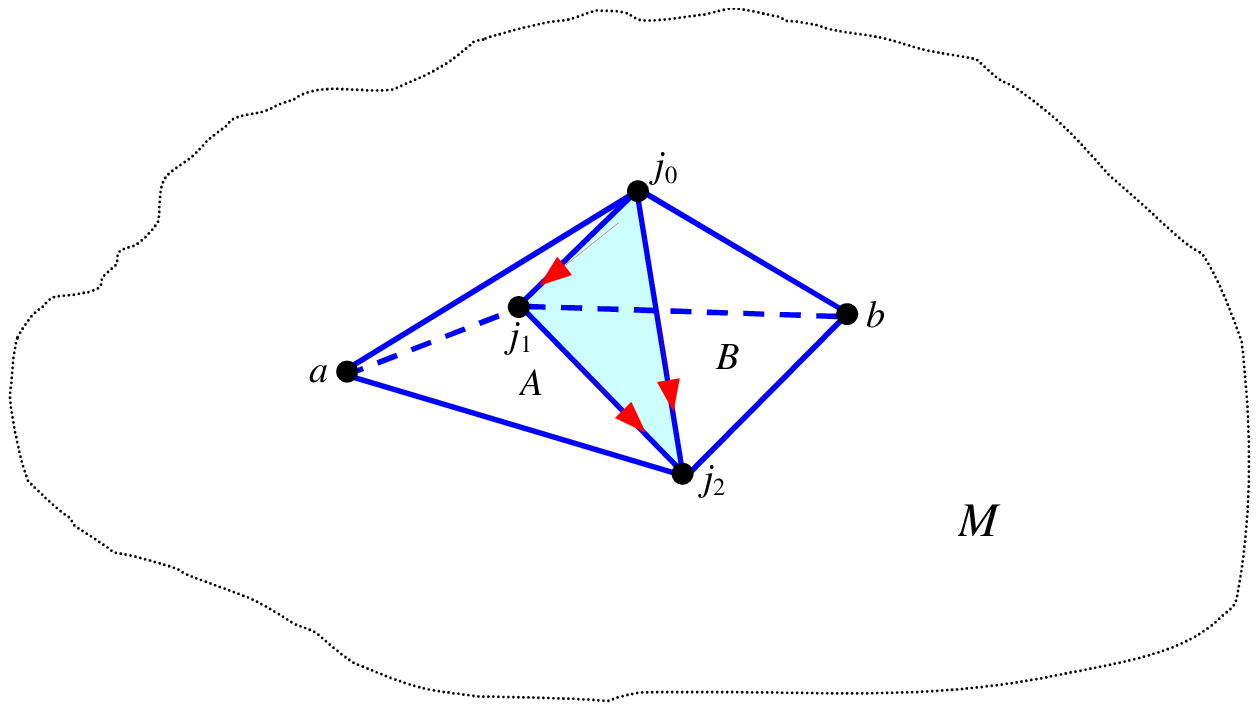}{\special{language "Scientific Word";type
"GRAPHIC";maintain-aspect-ratio TRUE;display "USEDEF";valid_file "F";width
5.847in;height 2.606in;depth 0pt;original-width 6.4643in;original-height
2.8632in;cropleft "0";croptop "1";cropright "1";cropbottom "0";filename
'pic21.eps';file-properties "XNPEU";}}

We have by (\ref{dev})%
\begin{equation*}
\partial e_{j_{0}...j_{k-1}aj_{k}...j_{n-1}}=...+\left( -1\right)
^{k}e_{j_{0}...j_{k-1}j_{k}...j_{n-1}}+...~.
\end{equation*}%
Since interchanging the order of two neighboring vertices in an $n$-simplex
changes its orientation, we have 
\begin{equation*}
\sigma ^{j_{0}...j_{k-1}aj_{k}...j_{n-1}}=\left( -1\right) ^{k}\sigma
^{aj_{0}...j_{k-1}j_{k}...j_{n-1}}.
\end{equation*}%
Multiplying the above lines, we obtain%
\begin{equation*}
\partial \left( \sigma ^{A}e_{A}\right) =...+\sigma
^{aj_{0}...j_{n-1}}e_{j_{0}...j_{n-1}}+...\ ,
\end{equation*}%
and in the same way%
\begin{equation*}
\partial \left( \sigma ^{B}e_{B}\right) =...+\sigma
^{bj_{0}...j_{n-1}}e_{j_{0}...j_{n-1}}+...
\end{equation*}%
However, the vertices $a$ and $b$ are located on the opposite sides of the
face $j_{0}...j_{n-1}$, which implies that the simplexes $aj_{0}...j_{n-1}$
and $bj_{0}...j_{n-1}$ have the opposite orientations relative to that of $M$%
. Hence,%
\begin{equation*}
\sigma ^{aj_{0}...j_{n-1}}+\sigma ^{bj_{0}...j_{n-1}}=0,
\end{equation*}%
which means that the term $e_{j_{0}...j_{n-1}}$ cancels out in the sum $%
\partial \left( \sigma ^{A}e_{A}+\sigma ^{B}e_{B}\right) $ and, hence, in $%
\partial \sigma .$ This proves that $\partial \sigma =0.$

The closed paths $\sigma $ defined by (\ref{sigma}) is called a \emph{%
surface path} of $M$ (or $T$).

There is a number of triangulations when a surface path $\sigma $ happens to
be exact, that is, $\sigma =\partial v$ for some $\left( n+1\right) $-path $%
v $. If this is the case then $v$ is called a \emph{solid path} as in this
case $v$ represents a \textquotedblleft solid\textquotedblright\ shape whose
boundary is given by $M$ (or $T$).

\begin{example}
\RM If $M=\mathbb{S}^{1}$ then $T$ is a cycle graph, and a surface path $%
\sigma $ in this case was constructed in the proof of Proposition \ref%
{Pcycle}. We have seen there that $\sigma $ is exact if the digraph $G$ is a
triangle or square, and non-exact otherwise. In the former case a solid
paths $v$ represents a triangle or a square, respectively, in the latter
case a solid path does not exist.
\end{example}

\begin{example}
\RM Let $M=\mathbb{S}^{n}$ and let the faces of a triangulation $T$ of $M$
form a $\left( n+1\right) $-simplex, so that the digraph $G$ is a $\left(
n+1\right) $-simplex digraph. Denoting the vertices by $0,1,...,n+1$, we
obtain $\partial e_{0...n+1}=\sigma $ so that $e_{i_{0}...i_{n+1}}$ is a
solid path representing a solid $\left( n+1\right) $-simplex.
\end{example}

There are also higher dimensional examples when a surface path is not exact,
see Example \ref{Expic13} below. Further examples of surface and solid paths
will be given in Section \ref{SecProduct}.

\subsection{Lemma of Sperner revisited}

Consider a triangle $ABC$ on the plane $\mathbb{R}^{2}$ and its
triangulation $T$. The set of vertices of $T$ is colored with three colors $%
1,2,3$ in such a way that

\begin{itemize}
\item the vertices $A,B,C$ are colored with $1,2,3$ respectively;

\item each vertex on any edge of $ABC$ is colored with one of the two colors
of the endpoints of the edge (see Fig. \ref{pic12}).
\end{itemize}

\FRAME{ftbphFU}{5.0704in}{2.2278in}{0pt}{\Qcb{A Sperner coloring}}{\Qlb{pic12%
}}{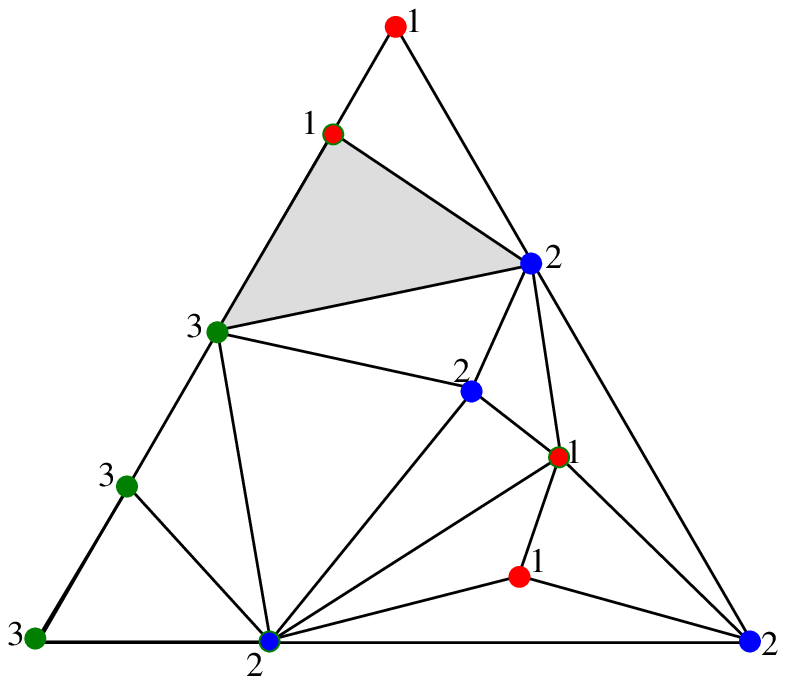}{\special{language "Scientific Word";type
"GRAPHIC";maintain-aspect-ratio TRUE;display "USEDEF";valid_file "F";width
5.0704in;height 2.2278in;depth 0pt;original-width 6.3027in;original-height
2.5417in;cropleft "0";croptop "1";cropright "1";cropbottom "0";filename
'pic12.eps';file-properties "XNPEU";}}

The classical lemma of Sperner says that then there exists in $T$ a $3$%
-color triangle, that is, a triangle, whose vertices are colored with the
three different colors. Moreover, the number of such triangles is odd.

We give here a new proof using the boundary operator $\partial $ for $1$%
-paths. Although this proof is no shorter that the classical proof based on
a double counting argument, it still provides a new insight into the
subject, that $3$-color triangles appear as sources and sinks of some
\textquotedblleft vector field\textquotedblright\ on a digraph.

Let us first do some reduction. Firstly, let us modify the triangulation $T$
so that there are no vertices on the edges $AB,AC,BC$ except for $A,B,C.$
Indeed, if $X$ is a vertex on $AB$ then we move $X$ a bit inside the
triangle $ABC.$ This gives rise to a new triangle in the triangulation $T$
that is formed by $X$ and its former neighbors, say $Y$ and $Z$, on the edge 
$AB$ (while keeping all old triangles). However, since all $X,Y,Z$ are
colored with two colors, no $3$-color triangle emerges after that move. By
induction, we remove all the vertices from the edges of $ABC.$

Secondly, we project the triangle $ABC$ and the triangulation $T$ onto the
sphere $\mathbb{S}^{2}$ and add to the set $T$ the triangle $ABC$ itself
from the other side of the sphere. Then we obtain a triangulation of $%
\mathbb{S}^{2},$ denote it again by $T$, and we need to prove that the
number of 3-color triangles is \emph{even}. Indeed, since we know that one
of the triangles, namely, $ABC$ is 3-color, this would imply that the number
of $3$-color triangles in the original triangulation is odd.

Let us regard $T$ as a graph on $\mathbb{S}^{2}$ and construct a dual graph $%
V$. Chose at each face of $T$ a point and regard them as vertices of the
dual graph $V$. The vertices in $V$ are connected if the corresponding
triangles in $T$ have a common edge (see Fig. \ref{pic17}). Then the faces
of $V$ are in one-to-one correspondence to the vertices of $T.$

\FRAME{ftbphFU}{5.0704in}{1.6942in}{0pt}{\Qcb{Construction of a dual graph}}{%
\Qlb{pic17}}{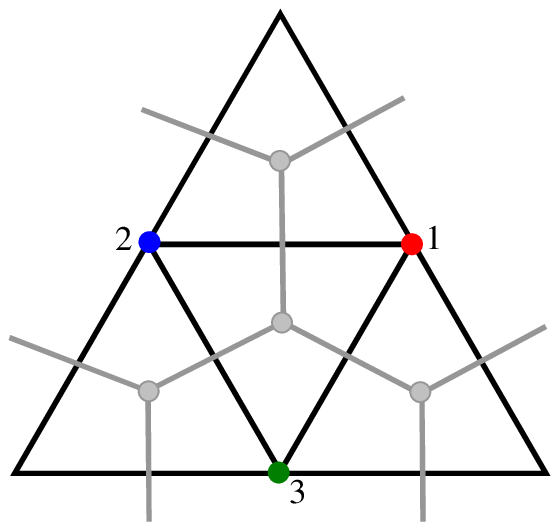}{\special{language "Scientific Word";type
"GRAPHIC";maintain-aspect-ratio TRUE;display "USEDEF";valid_file "F";width
5.0704in;height 1.6942in;depth 0pt;original-width 6.4643in;original-height
2.2485in;cropleft "0";croptop "1";cropright "1";cropbottom "0";filename
'pic17.eps';file-properties "XNPEU";}}

Hence, given a graph $V$ on $\mathbb{S}^{2}$ such that each vertex has
degree $3$ and each face is colored with one of the colors $1,2,3,$ we need
to prove that the number of $3$-color vertices (that is, the vertices, whose
adjacent faces have all three colors) is even.

Let us make $V$ into a digraph as follows. Each edge $\xi $ in $V$ has two
adjacent faces. Choose the orientation on $\xi $ so that the color from the
left hand side and that from the right hand side of $\xi $ form one of the
following pairs: $\left( 1,2\right) ,\left( 2,3\right) ,\left( 3,1\right) $
(see Fig. \ref{pic18}), while if the colors are the same then allow both
orientations of $\xi $.

\FRAME{ftbphFU}{3.9678in}{0.6175in}{0pt}{\Qcb{The orientation of an edge
depends on the colors of adjacent faces}}{\Qlb{pic18}}{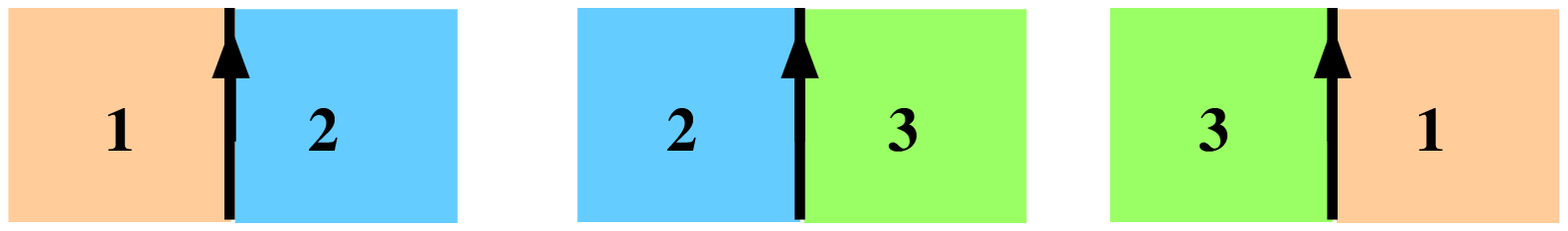}{\special%
{language "Scientific Word";type "GRAPHIC";maintain-aspect-ratio
TRUE;display "USEDEF";valid_file "F";width 3.9678in;height 0.6175in;depth
0pt;original-width 6.7348in;original-height 1.2205in;cropleft "0";croptop
"1";cropright "1";cropbottom "0";filename 'pic18.eps';file-properties
"XNPEU";}}

Examples of such orientations are shown on Fig. \ref{pic17b}.

\FRAME{ftbphFU}{5.0704in}{1.7815in}{0pt}{\Qcb{Oriented edges in the dual
graph $\left( V,E\right) $}}{\Qlb{pic17b}}{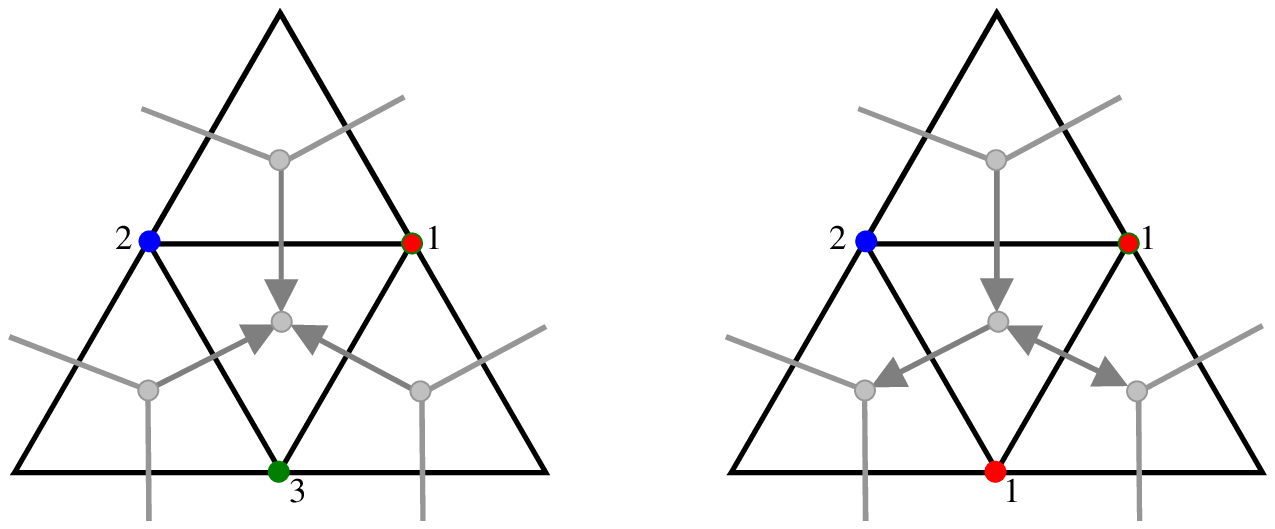}{\special{language
"Scientific Word";type "GRAPHIC";maintain-aspect-ratio TRUE;display
"USEDEF";valid_file "F";width 5.0704in;height 1.7815in;depth
0pt;original-width 6.4643in;original-height 2.2485in;cropleft "0";croptop
"1";cropright "1";cropbottom "0";filename 'pic17b.eps';file-properties
"XNPEU";}}

Denote by $E$ the set of the oriented edges and set $v=\sum_{\left\{ ab\in
E\right\} }e_{ab}.$ We have for any $a\in V$ 
\begin{equation*}
\left( \partial v\right) _{a}=\sum_{b}v^{ba}-\sum_{c}v^{ac}=\#\{\text{%
incoming edges}\}-\#\{\text{outcoming edges}\},
\end{equation*}%
where $\#A$ denotes the number of elements in the set $A$. If $a$ is $3$%
-color, then either all three edges at $a$ are incoming or all are
outcoming, whence $\left( \partial v\right) _{a}=3$ or $-3$, respectively.
If $a$ is not $3$-color then $\left( \partial v\right) _{a}=0$ (cf. Fig. \ref%
{pic17b}). Denoting by $n_{1}$ the number of $3$-color edges with incoming
orientation and by $n_{2}$ that with outcoming orientation, we obtain that $%
\left( \partial v,1\right) =3\left( n_{1}-n_{2}\right) $. On the other hand, 
$\left( \partial v,1\right) =\left( v,d1\right) =0$ whence we conclude that $%
n_{1}=n_{2}$. In particular, the total number of $3$-color vertices is $%
2n_{1}$, that is, even, which was to be proved. In fact, we have proved a
bit more: in a triangulation of a sphere, the numbers of $3$-color triangles
of the opposite orientations are the same.

\section{Homologies of subgraphs}

\setcounter{equation}{0}\label{Sec7}

\subsection{Chain complex of a subgraph}

\label{SecV'E'}Let $G^{\prime }=\left( V^{\prime },E^{\prime }\right) $ and $%
G=\left( V,E\right) $ be two digraph. We say that $G^{\prime }$ is a
subgraph of $G$ if $V^{\prime }\subset V$ and $E^{\prime }\subset E$. Let us
mark by the dash "$^{\prime }$" all the notation related to the graph $%
G^{\prime }$ rather than to $G$, for example, $\mathcal{R}_{p}^{\prime
}\equiv \mathcal{R}_{p}\left( G^{\prime }\right) $ while $\mathcal{R}%
_{p}\equiv \mathcal{R}_{p}\left( G\right) .$

As it was already observed (cf. (\ref{V'inV})), $\mathcal{R}_{p}^{\prime
}\subset \mathcal{R}_{p}$ and $\partial $ commutes with this inclusion. It
is also obvious that if $e_{i_{0}...i_{p}}$ is an allowed path in $G^{\prime
}$ then it is also allowed in $G$, whence $\mathcal{A}_{p}^{\prime }\subset 
\mathcal{A}_{p}.$ Moreover, this argument shows that%
\begin{equation}
\mathcal{A}_{p}^{\prime }=\mathcal{A}_{p}\cap \mathcal{R}_{p}^{\prime }.
\label{A'}
\end{equation}

By the definition (\ref{Omdef}) of $\Omega _{p}$, we obtain that $\Omega
_{p}^{\prime }\subset \Omega _{p}$ and $\partial $ commutes with this
inclusion. Consequently, the chain complex 
\begin{equation*}
\begin{array}{cccccccccc}
0 & \leftarrow & \Omega _{0}^{\prime } & \overset{\partial }{\leftarrow } & 
\Omega _{1}^{\prime } & \overset{\partial }{\leftarrow } & \Omega
_{2}^{\prime } & \overset{\partial }{\leftarrow } & \Omega _{3}^{\prime } & 
\overset{\partial }{\leftarrow }\dots%
\end{array}%
\end{equation*}%
is a sub-complex of 
\begin{equation*}
\begin{array}{cccccccccc}
0 & \leftarrow & \Omega _{0} & \overset{\partial }{\leftarrow } & \Omega _{1}
& \overset{\partial }{\leftarrow } & \Omega _{2} & \overset{\partial }{%
\leftarrow } & \Omega _{3} & \overset{\partial }{\leftarrow }\dots%
\end{array}%
\end{equation*}%
By Proposition \ref{Plong} (cf. (\ref{longXJi})) we obtain that the
following long sequence is exact:

\begin{notes}[\ ]
\begin{equation}
0\leftarrow H_{0}(\Omega /\Omega ^{\prime })\leftarrow H_{0}(\Omega
)\leftarrow H_{0}(\Omega ^{\prime })\leftarrow \dots \leftarrow H_{p}(\Omega
/\Omega ^{\prime })\leftarrow H_{p}(\Omega )\leftarrow H_{p}(\Omega ^{\prime
})\leftarrow H_{p+1}(\Omega /\Omega ^{\prime })\leftarrow \dots
\label{Om/Om'}
\end{equation}
\end{notes}

It is also worth mentioning that%
\begin{equation}
\Omega _{p}^{\prime }=\Omega _{p}\cap \mathcal{A}_{p}^{\prime }=\Omega
_{p}\cap \mathcal{R}_{p}^{\prime }.  \label{Om'}
\end{equation}%
Indeed, the inclusions $\Omega _{p}^{\prime }\subset \Omega _{p}\cap 
\mathcal{A}_{p}^{\prime }\subset \Omega _{p}\cap \mathcal{R}_{p}^{\prime }$
are obvious. To prove the opposite inclusions, observe that $v\in \Omega
_{p}\cap \mathcal{R}_{p}^{\prime }$ implies by (\ref{Omdef}) and (\ref{A'}) 
\begin{equation*}
v\in \mathcal{A}_{p}\cap \mathcal{R}_{p}^{\prime }=\mathcal{A}_{p}^{\prime
}\ \ \ \text{and\ \ \ \ }\partial v\in \mathcal{A}_{p-1}\cap \mathcal{R}%
_{p-1}^{\prime }=\mathcal{A}_{p-1}^{\prime },
\end{equation*}%
whence $v\in \Omega _{p}^{\prime }.$

\subsection{Removing a vertex of degree $1$}

\label{SecLoose}

\begin{theorem}
\label{Tend}Suppose that a graph $G$ has a vertex $a$ such that there is
only one outcoming edge $a\rightarrow b$ from $a$ and no incoming edges to $%
a $. Let $G^{\prime }=\left( V^{\prime },E^{\prime }\right) $ be the digraph
with $V^{\prime }=V\setminus \left\{ a\right\} $ and $E^{\prime }=E\setminus
\left\{ ab\right\} $.%
\begin{equation*}
\fbox{$\ \ \ a~\bullet \longrightarrow $\fbox{$\bullet ~b\ \ \ \ 
\begin{array}{c}
\  \\ 
\ 
\end{array}%
\ G^{\prime }\ $}\ \ $\ G$}
\end{equation*}%
Then $H_{p}\left( G\right) \cong H_{p}(G^{\prime })$ for all $p\geq 0.$
\end{theorem}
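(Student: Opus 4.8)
The plan is to use the long exact homology sequence (\ref{Om/Om'}) of the pair $\Omega_\bullet(G')\subset\Omega_\bullet(G)$ and to show that the quotient complex $\Omega_\bullet(G)/\Omega_\bullet(G')$ is acyclic; by exactness this forces the inclusion to induce isomorphisms $H_p(G')\cong H_p(G)$ for all $p$. The first step is a structural observation about allowed paths. Since $a$ has no incoming edges, the vertex $a$ can occur in an allowed path $i_0\ldots i_p$ only in the initial position $i_0$; and since $a\to b$ is the unique outgoing edge, an allowed path containing $a$ must begin $a\,b\,i_2\ldots i_p$ with $b\,i_2\ldots i_p$ an allowed path of $G'$. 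Hence every $v\in\mathcal{A}_p(G)$ decomposes uniquely as $v=v_0+e_a u$, where $v_0\in\mathcal{A}_p(G')$ involves no $a$ and $u$ is a combination of allowed paths of $G'$ beginning with $b$.

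The heart of the argument is the claim that $\Omega_p(G)=\Omega_p(G')$ for all $p\geq 2$, i.e. the ``through-$a$'' part $e_a u$ vanishes for every $\partial$-invariant $v$ of degree $\geq 2$. I would take $v=v_0+e_a u\in\Omega_p(G)$ and use the product rule (\ref{duv}) together with $\partial e_a=e$ and $eu=u$ (exactly as in the Poincar\'e lemma, Theorem \ref{Tstar}) to get $\partial(e_a u)=u-e_a\partial u$. The only terms of $\partial v$ containing $a$ are those of $-e_a\partial u$, so $\partial v\in\mathcal{A}_{p-1}$ forces $e_a\partial u$ to be allowed. Now among the terms of $\partial u$, those obtained by deleting the initial vertex $b$ of a constituent path $e_{b c_2\ldots c_p}$ are exactly the terms of $\partial u$ that do \emph{not} begin with $b$; prefixing such a term by $a$ yields $e_{a c_2\ldots}$ with $c_2\neq b$, which is non-allowed because $a\to c_2$ fails. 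Since these non-$b$-leading terms cannot cancel against the $b$-leading terms and are in bijection with the basis paths $e_{b c_2\ldots c_p}$ of $u$, their coefficients $u^{b c_2\ldots c_p}$ must all vanish; hence $u=0$, and then $\partial v=\partial v_0$ is allowed and $a$-free, so $v=v_0\in\Omega_p(G')$. This uses $p\geq 2$ essentially: for $p=1$ one has $u=\lambda e_b$, $\partial u=\lambda e$, and $e_a\partial u=\lambda e_a$ is automatically allowed.

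With this in hand the quotient complex is immediate. In low degrees $\Omega_0(G)=\Omega_0(G')\oplus\mathbb{K}e_a$ and $\Omega_1(G)=\mathcal{A}_1(G)=\Omega_1(G')\oplus\mathbb{K}e_{ab}$, while $\Omega_p(G)=\Omega_p(G')$ for $p\geq 2$. Thus $\Omega_\bullet(G)/\Omega_\bullet(G')$ is concentrated in degrees $0$ and $1$, one-dimensional in each, spanned by the classes of $e_a$ and $e_{ab}$. Its differential sends the class of $e_{ab}$ to that of $\partial e_{ab}=e_b-e_a$, which equals $-[e_a]$ modulo $\Omega_0(G')$ since $e_b\in\Omega_0(G')$; hence the differential is an isomorphism $\mathbb{K}\to\mathbb{K}$ and the quotient complex is acyclic. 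Feeding $H_p(\Omega/\Omega')=0$ into (\ref{Om/Om'}) yields $H_p(G')\cong H_p(G)$ for all $p\geq 0$.

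The main obstacle is the middle paragraph: verifying that no linear combination of through-$a$ paths can be $\partial$-invariant in degrees $\geq 2$. The delicate point is to confirm that the ``bad'' terms (those beginning $a\,c_2$ with $c_2\neq b$) are genuinely unremovable, i.e. that they are linearly independent from one another and from all $b$-leading terms, so that their vanishing forces \emph{every} coefficient of $u$ to be zero rather than merely imposing relations among them. Everything else is bookkeeping built on the product rule and the provided long exact sequence.
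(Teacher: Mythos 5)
Your proof is correct and follows essentially the same route as the paper's: the paper likewise establishes $\Omega _{p}\left( G\right) =\Omega _{p}\left( G^{\prime }\right) $ for $p\geq 2$ by isolating the non-allowed components $e_{ac_{2}\dots }$ with $c_{2}\neq b$ (writing the through-$a$ part as $e_{ab}u$ rather than your $e_{a}u$, an equivalent bookkeeping, since $\partial \left( e_{a}u\right) =u-e_{a}\partial u$ and $\partial \left( e_{ab}\tilde{u}\right) =\left( e_{b}-e_{a}\right) \tilde{u}+e_{ab}\partial \tilde{u}$ produce the identical bad term $-e_{a}\tilde{u}$), and then computes the same two-term quotient complex spanned by $e_{a}$ and $e_{ab}$ and feeds its acyclicity into the long exact sequence (\ref{Om/Om'}), the only cosmetic difference being that the paper handles $p=0$ separately via connected components while your exact-sequence argument subsumes it. The ``delicate point'' you flag at the end is already settled by your own middle paragraph: the bad terms $e_{ac_{2}\dots c_{p}}$, $c_{2}\neq b$, are pairwise distinct elements of the canonical basis of $\mathcal{R}_{p-1}$ and distinct from every $b$-leading or $ab$-leading basis path, so no cancellation is possible and each coefficient $u^{bc_{2}\dots c_{p}}$ must vanish individually.
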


\begin{remark}
\RM The same is true if the vertex $a$ has one incoming edge $b\rightarrow a$
and no outcoming edges.

Theorem \ref{Tend} is a particular case of a more general Theorem \ref{Tabac}
from the next section. We give here an independent proof based on the
identity (\ref{Omp'=}) below that may be interesting on its own right.
\end{remark}

\begin{proof}
Let us first prove that 
\begin{equation}
\Omega _{p}=\Omega _{p}^{\prime }\ \text{for all\ \ }p\geq 2,  \label{Omp'=}
\end{equation}%
which will imply that, for all $p\geq 2$, 
\begin{equation}
\dim H_{p}\left( \Omega ^{\prime }\right) =\dim H_{p}\left( \Omega \right) .
\label{Hp'}
\end{equation}%
In the view of (\ref{Om'}), to prove (\ref{Omp'=}) it suffices to show that,
for all $p\geq 2$,%
\begin{equation}
\Omega _{p}\subset \mathcal{A}_{p}^{\prime },  \label{OmA}
\end{equation}%
that is%
\begin{equation*}
v\in \mathcal{A}_{p}\ \ \text{and\ \ }\partial v\in \mathcal{A}_{p-1}\
\Rightarrow v\in \mathcal{A}_{p}^{\prime }.
\end{equation*}%
Every elementary allowed $p$-path on $G$ either is allowed on $G^{\prime }$
or starts with $ab$, which implies that $v$ can be represented in the form%
\begin{equation*}
v=e_{ab}u+v^{\prime },
\end{equation*}%
where $v^{\prime }\in \mathcal{A}_{p}^{\prime }$, while $u\in \mathcal{A}%
_{p-2}^{\prime }$ is a linear combination of the paths $e_{i_{0}...i_{p-2}}%
\in \mathcal{A}_{p-2}^{\prime }$ with $i_{0}\neq b$. It follows that%
\begin{equation}
\partial v=\left( e_{b}-e_{a}\right) u+e_{ab}\partial u+\partial v^{\prime }.
\label{veb-ea}
\end{equation}%
Note that $e_{a}u$ is a linear combination of the elementary paths $%
e_{ai_{0}...i_{p-2}}$ where $i_{0},...,i_{p-2}\in V^{\prime }$ and $%
i_{0}\neq b$. Since $ai_{0}$ is not an edge, those elementary paths are not
allowed in $G$. No other terms in the right hand side of (\ref{veb-ea}) has $%
e_{ai_{0}...i_{p-2}}$-component. Since $\partial v$ is allows in $G$, its $%
e_{ai_{0}...i_{p-2}}$-component is $0$, which is only possible if $e_{a}u=0$%
, that is, $u=0.$ It follows that $v=v^{\prime }\in \mathcal{A}_{p}^{\prime
} $, which finishes the proof of (\ref{OmA}).

Hence, we have the identity (\ref{Hp'}) for $p\geq 2$. For $p=0$ this
identity also true as the number of connected components of $G$ and $%
G^{\prime }$ is the same.

We are left to treat the case $p=1.$ Observe that 
\begin{equation}
\Omega _{0}=\Omega _{0}^{\prime }+\limfunc{span}\left\{ e_{a}\right\} \ \ 
\text{and \ }\Omega _{1}=\Omega _{1}^{\prime }+\limfunc{span}\left\{
e_{ab}\right\} .  \label{Om12=}
\end{equation}%
By (\ref{Omp'=}) and (\ref{Om12=}) the cochain complex $\Omega /\Omega
^{\prime }$ has the form 
\begin{equation*}
0\longleftarrow \limfunc{span}\left\{ e_{a}\right\} \overset{\partial }{%
\longleftarrow }\limfunc{span}\left\{ e_{ab}\right\} \longleftarrow 0=\Omega
_{2}/\Omega _{2}^{\prime }.
\end{equation*}%
Since%
\begin{equation*}
\partial e_{ab}=e_{b}-e_{a}=-e_{a}\func{mod}\Omega _{0}^{\prime },
\end{equation*}%
it follows that $\func{Im}\partial |_{\Omega _{1}/\Omega _{1}^{\prime }}=%
\limfunc{span}\left\{ e_{a}\right\} $, while $\ker \partial |_{\Omega
_{1}/\Omega _{1}^{\prime }}=0,$ whence 
\begin{equation*}
\dim H_{0}\left( \Omega /\Omega ^{\prime }\right) =\dim H_{1}\left( \Omega
/\Omega ^{\prime }\right) =0.\text{\ }
\end{equation*}%
By (\ref{Om/Om'}) we have a long exact sequence%
\begin{equation*}
H_{0}\left( \Omega /\Omega ^{\prime }\right) =0\longleftarrow H_{1}\left(
\Omega \right) \longleftarrow H_{1}\left( \Omega ^{\prime }\right)
\longleftarrow 0=H_{1}\left( \Omega /\Omega ^{\prime }\right)
\end{equation*}%
which implies that%
\begin{equation*}
\dim H_{1}\left( \Omega \right) =\dim H_{1}\left( \Omega ^{\prime }\right) ,
\end{equation*}%
thus finishing the proof.
\end{proof}

\begin{corollary}
Let a digraph $G$ be a tree \emph{(that is, the underlying undirected graph
is a tree)}. Then $H_{p}\left( G\right) =0$ for all $p\geq 1.$
\end{corollary}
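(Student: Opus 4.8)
The plan is to argue by induction on the number of vertices $n=\left\vert V\right\vert$, peeling off one leaf at a time and using Theorem~\ref{Tend} as the engine of the inductive step. The base case $n=1$ is immediate: a single-vertex digraph has $\Omega_p=\{0\}$ for all $p\geq 1$, so $H_p(G)=0$ for $p\geq 1$ trivially. The whole corollary will then follow once I show that each deletion of a leaf is covered by Theorem~\ref{Tend} and that the smaller digraph is again a tree.

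For the inductive step with $n\geq 2$, I would first invoke the standard fact that a finite connected acyclic undirected graph has a leaf, i.e. a vertex $a$ of (undirected) degree $1$. In the oriented setting this leaf carries a single directed edge, either $a\rightarrow b$ or $b\rightarrow a$ for a unique neighbour $b$. In the first case $a$ has exactly one outgoing edge and no incoming edges, so Theorem~\ref{Tend} applies verbatim; in the second case $a$ has exactly one incoming edge and no outgoing edges, and I would invoke the variant recorded in the Remark following Theorem~\ref{Tend}. Either way, with $G^{\prime}=(V\setminus\{a\},E\setminus\{ab\})$ we get $H_p(G)\cong H_p(G^{\prime})$ for all $p\geq 0$. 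Since deleting a leaf and its unique incident edge from a tree leaves a connected acyclic graph on $n-1$ vertices, $G^{\prime}$ is again a tree, and the induction hypothesis gives $H_p(G^{\prime})=0$ for $p\geq 1$, hence $H_p(G)=0$ for $p\geq 1$, closing the induction.

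The one point that genuinely needs care — and the only real obstacle — is confirming that the leaf is incident to a \emph{single} directed edge, so that precisely one of the two hypotheses of Theorem~\ref{Tend} is met. Under the natural reading of a tree as an orientation of an undirected tree (one directed edge per undirected edge, and in particular no two-way edges), this is automatic. The potential gap is a leaf joined to its neighbour by a two-way edge $a\leftrightarrow b$, which still projects to a single undirected edge but escapes the degree hypothesis of Theorem~\ref{Tend}. If such configurations are to be allowed, I would dispatch this case by a direct computation of $\Omega_{\bullet}$ exactly as in the two-vertex example $^{0}\bullet\leftrightarrow\bullet^{1}$ treated earlier, where the regular homologies $H_p$ already vanish for $p\geq 1$; otherwise the case simply does not occur and the induction above is complete.
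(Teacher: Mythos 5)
Your proof is correct and follows essentially the same route as the paper's: the paper also inducts (on $\left\vert E\right\vert$ rather than $\left\vert V\right\vert$, a trivial difference), finds a vertex of degree $1$, and removes it via Theorem \ref{Tend} together with the Remark covering the incoming-edge case. Your extra discussion of a possible two-way edge $a\leftrightarrow b$ is a reasonable precaution but does not arise under the paper's reading of a tree (one directed edge per undirected edge), which is exactly the reading implicit in the paper's proof; the paper additionally observes that for $p\geq 2$ the vanishing also follows directly from Theorem \ref{Tsq}.
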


\begin{proof}
Induction in the number of edges $\left\vert E\right\vert .$ If $\left\vert
E\right\vert =0$ then the claim is obvious. If $\left\vert E\right\vert >0$
then there is a vertex $a\in V$ of degree $1$ (indeed, if this is not the
case then moving along undirected edges allows to produce a cycle). Removing
this vertex and the adjacent edge, we obtain a tree $G^{\prime }$ with $%
\left\vert E^{\prime }\right\vert <\left\vert E\right\vert $. By the
inductive hypothesis $H_{p}\left( G^{\prime }\right) =0$ for $p\geq 1$,
whence by Theorem \ref{Tend} also $H_{p}\left( G\right) =0$.

That $H_{p}\left( G\right) =0$ for $p\geq 2$ follows also from Theorem \ref%
{Tsq}.
\end{proof}

\subsection{Removing a vertex of degree $n$}

\label{Sectriangle2}

\begin{theorem}
\label{Tabac}Suppose that a digraph $G=\left( V,E\right) $ has a vertex $a$
with $n$ outcoming edges $a\rightarrow b_{0},a\rightarrow
b_{1},...,a\rightarrow b_{n-1}$ and no incoming edges. Assume also that $%
b_{0}\rightarrow b_{i}$ for all $i\geq 1$:%
\begin{equation*}
\fbox{$%
\begin{array}{c}
\  \\ 
a\overset{\nearrow }{\underset{\searrow }{\bullet \rightarrow }} \\ 
\ 
\end{array}%
$\fbox{$%
\begin{array}{c}
\bullet \ b_{1}\  \\ 
\overset{\uparrow }{\underset{\downarrow }{\bullet }}\ b_{0}\  \\ 
\bullet \ b_{2}\ 
\end{array}%
$\ $\cdots $\ \ \ $G^{\prime }$}\ \ \ $G$}
\end{equation*}%
Denote by $G^{\prime }=\left( V^{\prime },E^{\prime }\right) $ the digraph
that is obtained from $G$ by removing a vertex $a$ with all adjacent edges,
that is, $V^{\prime }=V\setminus \left\{ a\right\} $ and $E^{\prime
}=E\setminus \left\{ ab_{i}\right\} _{i=0}^{n-1}$. Then, for any $p\geq 0$, 
\begin{equation}
\dim H_{p}\left( G\right) =\dim H_{p}\left( G^{\prime }\right) .
\label{Hpp'}
\end{equation}
\end{theorem}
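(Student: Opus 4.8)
The plan is to compare the two chain complexes through the long exact sequence of the pair $(\Omega_\bullet,\Omega'_\bullet)$, exactly as set up in Section~\ref{SecV'E'}. Since $\Omega'_\bullet(G')$ is a subcomplex of $\Omega_\bullet(G)$, Proposition~\ref{Plong} yields the long exact sequence \eqref{Om/Om'}. Hence it suffices to prove that the relative homology vanishes, $H_p(\Omega/\Omega')=0$ for all $p\ge 0$; the exactness of \eqref{Om/Om'} then forces $H_p(\Omega)\cong H_p(\Omega')$ in every degree, which is \eqref{Hpp'}. So the whole theorem reduces to showing that the relative complex $\Omega_\bullet/\Omega'_\bullet$ is acyclic.

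The next step is to make the relative complex completely explicit. Because $a$ has no incoming edges, every allowed $p$-path of $G$ that is not a path of $G'$ must begin at $a$, and so has the form $e_a u$ where $u$ is an allowed $(p-1)$-path of $G'$ whose first vertex lies in $\{b_0,\dots,b_{n-1}\}$; writing $U_{p-1}$ for the span of such $u$, we get $\mathcal{A}_p=\mathcal{A}'_p\oplus e_a U_{p-1}$. The assignment $v\mapsto u$ sending a path to its $e_a$-component then identifies $\Omega_p/\Omega'_p$ with the subspace $\widehat U_{p-1}\subseteq U_{p-1}$ of those $u$ that occur as $e_a$-components of elements of $\Omega_p$, because the kernel of this assignment is exactly $\Omega_p\cap\mathcal{A}'_p=\Omega'_p$ by \eqref{Om'}. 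Using the product rule \eqref{duv} and $\partial e_a=e$, one computes $\partial(v'+e_a u)=(\text{$a$-free terms})-e_a\partial u$, so under this identification the induced relative boundary is, up to sign, the ordinary $\partial$ on $\widehat U_\bullet$. The key structural observation, read off from the same computation, is that if $u\in\widehat U_{p-1}$ then the $a$-part $e_a\partial u$ of $\partial v$ must itself be allowed in $G$, which forces every surviving term of $\partial u$ to begin at one of the vertices $b_j$; that is, $\partial u\in U_{p-2}$.

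With this in hand I would contract the relative complex by the homotopy that prepends $b_0$: define $h(u)=e_{b_0}u$ on $\widehat U_\bullet$ (equivalently, on representatives, insert $b_0$ immediately after $a$). The hypotheses $a\to b_0$ and $b_0\to b_i$ for $i\ge 1$ are precisely what is needed for $e_a e_{b_0}u$ to be an allowed path, while the regularity convention kills the degenerate terms in which $b_0$ is repeated. The heart of the argument is to check that $h$ is well defined, i.e. that $e_{b_0}u$ again lies in $\widehat U_p$: one verifies directly that $e_a e_{b_0}u\in\Omega_{p+1}$ by computing $\partial(e_a e_{b_0}u)=e_{b_0}u-e_a u+e_a e_{b_0}\partial u$ and observing that all three terms are allowed — here the structural fact $\partial u\in U_{p-2}$ is exactly what makes $e_a e_{b_0}\partial u$ allowed. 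Reading the same computation as $\partial h+h\partial=\pm\,\mathrm{id}$ on the relative complex then shows that $\mp h$ is a contracting homotopy, whence $H_p(\Omega/\Omega')=0$ for all $p$.

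I expect the well-definedness of $h$ to be the main obstacle: everything hinges on the interaction between $\partial$-invariance and the special edge structure at $a$ and $b_0$, and it is only because $u$ extends to a $\partial$-invariant path through $a$ (so that $\partial u\in U_{p-2}$) that prepending $b_0$ stays inside the relative complex. Care is also needed with the regular boundary operator, since the degenerate cases where a path already begins at $b_0$ are handled not by separate treatment but by the vanishing of non-regular terms such as $e_{b_0 b_0}$; these cancellations are what make the homotopy identity come out to exactly $\pm\mathrm{id}$. The degree bookkeeping at the bottom of the complex, where $\partial|_{\Omega_0}$ is truncated to zero, should be checked but causes no difficulty, and the final assembly through \eqref{Om/Om'} is then immediate. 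This argument also recovers Theorem~\ref{Tend} as the case $n=1$.
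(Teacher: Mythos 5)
Your proof is correct, and it is a genuine reorganization of the paper's argument rather than a reproduction of it. Both share the same skeleton: the long exact sequence (\ref{Om/Om'}) reduces the theorem to acyclicity of the relative complex, and both hinge on the same special element $e_{ab_{0}}u$, whose $\partial $-invariance rests on $a\rightarrow b_{0}$ and $b_{0}\rightarrow b_{i}$. But the mechanisms differ. The paper argues degree by degree, for $p\geq 1$, that every relative \emph{cycle} bounds: from $v\in \Omega _{p}$ with $\partial v=0\func{mod}\Omega _{p-1}^{\prime }$ it writes $v=e_{a}u\func{mod}\mathcal{A}_{p}^{\prime }$ and deduces the strong conclusion $\partial u=0$ (the $e_{a...}$-part of a relative cycle must vanish outright), so that $\partial \left( e_{ab_{0}}u\right) =e_{b_{0}}u-e_{a}u$ collapses to two terms; the case $p=0$ is then settled separately by counting connected components (Proposition \ref{PH0}). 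You instead identify the relative complex explicitly, $\Omega _{p}/\Omega _{p}^{\prime }\cong \widehat{U}_{p-1}$ (the kernel of the $e_{a}$-component map is $\Omega _{p}\cap \mathcal{A}_{p}^{\prime }=\Omega _{p}^{\prime }$ by (\ref{Om'})), and contract it by $h\left( u\right) =e_{b_{0}}u$. This forces you to prove a structural lemma absent from the paper, namely $\partial u\in U_{p-2}$ for \emph{every} $u\in \widehat{U}_{p-1}$, cycle or not; your derivation of it is sound, since the $e_{a...}$-part of the allowed path $\partial v$ is exactly $-e_{a}\partial u$, prepending $a$ is injective on paths avoiding $a$, and no cancellation with the $a$-free part of $\partial v$ is possible. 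That lemma is precisely what makes $e_{ab_{0}}\partial u$ allowed, hence $e_{ab_{0}}u\in \Omega _{p+1}$ and $h$ well defined. The homotopy identity also checks out: with the regular conventions, $\partial \left( e_{b_{0}}u\right) =u-e_{b_{0}}\partial u$, so $\partial h+h\partial =\mathrm{id}$ on $\widehat{U}_{\bullet }$ with the un-negated differential (the sign of the induced relative boundary is immaterial for acyclicity), and the degenerate components of $u$ that already start at $b_{0}$ are absorbed by the vanishing of non-regular terms such as $e_{b_{0}b_{0}...}$, just as you say; at the bottom, $\Omega _{0}/\Omega _{0}^{\prime }=\limfunc{span}\left\{ e_{a}\right\} $ corresponds to $\widehat{U}_{-1}\cong \mathbb{K}$ and is hit via $\partial e_{ab_{0}}=-e_{a}\func{mod}\Omega _{0}^{\prime }$, so the truncation causes no trouble. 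What your route buys: a uniform treatment of all $p\geq 0$ with no appeal to Proposition \ref{PH0}, and the stronger conclusion that $\Omega /\Omega ^{\prime }$ is contractible rather than merely acyclic. What the paper's route buys: it never needs the explicit identification of the relative complex, the structural lemma, or the sign bookkeeping, because for cycles the key computation degenerates. Your closing remark is also accurate: the case $n=1$ recovers Theorem \ref{Tend}, in agreement with the paper's own remark.
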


\begin{remark}
\RM The same is true if a vertex $a$ has $n$ incoming edges $%
b_{0}\rightarrow a,b_{1}\rightarrow a,...,b_{n-1}\rightarrow a$ and no
outcoming edges, while $b_{i}\rightarrow b_{0}$ for all $i\geq 1$:%
\begin{equation*}
\fbox{$%
\begin{array}{c}
\  \\ 
a\overset{\swarrow }{\underset{\nwarrow }{\bullet \leftarrow }} \\ 
\ 
\end{array}%
$\fbox{$%
\begin{array}{c}
\bullet \ b_{1}\  \\ 
\overset{\downarrow }{\underset{\uparrow }{\bullet }}\ b_{0}\  \\ 
\bullet \ b_{2}\ 
\end{array}%
$\ $\cdots $\ \ \ $G^{\prime }$}\ \ \ $G$}
\end{equation*}%
Theorem \ref{Tabac} can be regarded as a discrete analogous of the classical
result of homotopy invariance of homologies on manifolds.
\end{remark}

\begin{example}
\RM Consider a digraph $G$ as shown in Fig. \ref{pic5b}.\FRAME{ftbphFU}{%
6.3304in}{1.567in}{0pt}{\Qcb{A digraph with many triangles and squares}}{%
\Qlb{pic5b}}{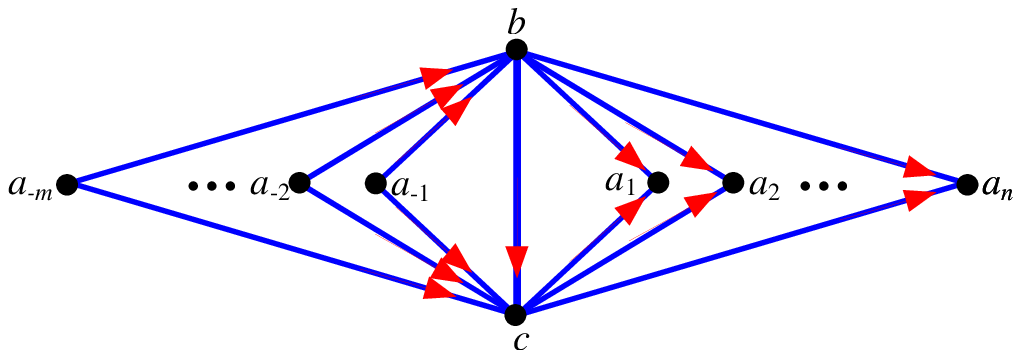}{\special{language "Scientific Word";type
"GRAPHIC";maintain-aspect-ratio TRUE;display "USEDEF";valid_file "F";width
6.3304in;height 1.567in;depth 0pt;original-width 6.3027in;original-height
1.5939in;cropleft "0";croptop "1";cropright "1";cropbottom "0";filename
'pic5b.eps';file-properties "XNPEU";}}

Each of the vertices $a_{k}$ satisfies the hypotheses of Theorem \ref{Tabac}
with $n=2$ (either with incoming or outcoming edges). Removing successively
the vertices $a_{k}$, we see that all the homologies of $G$ are the same as
those of the remaining graph $^{b}\bullet \rightarrow \bullet ^{c}$. Since
it is a star-shaped graph, we obtain $\dim H_{0}=1$ and $\dim H_{p}=0$ for
all $p\geq 1.$ In particular, $\chi =1$.

Note that for this digraph 
\begin{equation*}
\dim \Omega _{0}=m+n+2,\ \ \dim \Omega _{1}=2m+2n+1.
\end{equation*}%
Using Proposition \ref{PropS} and observing that the number of semi-edges is 
$mn$, we obtain 
\begin{equation*}
\dim \Omega _{2}=m+n+mn,
\end{equation*}
where the basis in $\Omega _{2}$ is given by the triangles $e_{a_{-k}bc}$,\ $%
e_{bca_{l}}$ and squares $e_{a_{-k}ba_{l}}-e_{a_{-k}ca_{l}}$. Finally, $\,$%
we have%
\begin{equation*}
\dim \Omega _{3}=\dim \mathcal{A}_{3}=mn
\end{equation*}
where the basis in $\Omega _{3}$ is determined by the snakes $%
e_{a_{-k}bca_{l}}$, and $\dim \Omega _{p}=0$ for all $p>3.$
\end{example}

\begin{proof}[Proof of Theorem \protect\ref{Tabac}]
Since the number of connected components of the graphs $G$ and $G^{\prime }$
is obviously the same, the identity (\ref{Hpp'}) for $p=0$ follows from
Proposition \ref{PH0}.

For $p\geq 1$ consider the long exact sequence (\ref{Om/Om'}), that is, 
\begin{equation*}
...\leftarrow H_{p}\left( \Omega /\Omega ^{\prime }\right) \leftarrow
H_{p}\left( \Omega \right) \leftarrow H_{p}\left( \Omega ^{\prime }\right)
\leftarrow H_{p+1}\left( \Omega /\Omega ^{\prime }\right) \leftarrow ...,
\end{equation*}%
that implies the identity 
\begin{equation*}
\dim H_{p}\left( \Omega \right) =\dim H_{p}\left( \Omega ^{\prime }\right) \
\ \text{for }p\geq 1,
\end{equation*}%
if we prove that%
\begin{equation}
\dim H_{p}\left( \Omega /\Omega ^{\prime }\right) =0\ \ \text{for }p\geq 1.
\label{p/p'}
\end{equation}%
The condition (\ref{p/p'}) means that 
\begin{equation*}
\ker \partial |_{\Omega _{p}/\Omega _{p}^{\prime }}\subset \func{Im}\partial
|_{\Omega _{p+1}/\Omega _{p+1}^{\prime }}
\end{equation*}%
that is, if 
\begin{equation}
v\in \Omega _{p}\ \ \text{and\ \ }\partial v=0\func{mod}\Omega
_{p-1}^{\prime }  \label{vdv}
\end{equation}%
then there exists $\omega \in \Omega _{p+1}$ such that 
\begin{equation}
\partial \omega =v\func{mod}\Omega _{p}^{\prime }.  \label{wv}
\end{equation}%
In fact, it suffices to prove the existence of $\omega \in \mathcal{A}_{p+1}$
such that 
\begin{equation}
\partial \omega =v\func{mod}\mathcal{A}_{p}^{\prime }.  \label{wva}
\end{equation}%
Indeed, (\ref{wva}) implies $\partial \omega \in \mathcal{A}_{p}$ and,
hence, $\omega \in \Omega _{p+1}.$ Since $\partial \omega -v\in \mathcal{A}%
_{p}^{\prime }$ and%
\begin{equation*}
\partial \left( \partial \omega -v\right) =-\partial v\in \mathcal{A}%
_{p-1}^{\prime },
\end{equation*}%
it follows that $\partial \omega -v\in \Omega _{p}^{\prime }$ which proves (%
\ref{wv}).

To prove the existence of $\omega $ as above, observe that $v$ can be
represented in the form%
\begin{equation}
v=e_{a}u~\func{mod}\mathcal{A}_{p}^{\prime },  \label{u+wA}
\end{equation}%
where $u\in \mathcal{A}_{p-1}^{\prime }$ and $e_{a}u\in \mathcal{A}_{p}.$ We
have then%
\begin{equation*}
\partial v=u-e_{a}\partial u~\func{mod}\mathcal{R}_{p-1}^{\prime }.
\end{equation*}%
Since $\partial v,u\in \mathcal{A}_{p-1}^{\prime },$ it follows that 
\begin{equation}
e_{a}\partial u=0\ \func{mod}\mathcal{R}_{p-1}^{\prime }.  \label{eadu}
\end{equation}%
However, all components of the path $e_{a}\partial u$ start with $e_{a...}$,
whereas the condition (\ref{eadu}) means that the path $e_{a}\partial u$ has
no such component. Hence, (\ref{eadu}) is only possible if $\partial u=0.$

Since $e_{a}u\in \mathcal{A}_{p}$, any component of $u$ has the form $%
e_{b_{i}...}$, which, together with the hypothesis that $b_{0}b_{i}$ is an
edge, implies that $e_{b_{0}}u\in \mathcal{A}_{p}^{\prime }$ and $%
e_{ab_{0}}u\in \mathcal{A}_{p+1}$. Using $\partial u=0$ and (\ref{u+wA}), we
obtain%
\begin{equation*}
\partial \left( e_{ab_{0}}u\right) =\left( e_{b_{0}}-e_{a}\right)
u+e_{ab_{0}}\partial u=e_{b_{0}}u-e_{a}u=-v~\func{mod}\mathcal{A}%
_{p}^{\prime },
\end{equation*}%
so that (\ref{wva}) holds with $\omega =-e_{ab_{0}}u.$
\end{proof}

\subsection{Removing a vertex of degree $1+1$}

\label{Sectriangles}Recall that a pair $cb$ of distinct vertices on a graph
is a \emph{semi-edge} if $cb$ is not an edge but there is a vertex $j$ such
that $cj$ and $jb$ are edges:%
\begin{equation*}
\begin{array}{c}
\bullet b \\ 
\ \upharpoonleft \  \\ 
\bullet c%
\end{array}%
\ _{\nearrow }^{\nwarrow }\bullet j
\end{equation*}%
In the next theorem the field $K$ has characteristic $0.$

\begin{theorem}
\label{Tloostri}\label{Tcab}Suppose that a graph $G=\left( V,E\right) $ has
a vertex $a$ such that there is only one outcoming edge $a\rightarrow b$
from $a$ and only one incoming edge $c\rightarrow a$, where $b\neq c.$
Consider the digraph $G^{\prime }=\left( V^{\prime },E^{\prime }\right) $
where $V^{\prime }=V\setminus \left\{ a\right\} $ and $E^{\prime
}=E\setminus \left\{ ab,ca\right\} $. 
\begin{equation*}
\fbox{$\ \ \ a~\bullet _{\nwarrow }^{\nearrow }$\fbox{$%
\begin{array}{c}
\bullet b \\ 
\vdots \  \\ 
\bullet c%
\end{array}%
\ \ ~\ \ \ \ \ G^{\prime }\ $}\ \ $\ G$}
\end{equation*}%
Then the following is true.

\begin{itemize}
\item[$\left( a\right) $] For any $p\geq 2$,%
\begin{equation}
\dim H_{p}\left( G\right) =\dim H_{p}(G^{\prime }).  \label{p=p}
\end{equation}

\item[$\left( b\right) $] If $cb$ is an edge or a semi-edge in $G^{\prime }$
then \emph{(\ref{p=p})} is satisfied also for $p=0,1$, that is, for all $%
p\geq 0.$

\item[$\left( c\right) $] If $cb$ is neither edge nor semi-edge in $%
G^{\prime }$, but $b,c$ belong to the same connected component of $G^{\prime
}$ then 
\begin{equation*}
\dim H_{1}\left( G\right) =\dim H_{1}(G^{\prime })+1\ \ 
\end{equation*}%
and $\dim H_{0}\left( G\right) =\dim H_{0}\left( G^{\prime }\right) .$

\item[$\left( d\right) $] If $b,c$ belong to different connected components
of $G^{\prime }$ then 
\begin{equation*}
\dim H_{1}\left( G\right) =\dim H_{1}(G^{\prime })
\end{equation*}%
and $\dim H_{0}\left( G\right) =\dim H_{0}(G^{\prime })-1.$\ 
\end{itemize}

Consequently, in the case $\left( b\right) ,$ $\chi \left( G\right) =\chi
\left( G^{\prime }\right) ,$ whereas in the cases $\left( c\right) $ and $%
\left( d\right) ,$ $\chi \left( G\right) =\chi \left( G^{\prime }\right) -1.$
\end{theorem}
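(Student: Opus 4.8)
The plan is to feed the pair $(\Omega_\bullet,\Omega_\bullet')$ into the long exact sequence (\ref{Om/Om'}) and reduce the whole statement to an analysis of the quotient complex $\Omega_\bullet/\Omega_\bullet'$. The starting observation is (\ref{Om'}), namely $\Omega_p'=\Omega_p\cap\mathcal{A}_p'$: two $\partial$-invariant paths are congruent modulo $\Omega_p'$ exactly when they carry the same components on the elementary paths that use the vertex $a$, so the quotient is governed by these ``$a$-parts''. Since $a$ has the single incoming edge $c\to a$ and the single outgoing edge $a\to b$, an allowed path can meet $a$ only in a shape $ab\dots$, $\dots ca$, or $\dots cab\dots$, and this rigidity forces the quotient to be tiny. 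In particular $(\Omega/\Omega')_0=\limfunc{span}\{e_a\}$ and $(\Omega/\Omega')_1=\limfunc{span}\{e_{ab},e_{ca}\}$, and modulo $\Omega_0'$ one has $\partial e_{ab}=-e_a$ and $\partial e_{ca}=e_a$; hence $H_0(\Omega/\Omega')=0$ while $\ker\partial|_{(\Omega/\Omega')_1}=\limfunc{span}\{e_{ab}+e_{ca}\}$ is one-dimensional.

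The combinatorial heart of the proof --- and the step I expect to be the main obstacle --- is to pin down the quotient in degrees $\geq 2$. The claim is that $\Omega_p=\Omega_p'$ for $p\geq 3$, and that $(\Omega/\Omega')_2=\limfunc{span}\{e_{cab}\}$ when $cb$ is an edge or a semi-edge of $G'$ and is $\{0\}$ otherwise. To prove $\Omega_p=\Omega_p'$ for $p\geq 3$ I would argue that any allowed path $\pi$ genuinely using $a$ of length $\geq 3$ must have a vertex before $c$ or after $b$; deleting that neighbour in $\partial\pi$ produces a term in which $a$ sits next to a \emph{non}-neighbour, hence a non-allowed path, and one checks that this offending term is produced by no other deletion of any allowed path, so it cannot cancel. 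Thus the $a$-component of $\pi$ must vanish if $\partial\pi\in\mathcal{A}_{p-1}$, whence the $a$-part of every element of $\Omega_p$ is zero. The same uncancellable-term bookkeeping at $p=2$ eliminates the shapes $abx$ and $xca$ (whose boundaries contain the non-allowed $a$-edges $e_{ax},e_{xa}$), leaving only $e_{cab}$; and $e_{cab}$ lies in $\Omega_2$ precisely when the non-allowed remainder $e_{cb}$ in $\partial e_{cab}=e_{ab}-e_{cb}+e_{ca}$ can be removed, i.e. when $cb$ is an edge (then $e_{cb}$ is allowed) or a semi-edge (then $e_{cab}-e_{cjb}$ works for a bridge $cjb$), by Lemma \ref{Lemsemialowed}. (The characteristic-zero hypothesis would be invoked, if at all, only to preclude accidental coefficient cancellations in this bookkeeping.)

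Granting this description, the quotient homology is immediate: $H_p(\Omega/\Omega')=0$ for all $p\geq 2$ (in the edge/semi-edge case $\partial_2[e_{cab}]=[e_{ab}+e_{ca}]\neq 0$ and $(\Omega/\Omega')_3=0$, so $H_2=0$; in the remaining case $(\Omega/\Omega')_2=0$), while $H_1(\Omega/\Omega')$ is $\{0\}$ in case $(b)$ and is generated by $[e_{ab}+e_{ca}]$ with dimension $1$ in cases $(c)$ and $(d)$. Part $(a)$ then follows at once from (\ref{Om/Om'}), since $H_p(\Omega/\Omega')=0=H_{p+1}(\Omega/\Omega')$ for $p\geq 2$ yields $H_p(\Omega)\cong H_p(\Omega')$.

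For the low-degree statements I would read off the tail of (\ref{Om/Om'}), which with $H_0(\Omega/\Omega')=H_2(\Omega/\Omega')=0$ collapses to the exact sequence
\[
0\leftarrow H_{0}(\Omega )\leftarrow H_{0}(\Omega ^{\prime })\overset{\delta}{\leftarrow } H_{1}(\Omega /\Omega ^{\prime })\leftarrow H_{1}(\Omega )\leftarrow H_{1}(\Omega ^{\prime })\leftarrow 0 .
\]
The connecting map sends the generator to $\delta[e_{ab}+e_{ca}]=[\partial(e_{ab}+e_{ca})]=[e_{b}-e_{c}]\in H_{0}(\Omega')$, which by Proposition \ref{PH0} vanishes iff $b$ and $c$ lie in the same connected component of $G'$. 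In case $(b)$, $H_1(\Omega/\Omega')=0$ forces $H_0(\Omega)\cong H_0(\Omega')$ and $H_1(\Omega)\cong H_1(\Omega')$. In case $(c)$, $\delta=0$ and the sequence splits into $0\leftarrow H_0(\Omega)\leftarrow H_0(\Omega')\leftarrow 0$ and $0\leftarrow H_1(\Omega/\Omega')\leftarrow H_1(\Omega)\leftarrow H_1(\Omega')\leftarrow 0$, giving $\dim H_0(\Omega)=\dim H_0(\Omega')$ and $\dim H_1(\Omega)=\dim H_1(\Omega')+1$. In case $(d)$, $\delta$ is injective, so $\dim H_1(\Omega)=\dim H_1(\Omega')$ and $\dim H_0(\Omega)=\dim H_0(\Omega')-1$. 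Summing the alternating dimensions then yields $\chi(G)=\chi(G')$ in case $(b)$ and $\chi(G)=\chi(G')-1$ in cases $(c)$ and $(d)$, completing the proof.
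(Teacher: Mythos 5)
Your proposal is correct, and its skeleton coincides with the paper's proof: both run the long exact sequence (\ref{Om/Om'}) for the pair $\Omega _{\bullet }(G^{\prime })\subset \Omega _{\bullet }(G)$; both establish, by the same uncancellable-term bookkeeping, that every $v\in \Omega _{p}$ has vanishing components on allowed paths of the shapes $\alpha ab\beta $ ($\beta $ nonempty) and $\gamma ca\beta $ ($\gamma $ nonempty) (the paper's Cases 1--2, which yield your $\Omega _{p}=\Omega _{p}^{\prime }$ for $p\geq 3$ and reduce $\Omega _{2}/\Omega _{2}^{\prime }$ to the $e_{cab}$-coefficient); and both settle degree $2$ by the same trichotomy on $cb$, using $\left( \partial v\right) ^{cb}=-v^{cab}$ in the neither-edge-nor-semi-edge case. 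You diverge in two places, both to good effect. In degrees $0,1$ the paper never computes the connecting homomorphism: it applies the alternating-sum identity (\ref{sum=0}) to the six-term sequence to obtain the single relation (\ref{1=1+}) and then inputs the comparison of $\dim H_{0}(G)$ with $\dim H_{0}(G^{\prime })$ found by counting connected components directly; you instead evaluate $\delta \lbrack e_{ab}+e_{ca}]=[e_{b}-e_{c}]$ and decide its vanishing in $H_{0}(\Omega ^{\prime })$ by connectivity, which produces all of (b)--(d) from the sequence alone and is, if anything, tidier. More substantively, in the semi-edge case the paper takes $v=e_{cab}-\frac{1}{\left\vert J\right\vert }\sum_{j\in J}e_{cjb}$, and the division by $\left\vert J\right\vert $ is the sole point where characteristic zero enters its argument; your single-bridge path $e_{cab}-e_{cjb}$ lies in $\Omega _{2}$ just as well (the two $e_{cb}$ terms cancel and everything remaining is allowed) and involves no division, so your parenthetical hedge is unnecessary: your version actually proves the theorem over an arbitrary field, removing the characteristic-zero hypothesis that the paper's averaging device forces. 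One cosmetic caveat: in the semi-edge case $e_{cab}\notin \Omega _{2}$, so your assertion that $\left( \Omega /\Omega ^{\prime }\right) _{2}=\limfunc{span}\left\{ e_{cab}\right\} $ should be read as saying the quotient is one-dimensional, generated by the class of a path whose $a$-part is $e_{cab}$; the image of that class under $\partial $, namely $[e_{ab}+e_{ca}]$, and everything downstream are unaffected.
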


\begin{example}
\RM Consider the graphs 
\begin{equation*}
G=%
\begin{array}{ccc}
& \overset{b}{\bullet } &  \\ 
^{a}\bullet _{\nwarrow }^{\nearrow } & \downarrow & _{\nearrow }^{\nwarrow
}\bullet ^{d} \\ 
& \underset{c}{\bullet } & 
\end{array}%
\ \ \text{and\ \ }G^{\prime }=%
\begin{array}{cc}
\overset{b}{\bullet } &  \\ 
\downarrow & _{\nearrow }^{\nwarrow }\bullet ^{d} \\ 
\underset{c}{\bullet } & 
\end{array}%
\ 
\end{equation*}%
Since $cb$ is semi-edge in $G^{\prime }$ we have case $\left( b\right) $ so
that all homologies of $G$ and $G^{\prime }$ are the same. Removing further
vertex $d$ we obtain a digraph $\ $%
\begin{equation*}
G^{\prime \prime }=\left. ^{b}\bullet \rightarrow \bullet ^{c}\right.
\end{equation*}%
It is a star-shaped graph whence $\dim H_{p}\left( G^{\prime \prime }\right)
=0$ for $p\geq 1.$ Since $cb$ is neither edge nor semi-edge in $G^{\prime
\prime }$, but the graph $G^{\prime \prime }$ is connected, we conclude by
case $\left( c\right) $ that 
\begin{equation*}
\dim H_{1}\left( G^{\prime }\right) =\dim H_{1}\left( G^{\prime \prime
}\right) +1=1.
\end{equation*}%
and $H_{p}\left( G^{\prime }\right) =H_{p}\left( G^{\prime \prime }\right) $
for $p\geq 2$. It follows that $\dim H_{p}\left( G\right) =0$ for $p\geq 2$
and $\dim H_{1}\left( G\right) =1$.
\end{example}

\begin{example}
\label{Exantisnake}\RM Consider a digraph as on Fig. \ref{pic10a} (a kind of
anti-snake).\FRAME{ftbphFU}{5.7009in}{1.2756in}{0pt}{\Qcb{An anti-snake}}{%
\Qlb{pic10a}}{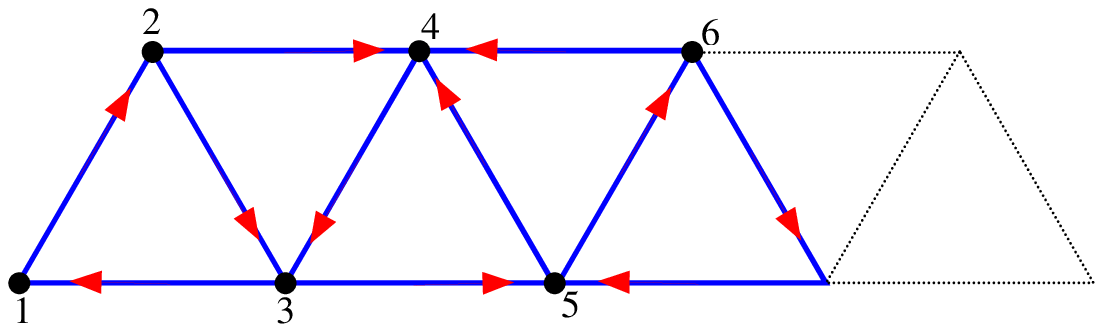}{\special{language "Scientific Word";type
"GRAPHIC";maintain-aspect-ratio TRUE;display "USEDEF";valid_file "F";width
5.7009in;height 1.2756in;depth 0pt;original-width 6.4643in;original-height
1.4218in;cropleft "0";croptop "1";cropright "1";cropbottom "0";filename
'pic10a.eps';file-properties "XNPEU";}}

We start building this graph with $1\rightarrow 2.$ Since $21$ is neither
edge nor semi-edge, adding a path $2\rightarrow 3\rightarrow 1$ increases $%
\dim H_{1}$ by $1$ and preserves other homologies. Since $23$ is an edge,
adding a path $2\rightarrow 4\rightarrow 3$ preserves all homologies. Since $%
34$ is neither edge nor semi-edge, adding a path $3\rightarrow 5\rightarrow
4 $ increases $\dim H_{1}$ by $1$ and preserves other homologies. Similarly,
adding a path $5\rightarrow 6\rightarrow 4$ preserves all homologies.

One can repeat this pattern arbitrarily many times. By doing so we construct
a digraph with a prescribed positive value of $\dim H_{1}$ while keeping $%
\dim H_{p}=0$ for all $p\geq 2$. Consequently, the Euler characteristic $%
\chi $ can take arbitrary negative values.
\end{example}

\begin{example}
\RM Consider a digraph on Fig. \ref{pic33}. By Theorem \ref{Tabac}, we can
remove the vertices $5$ and $8$ (and their adjacent edges) without change of
homologies. Then by the same theorem we can remove $4$ and $7$. By Theorem %
\ref{Tcab} we can remove the vertex $1.$ The resulting graph with the
vertices $0,2,3,6$ is star-shaped, so that by Theorem \ref{Tstar} the
homology groups $H_{p}$ are trivial for all $p\geq 1$, while $\dim H_{0}=1.$
\end{example}

\begin{proof}[Proof of Theorem \protect\ref{Tcab}]
\emph{Proof of }$\left( a\right) .$ The identity (\ref{p=p}) for $p\geq 2$
will follow if if prove that 
\begin{equation}
\dim H_{p}\left( \Omega /\Omega ^{\prime }\right) =0\ \text{for }p\geq 2.
\label{HpO}
\end{equation}%
In order to prove (\ref{HpO}) it suffices to show that%
\begin{equation*}
\ker \partial |_{\Omega _{p}/\Omega _{p}^{\prime }}=0,
\end{equation*}%
which is equivalent to 
\begin{equation}
v\in \Omega _{p},\ \ \partial v=0\func{mod}\Omega _{p-1}^{\prime
}\Rightarrow v=0\func{mod}\Omega _{p}^{\prime }.  \label{vpp-1}
\end{equation}%
By the definition (\ref{Omdef}) of $\Omega _{p}$, (\ref{vpp-1}) is
equivalent to 
\begin{equation}
v\in \mathcal{A}_{p}\text{\ \ and}\ \ \partial v\in \mathcal{A}%
_{p-1}^{\prime }\Rightarrow v\in \mathcal{A}_{p}^{\prime }.  \label{App-1}
\end{equation}%
Hence, let us prove (\ref{App-1}) for all $p\geq 2.$

Every elementary allowed $p$-path on $G$ either contains one of the edges $%
ab $, $ca$ or is allowed in $G^{\prime }$. Let us show that, for any $v$ as
in (\ref{App-1}), its components $v^{...ab...}$ and $v^{...ca...}$ vanish,
which will imply that $v\in \mathcal{A}_{p}^{\prime }$. Any such component
can be written in the form $v^{\alpha ab\beta }$ or $v^{\gamma ca\beta }$
where $\alpha ,\beta ,\gamma $ are some paths. Consider the following cases.
For further applications, in the Cases 1,2 we assume only that $v\in \Omega
_{p}$ (whereas in the Case 3 $v$ is as in (\ref{App-1})).

\emph{Case 1.} Let us consider first the component $v^{\alpha ab\beta }$
where $\beta $ is non-empty. If $\alpha ab\beta $ is not allowed in $G$ then 
$v^{\alpha ab\beta }=0$ by definition. Let $\alpha ab\beta $ be allowed in $%
G $. The path $\alpha a\beta $ is not allowed because the only outcoming
edge from $a$ is $ab$. Since $\partial v\in \mathcal{A}_{p-1}$, we have 
\begin{equation*}
\left( \partial v\right) ^{\alpha a\beta }=0.
\end{equation*}%
Let us show that%
\begin{equation}
\left( \partial v\right) ^{\alpha a\beta }=\pm v^{\alpha ab\beta },
\label{vab}
\end{equation}%
which will imply $v^{\alpha ab\beta }=0.$ Indeed, by (\ref{dv}) $\left(
\partial v\right) ^{\alpha a\beta }$ is the sum of the terms $\pm v^{\omega
} $ where $\omega $ is a $p$-path that is obtained from $\alpha a\beta $ by
inserting one vertex. Since there is no edge from $a$ to $\beta $, the only
way $\omega $ can be allowed is when $\omega =\alpha ab\beta $. Since for
any other $\omega $ we have $v^{\omega }=0$, we obtain (\ref{vab}), which
implies that $v^{\alpha ab\beta }=0.$

\emph{Case 2.} In the same way one proves that $v^{\gamma ca\beta }=0$
provided $\gamma $ is non-empty, using the fact that the only incoming edge
in $a$ is $ca.$

\emph{Case 3.} Consider now an arbitrary component $v^{\alpha ab\beta }.$ If 
$\beta $ is non-empty then $v^{\alpha ab\beta }=0$ by Case 1. Let $\beta $
be empty. Then $\alpha $ must have the form $\alpha =\gamma c$ so that $%
v^{\alpha ab\beta }=v^{\gamma cab}.$ If $\gamma $ is non-empty then $%
v^{\gamma cab}=0$ by Case 2. Finally, let $\gamma $ be also empty so that $%
v^{\alpha ab\beta }=v^{cab}$ (which is only possible if $p=2$). Since $%
\partial v\in \mathcal{A}_{1}^{\prime }$, we have%
\begin{equation*}
\left( \partial v\right) ^{ab}=0.
\end{equation*}%
On the other hand, 
\begin{equation*}
\left( \partial v\right) ^{ab}=\sum_{i\in V}v^{iab}-v^{aib}+v^{abi}.
\end{equation*}%
Here all the terms of the form $v^{iab}$ vanish, except possibly for $%
v^{cab} $, because $ia$ is not an edge unless $i=c$. All the terms $v^{aib}$
vanish because $ai$ is not an edge. All the terms $v^{abi}$ vanish by Case
1. Hence, we obtain 
\begin{equation*}
\left( \partial v\right) ^{ab}=v^{cab}
\end{equation*}%
whence $v^{cab}=0$ follows, thus finishing the proof of the part $\left(
a\right) .$

\emph{Proof of }$\left( b\right) ,\left( c\right) ,\left( d\right) .$ If $%
b,c $ belong to the same connected component of $G^{\prime }$ then the
number of connected components of $G$ and that of $G^{\prime }$ are the
same, so that%
\begin{equation}
\dim H_{0}\left( \Omega \right) =\dim H_{0}\left( \Omega ^{\prime }\right) ,
\label{0'=0}
\end{equation}%
whereas if $b,c$ belong to different connected components of $G^{\prime }$
then after joining them by $a$ the number of connected components reduces by 
$1$, so that%
\begin{equation}
\dim H_{0}\left( \Omega \right) =\dim H_{0}\left( \Omega ^{\prime }\right)
-1.  \label{0'=0+1}
\end{equation}

To handle $H_{1}$ we use the long exact sequence (\ref{Om/Om'}) that by (\ref%
{HpO}) has the form%
\begin{equation}
0\leftarrow H_{0}\left( \Omega /\Omega ^{\prime }\right) \leftarrow
H_{0}\left( \Omega \right) \leftarrow H_{0}\left( \Omega ^{\prime }\right)
\leftarrow H_{1}\left( \Omega /\Omega ^{\prime }\right) \leftarrow
H_{1}\left( \Omega \right) \leftarrow H_{1}\left( \Omega ^{\prime }\right)
\leftarrow 0.  \label{Om/Om'1}
\end{equation}%
Since we know already the relation between $H_{0}\left( \Omega ^{\prime
}\right) $ and $H_{0}\left( \Omega \right) $, to obtain the relation between 
$H_{1}\left( \Omega ^{\prime }\right) $ and $H_{1}\left( \Omega \right) $ we
need to compute $\dim H_{0}\left( \Omega /\Omega ^{\prime }\right) $ and $%
\dim H_{1}\left( \Omega /\Omega ^{\prime }\right) $ from the quotient
complex $\Omega /\Omega ^{\prime }$. Observe that%
\begin{equation}
\Omega _{0}=\Omega _{0}^{\prime }+\limfunc{span}\left\{ e_{a}\right\} ,\ \ 
\text{\ }\Omega _{1}=\Omega _{1}^{\prime }+\limfunc{span}\left\{
e_{ab},e_{ca}\right\}  \label{Om01'}
\end{equation}%
so that the quotient complex $\Omega /\Omega ^{\prime }$ has the form 
\begin{equation*}
0\longleftarrow \limfunc{span}\left\{ e_{a}\right\} \overset{\partial }{%
\longleftarrow }\limfunc{span}\left\{ e_{ab},e_{ca}\right\} \overset{%
\partial }{\longleftarrow }\Omega _{2}/\Omega _{2}^{\prime }\overset{%
\partial }{\longleftarrow }...
\end{equation*}%
We need to determine $\func{Im}\partial |_{\Omega _{1}/\Omega _{1}^{\prime
}} $, $\ker \partial |_{\Omega _{1}/\Omega _{1}^{\prime }}$, $\func{Im}%
\partial |_{\Omega _{2}/\Omega _{2}^{\prime }}$. Since%
\begin{equation*}
\partial e_{ab}=e_{b}-e_{a}=-e_{a}\func{mod}\Omega _{0}^{\prime },
\end{equation*}%
it follows that 
\begin{equation*}
\func{Im}\partial |_{\Omega _{1}/\Omega _{1}^{\prime }}=\Omega _{0}/\Omega
_{0}^{\prime },
\end{equation*}%
whence 
\begin{equation}
\dim H_{0}\left( \Omega /\Omega ^{\prime }\right) =0.\text{\ }  \label{00}
\end{equation}%
For any scalars $k,l\in \mathbb{K}$, we have 
\begin{equation*}
\partial (ke_{ab}+le_{ca})=(l-k)e_{a}\func{mod}\Omega _{0}^{\prime },
\end{equation*}%
so that $\partial (ke_{ab}+le_{ca})=0$ if and only if $k=l$, that is 
\begin{equation}
\ker \partial |_{\Omega _{1}/\Omega _{1}^{\prime }}=\limfunc{span}%
(e_{ab}+e_{ca})\func{mod}\Omega _{1}^{\prime }.  \label{ker1/1}
\end{equation}

Let us now compute $\func{Im}\partial |_{\Omega _{2}/\Omega _{2}^{\prime }}$%
. For any $v\in \Omega _{2}$ we have by the above Cases 1,2 that $%
v^{abi}=v^{jca}=0$, which implies that $v$ has the form%
\begin{equation}
v=v^{\prime }+v^{cab}e_{cab},  \label{vv'}
\end{equation}%
where $v^{\prime }\in \mathcal{A}_{2}^{\prime }$. It follows that%
\begin{equation}
\partial v=\partial v^{\prime }+v^{cab}\left( e_{ab}-e_{cb}+e_{ca}\right) .
\label{dvv'}
\end{equation}%
Since all $1$-paths $\partial v$, $e_{ab}$ and $e_{ca}$ belong to $\mathcal{A%
}_{1}$, it follows that $\partial v^{\prime }-v^{cab}e_{cb}\in \mathcal{A}%
_{1}$ whence also $\partial v^{\prime }-v^{cab}e_{cb}\in \mathcal{A}%
_{1}^{\prime }.$ Therefore,%
\begin{equation}
\partial v=v^{cab}\left( e_{ab}+e_{ca}\right) \func{mod}\Omega _{1}^{\prime
}.  \label{ab+ca}
\end{equation}%
Next consider two cases.

$\left( i\right) $ Let $\Omega _{2}$ contain an element $v$ with $%
v^{cab}\neq 0.$ Then by (\ref{ab+ca}) 
\begin{equation}
\func{Im}\partial |_{\Omega _{2}/\Omega _{2}^{\prime }}=\limfunc{span}\left(
e_{ab}+e_{ca}\right) \func{mod}\Omega _{1}^{\prime },  \label{Im2/2}
\end{equation}%
which together with (\ref{ker1/1}) implies 
\begin{equation}
\dim H_{1}\left( \Omega /\Omega ^{\prime }\right) =0.\text{\ }  \label{10}
\end{equation}%
Substituting (\ref{00}) and (\ref{10}) into the exact sequence (\ref{Om/Om'1}%
), we obtain that the identity 
\begin{equation*}
\dim H_{p}\left( \Omega ^{\prime }\right) =\dim H_{p}\left( \Omega \right)
\end{equation*}%
holds for all $p\geq 0$.

$\left( ii\right) $ Assume that $v^{cab}=0$ for all $v\in \Omega _{2}$. Then
by (\ref{ab+ca})%
\begin{equation*}
\func{Im}\partial |_{\Omega _{2}/\Omega _{2}^{\prime }}=0,
\end{equation*}%
which together with (\ref{ker1/1}) implies 
\begin{equation}
\dim H_{1}\left( \Omega /\Omega ^{\prime }\right) =1.  \label{H1=1}
\end{equation}%
Using again the exact sequence (\ref{Om/Om'1}), that is, 
\begin{equation*}
0\leftarrow H_{0}\left( \Omega \right) \leftarrow H_{0}\left( \Omega
^{\prime }\right) \leftarrow H_{1}\left( \Omega /\Omega ^{\prime }\right)
\leftarrow H_{1}\left( \Omega \right) \leftarrow H_{1}\left( \Omega ^{\prime
}\right) \leftarrow 0,
\end{equation*}%
we obtain by (\ref{sum=0}) and (\ref{H1=1}) 
\begin{equation}
\dim H_{1}\left( \Omega ^{\prime }\right) -\dim H_{1}(\Omega )+1-\dim
H^{0}\left( \Omega ^{\prime }\right) +\dim H^{0}(\Omega )=0  \label{1=1+}
\end{equation}%
Let us now specify when $\left( i\right) $ or $\left( ii\right) $ occur.
Assume first that $cb$ is an edge:%
\begin{equation*}
\fbox{$\ \ \ a~\bullet _{\nwarrow }^{\nearrow }$\fbox{$%
\begin{array}{c}
\bullet b \\ 
\ \uparrow \ \  \\ 
\bullet c%
\end{array}%
\ \ ~\ \ \ \ \ G^{\prime }\ $}\ \ $\ G$}
\end{equation*}%
Then 
\begin{equation*}
\partial e_{cab}=e_{ab}-e_{cb}+e_{ca}\in \mathcal{A}_{1},
\end{equation*}%
whence it follows that $e_{cab}\in \Omega _{2}$. Hence, we have the case $%
\left( i\right) $ with $v=e_{cab}$.

Assume now that $cb$ is not an edge. Denote by $J$ the set of vertices $j\in
V^{\prime }$ such that the $2$-path $cjb$ is allowed in $G^{\prime }$: 
\begin{equation*}
a~\bullet _{\nwarrow }^{\nearrow }%
\begin{array}{c}
\bullet b \\ 
\  \\ 
\bullet c%
\end{array}%
\ _{\nearrow }^{\nwarrow }\fbox{$\overset{\ }{\underset{\ }{\bullet j\ ...}\
J}$}
\end{equation*}%
Assume first that $J$ is non-empty, that is, $cb$ is a semi-edge, and set 
\begin{equation*}
v=e_{cab}-\frac{1}{\left\vert J\right\vert }\sum_{j\in J}e_{cjb},
\end{equation*}%
where $\left\vert J\right\vert $ is the number of elements in $J$. It is
clear that $v\in \mathcal{A}_{2}$. We have%
\begin{eqnarray}
\partial v &=&\left( e_{ab}-e_{cb}+e_{ca}\right) -\frac{1}{\left\vert
J\right\vert }\sum_{j\in J}\left( e_{jb}-e_{cb}+e_{cj}\right)  \notag \\
&=&\left( e_{ab}+e_{ca}\right) -\frac{1}{\left\vert J\right\vert }\sum_{j\in
J}\left( e_{jb}+e_{cj}\right) ,  \label{ll3}
\end{eqnarray}%
where the term $e_{cb}$ has cancelled out. It follows from (\ref{ll3}) that $%
\partial v\in \mathcal{A}_{1}$ whence $v\in \Omega _{2}$, and we obtain
again the case $\left( i\right) $. This finishes the proof of $\left(
b\right) $.

Let us show that if $J=\emptyset $ (that is, when $cb$ is neither edge nor
semi-edge) then we have the case $\left( ii\right) .$ Any $2$-path $v\in
\Omega _{2}$ has the form (\ref{vv'}) and $\partial v$ is given by (\ref%
{dvv'}). It follows that%
\begin{equation*}
\left( \partial v\right) ^{cb}=\left( \partial v^{\prime }\right)
^{cb}-v^{cab}.
\end{equation*}%
Since $\partial v\in \mathcal{A}_{1}$ and $cb$ is not an edge, we have $%
\left( \partial v\right) ^{cb}=0.$ We have by (\ref{dv})%
\begin{equation*}
\left( \partial v^{\prime }\right) ^{cb}=\sum_{j\in V^{\prime }}\left(
v^{\prime }\right) ^{jcb}-\left( v^{\prime }\right) ^{cjb}+\left( v^{\prime
}\right) ^{cbj},
\end{equation*}%
which implies that $\left( \partial v^{\prime }\right) ^{cb}=0$ as no
elementary $2$-path of the form $jcb,cjb,cbj$ is allowed in $G^{\prime },$
whereas $v^{\prime }\in \mathcal{A}_{2}^{\prime }$. It follows that $%
v^{cab}=0$ so that we have the case $\left( ii\right) $.

If in addition $b,c$ belong to the same connected component of $G^{\prime }$
then we have (\ref{0'=0}), that is,%
\begin{equation*}
\dim H^{0}\left( \Omega \right) =\dim H^{0}(\Omega ^{\prime }).
\end{equation*}%
Substituting into (\ref{1=1+}), we obtain 
\begin{equation*}
\dim H_{1}\left( \Omega \right) =\dim H_{1}(\Omega ^{\prime })+1.
\end{equation*}%
which proves part $\left( c\right) $.

If $b,c$ belong to different components of $G^{\prime }$ then we have by (%
\ref{0'=0+1})%
\begin{equation*}
\dim H^{0}\left( \Omega \right) =\dim H^{0}(\Omega ^{\prime })-1,
\end{equation*}%
whence by (\ref{1=1+}) 
\begin{equation*}
\dim H_{1}\left( \Omega \right) =\dim H_{1}(\Omega ^{\prime }),
\end{equation*}%
which finishes the proof of part $\left( d\right) .$

Finally, the identities for the Euler characteristic follow easily from the
relations between $\dim H_{p}\left( \Omega \right) $ and $\dim H_{p}\left(
\Omega ^{\prime }\right) .$
\end{proof}

\section{Join of path complexes}

\setcounter{equation}{0}\label{SecJoin}In this and next sections we use
slightly different way of denoting the path/form spaces associated with a
given path complex as we will have to consider path complexes on more than
one set. Given a finite set $X$, denote by $P\left( X\right) $ a path
complex on $X$. The space $\mathcal{A}_{p}\left( P\left( X\right) \right) $
of all allowed $p$-paths (= finite $\mathbb{K}$-linear combinations of
elements from $P_{p}\left( X\right) $) will be denoted shortly by $\mathcal{A%
}_{p}\left( X\right) $. Similarly, the space $\Omega _{p}\left( P\left(
X\right) \right) $ of all $\partial $-invariant $p$-paths will be denoted by 
$\Omega _{p}\left( X\right) $. Similar notation will apply to all other
relevant notions including homologies $H_{p}\left( X\right) $, etc.

\subsection{Definition and examples of a join}

\label{SecJoinEx}

\begin{definition}
\RM Given two disjoint finite sets $X,Y$ and their path complexes $P\left(
X\right) ,P\left( Y\right) $, set $Z=X\sqcup Y$ and define a path complex $%
P\left( Z\right) $ as follows: $P\left( Z\right) $ consists of all paths of
the form $\,uv$ where $u\in P\left( X\right) $ and $v\in P\left( Y\right) $.
The path complex $P\left( Z\right) $ is called a \emph{join} of $P\left(
X\right) ,P\left( Y\right) $ and is denoted by $P\left( Z\right) =P\left(
X\right) \ast P\left( Z\right) .$
\end{definition}

An example of the path $uv\in P\left( Z\right) $ is given on Fig. \ref{pic36}%
. Note that each of $u,v$ can be empty so that all allowed paths on $X$ and $%
Y$ will also be allowed on $Z$.

\FRAME{ftbhFU}{4.1633in}{2.0326in}{0pt}{\Qcb{A path $uv$}}{\Qlb{pic36}}{%
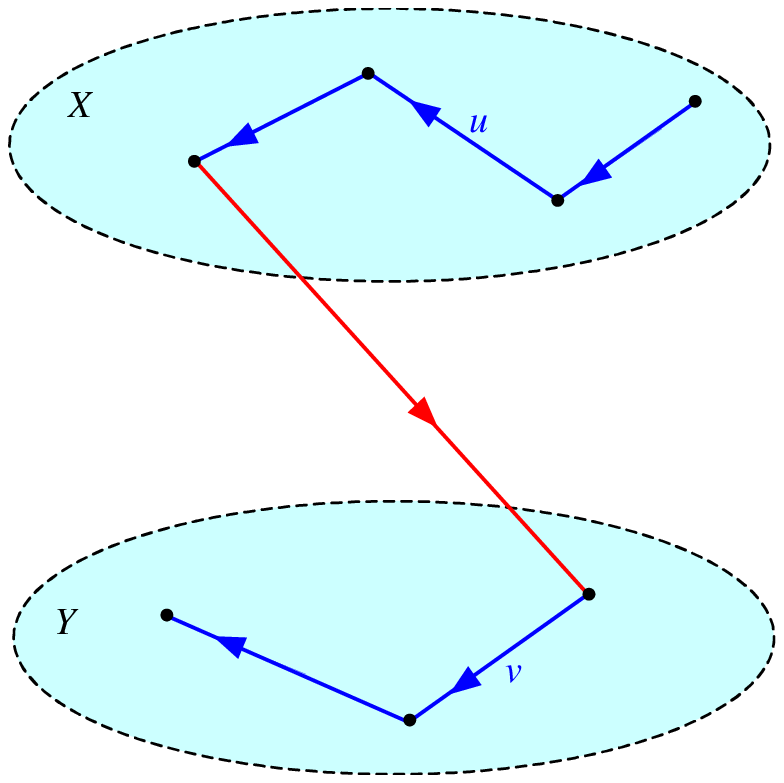}{\special{language "Scientific Word";type
"GRAPHIC";maintain-aspect-ratio TRUE;display "USEDEF";valid_file "F";width
4.1633in;height 2.0326in;depth 0pt;original-width 5.9088in;original-height
2.8644in;cropleft "0";croptop "1";cropright "1";cropbottom "0";filename
'pic36.eps';file-properties "XNPEU";}}

The operation $\ast $ on the path complexes is obviously non-commutative but
associative.

\begin{example}
\RM Let $X,Y$ be digraphs and $P\left( X\right) $ and $P\left( Y\right) $
with the path complexes arising from their digraph structures. Consider the
digraph $Z$ whose the set of vertices is $X\sqcup Y$, while the set of edges
of $Z$ consists of all the edges of $X$ and $Y$, as well as of all the edges 
$x\rightarrow y$ for all $x\in X$ and $y\in Y$. The digraph $Z$ is called a 
\emph{\ join} of $X$ and $Y$ and is denoted by $\,X\ast Y.$

An example of a join of two digraphs is shown on Fig. \ref{pic35}.\FRAME{%
ftbhFU}{4.1635in}{2.0324in}{0pt}{\Qcb{A digraph $Z=X\ast Y$}}{\Qlb{pic35}}{%
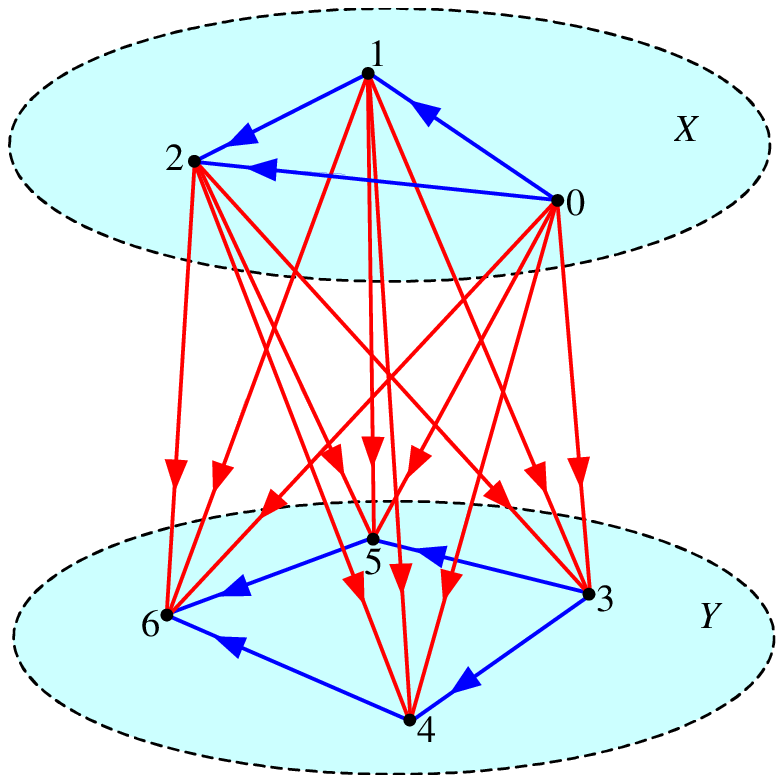}{\special{language "Scientific Word";type
"GRAPHIC";maintain-aspect-ratio TRUE;display "USEDEF";valid_file "F";width
4.1635in;height 2.0324in;depth 0pt;original-width 5.9088in;original-height
2.8644in;cropleft "0";croptop "1";cropright "1";cropbottom "0";filename
'pic35.eps';file-properties "XNPEU";}}

Let $P\left( Z\right) $ be the path complex arising from the digraph
structure of $Z$. Then it is obvious from the definition that $P\left(
Z\right) $ is the join of $P\left( X\right) $ and $P\left( Y\right) $ so that%
\begin{equation*}
P\left( X\ast Y\right) =P\left( X\right) \ast P\left( Y\right) .
\end{equation*}%
Hence, the operation of joining of digraphs is a particular case of the
operation of joining path complexes.

If $Y$ consists of a single vertex $a$ then $Z=X\ast Y$ is called a \emph{%
cone} over $X$, and if $Y$ consists of two vertices $a,b$ and no edges then $%
Z=X\ast Y$ is called a \emph{suspension} over $X$ (see Sections \ref{SecCone}
and \ref{SecSus} for more details).
\end{example}

\begin{example}
\RM Let $X$ and $Y$ be the vertex sets of finite simplicial complexes $%
S\left( X\right) $ and $S\left( Y\right) .$ Let us construct a simplicial
complex $S\left( Z\right) $ with the vertex set $Z=X\sqcup Y$ as follows.
Assuming that $\left\vert X\right\vert =n$ and $\left\vert Y\right\vert =m$,
embed the set $X$ (together with all simplexes from $S\left( X\right) $)
into a hyperplane $h^{n-1}\subset \mathbb{R}^{n+m-1}$ and $Y$ -- into a
hyperplane $h^{m-1}\subset R^{n+m-1}$, where the hyperplanes $%
h^{n-1},h^{m-1} $ are orthogonal and non-intersecting. For any two simplexes 
$\sigma _{1}\in S\left( X\right) $ and $\sigma _{2}\in S\left( Y\right) $,
define their join $\sigma _{1}\ast \sigma _{2}$ as the convex hull of $%
\sigma _{1}$ and $\sigma _{2}$ embedded in $\mathbb{R}^{n+m-1}$ as above
(see Fig. \ref{pic39}).\FRAME{ftbphFU}{3.3009in}{2.2001in}{0pt}{\Qcb{A join $%
\protect\sigma _{1}\ast \protect\sigma _{2}$ of two one-dimensional
simplexes $\protect\sigma _{1},\protect\sigma _{2}$ (case $n=m=2$)}}{\Qlb{%
pic39}}{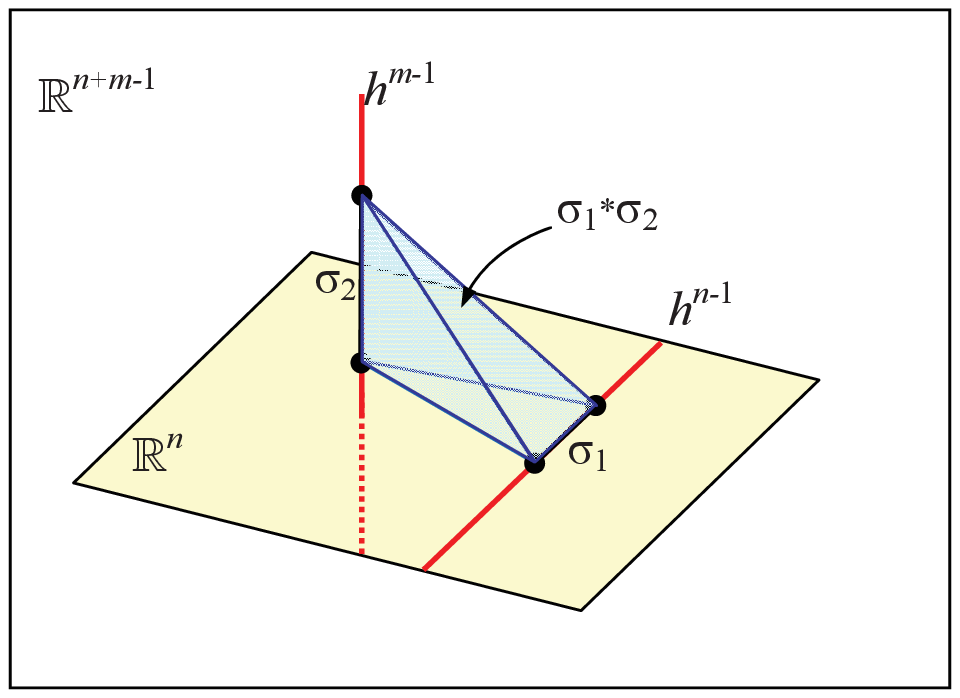}{\special{language "Scientific Word";type
"GRAPHIC";maintain-aspect-ratio TRUE;display "ICON";valid_file "F";width
3.3009in;height 2.2001in;depth 0pt;original-width 0pt;original-height
0pt;cropleft "0";croptop "1";cropright "1";cropbottom "0";filename
'pic39.eps';file-properties "XNPEU";}}

Due to a general position of $\sigma _{1}$ and $\sigma _{2}$, the join $%
\sigma _{1}\ast \sigma _{2}$ is also a simplex. Then $S\left( Z\right) $ is
a collection of all simplexes $\sigma _{1}\ast \sigma _{2}$ with $\sigma
_{1}\in S\left( X\right) $ and $\sigma _{2}\in S\left( Y\right) $. We refer
to $S\left( Z\right) $ as a join of simplicial complexes $S\left( X\right)
,S\left( Y\right) $ and denote it by $S\left( X\right) \ast S\left( Y\right) 
$.

Equivalently, one can define $S\left( Z\right) $ in an abstract way without
embedding into a Euclidean space. Indeed, considering simplexes as sequences
of vertices, we can say that $S\left( Z\right) $ consists of all simplexes
of the form $\left[ x_{0},...,x_{p},y_{0},...,y_{q}\right] $ where $\left[
x_{0},...,x_{p}\right] \in S\left( X\right) $ and $\left[ y_{0},...,y_{q}%
\right] \in S\left( Y\right) $. It is clear that $S\left( Z\right) $
satisfies the defining property (\ref{s}) of a simplicial complex, so that $%
S\left( Z\right) $ is a simplicial complex. It is also obvious that the path
complexes $P\left( X\right) ,P\left( Y\right) ,P\left( Z\right) $ of the
simplicial complexes $S\left( X\right) ,S\left( Y\right) ,S\left( Z\right) $%
, respectively, satisfy $P\left( Z\right) =P\left( X\right) \ast P\left(
Y\right) .$ Hence, the operation of joining of simplicial complexes is a
particular case of the operation of joining path complexes.

Alternatively one can see the latter using Proposition \ref{PropPM}: if both
path complexes $P\left( X\right) $ and $P\left( Y\right) $ arise from
simplicial complexes, that is, if they are monotone and perfect, then the
path complex $P\left( X\right) \ast P\left( Y\right) $ is also monotone and
perfect. Hence, it arises from a simplicial complex with the vertex set $Z$.
\end{example}

\begin{proposition}
\label{Puv}Let $P\left( X\right) $ and $P\left( Y\right) $ be two path
complexes and let $P\left( Z\right) =P\left( X\right) \ast P\left( Y\right) $%
. If $u\in \Omega _{p}\left( X\right) $ and $v\in \Omega _{q}\left( Y\right) 
$ then $uv\in \Omega _{p+q+1}\left( Z\right) .$ Moreover, the operation $%
u,v\mapsto uv$ of join extends to that for the homology classes $u\in 
\widetilde{H}_{p}\left( X\right) $ and $v\in \widetilde{H}_{q}\left(
Y\right) $ so that $uv\in \widetilde{H}_{p+q+1}\left( Z\right) .$
\end{proposition}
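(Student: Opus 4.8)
The plan is to reduce the entire statement to the product rule for join (Lemma \ref{Lemuv}) together with the defining property of the join path complex; once these are in hand the argument is purely formal. First I would prove the path-level claim that $uv \in \Omega_{p+q+1}(Z)$. The opening observation is that $uv \in \mathcal{A}_{p+q+1}(Z)$: since $u \in \Omega_p(X) \subset \mathcal{A}_p(X)$ and $v \in \Omega_q(Y) \subset \mathcal{A}_q(Y)$ are linear combinations of allowed elementary paths, formula (\ref{epeq}) gives $e_{i_0\dots i_p}e_{j_0\dots j_q} = e_{i_0\dots i_p j_0\dots j_q}$, and by the very definition $P(Z) = P(X) \ast P(Y)$ the join of an allowed $X$-path and an allowed $Y$-path is an allowed $Z$-path. (In the regular setting there is nothing extra to verify, since $X \cap Y = \emptyset$ forces the single new adjacency $i_p j_0$ to join two distinct vertices, so the join of regular paths stays regular.) Then I would apply the product rule (\ref{duv}), namely $\partial(uv) = (\partial u)v + (-1)^{p+1} u\,\partial v$. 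By the definition of $\Omega$ we have $\partial u \in \mathcal{A}_{p-1}(X)$ and $\partial v \in \mathcal{A}_{q-1}(Y)$, so both $(\partial u)v$ and $u\,\partial v$ are again linear combinations of joins of an allowed $X$-path with an allowed $Y$-path, hence lie in $\mathcal{A}_{p+q}(Z)$ by the first observation. Thus $\partial(uv) \in \mathcal{A}_{p+q}(Z)$, which is precisely the condition $uv \in \Omega_{p+q+1}(Z)$. Throughout I use that $\partial$ commutes with the inclusions into $Z$, as recorded after (\ref{V'inV}), so that $\partial u$ may be computed in $X$ or in $Z$ interchangeably.

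Next I would show that join descends to reduced homology, i.e.\ respects the relation ``closed modulo exact'' in the augmented complexes (\ref{cOmr}). Applying the product rule once more: if $\partial u = 0$ and $\partial v = 0$ then $\partial(uv) = 0$, so the join of closed paths is closed. If $u$ is closed and $v = \partial w$ with $w \in \Omega_{q+1}(Y)$, then part one gives $uw \in \Omega_{p+q+2}(Z)$ and $\partial(uw) = (\partial u)w + (-1)^{p+1} u\,\partial w = (-1)^{p+1} uv$, so $uv$ is exact; symmetrically, if $u = \partial s$ with $s \in \Omega_{p+1}(X)$ and $v$ is closed then $\partial(sv) = (\partial s)v = uv$ is exact. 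Finally, for closed representatives with $u_1 - u_2$ and $v_1 - v_2$ exact, the bilinear identity $u_1 v_1 - u_2 v_2 = (u_1 - u_2)v_1 + u_2(v_1 - v_2)$ exhibits the difference as a sum of a closed-times-exact term and an exact-times-closed term, both exact by the previous line. Hence $[u][v] := [uv]$ is well defined as a map $\widetilde{H}_p(X) \times \widetilde{H}_q(Y) \to \widetilde{H}_{p+q+1}(Z)$.

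The calculation is routine bilinear bookkeeping, and the step I expect to need the most care --- the main obstacle --- is checking that the product rule is applied consistently at the bottom degrees $p = -1$ or $q = -1$, where $\Omega_{-1} = \mathbb{K}$ and the map $\partial|_{\Omega_0}$ is the augmentation $v \mapsto \sum_k v^k$ rather than the zero map of the truncated complex. Since Lemma \ref{Lemuv} is stated for all $p,q \geq -1$ and $e$ acts as a unit for join (so $ev = v$), this extension is legitimate, but one must be sure that the notions of ``closed'' and ``exact'' being manipulated are those of the reduced complex (\ref{cOmr}), which includes the augmentation, and not those of the truncated complex (\ref{cOm}) where $\partial$ on $\Omega_0$ is set to zero.
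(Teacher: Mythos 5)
Your proposal is correct and follows essentially the same route as the paper's proof: allowedness of $uv$ from the definition of $P(X)\ast P(Y)$, the product rule (\ref{duv}) to get $\partial(uv)\in\mathcal{A}_{p+q}(Z)$ and hence $uv\in\Omega_{p+q+1}(Z)$, and then closed/exact bookkeeping via the same product rule to descend to reduced homology. Your extra care (the explicit symmetric case $u=\partial s$, the bilinearity identity for well-definedness, regularity of the join across disjoint vertex sets, and the reduced-versus-truncated distinction at bottom degrees) only fills in details the paper leaves implicit.
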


\begin{proof}
If $u$ and $v$ are allowed then $uv$ is allowed on $Z$ by definition. In
particular, if $u\in \Omega _{p}\left( X\right) $ and $v\in \Omega
_{q}\left( Y\right) $ then $uv\in \mathcal{A}_{p+q+1}\left( Z\right) .$ Let
us show that $\partial \left( uv\right) \in \mathcal{A}_{p+q}\left( Z\right) 
$, which would imply $uv\in \Omega _{p+q+1}\left( Z\right) $. Indeed, we have%
\begin{equation}
\partial \left( uv\right) =\left( \partial u\right) v+\left( -1\right)
^{p+1}u\left( \partial v\right) .  \label{duv1}
\end{equation}%
Since $\partial u$ and $\partial v$ are also allowed, we obtain that the
right hand side here is allowed, whence the claim follows.

If $u,v$ are representatives of homology classes, that is, closed paths,
then by (\ref{duv1}) the join $uv$ is also closed, so that $uv$ represents a
homology class of $Z$. We are left to verify that the class of $uv$ depends
only on the classes of $u$ and $v$. For that it suffices to prove that if
either $u$ or $v$ is exact then so is $uv$. Indeed, if $u=dw$ then%
\begin{equation*}
\partial \left( wv\right) =\left( \partial w\right) v+\left( -1\right)
^{p}w\left( \partial v\right) =uv
\end{equation*}%
so that $uv$ is exact.
\end{proof}

\subsection{Chain complex of a join}

Given a regular path complex $P\left( X\right) $ on a finite set $X$, we
consider as before the paths spaces $\mathcal{R}_{n}\left( X\right) ,%
\mathcal{A}_{n}\left( X\right) $ and $\Omega _{n}\left( X\right) $, where $%
n\geq -1$. If $W_{n}\left( X\right) $ is one of these space then set 
\begin{equation*}
W_{n}^{\prime }\left( X\right) \equiv W_{n-1}\left( X\right) .
\end{equation*}%
The graded linear space $W_{\bullet }^{\prime }$ and $W_{\bullet }$ coincide
as linear spaces but have slightly different graded structure. Recall that $%
\mathcal{R}_{\bullet }\left( X\right) $, $\mathcal{R}_{\bullet }^{\prime
}\left( X\right) $, $\Omega _{\bullet }\left( X\right) $,$\ \Omega _{\bullet
}^{\prime }\left( X\right) $ carry in addition the structure of chain
complexes with respect to the regular boundary operator $\partial $.

In the next statement we use the notion of the tensor product of chain
complexes (see Section \ref{SecTensor} for definition).

\begin{theorem}
\label{Tjoin}Let $X,Y$ be two finite non-empty sets and $P\left( X\right) $
and $P\left( Y\right) $ be regular path complexes on $X$ and $Y$
respectively. Set $Z=X\sqcup Y$ and consider the join path complex 
\begin{equation*}
P\left( Z\right) =P\left( X\right) \ast P\left( Y\right) .
\end{equation*}%
We have the following isomorphism of the chain complexes:%
\begin{equation}
\Omega _{\bullet }\left( Z\right) \cong \Omega _{\bullet }^{\prime }\left(
X\right) \otimes \Omega _{\bullet }\left( Y\right) ,  \label{rpq}
\end{equation}%
where the mapping $\Omega _{\bullet }^{\prime }\left( X\right) \otimes
\Omega _{\bullet }\left( Y\right) \rightarrow \Omega _{\bullet }\left(
Z\right) $ is given by $u\otimes v\mapsto uv.$

In particular, for any $r\geq -1$,%
\begin{equation}
\Omega _{r}\left( Z\right) \cong \tbigoplus_{\left\{ p,q\geq
-1:p+q=r-1\right\} }\left( \Omega _{p}\left( X\right) \otimes \Omega
_{q}\left( Y\right) \right)  \label{Omrpq-1}
\end{equation}%
and, for any $r\geq 0$,%
\begin{equation}
\widetilde{H}_{r}\left( Z\right) \cong \tbigoplus_{\left\{ p,q\geq
0:p+q=r-1\right\} }\left( \widetilde{H}_{p}\left( X\right) \otimes 
\widetilde{H}_{q}\left( Y\right) \right)  \label{HZ}
\end{equation}%
(the K\"{u}nneth formula for join).
\end{theorem}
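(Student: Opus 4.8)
The plan is to prove the chain-level isomorphism (\ref{rpq}) directly, and then deduce (\ref{Omrpq-1}) by taking graded pieces and (\ref{HZ}) by invoking the algebraic Künneth theorem for tensor products of chain complexes over a field (this is the homological-algebra input that I will assume is recorded in the Appendix, Section \ref{Sec3}). The map that does the work is already handed to us: by Proposition \ref{Puv}, if $u\in\Omega_p(X)$ and $v\in\Omega_q(Y)$ then $uv\in\Omega_{p+q+1}(Z)$, so $u\otimes v\mapsto uv$ is a well-defined linear map $\Omega_p'(X)\otimes\Omega_q(Y)\to\Omega_{p+q}(Z)$ (note the index shift: $\Omega_p'(X)=\Omega_{p-1}(X)$, so an element living in bidegree $(p,q)$ with $p+q=r$ comes from $\Omega_{p-1}(X)\otimes\Omega_q(Y)$ and lands in $\Omega_{p-1+q+1}(Z)=\Omega_r(Z)$). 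Assembling over all bidegrees gives a graded linear map $\Phi\colon\Omega_\bullet'(X)\otimes\Omega_\bullet(Y)\to\Omega_\bullet(Z)$.

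First I would verify that $\Phi$ is a chain map. The boundary on the tensor product of complexes $\Omega_\bullet'(X)\otimes\Omega_\bullet(Y)$ is, by the standard sign convention, $\partial(u\otimes v)=(\partial u)\otimes v+(-1)^{|u|'}u\otimes\partial v$, where $|u|'$ is the degree of $u$ in the \emph{primed} complex $\Omega_\bullet'(X)$, i.e. $|u|'=p+1$ when $u\in\Omega_p(X)$. The product rule (\ref{duv}) gives $\partial(uv)=(\partial u)v+(-1)^{p+1}u(\partial v)$, and $(-1)^{p+1}$ is exactly $(-1)^{|u|'}$. Hence $\Phi\partial=\partial\Phi$, so the signs match precisely because the degree shift in the primed complex is the $+1$ that turns the join sign $(-1)^{p+1}$ into the tensor-product Koszul sign. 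This is the one place where the artificial-looking primed grading earns its keep, and I would make that matching explicit.

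The heart of the argument, and the main obstacle, is showing that $\Phi$ is a linear isomorphism onto $\Omega_\bullet(Z)$ in each degree; equivalently, that (\ref{Omrpq-1}) holds. The plan is to prove this by exhibiting a basis. Every allowed elementary path on $Z=X\sqcup Y$ factors uniquely as a concatenation $\alpha\beta$ with $\alpha$ an allowed path on $X$ and $\beta$ an allowed path on $Y$ (one of them possibly empty): indeed, by the definition of the join path complex together with the edge structure $x\to y$, once a path leaves $X$ it can never return, so there is a unique splitting point. This gives a linear isomorphism $\mathcal{A}_r(Z)\cong\bigoplus_{p+q=r-1}\mathcal{A}_p(X)\otimes\mathcal{A}_q(Y)$ (using $\mathcal{A}_{-1}\cong\mathbb{K}$ to absorb the empty factors), under which the join map is precisely tensor multiplication. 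It then remains to check that under this identification the $\partial$-invariance condition cutting out $\Omega_r(Z)$ inside $\mathcal{A}_r(Z)$ corresponds exactly to the condition cutting out $\bigoplus_{p+q=r-1}\Omega_p(X)\otimes\Omega_q(Y)$. The inclusion $\supseteq$ is Proposition \ref{Puv}. For $\subseteq$ I would take $w\in\Omega_r(Z)$, write $w=\sum_{p+q=r-1}w_{p,q}$ according to the splitting (with $w_{p,q}\in\mathcal{A}_p(X)\otimes\mathcal{A}_q(Y)$), apply $\partial$ using the product rule, and observe that $\partial w\in\mathcal{A}_{r-1}(Z)$ forces, componentwise in the splitting, both $(\partial u)v$ and $u(\partial v)$ to be allowed; since the non-allowed components of $(\partial u)$ (as a path on $X$) and of $(\partial v)$ (on $Y$) sit in independent pieces of the decomposition, each factor must separately be $\partial$-invariant. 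The delicate point I expect to fight with is the bookkeeping of which non-allowed pieces of $\partial w$ can or cannot cancel across different $(p,q)$-summands: I would argue that a non-allowed $X$-component and a non-allowed $Y$-component land in distinct basis directions of $\mathcal{R}_{r-1}(Z)$, so no cancellation between the $X$-deficiency and the $Y$-deficiency is possible, which pins down $u\in\Omega_p(X)$ and $v\in\Omega_q(Y)$ factor by factor.

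Once the chain-complex isomorphism (\ref{rpq}) and its degreewise form (\ref{Omrpq-1}) are established, the reduced-homology statement (\ref{HZ}) follows formally: $\widetilde H_\bullet$ is the homology of the full (non-truncated) complex, the reduced complex of $X$ is exactly $\Omega_\bullet'(X)$ shifted so that $\widetilde H_p(X)=H_p(\Omega_\bullet'(X))$ after the degree bookkeeping, and over a field the Künneth theorem gives $H_r(\Omega_\bullet'(X)\otimes\Omega_\bullet(Y))\cong\bigoplus_{p+q=r}H_p(\Omega_\bullet'(X))\otimes H_q(\Omega_\bullet(Y))$ with no $\mathrm{Tor}$ term. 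Tracking the single index shift coming from the primed grading converts $p+q=r$ into the stated $p+q=r-1$, giving (\ref{HZ}). I would close by noting that the field hypothesis is used exactly here, to kill the $\mathrm{Tor}$ correction.
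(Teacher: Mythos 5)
Your proposal is correct, and on the decisive step it takes a genuinely different route from the paper. Both arguments start the same way: the map $\Phi\left( u\otimes v\right) =uv$, the sign check matching the join product rule $\partial \left( uv\right) =\left( \partial u\right) v+\left( -1\right) ^{p+1}u\partial v$ of Lemma \ref{Lemuv} against the tensor-product boundary via the primed grading, injectivity of $\Phi $ from linear independence of the joined elementary paths, the identification $\Phi \left( \mathcal{A}_{r}\left( X,Y\right) \right) =\mathcal{A}_{r}\left( Z\right) $ (the paper's (\ref{FAA})), and the easy inclusion from Proposition \ref{Puv}. For the hard inclusion $\Omega _{r}\left( Z\right) \subseteq \Phi \left( \Omega _{r}\left( X,Y\right) \right) $, however, the paper \emph{dualizes}: it passes to the cochain spaces $\Omega ^{r}=\mathcal{A}^{r}/\mathcal{K}^{r}$, introduces a join $\varphi \ast \psi $ of forms satisfying the product rule (\ref{uvmod}) modulo non-allowed forms, shows that $\mathcal{K}^{\prime p}\left( X\right) \otimes \mathcal{A}^{q}\left( Y\right) +\mathcal{A}^{\prime p}\left( X\right) \otimes \mathcal{K}^{q}\left( Y\right) $ is carried into $\mathcal{K}^{r}\left( Z\right) $, and concludes by the dimension count (\ref{KAK}), injectivity of $\Phi $ converting the inequality into equality. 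You instead stay on the chain side: decompose $w\in \Omega _{r}\left( Z\right) $ by bidegree and note that in bidegree $\left( p-1,q\right) $ the non-allowed components of $\left( \partial \otimes 1\right) w_{p,q}$ sit on split paths $e_{\alpha \beta }$ with $\alpha $ non-allowed and $\beta $ allowed, while those of $\left( 1\otimes \partial \right) w_{p-1,q+1}$ sit on paths with $\alpha $ allowed and $\beta $ non-allowed --- disjoint basis directions, so each vanishes separately; this is exactly the right resolution of the cross-summand cancellation worry, and it does go through. Two details should be made explicit: since $w_{p,q}$ need not be a pure tensor, write $w_{p,q}=\sum_{i}u_{i}\otimes v_{i}$ with the $v_{i}$ linearly independent to deduce $\partial u_{i}\in \mathcal{A}_{p-1}\left( X\right) $ coefficientwise, giving $w_{p,q}\in \Omega _{p}\left( X\right) \otimes \mathcal{A}_{q}\left( Y\right) $, and then combine with the symmetric condition via the intersection identity $\left( \Omega _{p}\otimes \mathcal{A}_{q}\right) \cap \left( \mathcal{A}_{p}\otimes \Omega _{q}\right) =\Omega _{p}\otimes \Omega _{q}$ (valid over any field), rather than literally ``pinning down $u$ and $v$''; and in deducing (\ref{HZ}) one needs $\widetilde{H}_{-1}=\left\{ 0\right\} $ to restrict the sum to $p,q\geq 0$, as the paper notes. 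As to what each route buys: yours is more elementary and self-contained --- no cochain machinery, no ideals $\mathcal{J},\mathcal{K}$, no dimension counting --- and it anticipates the style of the paper's own proof of the Cartesian-product analogue (Theorem \ref{Tuivi}), which is indeed carried out on the path side with orthogonal complements; the paper's dual route avoids all bidegree bookkeeping, needs only a one-sided dimension inequality because injectivity is already in hand, and produces the form-join $\ast $ with its product rule as a by-product of independent interest.
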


It follows from (\ref{HZ}) that%
\begin{equation}
\dim \widetilde{H}_{r}\left( Z\right) =\sum_{\left\{ p,q\geq
0:p+q=r-1\right\} }\dim \widetilde{H}_{p}\left( X\right) \dim \widetilde{H}%
_{q}\left( Y\right) .  \label{dimHZ}
\end{equation}%
If the both path complexes $P\left( X\right) $ and $P\left( Y\right) $ are
connected then $\widetilde{H}_{0}=\left\{ 0\right\} $ for the both complexes
(cf. Proposition \ref{PH0}), and (\ref{HZ}) can be restated as follows: for
any $r\geq 1$%
\begin{equation*}
H_{r}\left( Z\right) \cong \tbigoplus_{\left\{ p,q\geq 1:p+q=r-1\right\}
}\left( H_{p}\left( X\right) \otimes H_{q}\left( Y\right) \right) .
\end{equation*}

\begin{proof}
Let us first show how (\ref{Omrpq-1}) and (\ref{HZ}) follow from (\ref{rpq}%
). By definition (\ref{rpq}) means that%
\begin{equation*}
\Omega _{r}\left( Z\right) \cong \tbigoplus_{\left\{ p\geq 0,q\geq
-1:p+q=r\right\} }\left( \Omega _{p}^{\prime }\left( X\right) \otimes \Omega
_{q}\left( Y\right) \right) ,
\end{equation*}%
whence (\ref{Omrpq-1}) follows by changing $p-1$ to $p$. The isomorphism (%
\ref{rpq}) of the chain complexes $\Omega _{\bullet }\left( Z\right) $ and $%
\Omega _{\bullet }^{\prime }\left( X\right) \otimes \Omega _{\bullet }\left(
Y\right) $ implies that their homologies are also isomorphic. On the other
hand, by the K\"{u}nneth theorem (\ref{KAB}), we have%
\begin{equation*}
H_{\bullet }\left( \Omega _{\bullet }^{\prime }\left( X\right) \otimes
\Omega _{\bullet }\left( Y\right) \right) \cong H_{\bullet }\left( \Omega
_{\bullet }^{\prime }\left( X\right) \right) \otimes H_{\bullet }\left(
\Omega _{\bullet }\left( Y\right) \right)
\end{equation*}%
whence%
\begin{equation*}
H_{\bullet }\left( \Omega _{\bullet }\left( Z\right) \right) \cong
H_{\bullet }\left( \Omega _{\bullet }^{\prime }\left( X\right) \right)
\otimes H_{\bullet }\left( \Omega _{\bullet }\left( Y\right) \right) .
\end{equation*}%
More explicitly this means that, for any $r\geq -1$,%
\begin{eqnarray*}
H_{r}\left( \Omega _{\bullet }\left( Z\right) \right) &\cong
&\tbigoplus_{\left\{ p^{\prime }\geq 0,q\geq -1:p^{\prime }+q=r\right\}
}\left( H_{p^{\prime }}\left( \Omega _{\bullet }^{\prime }\left( X\right)
\right) \otimes H_{q}\left( \Omega _{\bullet }\left( Y\right) \right) \right)
\\
&=&\tbigoplus_{\left\{ p,q\geq -1:p+q=r-1\right\} }\left( H_{p}\left( \Omega
_{\bullet }\left( X\right) \right) \otimes H_{q}\left( \Omega _{\bullet
}\left( Y\right) \right) \right) .
\end{eqnarray*}%
Since the homology group $H_{-1}\left( \Omega _{\bullet }\right) $ is always
trivial, the condition $p,q\geq -1$ can be replaced here by $p,q\geq 0$.
Finally, observing that $H_{p}\left( \Omega _{\bullet }\left( X\right)
\right) =\widetilde{H}_{p}\left( X\right) $ and $H_{q}\left( \Omega
_{\bullet }\left( Y\right) \right) =\widetilde{H}_{q}\left( Y\right) $ are
the reduced homologies, we obtain (\ref{HZ}).

Now we concentrate on the proof of (\ref{rpq}). We will consider here the
path spaces $\left\{ \mathcal{R}_{p}\right\} $,$\ \left\{ \mathcal{A}%
_{p}\right\} $, $\left\{ \Omega _{p}\right\} $ associated with the path
complexes $P\left( X\right) ,P\left( Y\right) $ and $P\left( Z\right) $. If $%
\left\{ W_{p}\right\} $ is one of these families then set 
\begin{equation*}
W_{\bullet }\left( X,Y\right) =W_{\bullet }^{\prime }\left( X\right) \otimes
W_{\bullet }\left( Y\right) .
\end{equation*}%
Then (\ref{rpq}) can be restated as follows: 
\begin{equation*}
\Omega _{\bullet }\left( Z\right) \cong \Omega _{\bullet }\left( X,Y\right) .
\end{equation*}%
To prove this, we will construct explicitly a mapping 
\begin{equation*}
\Phi :\Omega _{r}\left( X,Y\right) \rightarrow \Omega _{r}\left( Z\right)
\end{equation*}%
that will be isomorphism of linear spaces and will commute with the boundary
operator $\partial $.

Consider first a larger the chain complex 
\begin{equation*}
\mathcal{R}_{\bullet }\left( X,Y\right) =\mathcal{R}_{\bullet }^{\prime
}\left( X\right) \otimes \mathcal{R}_{\bullet }\left( Y\right)
\end{equation*}
and define for any $r\geq -1$ the linear mapping 
\begin{equation*}
\Phi :\mathcal{R}_{r}\left( X,Y\right) \rightarrow \mathcal{R}_{r}\left(
Z\right)
\end{equation*}%
as follows: for all $u\in \mathcal{R}_{p}^{\prime }\left( X\right) $ and $%
v\in \mathcal{R}_{q}\left( Y\right) $ with $p+q=r$, set%
\begin{equation}
\Phi \left( u\otimes v\right) =uv,  \label{Ruv}
\end{equation}
where $uv$ is the join of $u$ and $v$ on $Z$ (note that $X$ and $Y$ are
subsets of $Z$).

It follows from Lemma \ref{Lemuv} that, for $u,v$ as above, 
\begin{equation}
\partial \left( uv\right) =\left( \partial u\right) v+\left( -1\right)
^{p}u\partial v.  \label{duvp}
\end{equation}%
Here the operator $\partial $ is the boundary operator on $\mathcal{R}%
_{\bullet }\left( Z\right) $, but in the expressions $\partial u$ and $%
\partial v$ it coincides with the boundary operators on $\mathcal{R}%
_{\bullet }\left( X\right) $ and $\mathcal{R}_{\bullet }\left( Y\right) $,
respectively. By (\ref{utsv}) we have for the operator $\partial $ on $%
\mathcal{R}_{\bullet }\left( X,Y\right) $%
\begin{equation*}
\partial \left( u\otimes v\right) =\left( \partial u\right) \otimes v+\left(
-1\right) ^{p}u\otimes \partial v.
\end{equation*}%
The comparison with (\ref{duvp}) shows that the following diagram is
commutative:%
\begin{equation*}
\begin{array}{ccc}
\mathcal{R}_{r-1}\left( X,Y\right) & \overset{\partial }{\leftarrow } & 
\mathcal{R}_{r}\left( X,Y\right) \\ 
\ \downarrow ^{\Phi } &  & \ \downarrow ^{\Phi } \\ 
\mathcal{R}_{r-1}\left( Z\right) & \overset{\partial }{\leftarrow } & 
\mathcal{R}_{r}\left( Z\right)%
\end{array}%
\end{equation*}%
Hence, the mapping $\Phi $ is a homomorphism of chain complexes $\mathcal{R}%
_{\bullet }\left( X,Y\right) $ and $\mathcal{R}_{\bullet }\left( Z\right) $.

Let us verify that $\Phi $ is in fact a monomorphism. Indeed, the basis in $%
\mathcal{R}_{r}\left( X,Y\right) $ consists of all elements of the form $%
e_{x}\otimes e_{y}$ where $x\in R_{p}\left( X\right) ,y\in R_{q}\left(
Y\right) $ with $p+q=r.$ Since $\Phi \left( e_{x}\otimes e_{y}\right)
=e_{xy} $ and all such paths $e_{xy}$ are linearly independent in $\mathcal{R%
}_{r}\left( Z\right) $, we see that $\Phi $ is injective.

Next observe that%
\begin{equation}
\Phi \left( \mathcal{A}_{r}\left( X,Y\right) \right) =\mathcal{A}_{r}\left(
Z\right) .  \label{FAA}
\end{equation}%
Indeed, the basis in $\mathcal{A}_{r}\left( X,Y\right) $ consists of all
elements of the form $e_{x}\otimes e_{y}$ where $x\in R_{p}\left( X\right)
,y\in R_{q}\left( Y\right) $ with $p+q=r,$ while the basis in $\mathcal{A}%
_{r}\left( Z\right) $ consists of the paths $e_{xy}$ with the same set of $%
x,y$, whence the claim follows. In particular, the linear spaces $\mathcal{A}%
_{r}\left( X,Y\right) $ and $\mathcal{A}_{r}\left( Z\right) $ are isomorphic.

Now we aim at restricting $\Phi $ to $\Omega _{\bullet }\left( X,Y\right) $.
In fact, we will prove that 
\begin{equation}
\Phi \left( \Omega _{r}\left( X,Y\right) \right) =\Omega _{r}\left( Z\right)
,  \label{FFOM=}
\end{equation}%
for all $r\geq -1$, which will finish the proof of (\ref{rpq}). The inclusion%
\begin{equation}
\Phi \left( \Omega _{r}\left( X,Y\right) \right) \subset \Omega _{r}\left(
Z\right)  \label{OMOMin}
\end{equation}%
is trivial because by Proposition \ref{Puv} $u\in \Omega _{p}^{\prime
}\left( X\right) $ and $v\in \Omega _{q}\left( Y\right) $ with $p+q=r$ imply 
$uv\in \Omega _{r}\left( Z\right) .$

Since $\Phi $ is injective, in order to prove the opposite inclusion in (\ref%
{OMOMin}), it suffices to show that%
\begin{equation}
\dim \left( \Omega _{r}\left( X,Y\right) \right) \geq \dim \Omega _{r}\left(
Z\right) .  \label{ddOm}
\end{equation}%
\label{rem: rewrite using only paths and orthogonal complement]}To that end,
we consider the dual spaces $\left\{ \mathcal{R}^{p}\right\} $, $\left\{ 
\mathcal{A}^{p}\right\} $, $\left\{ \Omega ^{p}\right\} $ of forms on $X,Y,Z$%
. We use the same notation as in the case of path spaces: $W^{\prime
p}=W^{p-1}$ and 
\begin{equation*}
W^{\bullet }\left( X,Y\right) =W^{\prime \bullet }\left( X\right) \otimes
W^{\bullet }\left( Y\right) .
\end{equation*}%
In particular, $\Omega ^{r}\left( X,Y\right) $ is dual to $\Omega _{r}\left(
X,Y\right) $. We will prove that%
\begin{equation}
\dim \left( \Omega ^{r}\left( X,Y\right) \right) \geq \dim \Omega ^{r}\left(
Z\right) ,  \label{ddOmm}
\end{equation}%
which will settle (\ref{ddOm}) and, hence, (\ref{FFOM=}).

Recall that%
\begin{equation*}
\Omega ^{r}=\left. \mathcal{A}^{r}\right/ \mathcal{K}^{r}
\end{equation*}%
where%
\begin{equation*}
\mathcal{K}^{r}=\left( \mathcal{N}^{r}+d\mathcal{N}^{r-1}\right) \cap 
\mathcal{A}^{r}.
\end{equation*}%
Therefore, we have%
\begin{eqnarray*}
\Omega ^{r}\left( X,Y\right) &=&\bigoplus_{\left\{ p\geq 0,q\geq
-1:p+q=r\right\} }\Omega ^{\prime p}\left( X\right) \otimes \Omega
^{q}\left( Y\right) \\
&&\bigoplus_{\left\{ p\geq 0,q\geq -1:p+q=r\right\} }\left( \left. \mathcal{A%
}^{\prime p}\left( X\right) \right/ \mathcal{K}^{\prime p}\left( X\right)
\right) \otimes \left( \left. \mathcal{A}^{q}\left( Y\right) \right/ 
\mathcal{K}^{q}\left( Y\right) \right) \\
&\cong &\bigoplus_{\left\{ p\geq 0,q\geq -1:p+q=r\right\} }\left. \left( 
\mathcal{A}^{\prime p}\left( X\right) \otimes \mathcal{A}^{q}\left( Y\right)
\right) \right/ \left( \mathcal{K}^{\prime p}\left( X\right) \otimes 
\mathcal{A}^{q}\left( Y\right) +\mathcal{A}^{\prime p}\left( X\right)
\otimes \mathcal{K}^{q}\left( Y\right) \right) ,
\end{eqnarray*}%
which implies that%
\begin{eqnarray*}
\dim \Omega ^{r}\left( X,Y\right) &=&\sum_{\left\{ p\geq 0,q\geq
-1:p+q=r\right\} }\dim \left( \mathcal{A}^{\prime p}\left( X\right) \otimes 
\mathcal{A}^{q}\left( Y\right) \right) \\
&&-\sum_{\left\{ p\geq 0,q\geq -1:p+q=r\right\} }\dim \left( \mathcal{K}%
^{\prime p}\left( X\right) \otimes \mathcal{A}^{q}\left( Y\right) +\mathcal{A%
}^{\prime p}\left( X\right) \otimes \mathcal{K}^{q}\left( Y\right) \right)
\end{eqnarray*}%
It follows from (\ref{FAA}) that 
\begin{equation*}
\sum_{\left\{ p\geq 0,q\geq -1:p+q=r\right\} }\dim \left( \mathcal{A}%
^{\prime p}\left( X\right) \otimes \mathcal{A}^{q}\left( Y\right) \right)
=\dim \mathcal{A}^{r}\left( Z\right) .
\end{equation*}%
Since%
\begin{equation*}
\dim \Omega ^{r}\left( Z\right) =\dim \mathcal{A}^{r}\left( Z\right) -\dim 
\mathcal{K}^{r}\left( Z\right) ,
\end{equation*}%
the inequality (\ref{ddOmm}) will follow if we prove that%
\begin{equation}
\sum_{\left\{ p\geq 0,q\geq -1:p+q=r\right\} }\dim \left( \mathcal{K}%
^{\prime p}\left( X\right) \otimes \mathcal{A}^{q}\left( Y\right) +\mathcal{A%
}^{\prime p}\left( X\right) \otimes \mathcal{K}^{q}\left( Y\right) \right)
\leq \dim \mathcal{K}^{r}\left( Z\right) .  \label{KAK}
\end{equation}

In the next part of the proof we need an operation of joining of the forms
on $X$ and $Y$. For any forms $\varphi \in \mathcal{A}^{\bullet }\left(
X\right) $ and $\psi \in \mathcal{A}^{\bullet }\left( Y\right) $, define the
form $\varphi \ast \psi \in \mathcal{A}^{\bullet }\left( Z\right) $ first
for the elementary forms by%
\begin{equation*}
e^{i_{0}...i_{p}}\ast e^{j_{0}...j_{q}}=e^{i_{0}...i_{p}j_{0}...j_{q}}
\end{equation*}%
(clearly, if the paths $i_{0}...i_{p}$ and $j_{0}...j_{q}$ are allowed on $X$
and $Y$ respectively, then their join path is also allowed on $Z$), and then
extend to all $\varphi $ and $\psi $ by bilinearity.

The operation $\ast $ allows us to define a linear mapping%
\begin{equation*}
\Psi :\mathcal{A}^{r}\left( X,Y\right) \rightarrow \mathcal{A}^{r}\left(
Z\right)
\end{equation*}%
by%
\begin{equation}
\Psi \left( \varphi \otimes \psi \right) =\varphi \ast \psi .  \label{Psi}
\end{equation}%
The mapping $\Psi $ is obviously bijective as all allowed paths on $Z$ are
obtained by joining the allowed paths on $X$ and $Y$ in a unique way.

Let us show that the operation $\ast $ satisfies the following version of
the product rule: for all $\varphi \in \mathcal{A}^{p}\left( X\right) $, $%
\psi \in \mathcal{A}^{q}\left( Y\right) $ 
\begin{equation}
d\left( \varphi \ast \psi \right) =\left( d\varphi \right) \ast \psi +\left(
-1\right) ^{p+1}\varphi \ast d\psi \ \func{mod}\mathcal{N}^{p+q+2}\left(
Z\right) .  \label{uvmod}
\end{equation}%
It suffices to prove (\ref{uvmod}) for $\varphi =e^{i_{0}...i_{p}}$ and $%
\psi =e^{j_{0}...j_{q}}=e^{i_{p+1}...i_{p+q+1}}.$ We have by (\ref{de})%
\begin{eqnarray*}
d\left( \varphi \ast \psi \right) &=&de^{i_{0}...i_{p+q+1}}=\sum_{k\in
Z}\sum_{r=0}^{p+q+2}\left( -1\right) ^{r}e^{i_{0}...i_{r-1}ki_{r}..i_{p+q+1}}
\\
&=&\sum_{k\in Z}\sum_{r=0}^{p}\left( -1\right)
^{r}e^{i_{0}...i_{r-1}ki_{r}...i_{p}i_{p+1}...i_{p+q+1}} \\
&&+\sum_{k\in X\sqcup Y}\left( -1\right)
^{p+1}e^{i_{0}...i_{p}ki_{p+1}...i_{p+q+1}} \\
&&+\sum_{k\in Z}\sum_{r=p+2}^{p+q+2}\left( -1\right)
^{r}e^{i_{0}...i_{p}i_{p+1}...i_{r-1}ki_{r}...i_{p+q+1}}.
\end{eqnarray*}%
Observe that the form $e^{...i_{r-1}ki_{r}...i_{p}...}$ in the first sum is
non-allowed if $k\in Y,$ so that the range in the first sum can be
restricted to $k\in X$. Similarly, the form $%
e^{...i_{p+1}...i_{r-1}ki_{r}...}$ in the third sum is non-allowed if $k\in
X $, so that its range can be restricted to $k\in Y$. Splitting the range of
the second sum into two parts $k\in X$ and $k\in Y$ and combining them with
the first and third sums, respectively, we obtain the following identity $%
\func{mod}\mathcal{N}^{\bullet }\left( Z\right) $:%
\begin{eqnarray*}
d\left( \varphi \ast \psi \right) &=&\sum_{k\in X}\sum_{r=0}^{p+1}\left(
-1\right) ^{r}e^{i_{0}...i_{r-1}ki_{r}...i_{p}}\ast e^{i_{p+1}...i_{p+q+1}}
\\
&&+\sum_{k\in Y}\sum_{r=p+1}^{p+q+2}\left( -1\right)
^{r}e^{i_{0}...i_{p}}\ast e^{i_{p+1}...i_{r-1}ki_{r}...i_{p+q+1}} \\
&=&\left( d\varphi \right) \ast \psi +\left( -1\right) ^{p+1}\varphi \ast
d\psi ,
\end{eqnarray*}%
which proves (\ref{uvmod}).

Let us now verify that%
\begin{equation}
\Psi \left( \mathcal{K}^{\prime p}\left( X\right) \otimes \mathcal{A}%
^{q}\left( Y\right) +\mathcal{A}^{\prime p}\left( X\right) \otimes \mathcal{K%
}^{q}\left( Y\right) \right) \subset \mathcal{K}^{r}\left( Z\right) ,
\label{Psin}
\end{equation}%
provided $p+q=r$. We first prove that 
\begin{equation}
\Psi \left( \mathcal{K}^{\prime p}\left( X\right) \otimes \mathcal{A}%
^{q}\left( Y\right) \right) \subset \mathcal{K}^{r}\left( Z\right) .
\label{PsiAK}
\end{equation}%
For that we need to verify that 
\begin{equation*}
u\in \mathcal{K}^{\prime p}\left( X\right) \ \ \text{and\ }v\in \mathcal{A}%
^{q}\left( Y\right) \ \Rightarrow \ u\ast v\in \mathcal{K}^{r}\left(
Z\right) .
\end{equation*}%
Since $u\in \mathcal{A}^{\prime p}\left( X\right) $, we have $u\ast v\in 
\mathcal{A}^{r}\left( Z\right) .$ By definition of $\mathcal{K}^{\prime
p}\left( X\right) $, we have $u=d\varphi +\psi $ where $\varphi \in \mathcal{%
N}^{p-2}\left( X\right) $ and $\psi \in \mathcal{N}^{p-1}\left( X\right) $.
Using (\ref{uvmod}), we obtain%
\begin{equation*}
u\ast v=\left( d\varphi +\psi \right) \ast v=d\left( \varphi \ast v\right)
-\left( -1\right) ^{p-1}\varphi \ast dv+\psi \ast v\ \func{mod}\mathcal{N}%
^{\bullet }\left( Z\right) .
\end{equation*}%
Clearly, all terms starting with $\varphi \ast $ and $\psi \ast $ belong to $%
\mathcal{N}^{\bullet }\left( Z\right) $, which implies that the right hand
side belongs to $d\mathcal{N}^{\bullet }\left( Z\right) +\mathcal{N}%
^{\bullet }\left( Z\right) $, whence%
\begin{equation*}
u\ast v\in d\mathcal{N}^{\bullet }\left( Z\right) +\mathcal{N}^{\bullet
}\left( Z\right) .
\end{equation*}%
It follows that $u\ast v\in \mathcal{K}^{\bullet }\left( Z\right) ,$ which
proves (\ref{PsiAK}). In the same way one proves that%
\begin{equation*}
\Psi \left( \mathcal{A}^{\prime p}\left( X\right) \otimes \mathcal{K}%
^{q}\left( Y\right) \right) \subset \mathcal{K}^{r}\left( Z\right) ,
\end{equation*}%
whence (\ref{Psin}) follows.\FRAME{ftbphFU}{4.8038in}{2.0788in}{0pt}{\Qcb{%
The $\Psi $-images of the spaces $\mathcal{K}^{\prime p}\left( X\right)
\otimes \mathcal{A}^{q}\left( Y\right) +\mathcal{A}^{\prime p}\left(
X\right) \otimes \mathcal{K}^{q}\left( Y\right) $}}{\Qlb{pic37}}{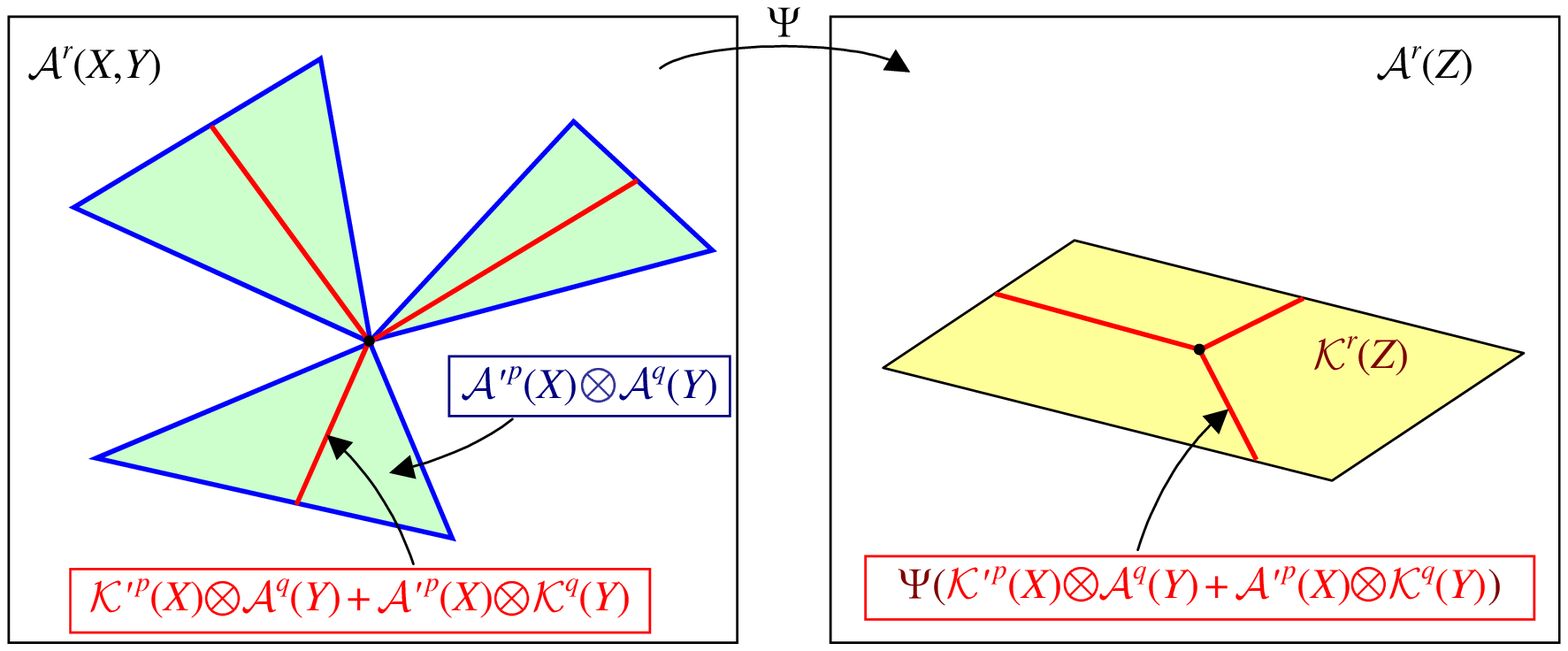}{%
\special{language "Scientific Word";type "GRAPHIC";maintain-aspect-ratio
TRUE;display "USEDEF";valid_file "F";width 4.8038in;height 2.0788in;depth
0pt;original-width 6.8226in;original-height 2.9301in;cropleft "0";croptop
"1";cropright "1";cropbottom "0";filename 'pic37.eps';file-properties
"XNPEU";}}

Observe that the spaces $\mathcal{K}^{\prime p}\left( X\right) \otimes 
\mathcal{A}^{q}\left( Y\right) +\mathcal{A}^{\prime p}\left( X\right)
\otimes \mathcal{K}^{q}\left( Y\right) $ have trivial intersections across
different pairs $p,q$ as they are subspaces of $\mathcal{A}^{\prime p}\left(
X\right) \otimes \mathcal{A}^{q}\left( Y\right) $ (cf. Fig. \ref{pic37}).
Since $\Psi $ is a monomorphism, the same applies to the $\Psi $-images of
those spaces. Since by (\ref{Psin}) all the $\Psi $-images lie in $\mathcal{K%
}^{r}\left( Z\right) $, we obtain (\ref{KAK}).
\end{proof}

\begin{remark}
\label{RemBasis}\RM It follows from (\ref{FFOM=}) that $\Omega _{r}\left(
Z\right) $ has a basis 
\begin{equation}
\tbigsqcup_{\left\{ p,q\geq -1:p+q=r-1\right\} }\left\{ u_{i}^{\left(
p\right) }v_{j}^{\left( q\right) }\right\} ,  \label{uvbasis}
\end{equation}%
where $\left\{ u_{i}^{\left( p\right) }\right\} $ is a basis in $\Omega
_{p}\left( X\right) $ and $\left\{ v_{j}^{\left( q\right) }\right\} $ is a
basis in $\Omega _{q}\left( Y\right) $. In the same way one expresses a
basis in $\widetilde{H}_{r}\left( Z\right) $ via the basis in $\widetilde{H}%
_{p}\left( X\right) $ and $\widetilde{H}_{q}\left( Y\right) $.
\end{remark}

\begin{example}
\RM\label{Expic35}Consider the graph $Z=X\ast Y$ as on Fig. \ref{pic35}. We
have%
\begin{equation*}
\begin{array}{ll}
\Omega _{0}\left( X\right) =\limfunc{span}\left\{ e_{0},e_{1},e_{2}\right\}
& \Omega _{0}\left( Y\right) =\limfunc{span}\left\{
e_{3},e_{4},e_{5},e_{6}\right\} \\ 
\Omega _{1}\left( X\right) =\limfunc{span}\left\{
e_{01},e_{02},e_{12}\right\} & \Omega _{1}\left( Y\right) =\limfunc{span}%
\left\{ e_{34},e_{35},e_{46},e_{56}\right\} \\ 
\Omega _{2}\left( X\right) =\limfunc{span}\left\{ e_{012}\right\} & \Omega
_{2}\left( Y\right) =\limfunc{span}\left\{ e_{346}-e_{356}\right\} \\ 
\Omega _{p}\left( X\right) =\left\{ 0\right\} \text{ for }p\geq 3 & \Omega
_{q}\left( Y\right) =\left\{ 0\right\} \text{ for }q\geq 3.%
\end{array}%
\end{equation*}%
Using Remark \ref{RemBasis}, we can obtain explicitly the basis in all $%
\Omega _{r}\left( Z\right) $. For example, 
\begin{eqnarray*}
\dim \Omega _{1}\left( Z\right) &=&19 \\
\dim \Omega _{2}\left( Z\right) &=&25 \\
\dim \Omega _{3}\left( Z\right) &=&19 \\
\Omega _{4}\left( Z\right) &=&\limfunc{span}\left\{
e_{01346}-e_{01356},e_{02346}-e_{02356},e_{12346}-e_{12356},e_{01234},e_{01235},e_{01246},e_{01256}\right\}
\\
\Omega _{5}\left( Z\right) &=&\limfunc{span}\left\{
e_{012346}-e_{012356}\right\} \\
\dim \Omega _{r}\left( Z\right) &=&0\ \text{for }r\geq 6.
\end{eqnarray*}%
Since by Proposition \ref{Pcycle} all homologies $\widetilde{H}_{p}\left(
X\right) $ and $\widetilde{H}_{q}\left( Y\right) $ are trivial, and so are $%
\widetilde{H}_{r}\left( Z\right) .$
\end{example}

\begin{example}
\RM Consider a slight modification of the previous example -- the digraph $%
Z=X\ast Y$ as on Fig. \ref{pic38}\FRAME{ftbphFU}{4.1635in}{2.0332in}{0pt}{%
\Qcb{A digraph $Z=X\ast Y$}}{\Qlb{pic38}}{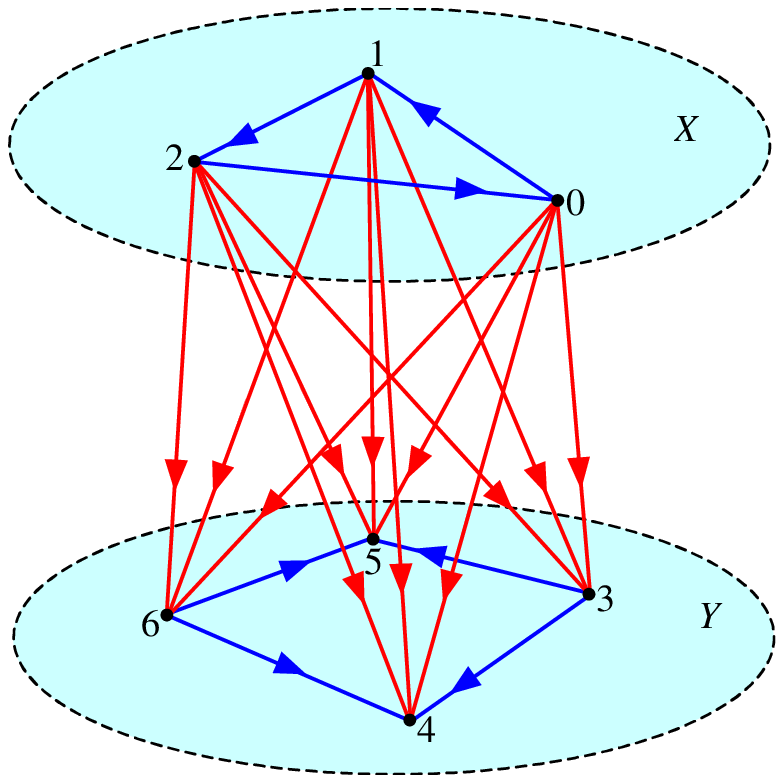}{\special{language
"Scientific Word";type "GRAPHIC";maintain-aspect-ratio TRUE;display
"USEDEF";valid_file "F";width 4.1635in;height 2.0332in;depth
0pt;original-width 5.9088in;original-height 2.8644in;cropleft "0";croptop
"1";cropright "1";cropbottom "0";filename 'pic38.eps';file-properties
"XNPEU";}}

In this case we have by Proposition \ref{Pcycle} that all homologies $%
\widetilde{H}_{p}\left( X\right) $ and $\widetilde{H}_{q}\left( Y\right) $
are trivial except for 
\begin{eqnarray*}
H_{1}\left( X\right) &=&\limfunc{span}\left\{ e_{01}+e_{12}+e_{20}\right\} ,
\\
H_{1}\left( Y\right) &=&\limfunc{span}\left\{
e_{35}-e_{65}+e_{64}-e_{34}\right\} .
\end{eqnarray*}%
Therefore, all $\widetilde{H}_{r}\left( Z\right) $ are trivial except for $%
H_{3}\left( Z\right) $ that is generated by a single element%
\begin{equation*}
e_{0135}-e_{0165}+e_{0164}-e_{0134}+e_{1235}-e_{1265}+e_{1264}-e_{1234}+e_{2035}-e_{2065}+e_{2064}-e_{2034}.
\end{equation*}
\end{example}

\subsection{Cones and simplexes}

\label{here copy(3)}\label{SecCone}

\begin{definition}
\RM A \emph{cone} over a digraph $X$ is a digraph $\limfunc{Cone}X$ that is
obtained from $X$ by adding one more vertex $a$ and all the edges of the
form $b\rightarrow a$ for all $b\in X$. The vertex $a$ is called the cone
vertex.
\end{definition}

Clearly, we have $\limfunc{Cone}X=X\ast Y$ where $Y$ consists of a single
vertex $a$.

\begin{proposition}
\label{Pcone}For any digraph $X$, we have for any $r\geq 0$%
\begin{equation}
\Omega _{r}\left( \limfunc{Cone}X\right) \cong \Omega _{r-1}\left( X\right) ,
\label{rr-1}
\end{equation}%
where the isomorphism is given by the mapping $u\mapsto ue_{a}$ from $\Omega
_{r-1}\left( X\right) $ to $\Omega _{r}\left( \limfunc{Cone}X\right) $ where 
$a$ is the cone vertex. Furthermore, all the reduced homologies of $\limfunc{%
Cone}X$ are trivial.
\end{proposition}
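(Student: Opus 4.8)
The plan is to treat $\limfunc{Cone}X=X\ast\{a\}$ as a join whose second factor is the single vertex $a$, and to read off the structure of $\Omega_{\bullet}(\limfunc{Cone}X)$ directly rather than invoking the full join machinery. The starting observation is that, since $a$ receives every edge $b\rightarrow a$ and emits none, the vertex $a$ can occur only as the \emph{last} vertex of an allowed path. Hence every allowed elementary $r$-path on $\limfunc{Cone}X$ either lies entirely in $X$ or has the form $i_{0}\cdots i_{r-1}a$ with $i_{0}\cdots i_{r-1}$ allowed in $X$. This yields the splitting $\mathcal{A}_{r}(\limfunc{Cone}X)=\mathcal{A}_{r}(X)\oplus\mathcal{A}_{r-1}(X)e_{a}$, so I would write a general allowed path as $w=w_{0}+w_{1}e_{a}$ with $w_{0}\in\mathcal{A}_{r}(X)$ and $w_{1}\in\mathcal{A}_{r-1}(X)$.

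Next I would compute the boundary using the product rule (Lemma \ref{Lemuv}) together with $\partial e_{a}=e$ and $w_{1}e=w_{1}$, obtaining
\[
\partial w=\bigl(\partial w_{0}+(-1)^{r}w_{1}\bigr)+(\partial w_{1})e_{a},
\]
where the bracketed term lies in $\Lambda_{r-1}(X)$ and the second term is the part terminating at $a$. Since $\mathcal{A}_{r-1}(\limfunc{Cone}X)=\mathcal{A}_{r-1}(X)\oplus\mathcal{A}_{r-2}(X)e_{a}$, the requirement $\partial w\in\mathcal{A}_{r-1}(\limfunc{Cone}X)$ separates into $\partial w_{1}\in\mathcal{A}_{r-2}(X)$ and $\partial w_{0}\in\mathcal{A}_{r-1}(X)$ (the summand $w_{1}$ being already allowed). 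Thus $w\in\Omega_{r}(\limfunc{Cone}X)$ exactly when $w_{0}\in\Omega_{r}(X)$ and $w_{1}\in\Omega_{r-1}(X)$, giving $\Omega_{r}(\limfunc{Cone}X)\cong\Omega_{r}(X)\oplus\Omega_{r-1}(X)$; the assignment $u\mapsto ue_{a}$ identifies $\Omega_{r-1}(X)$ isomorphically with the subspace of $\partial$-invariant paths ending at $a$ (well-definedness from Proposition \ref{Puv}, injectivity from the linear independence of the $e_{\cdots a}$), while the $\partial$-invariant paths living inside $X$ supply the remaining copy of $\Omega_{r}(X)$.

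For the vanishing of the reduced homology I would argue most cleanly that $\limfunc{Cone}X$ is inverse star-shaped with star center $a$, since $b\rightarrow a$ for every $b\neq a$; the Poincar\'{e} lemma (Theorem \ref{Tstar}) then gives $\widetilde{H}_{n}(\limfunc{Cone}X)=0$ for all $n$, and the same follows alternatively from the K\"{u}nneth formula for join (Theorem \ref{Tjoin}) because the one-vertex complex $\{a\}$ has trivial reduced homology in every degree. The step I expect to demand the most care is the boundary computation above: pinning down the sign $(-1)^{r}$ and correctly separating $\partial w$ into its $X$-part and its $a$-terminating part is precisely what lets one read off the two $\partial$-invariance conditions and recognise $\Omega_{\bullet}(\limfunc{Cone}X)$ as the algebraic cone of the identity chain map on $\Omega_{\bullet}(X)$, which is the structural reason its reduced homology is trivial.
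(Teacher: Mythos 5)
Your direct decomposition is correct, and it is a genuinely more elementary route than the paper's, which simply specializes the join K\"{u}nneth machinery (Theorem \ref{Tjoin}, via (\ref{Omrpq-1})) to $Y=\left\{ a\right\} $. Your key steps all check out: since $a$ emits no edges, $\mathcal{A}_{r}\left( \limfunc{Cone}X\right) =\mathcal{A}_{r}\left( X\right) \oplus \mathcal{A}_{r-1}\left( X\right) e_{a}$; the boundary formula $\partial w=\left( \partial w_{0}+\left( -1\right) ^{r}w_{1}\right) +\left( \partial w_{1}\right) e_{a}$ is the regular product rule of Lemma \ref{Lemuv} with $\partial e_{a}=e$ and the sign $\left( -1\right) ^{r}$ is right; and the two $\partial $-invariance conditions do separate, because the elementary paths ending at $a$ are linearly independent from those lying in $X$, and $w_{1}$ itself is already allowed. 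Your identification of $\Omega _{\bullet }\left( \limfunc{Cone}X\right) $ with the algebraic cone of the identity on $\Omega _{\bullet }\left( X\right) $ (for the augmented complexes, with $\Omega _{-1}\cong \mathbb{K}$) is a nice structural bonus, and your Poincar\'{e}-lemma argument via Theorem \ref{Tstar} is exactly the paper's alternative argument for the homology claim.

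But note what your computation actually yields: $\Omega _{r}\left( \limfunc{Cone}X\right) \cong \Omega _{r}\left( X\right) \oplus \Omega _{r-1}\left( X\right) $, which is \emph{not} the isomorphism (\ref{rr-1}) as stated --- the map $u\mapsto ue_{a}$ is injective but fails to be surjective whenever $\Omega _{r}\left( X\right) \neq \left\{ 0\right\} $. For instance, at $r=1$ one has $\dim \Omega _{1}\left( \limfunc{Cone}X\right) =\left\vert E\right\vert +\left\vert V\right\vert $ while $\dim \Omega _{0}\left( X\right) =\left\vert V\right\vert $. Your answer, not (\ref{rr-1}), is the correct one: in Section \ref{SecJoin} the chain complexes are augmented (the empty path is allowed, so $\Omega _{-1}\left( Y\right) \cong \mathbb{K}$), hence (\ref{Omrpq-1}) with $Y=\left\{ a\right\} $ produces \emph{two} summands, $\Omega _{r-1}\left( X\right) \otimes \Omega _{0}\left( Y\right) $ and $\Omega _{r}\left( X\right) \otimes \Omega _{-1}\left( Y\right) $, and the paper's one-line proof silently drops the second. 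The statement (\ref{rr-1}) is literally true only in degrees where $\Omega _{r}\left( X\right) =\left\{ 0\right\} $, which is how it is used in the examples that follow (top-degree paths on $\func{Sm}_{n}$ and on the pyramid). So your proof is sound, but you should flag this discrepancy explicitly rather than presenting the computation as if it established (\ref{rr-1}) verbatim; as written, the reader may not notice that you have proved a corrected statement rather than the quoted one.
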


\begin{proof}
Since $\limfunc{Cone}X=X\ast Y$ with $Y=\left\{ a\right\} $, (\ref{rr-1})
follows from (\ref{Omrpq-1}) and $\Omega _{0}\left( Y\right) =\limfunc{span}%
\left\{ e_{a}\right\} $. Since all the homologies $\widetilde{H}_{q}\left(
Y\right) \ $are trivial, it follows from (\ref{HZ}) that all homologies $%
\widetilde{H}_{r}\left( Z\right) $ are also trivial. The latter follows from
Theorem \ref{Tstar} since $\limfunc{Cone}X$ is inverse star-shaped.
\end{proof}

\begin{example}
\RM Clearly, a simplex-digraph $\func{Sm}_{n}$ can be regarded as a cone
over $\func{Sm}_{n-1}$ (cf. Section \ref{Exsimplex}). Since $\Omega
_{0}\left( \func{Sm}_{0}\right) $ is spanned by a $0$-path $e_{0}$, we
obtain by induction from (\ref{rr-1}) that $\Omega _{n}\left( \func{Sm}%
_{n}\right) $ is spanned by a path $e_{01...n}$.
\end{example}

\begin{example}
\RM Let $G$ be a square digraph%
\begin{equation*}
\begin{array}{ccc}
_{1}\bullet & \longrightarrow & \bullet _{3} \\ 
\ \uparrow &  & \uparrow \  \\ 
_{0}\bullet & \longrightarrow & \bullet _{2}%
\end{array}%
\end{equation*}
Then $\func{Cone}G$ is a \emph{pyramid} shown on Fig. \ref{pic20}.\FRAME{%
ftbphFU}{5.3757in}{1.6596in}{0pt}{\Qcb{A pyramid graph}}{\Qlb{pic20}}{%
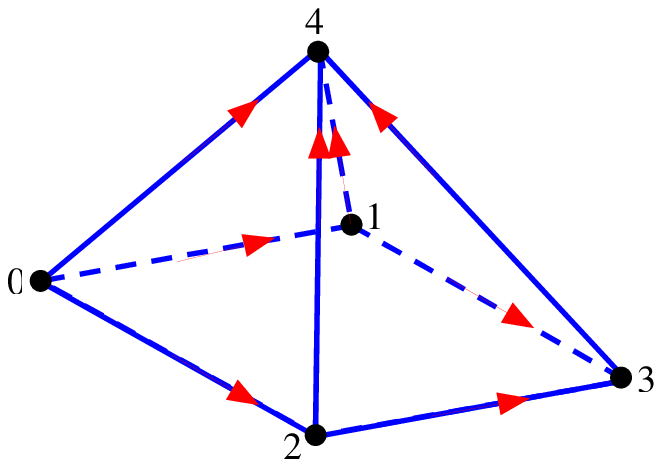}{\special{language "Scientific Word";type
"GRAPHIC";maintain-aspect-ratio TRUE;display "USEDEF";valid_file "F";width
5.3757in;height 1.6596in;depth 0pt;original-width 6.3027in;original-height
1.9268in;cropleft "0";croptop "1";cropright "1";cropbottom "0";filename
'pic20.eps';file-properties "XNPEU";}}Since $\Omega _{2}\left( G\right) $ is
spanned by a $2$-path $e_{013}-e_{023}$, we obtain that $\Omega _{3}\left( 
\func{Cone}G\right) $ is spanned by a $3$-path $e_{0134}-e_{0234}$.
\end{example}

\subsection{Suspension and spheres}

\label{SecSus}

\begin{definition}
\RM A \emph{suspension} over a digraph $X$ is a digraph $\func{Sus}X$ that
is obtained from $X$ by adding two vertices $a,b$ and all the edges $%
c\rightarrow a$ and $c\rightarrow b$ for all $c\in X$. The vertices $a,b$
are called the suspension vertices.
\end{definition}

Clearly, we have $\func{Sus}X=X\ast Y$ where $Y$ is a digraph that consists
of two vertices $a,b$ and no edges. An example of a suspension digraph is
shown on Fig. \ref{pic15}.

\FRAME{ftbphFU}{6.9613in}{1.7334in}{0pt}{\Qcb{A suspension digraph}}{\Qlb{%
pic15}}{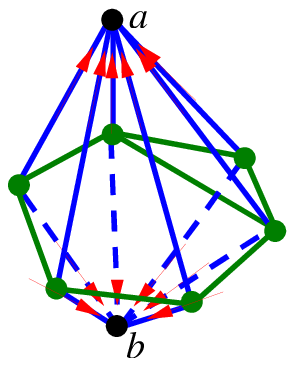}{\special{language "Scientific Word";type
"GRAPHIC";maintain-aspect-ratio TRUE;display "USEDEF";valid_file "F";width
6.9613in;height 1.7334in;depth 0pt;original-width 6.3027in;original-height
1.5939in;cropleft "0";croptop "1";cropright "1";cropbottom "0";filename
'pic15.eps';file-properties "XNPEU";}}

\begin{proposition}
\label{Tsus}For any digraph $X$ we have, for any $r\geq 0$,%
\begin{equation}
\Omega _{r}\left( \func{Sus}X\right) \cong \Omega _{r-1}\left( X\right)
\otimes \limfunc{span}\left\{ e_{a},e_{b}\right\} ,  \label{rab}
\end{equation}%
where $a,b$ are the suspension vertices and the isomorphism is given by the
mappings $u\otimes e_{a}\mapsto ue_{a}$ and $u\otimes e_{b}\mapsto ue_{b}.$
Furthermore, we have 
\begin{equation}
\widetilde{H}_{r}\left( \func{Sus}X\right) \cong \widetilde{H}_{r-1}\left(
X\right) ,  \label{pp-1}
\end{equation}%
where the isomorphism is given by the mapping $u\mapsto u\left(
e_{a}-e_{b}\right) $.
\end{proposition}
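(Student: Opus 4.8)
The plan is to deduce Proposition~\ref{Tsus} from the general K\"unneth theorem for joins (Theorem~\ref{Tjoin}) in exactly the same way the cone case (Proposition~\ref{Pcone}) was handled, once the elementary factor $Y$ has been analysed. As recorded just after the definition of the suspension, $\func{Sus}X=X\ast Y$, where $Y$ is the digraph on two vertices $a,b$ with no edges. Thus the whole argument reduces to computing the path spaces and the reduced homology of $Y$ and feeding them into the decompositions (\ref{Omrpq-1}) and (\ref{HZ}); no new homological input beyond Theorem~\ref{Tjoin} is needed.

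First I would determine $\Omega_\bullet(Y)$ and $\widetilde H_\bullet(Y)$. Since $Y$ has no edges there are no allowed $q$-paths for $q\ge 1$, so $\Omega_q(Y)=\mathcal A_q(Y)=\{0\}$ for every $q\ge 1$, while $\Omega_0(Y)=\mathcal A_0(Y)=\limfunc{span}\{e_a,e_b\}$. For the homology, $Y$ has two connected components, so by Proposition~\ref{PH0} (together with the duality $\dim H_0=\dim H^0$) we get $\dim H_0(Y)=2$; pairing against the constant $0$-form $1$ as in (\ref{H0v}) then isolates $\widetilde H_0(Y)=\limfunc{span}\{e_a-e_b\}$, and $\widetilde H_q(Y)=\{0\}$ for all $q\neq 0$. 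Note that $e_a-e_b$ is closed because $\partial(e_a-e_b)=e-e=0$ in the full complex.

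Next I would substitute into the two corollaries of Theorem~\ref{Tjoin}. In the sum (\ref{Omrpq-1}) over $p+q=r-1$ the only factor $\Omega_q(Y)$ of nonnegative degree that survives is $\Omega_0(Y)$, which forces $p=r-1$ and produces the summand $\Omega_{r-1}(X)\otimes\limfunc{span}\{e_a,e_b\}$; by Remark~\ref{RemBasis} the isomorphism is realised on basis elements by the join map, giving $u\otimes e_a\mapsto ue_a$ and $u\otimes e_b\mapsto ue_b$, which is precisely (\ref{rab}). Injectivity and allowedness of these joins are guaranteed by Proposition~\ref{Puv}. Similarly, substituting the above $\widetilde H_\bullet(Y)$ into the K\"unneth formula (\ref{HZ}) collapses the direct sum to the single term $q=0$, whence $\widetilde H_r(\func{Sus}X)\cong\widetilde H_{r-1}(X)\otimes\widetilde H_0(Y)\cong\widetilde H_{r-1}(X)$; tracing the induced join operation on homology classes from Proposition~\ref{Puv} with the generator $e_a-e_b$ of $\widetilde H_0(Y)$ shows that this isomorphism is induced by $u\mapsto u(e_a-e_b)$, which is (\ref{pp-1}).

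The only genuinely delicate point is bookkeeping rather than a new idea. One must correctly pin down $\widetilde H_0(Y)=\limfunc{span}\{e_a-e_b\}$ for the two-point digraph and then keep track, through the grading shift built into the primed complex $\Omega_\bullet'(X)=\Omega_{\bullet-1}(X)$ of Theorem~\ref{Tjoin}, of which tensor factor carries $e_a-e_b$; this is exactly what forces the explicit map $u\mapsto u(e_a-e_b)$ and explains why the \emph{difference} $e_a-e_b$, rather than $e_a$ or $e_b$ individually, is the correct generator of the suspension class. As a consistency check I would verify that the degree shift $r\mapsto r-1$ reproduces the expected dimensions on small examples and observe that, in contrast to the cone (where $Y$ is a single vertex with trivial reduced homology and Theorem~\ref{Tstar} applies), the suspension can carry nontrivial reduced homology coming from that of $X$.
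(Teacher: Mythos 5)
Your proposal is correct and takes essentially the same route as the paper's own proof: identify $\func{Sus}X=X\ast Y$ with $Y$ the edgeless two-vertex digraph, compute $\Omega _{\bullet }\left( Y\right) $ and $\widetilde{H}_{\bullet }\left( Y\right) =\limfunc{span}\left\{ e_{a}-e_{b}\right\} $ concentrated in degree $0$, and substitute into (\ref{Omrpq-1}) and (\ref{HZ}) of Theorem \ref{Tjoin}. You merely add detail that the paper leaves implicit (Proposition \ref{PH0} and (\ref{H0v}) to pin down $\widetilde{H}_{0}\left( Y\right) $, Proposition \ref{Puv} and Remark \ref{RemBasis} for the explicit join maps), and you even mirror the paper's own silent dropping of the degree $q=-1$ summand $\Omega _{r}\left( X\right) \otimes \Omega _{-1}\left( Y\right) $ from (\ref{Omrpq-1}) in the derivation of (\ref{rab}), so the two arguments coincide.
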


\begin{proof}
Let $Y$ as above. Since $\Omega _{0}\left( Y\right) =\limfunc{span}\left\{
e_{a},e_{b}\right\} $ and all other $\Omega _{q}\left( Y\right) $ are
trivial, (\ref{rab}) follows from (\ref{Omrpq-1}). Since $\widetilde{H}%
_{q}\left( Y\right) =\left\{ 0\right\} $ for all $q\neq 0$ and $\widetilde{H}%
_{0}\left( Y\right) =\limfunc{span}\left\{ e_{a}-e_{b}\right\} $, (\ref{pp-1}%
) follows from (\ref{HZ}).
\end{proof}

\begin{corollary}
We have $\chi \left( \func{Sus}X\right) =2-\chi \left( X\right) .$
\end{corollary}

\begin{proof}
Denoting $Z=$ $\func{Sus}X$ we obtain%
\begin{eqnarray*}
\chi \left( Z\right) &=&1+\sum_{p\geq 1}\left( -1\right) ^{p}\dim
H_{p}\left( Z\right) \\
&=&1+\sum_{p\geq 1}\left( -1\right) ^{p}\dim \widetilde{H}_{p-1}\left(
X\right) \\
&=&1-\sum_{q\geq 0}\left( -1\right) ^{q}\dim \widetilde{H}_{q}\left( X\right)
\\
&=&2-\sum_{q\geq 0}\left( -1\right) ^{q}\dim H_{q}\left( X\right) =2-\chi
\left( X\right) .
\end{eqnarray*}
\end{proof}

In particular, having examples of digraphs $X$ with arbitrary negative
values of $\chi $ (cf. Example \ref{Exantisnake}), we obtain examples of
digraphs $\func{Sus}X$ with arbitrary positive values of $\chi $.

\begin{example}
\RM Let $S$ be any cycle-graph that is neither triangle nor square. We
regards $S$ as an analog of a circle. Define $S_{n}$ inductively by $S_{1}=S$
and $S_{n+1}=\func{Sus}S_{n}.$ Then $S_{n}$ can be regarded as $n$%
-dimensional sphere-graph. An example of $S_{2}$ is shown on Fig. \ref%
{pic15a}. \FRAME{ftbphFU}{6.3304in}{1.8299in}{0pt}{\Qcb{A graph $S_{2}$
based on a 3-vertex cycle $S$}}{\Qlb{pic15a}}{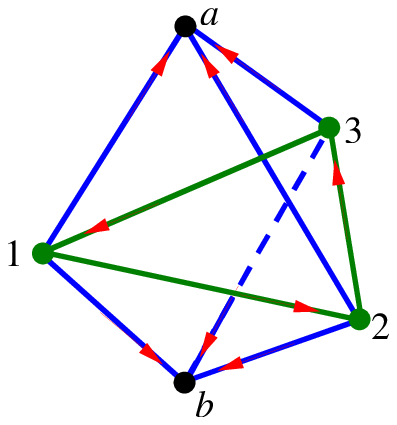}{\special{language
"Scientific Word";type "GRAPHIC";maintain-aspect-ratio TRUE;display
"USEDEF";valid_file "F";width 6.3304in;height 1.8299in;depth
0pt;original-width 6.3027in;original-height 1.8023in;cropleft "0";croptop
"1";cropright "1";cropbottom "0";filename 'pic15a.eps';file-properties
"XNPEU";}}

Since $\chi \left( S\right) =0$ by Proposition \ref{Propcycle}, it follows
that $\chi \left( S_{n}\right) =0\ \ $if $n$ is odd and $\chi \left(
S_{n}\right) =2$ if $n$ is even. Proposition \ref{Tsus} also implies that $%
\dim H_{n}\left( S_{n}\right) =\dim H_{1}\left( S\right) =1$, which gives an
example of a non-trivial $H_{n}$ with an arbitrary $n.$

Let $v$ be an $1$-path on $S$ that spans $H_{1}\left( S\right) $ (see
Section \ref{SecCycle}). If $S_{n+1}$ is a suspension of $S_{n}$ on the
vertices $a_{n},b_{n}$ then we obtain by induction that the spanning element
of $H_{n}\left( S_{n}\right) $ is 
\begin{equation*}
v\left( e_{a_{1}}-e_{b_{1}}\right) \left( e_{a_{2}}-e_{b_{2}}\right)
...\left( e_{a_{n-1}}-e_{b_{n-1}}\right) .
\end{equation*}%
For example, if $S$ is a cycle-graph on Fig. \ref{pic15a} with $V=\left\{
1,2,3\right\} $ and $E=\left\{ 12,23,31\right\} $, then $%
v=e_{12}+e_{23}+e_{31}$, whence the spanning element of $H_{2}\left(
S_{2}\right) $ is 
\begin{equation*}
v\left( e_{a}-e_{b}\right) =\left( e_{12a}+e_{23a}+e_{31a}\right) -\left(
e_{12b}+e_{23b}+e_{31b}\right) .
\end{equation*}
\end{example}

\begin{example}
\label{Expic13}\RM Another example of a $2$-dimensional sphere-graph $G$ is
shown on Fig. \ref{pic13}.\FRAME{ftbphFU}{5.2509in}{2.0791in}{0pt}{\Qcb{An
octahedron $G$}}{\Qlb{pic13}}{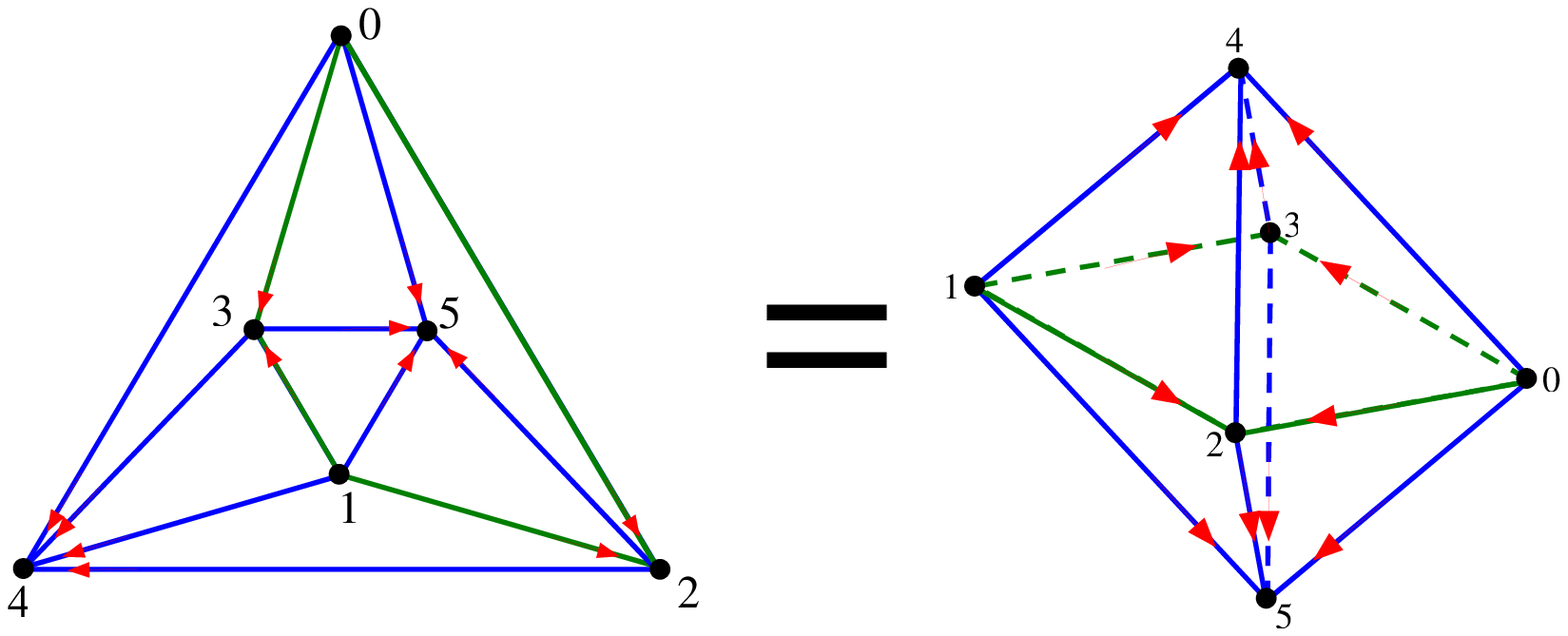}{\special{language "Scientific
Word";type "GRAPHIC";maintain-aspect-ratio TRUE;display "USEDEF";valid_file
"F";width 5.2509in;height 2.0791in;depth 0pt;original-width
7.0396in;original-height 2.7648in;cropleft "0";croptop "1";cropright
"1";cropbottom "0";filename 'pic13a.eps';file-properties "XNPEU";}}

Indeed, we have $G=\func{Sus}G^{\prime }$ where $G^{\prime }$ is the
subgraph with vertices $\left\{ 0,1,2,3\right\} $ that is a cycle-graph. By
Proposition \ref{Pcycle} we have 
\begin{equation*}
\dim H_{0}\left( G^{\prime }\right) =1,\ \ \dim H_{1}\left( G^{\prime
}\right) =1,\ \dim H_{p}\left( G^{\prime }\right) =0\ \text{for }p\geq 2.
\end{equation*}%
The same follows from the fact that $G^{\prime }=\func{Sus}G^{\prime \prime
} $ where $G^{\prime \prime }$ is a subgraph with vertices $\left\{
0,1\right\} $. By Proposition \ref{Tsus} we obtain 
\begin{equation}
\dim H_{0}\left( G\right) =1,\ \dim H_{1}\left( G\right) =0,\ \,\dim
H_{2}\left( G\right) =1,\ \dim H_{p}\left( G\right) =0\ \ \text{for }p\geq 3.
\label{G012}
\end{equation}%
Consequently, $\chi \left( G\right) =2.$

In the digraph $G$ we have%
\begin{equation*}
\dim \Omega _{0}=\left\vert V\right\vert =6\ \ \text{and\ \ }\dim \Omega
_{1}=\left\vert E\right\vert =12
\end{equation*}%
and%
\begin{equation*}
\mathcal{A}_{2}=\limfunc{span}\left\{
e_{024},e_{025},e_{034},e_{035},e_{124},e_{125},e_{134},e_{135}\right\} .
\end{equation*}%
The set of semi-edges is empty, whence by Proposition \ref{PropS} $\dim
\Omega _{2}=\dim \mathcal{A}_{2}=8$ and, hence, $\Omega _{2}=\mathcal{A}_{2}$%
. Alternatively, one can see that because all the $2$-paths spanning $%
\mathcal{A}_{2}$ are triangles and there are no squares. Also, there are no
allowed $3$-paths, so that $\mathcal{A}_{3}=\left\{ 0\right\} $ whence $\dim
\Omega _{p}=0$ for all $p\geq 3.$

Let us determine a spanning element of $H_{2}\left( G\right) .$ Clearly, $%
H_{0}\left( G^{\prime \prime }\right) =\limfunc{span}\left\{
e_{0},e_{1}\right\} $ whence by (\ref{H0v})%
\begin{equation*}
\widetilde{H}_{0}\left( G^{\prime \prime }\right) =\limfunc{span}\left\{
e_{0}-e_{1}\right\} .
\end{equation*}%
By the second claim of Proposition \ref{Tsus}, we have%
\begin{eqnarray*}
H_{1}\left( G^{\prime }\right) &=&\limfunc{span}\left\{ \tau \left(
e_{0}-e_{1}\right) \right\} =\limfunc{span}\left\{ \left( e_{0}-e_{1}\right)
\left( e_{2}-e_{3}\right) \right\} \\
&=&\limfunc{span}\left\{ e_{02}-e_{03}-e_{12}+e_{13}\right\} .
\end{eqnarray*}%
Alternatively, the same spanning element can be obtained by (\ref{vii+1}).

Applying Proposition \ref{Tsus} again, we obtain%
\begin{eqnarray*}
H_{2}\left( G\right) &=&\limfunc{span}\left\{ \tau \left(
e_{02}-e_{03}-e_{12}+e_{13}\right) \right\} \\
&=&\limfunc{span}\left\{ \left( e_{02}-e_{03}-e_{12}+e_{13}\right) \left(
e_{4}-e_{5}\right) \right\} \\
&=&\limfunc{span}\left\{
e_{024}-e_{025}-e_{034}+e_{035}-e_{124}+e_{125}+e_{134}-e_{135}\right\} .
\end{eqnarray*}%
Note that the spanning element of $H_{2}\left( G\right) $ is exactly a
surface path of a triangulation of $\mathbb{S}^{2}$ into an octahedron (cf.
Section \ref{Sectrian}). In this case the surface path is not exact so that
there is no solid path representing an octahedron.
\end{example}

\section{Cartesian product of path complexes}

\setcounter{equation}{0}\label{Sec6}\label{SecProduct}Let us fix some
notation to be used in this section. For a finite set $V$, denote by $%
R\left( V\right) $ the path complex on $V$ that consists of all regular
elementary paths on $V$. Then $R_{p}\left( V\right) $ denotes the set of all
regular elementary $p$-paths on $V$, for any non-negative integer $p$.

As before, the space $\mathcal{R}_{p}\left( V\right) $ of regular $p$-paths
is the set of all formal finite linear combinations of paths from $%
R_{p}\left( V\right) $ with the coefficients from the field $\mathbb{K}$.

In this Section all path complexes are regular and their chain complexes are
always truncated and regular. In particular, we set $\mathcal{R}%
_{-1}=\left\{ 0\right\} $. Notation $W_{\bullet }$\ means here $\left\{
W_{n}\right\} _{n\geq 0}.$

When considering more than one path complex we use the notation introduced
in Section \ref{SecJoin}.

\subsection{Cross product of regular paths}

\label{here copy(1)}Given two finite sets $X,Y$, consider their Cartesian
product 
\begin{equation*}
Z=X\times Y=\left\{ \left( x,y\right) :x\in X\text{ and }y\in Y\right\} .
\end{equation*}%
Let $z=z_{0}z_{1}...z_{r}$ be a regular elementary $r$-path on $Z$, where $%
z_{k}=\left( x_{k},y_{k}\right) $ with $x_{k}\in X$ and $y_{k}\in Y$. We say
that the path $z$ is \emph{step-like} if, for any $k=1,...,r$, either $%
x_{k-1}=x_{k}$ or $y_{k-1}=y_{k}$ is satisfied (in fact, exactly one of
these conditions holds as $z$ is regular); in other words, any couple $%
z_{k-1}z_{k}$ of consecutive vertices is either \emph{vertical} (when $%
x_{k-1}=x_{k}$) or \emph{horizontal} (when $y_{k-1}=y_{k}$):

\begin{equation*}
\text{vertical couple }%
\begin{array}{c}
\overset{z_{k}}{\bullet } \\ 
\uparrow \\ 
\overset{z_{k-1}}{\bullet }%
\end{array}%
\text{ and horizontal couple }%
\begin{array}{ccc}
\overset{z_{k-1}}{\bullet } & \longrightarrow & \overset{z_{k}}{\bullet }%
\end{array}%
\ \ \ .
\end{equation*}

Any step-like path $z$ on $Z$ determines regular elementary paths $x$ on $X$
and $y$ on $Y$ by projection. More precisely, $x$ is obtained from $z$ by
taking the sequence of all $X$-components of the vertices of $z$ and then by
collapsing in it any subsequence of repeated vertices to one vertex. The
same rule applies to $y$. By construction, the projections $x$ and $y$ are 
\emph{regular} elementary paths on $X$ and $Y$, respectively. Let the
projections of $z$ be $x=x_{0}...x_{p}$ and $y=y_{0}...y_{q}.$ Then $p+q=r$
where $r$ is the length of $z$, and every vertex $z_{k}$ of the path $z$ has
a form $\left( x_{i},y_{j}\right) $, whereas the previous vertex $z_{k-1}$
is either $\left( x_{i},y_{j-1}\right) $ or $\left( x_{i-1},y_{j}\right) $
as on the diagram:

\begin{equation*}
\begin{array}{c}
\overset{z_{k}=\left( x_{i},y_{j}\right) }{\bullet } \\ 
\uparrow \\ 
\overset{z_{k-1}=\left( x_{i},y_{j-1}\right) }{\bullet }%
\end{array}%
\ \ \text{or\ \ \ }%
\begin{array}{ccc}
\overset{z_{k-1}=\left( x_{i-1},y_{j}\right) }{\bullet } & \longrightarrow & 
\overset{z_{k}=\left( x_{i},y_{j}\right) }{\bullet }%
\end{array}%
.
\end{equation*}%
An example of a step-like path $z$ with its projections is shown on Fig. \ref%
{pic30}.

\FRAME{ftbpFU}{4.4394in}{2.363in}{0pt}{\Qcb{A step-like path $z$ and its
projections $x$ and $y$}}{\Qlb{pic30}}{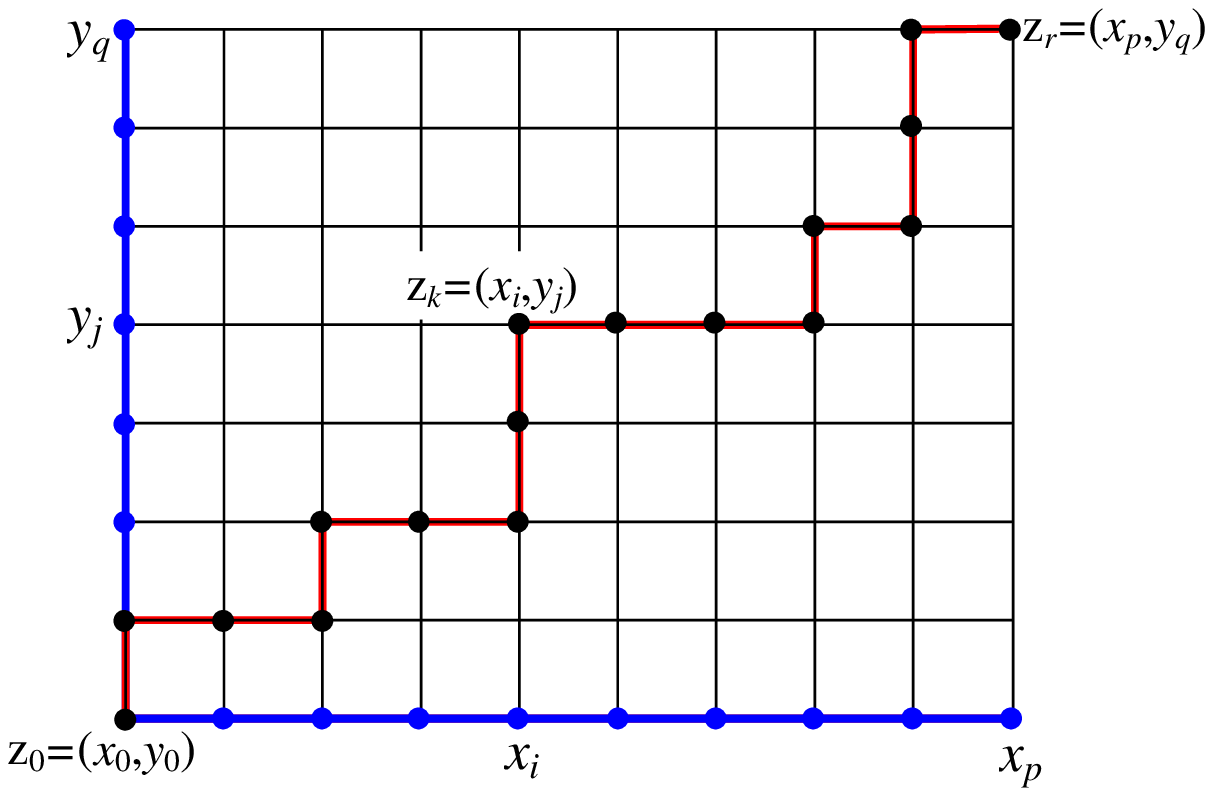}{\special{language
"Scientific Word";type "GRAPHIC";maintain-aspect-ratio TRUE;display
"USEDEF";valid_file "F";width 4.4394in;height 2.363in;depth
0pt;original-width 6.3027in;original-height 3.3356in;cropleft "0";croptop
"1";cropright "1";cropbottom "0";filename 'pic30.eps';file-properties
"XNPEU";}}

Every vertex $\left( x_{i},y_{j}\right) $ of a step-like path $z$ can be
represented as a point $\left( i,j\right) $ of $\mathbb{Z}^{2}$ so that the
whole path $z$ is represented by a \emph{staircase }$S\left( z\right) $ in $%
\mathbb{Z}^{2}$ connecting the points $\left( 0,0\right) $ and $\left(
p,q\right) $. Define the \emph{elevation }$L\left( z\right) $ of the path $z$
as the number of cells in $\mathbb{Z}_{+}^{2}$ below the staircase $S\left(
z\right) $ (the shaded area on Fig. \ref{pic22}).

\FRAME{ftbphFU}{4.5307in}{2.2615in}{0pt}{\Qcb{A staircase $S\left( z\right) $
and its elevation $L\left( z\right) .$ On this picture $L\left( z\right)
=30. $}}{\Qlb{pic22}}{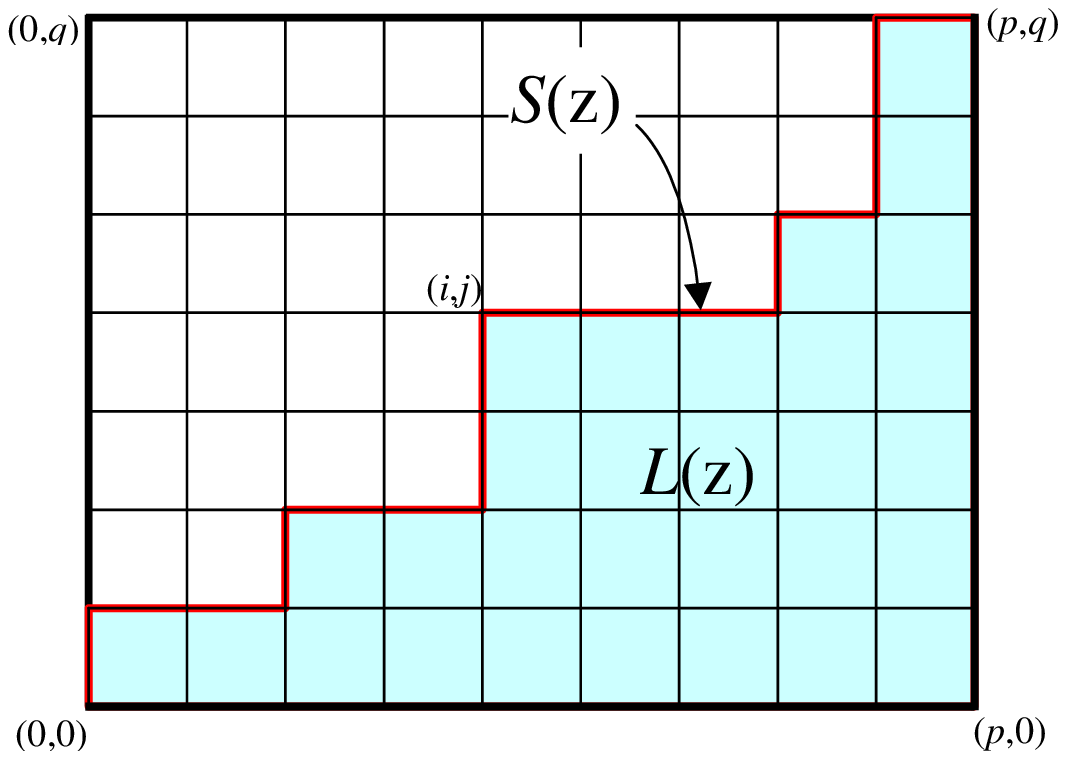}{\special{language "Scientific Word";type
"GRAPHIC";maintain-aspect-ratio TRUE;display "USEDEF";valid_file "F";width
4.5307in;height 2.2615in;depth 0pt;original-width 6.3027in;original-height
2.9879in;cropleft "0";croptop "1";cropright "1";cropbottom "0";filename
'pic22.eps';file-properties "XNPEU";}}

With any step-like path $z=z_{0}...z_{r}$ let us associate a sequence $%
\left\{ d_{k}\right\} _{k=1}^{r}$ where $d_{k}=1$ if the couple $%
z_{k-1}z_{k} $ is vertical, and $d_{k}=0$ if this couple is horizontal. If
the total number of horizontal couples is $p$ and of the vertical ones -- $q$%
, then the sequence $\left\{ d_{k}\right\} $ is a permutation of the
sequence 
\begin{equation}
\underset{p}{\{\underbrace{0,...,0}},\underset{q}{\underbrace{1,...,1}}\}.
\label{p0q1}
\end{equation}%
It is easy to see that $L\left( z\right) $ is equal to the minimal number of
transposition in $\left\{ d_{k}\right\} $ that brings it to the form (\ref%
{p0q1}). In the sequel we will need only the parity of the elevation $%
L\left( z\right) $ that is determined by the parity of the permutation $%
\left\{ d_{k}\right\} $.

\begin{definition}
\RM Given paths $u\in \mathcal{R}_{p}\left( X\right) $ and $v\in \mathcal{R}%
_{q}\left( Y\right) $ with some $p,q\geq 0$, define a path $u\times v$ on $Z$
by the following rule: for any step-like elementary $\left( p+q\right) $%
-path $z$ on $Z$, the component $\left( u\times v\right) ^{z}$ is defined by%
\begin{equation}
\left( u\times v\right) ^{z}=\left( -1\right) ^{L\left( z\right) }u^{x}v^{y},
\label{uvz}
\end{equation}%
where $x$ and $y$ are the projections of $z$ onto $X$ and $Y$, respectively,
and $u^{x}$ and $v^{y}$ are the corresponding components of $u$ and $v$. For
non-step-like paths $z$ set $\left( u\times v\right) ^{z}=0.$

The path $u\times v$ is called the (Cartesian) cross product of $u$ and $v$.
It follows that $u\times v\in \mathcal{R}_{p+q}\left( Z\right) .$
\end{definition}

Given a step-like $\left( p+q\right) $-path $z$ on $Z$, the projection of $z$
onto $X$ could be a \thinspace $p^{\prime }$-path $x$ with $p^{\prime }\neq
p $; in this case we set by definition $u^{x}\equiv 0.$ The same rule
applies to $v^{y}.$ In other words, $\left( u\times v\right) ^{z}$ may be
non-zero only when the projections of $z$ onto $X$ and $Y$ are $p$-path $x$
and $q$-path $y$ respectively, with non-zero $u^{x}$ and $v^{y}$.

For given paths $x\in R_{p}\left( X\right) $ and $y\in R_{q}\left( Y\right) $
with non-negative integers $p,q$, denote by $\Pi _{x,y}$ the set of all
step-like paths $z$ on $Z$ whose projections on $X$ and $Y$ are respectively 
$x$ and $y$. It follows from (\ref{uvz}) that 
\begin{equation}
e_{x}\times e_{y}=\sum_{z\in \Pi _{x,y}}\left( -1\right) ^{L\left( z\right)
}e_{z}.  \label{uvz1}
\end{equation}%
It is not difficult to see that the cross product is associative. \label%
{rem: prove that the cross product is associative}

\begin{example}
\RM Let us denote the vertices of $X$ by letters $a,b,c$ etc and the
vertices of $Y$ by integers $0,1,2,$ etc so that the vertices of $Z$ can be
denoted as the fields on the chessboard, for example, $a0,b1$ etc. Then we
have%
\begin{equation*}
e_{a}\times e_{01}=e_{a0a1}
\end{equation*}%
\begin{equation*}
e_{ab}\times e_{0}=e_{a0b0}
\end{equation*}%
\begin{equation*}
e_{ab}\times e_{01}=e_{a0b0b1}-e_{a0a1b1}
\end{equation*}%
\begin{equation*}
e_{abc}\times e_{01}=e_{a0b0c0c1}-e_{a0b0b1c1}+e_{a0a1b1c1}
\end{equation*}%
\begin{eqnarray*}
e_{abc}\times e_{012} &=&e_{a0b0c0c1c2}-e_{a0b0b1c1c2}+e_{a0b0b1b2c2} \\
&&+e_{a0a1b1c1c2}-e_{a0a1b1b2c2}+e_{a0a1a2b2c2}
\end{eqnarray*}%
etc (cf. Fig. \ref{pic24}). \FRAME{ftbphFU}{5.431in}{1.7781in}{0pt}{\Qcb{The
staircase $a0b0b1c1c2$ has elevation $1.$ Hence, $e_{a0b0b1c1c2}$ enters the
product $e_{abc}\times e_{012}$ with the negative sign.}}{\Qlb{pic24}}{%
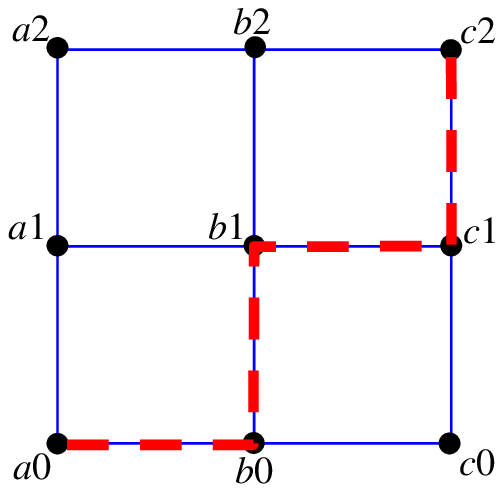}{\special{language "Scientific Word";type
"GRAPHIC";maintain-aspect-ratio TRUE;display "USEDEF";valid_file "F";width
5.431in;height 1.7781in;depth 0pt;original-width 6.0026in;original-height
1.9429in;cropleft "0";croptop "1";cropright "1";cropbottom "0";filename
'pic24.eps';file-properties "XNPEU";}}
\end{example}

\subsection{The product rule}

\label{Secduxv}In this and next sections we use the regular truncated
version of the boundary operator $\partial $.

\begin{proposition}
\label{Propdusqv}If$\ u\in \mathcal{R}_{p}\left( X\right) $ and $v\in 
\mathcal{R}_{q}\left( Y\right) $ where $p,q\geq 0$, then%
\begin{equation}
\partial \left( u\times v\right) =\left( \partial u\right) \times v+\left(
-1\right) ^{p}u\times \left( \partial v\right) .  \label{dusqv}
\end{equation}
\end{proposition}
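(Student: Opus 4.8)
The plan is to reduce to elementary paths and then to compare the two sides coefficient-by-coefficient in the basis of regular elementary $(p+q-1)$-paths on $Z$. By bilinearity of the cross product and linearity of $\partial$, it suffices to prove (\ref{dusqv}) for $u=e_x$ and $v=e_y$ with $x=x_0\dots x_p$ and $y=y_0\dots y_q$ regular. Using (\ref{uvz1}) and applying the regular boundary operator termwise I would write
\begin{equation*}
\partial(e_x\times e_y)=\sum_{z\in\Pi_{x,y}}(-1)^{L(z)}\sum_{k=0}^{p+q}(-1)^k e_{z\setminus z_k},
\end{equation*}
where $z\setminus z_k$ denotes $z$ with its $k$-th vertex deleted. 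First I would record that deleting a vertex of a staircase keeps the path regular, the only exception being when two coordinates of $x$ (or of $y$) that become adjacent actually coincide; in that degenerate situation the term is non-regular and is dropped by the regular $\partial$, and the matching term on the right-hand side vanishes simultaneously because the corresponding projection $x^{(m)}$ (or $y^{(n)}$) becomes non-regular. Hence I may assume all deletions stay regular.

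The heart of the argument is to classify each deleted vertex. An interior $z_k$ joins two consecutive couples of $z$; if these have \emph{opposite} type (one horizontal, one vertical) then $z\setminus z_k$ acquires a single \emph{diagonal} couple and is no longer step-like, whereas if they have the \emph{same} type, or if $z_k$ is an endpoint, then $z\setminus z_k$ is again step-like. I would show that the non-step-like (diagonal) terms cancel in pairs. Such a $w$ has exactly one diagonal couple, say from the point corresponding to $(x_a,y_b)$ to the one corresponding to $(x_{a+1},y_{b+1})$, and it arises from precisely two staircases: one routing through the lower-right corner (horizontal-then-vertical) and one through the upper-left corner (vertical-then-horizontal). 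These two corners occupy the same position $k=a+b+1$ in their staircases, while the unit square between them lies below one staircase and above the other, so the two elevations differ by exactly $1$. Thus the two contributions $(-1)^{L+k}e_w$ carry opposite signs and cancel, and the whole non-step-like part is $0$.

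It remains to match the surviving step-like terms with the right-hand side. Here I would set up the bijection: deleting a vertex whose two incident couples are horizontal (or an endpoint entered or left horizontally) yields a step-like path with $Y$-projection still $y$ and $X$-projection equal to $x$ with one vertex $x_m$ removed, which by (\ref{uvz1}) is exactly a summand of $(\partial e_x)\times e_y$; deleting a vertical vertex similarly identifies the term with a summand of $e_x\times(\partial e_y)$. The main obstacle is the sign bookkeeping. For a horizontal deletion I would express both $k$ and $L(z)$ through the number $m$ of horizontal steps and the number of vertical steps preceding the removed corner, together with the elevation $L(w)$ of the contracted staircase; the vertical-step count enters both $k$ and $L(z)-L(w)$ and so cancels modulo $2$, leaving exactly $(-1)^m$, the sign of the $m$-th summand of $\partial e_x$. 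The analogous count for a vertical deletion produces an extra global factor $(-1)^p$ (the horizontal steps after the corner contribute $p$ modulo $2$), which is precisely the Koszul sign in front of $e_x\times(\partial e_y)$ in (\ref{dusqv}). Assembling the three parts — cancellation of the diagonals, the horizontal terms giving $(\partial e_x)\times e_y$, and the vertical terms giving $(-1)^p e_x\times(\partial e_y)$ — completes the proof.
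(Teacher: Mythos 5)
Your proposal is correct and follows essentially the same route as the paper's proof: expand $\partial(e_x\times e_y)$ over $\Pi_{x,y}$, cancel the corner-deletion (diagonal) terms in pairs using the elevation parity $L(z')=L(z)\pm 1$, and biject the surviving horizontal and vertical deletions with the summands of $(\partial e_x)\times e_y$ and $(-1)^{p}e_x\times(\partial e_y)$ via the same sign count ($L(z)+k=L(w)+l+2m$, resp. $L(w)+p+m$). Your explicit remark that non-regular deletions are dropped simultaneously on both sides (since the contracted projection $x_{(l)}$ or $y_{(m)}$ is then non-regular) is a minor refinement the paper leaves implicit, not a different argument.
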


\begin{proof}
It suffices to prove (\ref{dusqv}) for the case $u=e_{x}$ and $v=e_{y}$
where $x$ and $y$ are regular elementary \thinspace $p$-path on $X$ and $q$%
-path on $Y$, respectively. Set $r=p+q$ so that $e_{x}\times e_{y}\in 
\mathcal{R}_{r}\left( Z\right) $. We have by (\ref{uvz1}) and (\ref{dev}) 
\begin{equation}
\partial \left( e_{x}\times e_{y}\right) =\sum_{z\in \Pi _{x,y}}\left(
-1\right) ^{L\left( z\right) }\partial e_{z}=\sum_{z\in \Pi
_{x,y}}\sum_{k=0}^{r}\left( -1\right) ^{L\left( z\right) +k}e_{z_{\left(
k\right) }},  \label{dexey}
\end{equation}%
where we use a shortcut%
\begin{equation*}
z_{\left( k\right) }=z_{0}...\widehat{z_{k}}%
...z_{r}=z_{0}...z_{k-1}z_{k+1}...z_{r}.
\end{equation*}%
Switching the order of the sums, rewrite (\ref{dexey}) in the form%
\begin{equation}
\partial \left( e_{x}\times e_{y}\right) =\sum_{k=0}^{r}\sum_{z\in \Pi
_{x,y}}\left( -1\right) ^{L\left( z\right) +k}e_{z_{\left( k\right) }}.
\label{dek}
\end{equation}%
Given an index $k=0,...,r$ and a path $z\in \Pi _{x,y}$, consider the
following four logically possible cases how horizontal and vertical couples
combine around $z_{k}$:%
\begin{equation*}
\begin{array}{cccccc}
\left( H\right) : & \overset{z_{k-1}}{\bullet } & \longrightarrow & \overset{%
z_{k}}{\bullet } & \longrightarrow & \overset{z_{k+1}}{\bullet }%
\end{array}%
\ \ \ \ \ \ \ \ \ \ \ \ \ \ 
\begin{array}{ccc}
&  & \overset{z_{k+1}}{\bullet } \\ 
&  & \uparrow \\ 
\left( V\right) : &  & \overset{z_{k}}{\bullet } \\ 
&  & \uparrow \\ 
&  & \overset{z_{k-1}}{\bullet }%
\end{array}%
\end{equation*}%
\begin{equation*}
\begin{array}{cccccccccccc}
\left( R\right) : &  &  & \overset{z_{k+1}}{\bullet } &  & \  & \  & \  & 
\left( L\right) : & \overset{z_{k}}{\bullet } & \longrightarrow & \overset{%
z_{k+1}}{\bullet } \\ 
&  &  & \uparrow &  &  &  &  &  & \uparrow &  &  \\ 
& \overset{z_{k-1}}{\bullet } & \longrightarrow & \overset{z_{k}}{\bullet }
&  &  &  &  &  & \overset{z_{k-1}}{\bullet } &  & 
\end{array}%
\end{equation*}%
Here $\left( H\right) $ stands for a horizontal position, $\left( V\right) $
for vertical, $\left( R\right) $ for right and $\left( L\right) $ for left.
If $k=0$ or $k=r$ then $z_{k-1}$ or $z_{k+1}$ should be ignored, so that one
has only two distinct positions $\left( H\right) $ and $\left( V\right) $.

If $z\in \Pi _{x,y}$ and $z_{k}$ stands in $\left( R\right) $ or $\left(
L\right) $ then consider a path $z^{\prime }\in \Pi _{x,y}$ such that $%
z_{i}^{\prime }=z_{i}$ for all $i\neq k$, whereas $z_{k}^{\prime }$ stands
in the opposite position $\left( L\right) $ or $\left( R\right) $,
respectively, as on the diagrams:%
\begin{equation*}
\begin{array}{ccc}
\overset{z_{k}^{\prime }}{\bullet } & \longrightarrow & \overset{z_{k+1}}{%
\bullet } \\ 
\uparrow &  & \uparrow \\ 
\overset{z_{k-1}}{\bullet } & \longrightarrow & \overset{z_{k}}{\bullet }%
\end{array}%
\ \ \ \ \ \ \ \ 
\begin{array}{ccc}
\overset{z_{k}}{\bullet } & \longrightarrow & \overset{z_{k+1}}{\bullet } \\ 
\uparrow &  & \uparrow \\ 
\overset{z_{k-1}}{\bullet } & \longrightarrow & \overset{z_{k}^{\prime }}{%
\bullet }%
\end{array}%
\end{equation*}%
Clearly, we have $L\left( z^{\prime }\right) =L\left( z\right) \pm 1$ which
implies that the terms $e_{z_{\left( k\right) }}$ and $e_{z_{\left( k\right)
}^{\prime }}$ in (\ref{dek}) cancel out.

Denote by $\Pi _{x,y}^{k}$ the set of paths $z\in \Pi _{x.y}$ such that $%
z_{k}$ stands in position $\left( V\right) $ and by $\Pi _{x,y}^{\ \ k}$ the
set of paths $z\in \Pi _{x,y}$ such that $z_{k}$ stands in position $\left(
H\right) $. By the above observation, we can restrict the summation in (\ref%
{dek}) to those pairs $k,z$ where $z_{k}$ is either in vertical or
horizontal position, that is, 
\begin{equation}
\partial \left( e_{x}\times e_{y}\right) =\sum_{k=0}^{r}\sum_{z\in \Pi
_{x,y}^{k}\sqcup \Pi _{x,y}^{\ \ k}}\left( -1\right) ^{L\left( z\right)
+k}e_{z_{\left( k\right) }}.  \label{dekk}
\end{equation}%
Let us now compute the first term in the right hand side of (\ref{dusqv}):%
\begin{equation}
\left( \partial e_{x}\right) \times e_{y}=\sum_{l=0}^{p}\left( -1\right)
^{l}e_{x}\times e_{y}=\sum_{l=0}^{p}\sum_{w\in \Pi _{x_{\left( l\right)
}},y}\left( -1\right) ^{L\left( w\right) +l}e_{w}.  \label{dew}
\end{equation}%
Fix some $l=0,...,p$ and $w\in \Pi _{x_{\left( l\right) },y}$. Since the
projection of $w$ on $X$ is $x_{\left( l\right)
}=x_{0}...x_{l-1}x_{l+1}...x_{p}$, there exists a unique index $k$ such that 
$w_{k-1}$ projects onto $x_{l-1}$ and $w_{k}$ projects onto $x_{l+1}$. Then $%
w_{k-1}$ and $w_{k}$ have a common projection onto $Y$, say $y_{m}$. \FRAME{%
ftbphFU}{4.5044in}{2.3976in}{0pt}{\Qcb{Step-like paths $w$ and $z$. The
shaded area represents the difference $L\left( z\right) -L\left( w\right) $.}%
}{\Qlb{pic40}}{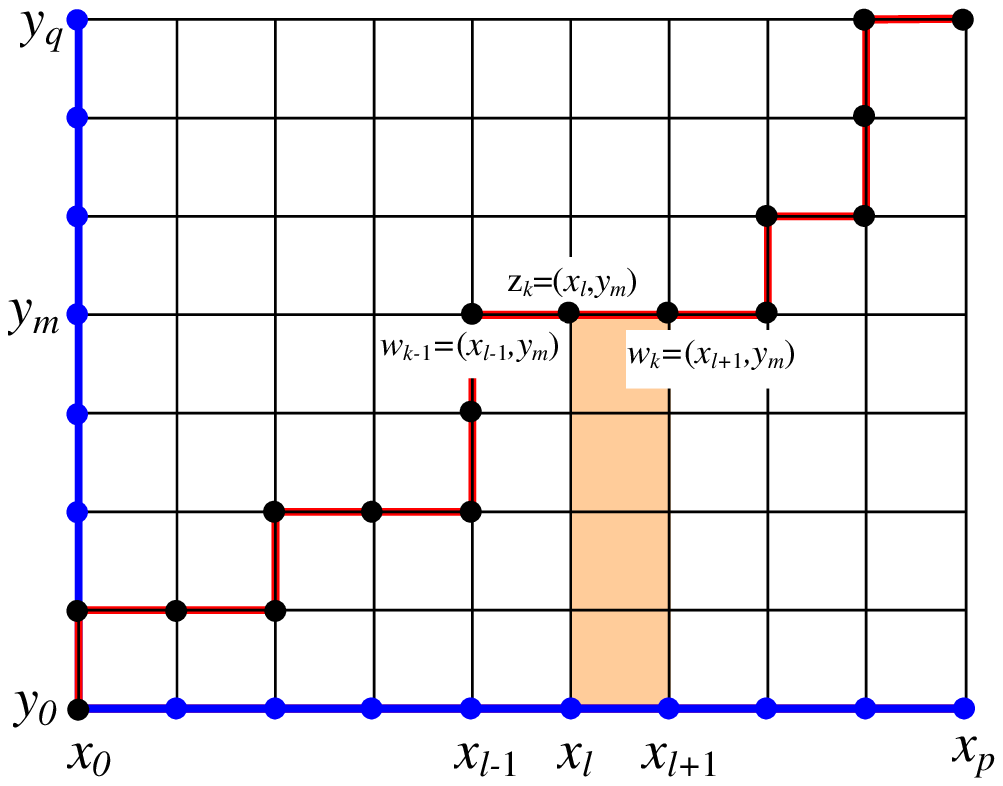}{\special{language "Scientific Word";type
"GRAPHIC";maintain-aspect-ratio TRUE;display "USEDEF";valid_file "F";width
4.5044in;height 2.3976in;depth 0pt;original-width 6.0026in;original-height
3.1767in;cropleft "0";croptop "1";cropright "1";cropbottom "0";filename
'pic40.eps';file-properties "XNPEU";}}

Define a path $z\in \Pi _{x,y}^{\ \ k}$ by setting%
\begin{equation}
\left. 
\begin{array}{ll}
z_{i}=w_{i} & \text{for }i\leq k-1, \\ 
z_{k}=\left( x_{l},y_{m}\right) & \text{for }i=k\text{,} \\ 
z_{i}=w_{i-1} & \text{for }i\geq k+1%
\end{array}%
\right.  \label{zw}
\end{equation}%
By construction we have $z_{\left( k\right) }=w.$ It also follows from the
construction that 
\begin{equation*}
L\left( z\right) =L\left( w\right) +m.
\end{equation*}%
Since $k=l+m$, we obtain that%
\begin{equation*}
L\left( z\right) +k=L\left( w\right) +l+2m.
\end{equation*}%
We see that each pair $l,w$ where $l=0,...,p$ and $w\in \Pi _{x_{\left(
l\right) },y}$ gives rise to a pair $k,z$ where $k=0,...,r$ , $z\in \Pi
_{x,y}^{\ \ k}$, and 
\begin{equation*}
\left( -1\right) ^{L\left( z\right) +k}e_{z_{\left( k\right) }}=\left(
-1\right) ^{L\left( w\right) +l}e_{w}.
\end{equation*}%
By reversing this argument, we obtain that each such pair $k,z$ gives back $%
l,w$ so that this correspondence between $k,z$ and $l,w$ is bijective.
Hence, we conclude that%
\begin{equation}
\left( \partial e_{x}\right) \times e_{y}=\sum_{l=0}^{p}\sum_{w\in \Pi
_{x_{\left( l\right) }},y}\left( -1\right) ^{L\left( w\right)
+l}e_{w}=\sum_{k=0}^{r}\sum_{z\in \Pi _{x,y}^{\ \ k}}\left( -1\right)
^{L\left( z\right) +k}e_{z_{\left( k\right) }}.  \label{dee}
\end{equation}%
The second term in the right hand side of (\ref{dusqv}) is computed
similarly:%
\begin{equation*}
\left( -1\right) ^{p}e_{x}\times \partial e_{y}=\sum_{m=0}^{q}\left(
-1\right) ^{m+p}e_{x}\times e_{y_{\left( m\right)
}}=\sum_{m=0}^{q}\sum_{w\in \Pi _{x,y_{\left( m\right) }}}\left( -1\right)
^{L\left( w\right) +m+p}e_{w}.
\end{equation*}%
Each pair $m,w$ here gives rise to a pair $k,z$ where $k=0,...,r$ and $z\in
\Pi _{x,y}^{k}$ in the following way: choose $k$ such that $w_{k-1}$
projects onto $y_{m-1}$ and $w_{k}$ projects onto $y_{m+1}$. Then $w_{k-1}$
and $w_{k}$ have a common projection onto $X$, say $x_{l}$. \FRAME{ftbphFU}{%
4.5044in}{2.3976in}{0pt}{\Qcb{Paths $w$ and $z$. The shaded area represents $%
L\left( z\right) -L\left( w\right) $.}}{\Qlb{pic41}}{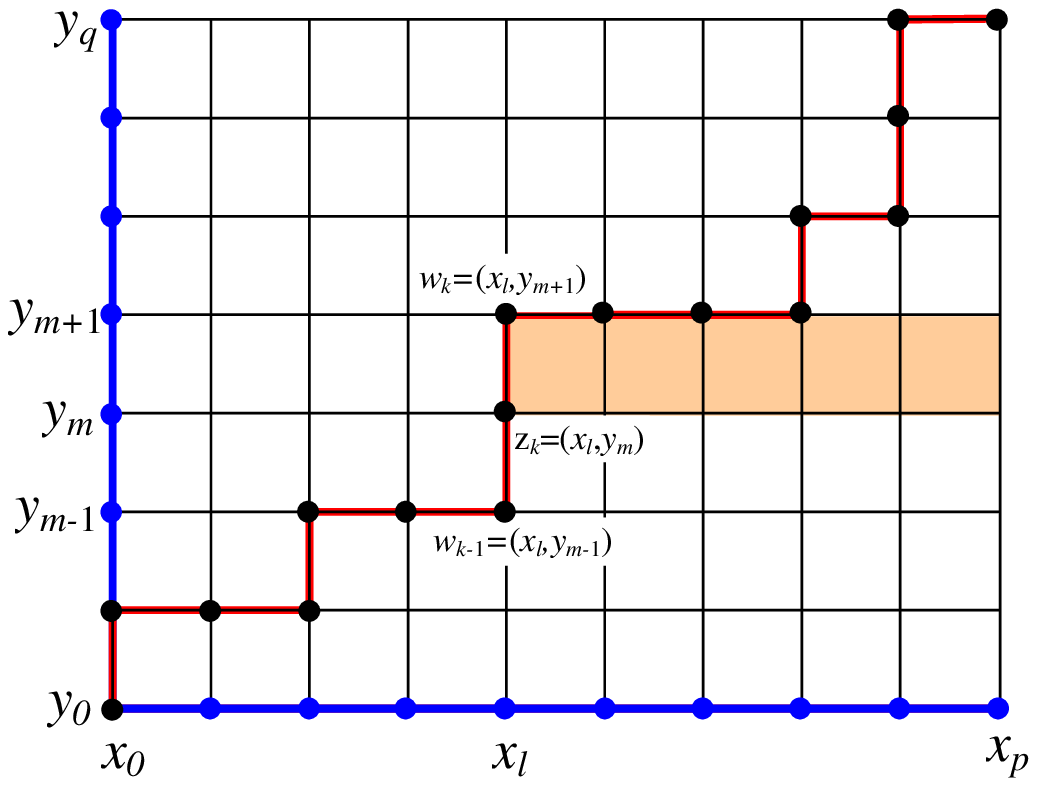}{\special%
{language "Scientific Word";type "GRAPHIC";maintain-aspect-ratio
TRUE;display "USEDEF";valid_file "F";width 4.5044in;height 2.3976in;depth
0pt;original-width 6.0026in;original-height 3.1767in;cropleft "0";croptop
"1";cropright "1";cropbottom "0";filename 'pic41.eps';file-properties
"XNPEU";}}

Define the path $z\in \Pi _{x,y}^{k}$ as in (\ref{zw}) (cf. Fig. \ref{pic41}%
). Then we have $w=z_{\left( k\right) }$ and 
\begin{equation*}
L\left( z\right) =L\left( w\right) +p-l.
\end{equation*}%
Since $k=l+m$, we obtain%
\begin{equation*}
L\left( z\right) +k=L\left( w\right) +p+m
\end{equation*}%
and%
\begin{equation*}
\left( -1\right) ^{p}e_{x}\times \partial e_{y}=\sum_{m=0}^{q}\sum_{w\in \Pi
_{x,y_{\left( m\right) }}}\left( -1\right) ^{L\left( w\right)
+m+p}e_{w}=\sum_{k=0}^{r}\sum_{z\in \Pi _{x,y}^{k}}\left( -1\right)
^{L\left( z\right) +k}e_{z_{\left( k\right) }}.
\end{equation*}%
Combining this with (\ref{dekk}) and (\ref{dee}), we obtain (\ref{dusqv}).
\end{proof}

\subsection{$\partial $-invariant paths on Cartesian product}

\label{SecXxY}

\begin{definition}
\RM Given two finite sets $X$ and $Y$ with path complexes $P\left( X\right) $
and $P\left( Y\right) $ respectively, define on the set $Z=X\times Y$ a path
complex $P\left( Z\right) $ as follows: the elements of $P\left( Z\right) $
are step-like paths on $Z$ whose projections on $X$ and $Y$ belong to $%
P\left( X\right) $ and $P\left( Y\right) $, respectively. The path complex $%
P\left( Z\right) $ is called the Cartesian product of the path complexes $%
P\left( X\right) $ and $P\left( Y\right) $ and is denoted by $P\left(
X\right) \boxplus P\left( Y\right) .$
\end{definition}

\label{rem: the Cartesian product is associative}In particular, if $x$ and $%
y $ are elementary allowed paths on $X$ and $Y$, respectively, then all the
paths $z\in \Pi _{x,y}$ are allowed on $Z$. It clearly follows from (\ref%
{uvz1}) that 
\begin{equation*}
u\in \mathcal{A}_{p}\left( X\right) \ \text{and\ }v\in \mathcal{A}_{q}\left(
Y\right) \ \ \Rightarrow \ \ u\times v\in \mathcal{A}_{p+q}\left( Z\right) .
\end{equation*}%
Furthermore, the following is true.

\begin{proposition}
\label{Puxv}If $u\in \Omega _{p}\left( X\right) $ and $v\in \Omega
_{q}\left( Y\right) $ then $u\times v\in \Omega _{p+q}\left( Z\right) .$
\end{proposition}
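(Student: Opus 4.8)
The plan is to verify directly the two conditions that define membership in $\Omega_{p+q}(Z)$, following the definition (\ref{Omdef}): namely that $u\times v$ is an allowed path and that its boundary is again allowed. Since $\Omega_p(X)\subset\mathcal{A}_p(X)$ and $\Omega_q(Y)\subset\mathcal{A}_q(Y)$, the hypotheses give $u\in\mathcal{A}_p(X)$ and $v\in\mathcal{A}_q(Y)$. The first condition, $u\times v\in\mathcal{A}_{p+q}(Z)$, is then immediate from the implication displayed just before the statement: by (\ref{uvz1}) a cross product of allowed paths is a linear combination of step-like paths whose projections are allowed on $X$ and $Y$, hence each such path is allowed on $Z$.

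It remains to show $\partial(u\times v)\in\mathcal{A}_{p+q-1}(Z)$, and the key tool here is the product rule of Proposition \ref{Propdusqv}, which gives $\partial(u\times v)=(\partial u)\times v+(-1)^{p}\,u\times(\partial v)$. Because $u\in\Omega_p(X)$ we have $\partial u\in\mathcal{A}_{p-1}(X)$; combining this with $v\in\mathcal{A}_q(Y)$ and the multiplicativity of $\times$ on allowed paths yields $(\partial u)\times v\in\mathcal{A}_{p+q-1}(Z)$. Symmetrically, $u\in\mathcal{A}_p(X)$ together with $\partial v\in\mathcal{A}_{q-1}(Y)$ gives $u\times(\partial v)\in\mathcal{A}_{p+q-1}(Z)$. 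Adding these two allowed terms shows that $\partial(u\times v)\in\mathcal{A}_{p+q-1}(Z)$, which is exactly the statement $u\times v\in\Omega_{p+q}(Z)$.

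The argument is short precisely because the substantive work has already been carried out, in the product rule (\ref{dusqv}) and in the fact that the cross product respects allowedness; the proof is the exact analogue of Proposition \ref{Puv}, where the join $uv$ was shown to be $\partial$-invariant by the same pairing of a product rule with preservation of allowedness. The only point that needs care is the degree bookkeeping in the boundary cases $p=0$ and $q=0$: since we work throughout this section with the regular \emph{truncated} version of $\partial$, the operator vanishes in degree $0$, so when $p=0$ (or $q=0$) the term $(\partial u)\times v$ (respectively $u\times(\partial v)$) drops out and no factor of negative degree is ever formed, keeping the application of the product rule legitimate. This is the main, and rather mild, obstacle; with it addressed the verification is complete.
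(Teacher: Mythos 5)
Your proof is correct and follows essentially the same route as the paper's: both verify that $u\times v$ is allowed and then invoke the product rule (\ref{dusqv}) to write $\partial \left( u\times v\right) =\left( \partial u\right) \times v+\left( -1\right) ^{p}u\times \left( \partial v\right) $, observing that each term is a cross product of allowed paths and hence allowed. Your closing remark on the boundary cases $p=0$, $q=0$ — that the truncated operator vanishes in degree zero, so no negative-degree factor arises — is a correct clarification of a point the paper leaves implicit.
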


\begin{proof}
Indeed, $\partial u$ and $\partial v$ are allowed, whence also $\partial
u\times v$ and $u\times \partial v$ are allowed, whence $\partial \left(
u\times v\right) $ is allowed by the product rule (\ref{dusqv}). It follows
that $u\times v\in \Omega _{p+q}\left( Z\right) .$
\end{proof}

It follows easily from the product rule that the cross product of closed
paths is closed, and of exact and closed paths -- exact.

The next theorem gives a complete description of $\partial $-invariant paths
on $Z$.

\begin{theorem}
\label{TK}Let $P\left( X\right) $ and $P\left( Y\right) $ be two regular
path complexes. Then for their Cartesian product $P\left( Z\right) =P\left(
X\right) \boxplus P\left( Y\right) $ the following isomorphism of chain
complexes holds:%
\begin{equation}
\Omega _{\bullet }\left( Z\right) \cong \Omega _{\bullet }\left( X\right)
\otimes \Omega _{\bullet }\left( Y\right) ,  \label{Omrpq}
\end{equation}%
where the mapping $\Omega _{\bullet }\left( X\right) \otimes \Omega
_{\bullet }\left( Y\right) \rightarrow \Omega _{\bullet }\left( Z\right) $
is given by $u\otimes v\mapsto u\times v$.

Consequently we have%
\begin{equation}
H_{\bullet }\left( Z\right) \cong H_{\bullet }\left( X\right) \otimes
H_{\bullet }\left( Y\right)  \label{HXYZ}
\end{equation}%
(the K\"{u}nneth formula for Cartesian product).
\end{theorem}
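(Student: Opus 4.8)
The plan is to reduce everything to the chain-level isomorphism (\ref{Omrpq}) and then invoke the algebraic K\"unneth theorem (\ref{KAB}): since $\mathbb{K}$ is a field there are no Tor terms, so once $\Omega _{\bullet }\left( Z\right) \cong \Omega _{\bullet }\left( X\right) \otimes \Omega _{\bullet }\left( Y\right) $ is established as chain complexes, passing to homology gives $H_{\bullet }\left( Z\right) \cong H_{\bullet }\left( \Omega _{\bullet }\left( X\right) \otimes \Omega _{\bullet }\left( Y\right) \right) \cong H_{\bullet }\left( X\right) \otimes H_{\bullet }\left( Y\right) $, which is (\ref{HXYZ}). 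Thus the entire content sits in (\ref{Omrpq}). I would set $\Omega _{r}\left( X,Y\right) :=\bigoplus _{\{p,q\geq 0:\,p+q=r\}}\Omega _{p}\left( X\right) \otimes \Omega _{q}\left( Y\right) $ and define $\Phi \left( u\otimes v\right) =u\times v$, aiming to show that $\Phi $ is an isomorphism of $\Omega _{\bullet }\left( X,Y\right) $ onto $\Omega _{\bullet }\left( Z\right) $ commuting with $\partial $.

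First I would verify that $\Phi $ is a chain map: the cross-product rule (Proposition \ref{Propdusqv}), $\partial \left( u\times v\right) =\left( \partial u\right) \times v+\left( -1\right) ^{p}u\times \left( \partial v\right) $, matches exactly the tensor boundary $\partial \left( u\otimes v\right) =\left( \partial u\right) \otimes v+\left( -1\right) ^{p}u\otimes \partial v$ from (\ref{utsv}), so $\Phi \partial =\partial \Phi $. Injectivity is immediate from (\ref{uvz1}): $\Phi \left( e_{x}\otimes e_{y}\right) =\sum _{z\in \Pi _{x,y}}\left( -1\right) ^{L\left( z\right) }e_{z}$, and since the projections of a step-like path recover $x$ and $y$, the index sets $\Pi _{x,y}$ are pairwise disjoint for distinct basis pairs $\left( x,y\right) $; the images therefore have disjoint supports among the $e_{z}$ and are linearly independent, so $\Phi $ is a monomorphism. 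Finally Proposition \ref{Puxv} gives $\Phi \left( \Omega _{p}\left( X\right) \otimes \Omega _{q}\left( Y\right) \right) \subseteq \Omega _{p+q}\left( Z\right) $, so $\Phi $ maps $\Omega _{r}\left( X,Y\right) $ injectively into $\Omega _{r}\left( Z\right) $ and hence $\dim \Omega _{r}\left( X,Y\right) \leq \dim \Omega _{r}\left( Z\right) $. The theorem then reduces to the reverse inequality $\dim \Omega _{r}\left( X,Y\right) \geq \dim \Omega _{r}\left( Z\right) $, which forces $\Phi $ to be onto $\Omega _{r}\left( Z\right) $.

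This reverse inequality is the main obstacle, and here the Cartesian case genuinely diverges from the join (Theorem \ref{Tjoin}). Unlike there, $\Phi $ is far from surjective on allowed paths: $\mathcal{A}_{r}\left( Z\right) $ is spanned by individual step-like paths, while the $e_{x}\times e_{y}$ are signed staircase sums, so one cannot simply identify $\mathcal{A}_{r}\left( X,Y\right) $ with $\mathcal{A}_{r}\left( Z\right) $. Instead I would dualize to forms, writing $\Omega ^{r}=\mathcal{A}^{r}/\mathcal{K}^{r}$ with $\mathcal{K}^{r}=\left( \mathcal{N}^{r}+d\mathcal{N}^{r-1}\right) \cap \mathcal{A}^{r}$ as in (\ref{A/J}), and prove $\dim \Omega ^{r}\left( X,Y\right) \geq \dim \Omega ^{r}\left( Z\right) $. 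This needs a cross product of forms $\varphi \times \psi $ dual to the cross product of paths, carrying the sign $\left( -1\right) ^{L\left( z\right) }$, together with a product rule of the form $d\left( \varphi \times \psi \right) =\left( d\varphi \right) \times \psi +\left( -1\right) ^{p}\varphi \times \left( d\psi \right) \bmod \mathcal{N}^{\bullet }\left( Z\right) $, established by the same corner-cancellation bookkeeping that drives the proof of Proposition \ref{Propdusqv}. With this rule in hand I would show that the relation subspaces $\mathcal{K}^{p}\left( X\right) \otimes \mathcal{A}^{q}\left( Y\right) +\mathcal{A}^{p}\left( X\right) \otimes \mathcal{K}^{q}\left( Y\right) $ map into $\mathcal{K}^{r}\left( Z\right) $ with trivial mutual intersections across distinct $\left( p,q\right) $, yielding the dimension bound precisely as (\ref{KAK}) does in the join argument.

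I expect the staircase combinatorics in the form-level product rule --- tracking the elevations $L\left( z\right) $ and the cancellation of the diagonal, corner-deleting terms --- to be the one genuinely delicate step; everything else is a formal transcription of the proof of Theorem \ref{Tjoin} with $\times $ in place of the join. Once the dimension inequality is secured, injectivity together with $\Phi \left( \Omega _{\bullet }\left( X,Y\right) \right) \subseteq \Omega _{\bullet }\left( Z\right) $ upgrades $\Phi $ to a chain isomorphism, proving (\ref{Omrpq}), and (\ref{HXYZ}) follows from (\ref{KAB}).
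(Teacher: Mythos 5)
Your reduction of (\ref{HXYZ}) to (\ref{Omrpq}) via (\ref{KAB}), the verification that $\Phi$ is a chain map via Proposition \ref{Propdusqv} and (\ref{utsv}), and the injectivity of $\Phi$ from the disjointness of the sets $\Pi _{x,y}$ are all correct and coincide with the paper. The gap is in your plan for the reverse dimension inequality, at exactly the point you dismiss as ``a formal transcription'' of Theorem \ref{Tjoin}. In the join proof, the counting (\ref{KAK}) produces $\dim \Omega ^{r}\left( X,Y\right) \geq \dim \Omega ^{r}\left( Z\right) $ only because the form-level map $\Psi $ there is a \emph{bijection} onto $\mathcal{A}^{r}\left( Z\right) $ (this is (\ref{FAA}): every allowed path on $X\sqcup Y$ splits uniquely as a join), so that $\dim \Omega ^{r}\left( X,Y\right) =\dim \mathcal{A}^{r}\left( Z\right) -\sum \dim \left( \text{relations}\right) $ can be compared directly with $\dim \Omega ^{r}\left( Z\right) =\dim \mathcal{A}^{r}\left( Z\right) -\dim \mathcal{K}^{r}\left( Z\right) $. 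For the Cartesian product your form cross product $e^{x}\times e^{y}=\sum_{z\in \Pi _{x,y}}\left( -1\right) ^{L\left( z\right) }e^{z}$ is injective but far from surjective: a pair $\left( x,y\right) $ with $\left\vert x\right\vert =p$, $\left\vert y\right\vert =q$ contributes $\binom{p+q}{p}$ basis paths $e_{z}$ to $\mathcal{A}^{r}\left( Z\right) $ but only one vector to the image $\widetilde{\mathcal{A}}^{r}\left( Z\right) :=\Psi \left( \mathcal{A}^{r}\left( X,Y\right) \right) $. Hence mapping the relation subspaces into $\mathcal{K}^{r}\left( Z\right) $ with trivial mutual intersections only yields $\dim \Omega ^{r}\left( X,Y\right) \geq \dim \widetilde{\mathcal{A}}^{r}\left( Z\right) -\dim \left( \mathcal{K}^{r}\left( Z\right) \cap \widetilde{\mathcal{A}}^{r}\left( Z\right) \right) $, and this dominates $\dim \Omega ^{r}\left( Z\right) $ only if one \emph{additionally} proves $\mathcal{A}^{r}\left( Z\right) =\widetilde{\mathcal{A}}^{r}\left( Z\right) +\mathcal{K}^{r}\left( Z\right) $. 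Nothing in the product rule for forms gives this, and you cannot shortcut it by a pairing argument either, since $\left( e^{x}\times e^{y},e_{x}\times e_{y}\right) =\binom{p+q}{p}$, which may vanish over a field of positive characteristic.

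The missing identity is precisely the dual of the paper's Proposition \ref{Pexey}: every $w\in \Omega _{r}\left( Z\right) $ lies in the span of the products $e_{x}\times e_{y}$, with $c^{xy}=\left( -1\right) ^{L\left( z\right) }w^{z}$ independent of the choice of $z\in \Pi _{x,y}$. Its proof is the genuinely delicate step of the Cartesian case: the staircase flip $z\mapsto z^{\prime }$ replacing a corner vertex $i_{k}$ by $i_{k}^{\prime }$, combined with the fact that $\partial w$ is allowed while the collapsed path $i_{0}...i_{k-1}i_{k+1}...i_{r}$ is not step-like, which forces $w^{z}+w^{z^{\prime }}=0$ and hence (\ref{z'z}). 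The paper then avoids forms altogether: from Proposition \ref{Pexey} it derives Corollary \ref{Corwuv} (the coefficients $u^{x}$ are themselves $\partial $-invariant), introduces the $\mathbb{K}$-scalar product $\left[ u,v\right] $ with the complements $\Omega _{p}^{\bot }$, proves Lemma \ref{Lembot} (which itself depends on Corollary \ref{Corwuv}), and closes the dimension count on the path side, with separate bookkeeping via (\ref{apq})--(\ref{dimUxV}) because $\mathcal{A}_{p}=\Omega _{p}\oplus \Omega _{p}^{\bot }$ may fail over a general field. So your architecture (injective chain map plus a dimension bound) matches the paper's, but the bound cannot be obtained by transcribing the join argument; it requires the flip combinatorics of Proposition \ref{Pexey}, which your proposal omits entirely.
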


The relation (\ref{HXYZ}) means that, for any $r\geq 0$, 
\begin{equation}
H_{r}\left( Z\right) \cong \tbigoplus_{\left\{ p,q\geq 0:p+q=r\right\}
}\left( H_{p}\left( X\right) \otimes H_{q}\left( Y\right) \right) .
\label{Hrpq}
\end{equation}%
For example, if the path complex $P\left( X\right) $ is connected and all
homologies $H_{p}\left( X\right) $, $p\geq 1$, are trivial then $H_{r}\left(
Z\right) \cong H_{r}\left( Y\right) .$

The proof of Theorem \ref{TK} will be given in Section \ref{SecK} after a
necessary preparation. Here we consider some examples of Cartesian products.

\RM Let $\left( X,E_{X}\right) $ and $\left( Y,E_{Y}\right) $ be two
digraphs. Their Cartesian product is the digraph $\left( Z,E_{Z}\right) $
where $Z=X\times Y$ and the set $E_{Z}$ of edges is defined as follows: $%
\left( x,y\right) \rightarrow \left( x^{\prime },y^{\prime }\right) $ if and
only if either $x\rightarrow x^{\prime }$ and $y=y^{\prime }$ (a \emph{%
horizontal} edge) and $y\rightarrow y^{\prime }$ and $x=x^{\prime }$ (a 
\emph{vertical} edge):%
\begin{equation*}
\begin{array}{cccccc}
y^{\prime }\bullet & \dots & \overset{\left( x,y^{\prime }\right) }{\bullet }
& \longrightarrow & \overset{\left( x^{\prime },y^{\prime }\right) }{\bullet 
} & \dots \\ 
\ \ \ \ \uparrow \  &  & \uparrow &  & \uparrow &  \\ 
y\bullet & \dots & \overset{\left( x,y\right) }{\bullet } & \longrightarrow
& \overset{\left( x^{\prime },y\right) }{\bullet } & \dots \\ 
\ \ \ \ \ \ \ \ \ \  &  &  &  &  &  \\ 
^{Y}\ \diagup \ _{X} & \dots & \underset{x}{\bullet } & \longrightarrow & 
\underset{x^{\prime }}{\bullet }\  & \dots%
\end{array}%
\ 
\end{equation*}%
Clearly, any allowed path on $\left( Z,E_{Z}\right) $ is step-like, and its
projections onto $X$ and $Y$ are also allowed. Hence, the path complex of
the digraph $\left( Z,E_{Z}\right) $ is the Cartesian product of the path
complexes of the digraphs $\left( X,E_{X}\right) $ and $\left(
Y,E_{Y}\right) $.

Let us give some explicit examples of product digraphs and $\partial $%
-invariant paths there. \label{rem: what is the purpose of all these
examples?}

\begin{example}
\RM Consider the Cartesian product \ $Z=X\boxplus Y$ of the digraphs 
\begin{equation*}
X=\ ^{a}\bullet \leftrightarrows \bullet ^{b}\ \ \text{and}\ \ Y=\
^{0}\bullet \leftrightarrows \bullet ^{1}
\end{equation*}%
that is shown on Fig. \ref{pic27}. \FRAME{ftbhFU}{6.3304in}{1.3015in}{0pt}{%
\Qcb{Cartesian product of $\ ^{a}\bullet \leftrightarrows \bullet ^{b}$ and $%
^{0}\bullet \leftrightarrows \bullet ^{1}$ }}{\Qlb{pic27}}{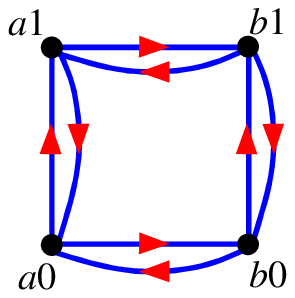}{%
\special{language "Scientific Word";type "GRAPHIC";maintain-aspect-ratio
TRUE;display "USEDEF";valid_file "F";width 6.3304in;height 1.3015in;depth
0pt;original-width 6.0026in;original-height 1.9553in;cropleft "0";croptop
"1";cropright "1";cropbottom "0";filename 'pic27.eps';file-properties
"XNPEU";}}The paths $e_{aba}$ and $e_{010}$ are $\partial $-invariant, so
that their cross product%
\begin{equation*}
e_{a0b0a0a1a0}-e_{a0b0b1a1a0}+e_{a0b0b1b0a0}+e_{a0a1b1a1a0}-e_{a0a1b1b0a0}+e_{a0a1a0b0a0}
\end{equation*}%
is $\partial $-invariant on $Z$. The paths $e_{ab}+e_{ba}$ and $%
e_{01}+e_{10} $ are exact, so that their cross product 
\begin{equation*}
e_{a0b0b1}-e_{a0a1b1}+e_{a1b1b0}-e_{a1a0b0}+e_{b0a0a1}-e_{b0b1a1}+e_{b1a1a0}-e_{b1b0a0}
\end{equation*}%
is an exact path on $Z$.
\end{example}

\begin{example}
\RM Let $Z=X\boxplus Y$ where%
\begin{equation*}
X=%
\begin{array}{c}
_{_{\nearrow }}\overset{b}{\bullet }_{_{\searrow }} \\ 
^{a}\bullet \ \rightarrow \ \bullet ^{c}%
\end{array}%
\ \ \text{and \ \ }Y=\ ^{0}\bullet \leftrightarrows \bullet ^{1}
\end{equation*}%
(see Fig. \ref{pic25}).\FRAME{ftbhFU}{6.3304in}{1.6769in}{0pt}{\Qcb{%
Cartesian product of a triangle and and $^{0}\bullet \leftrightarrows
\bullet ^{1}$}}{\Qlb{pic25}}{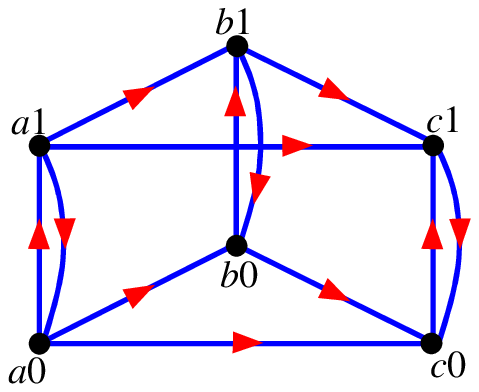}{\special{language "Scientific
Word";type "GRAPHIC";maintain-aspect-ratio TRUE;display "USEDEF";valid_file
"F";width 6.3304in;height 1.6769in;depth 0pt;original-width
6.0026in;original-height 2.7114in;cropleft "0";croptop "1";cropright
"1";cropbottom "0";filename 'pic25.eps';file-properties "XNPEU";}}The paths $%
e_{abc}$ is $\partial $-invariant on $X$ and $e_{01}+e_{10}$ is $\partial $%
-invariant on $Y$. Hence, their cross product%
\begin{equation*}
e_{a0b0c0c1}-e_{a0b0b1c1}+e_{a0a1b1c1}+e_{a1b1c1c0}-e_{a1b1b0c0}+e_{a1a0b0c0}
\end{equation*}%
is $\partial $-invariant on $Z$.
\end{example}

\begin{example}
\RM Let $Z=X\boxplus Y$ where%
\begin{equation*}
X=%
\begin{array}{c}
_{_{\nearrow }}\overset{b}{\bullet }_{_{\searrow }} \\ 
^{a}\bullet \ \rightarrow \ \bullet ^{c}%
\end{array}%
\ \ \text{and \ \ }Y=%
\begin{array}{ccc}
_{2}\bullet & \longrightarrow & \bullet _{3} \\ 
\ \uparrow &  & \uparrow \  \\ 
_{0}\bullet & \longrightarrow & \bullet _{1}%
\end{array}%
.
\end{equation*}%
(see Fig. \ref{pic42}).\FRAME{ftbphFU}{4.4391in}{2.2649in}{0pt}{\Qcb{%
Cartesian product of a triangle and a square}}{\Qlb{pic42}}{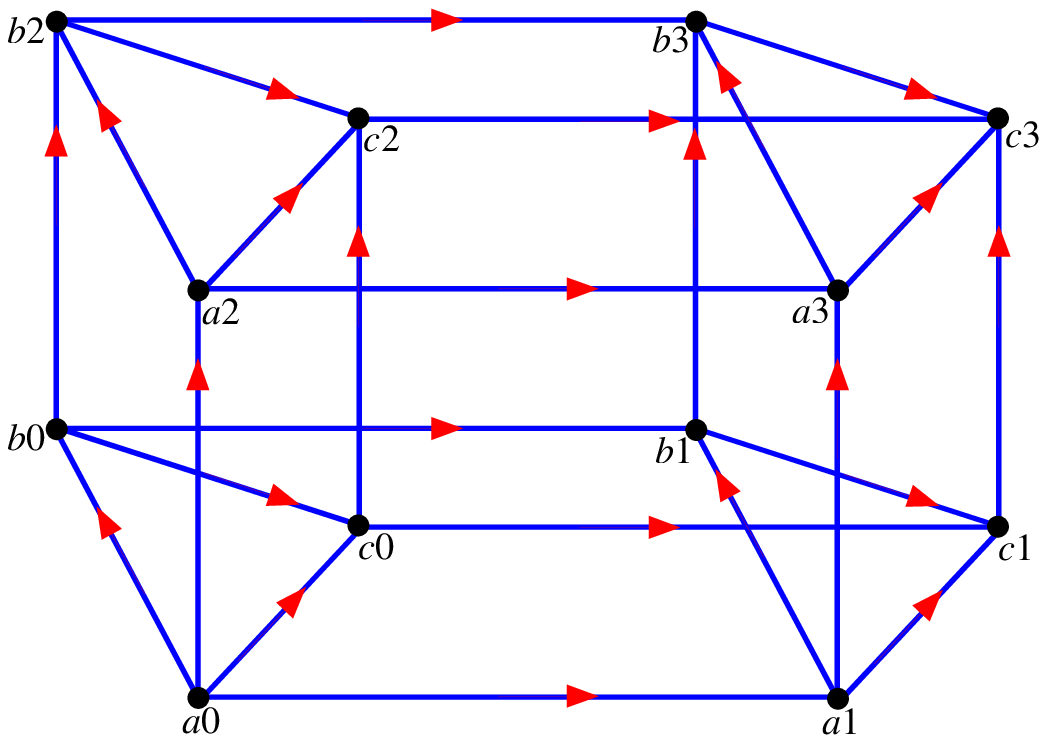}{%
\special{language "Scientific Word";type "GRAPHIC";maintain-aspect-ratio
TRUE;display "USEDEF";valid_file "F";width 4.4391in;height 2.2649in;depth
0pt;original-width 6.4013in;original-height 3.1955in;cropleft "0";croptop
"1";cropright "1";cropbottom "0";filename 'pic42.eps';file-properties
"XNPEU";}}Taking the cross product of $\partial $-invariant paths $e_{abc}$
and $e_{013}-e_{023}$, we obtain the following $\partial $-invariant path on 
$Z$:%
\begin{eqnarray*}
&&e_{a0b0c0c1c3}-e_{a0b0b1c1c3}+e_{a0b0b1b3c3} \\
&&+e_{a0a1b1c1c3}-e_{a0a1b1b3c3}+e_{a0a1a3b3c3} \\
&&-e_{a0b0c0c2c3}+e_{a0b0b2c2c3}-e_{a0b0b2b3c3} \\
&&-e_{a0a2b2c2c3}+e_{a0a2b2b3c3}-e_{a0a2a3b3c3}
\end{eqnarray*}
\end{example}

\subsection{Cylinders and hypercubes}

\label{SecCyl}For any digraph $X$, the cylinder over $X$ is the digraph%
\begin{equation*}
\limfunc{Cyl}X:=X\boxplus \ ^{0}\bullet \rightarrow \bullet ^{1}.
\end{equation*}%
Assuming that the vertices of $X$ are enumerated by $0,1,...,n-1$, we can
enumerate the vertices of $\limfunc{Cyl}X$ by $0,1,...,2n-1$ using the
following rule: $\left( x,0\right) $ is assigned the number $x$, while $%
\left( x,1\right) $ is assigned $x+n$.

Define the operation of \emph{lifting }paths from $X$ to $\limfunc{Cyl}X$ as
follows: for any regular path $v$ on $X$, the lifted path is denoted by $%
\widehat{v}$ and is defined by 
\begin{equation*}
\widehat{v}=v\times e_{01}.
\end{equation*}%
Since $e_{01}$ is $\partial $-invariant on $Y$, we obtain that if $v\in 
\mathcal{A}_{p}\left( X\right) $ then $\widehat{v}\in \mathcal{A}%
_{p+1}\left( \limfunc{Cyl}X\right) $, and if $v\in \Omega _{p}\left(
X\right) $ then $\widehat{v}\in \Omega _{p+1}\left( \limfunc{Cyl}X\right) $.

For example, if $v=e_{i_{0}...i_{p}}$ then 
\begin{equation}
\widehat{v}=e_{i_{0}...i_{p}}\times e_{01}=\sum_{k=0}^{p}\left( -1\right)
^{p-k}e_{i_{0}...i_{k}\left( i_{k}+n\right) ...\left( i_{p}+n\right) },
\label{elift}
\end{equation}%
since the path $i_{0}...i_{k}\left( i_{k}+n\right) ...\left( i_{p}+n\right) $
has the elevation $p-k$ as can be seen on the diagram: 
\begin{equation*}
\begin{array}{ccccccccc}
&  & \cdots & \overset{i_{k}+n}{\bullet } & \longrightarrow & \overset{%
i_{k+1}+n}{\bullet } & \longrightarrow & \cdots & \longrightarrow \overset{%
i_{p}+n}{\bullet } \\ 
&  &  & \uparrow &  & \uparrow &  &  &  \\ 
\overset{i_{0}}{\bullet }\longrightarrow & \cdots & \longrightarrow & 
\overset{i_{k}}{\bullet } & \longrightarrow & \overset{i_{k+1}}{\bullet } & 
\cdots &  & 
\end{array}%
,\ 
\end{equation*}

\begin{example}
\RM The cylinder over a triangle $X=%
\begin{array}{c}
_{_{\nearrow }}\overset{1}{\bullet }_{_{\searrow }} \\ 
^{0}\bullet \ \rightarrow \ \bullet ^{2}%
\end{array}%
$ is shown on Fig. \ref{pic8}.\FRAME{ftbphFU}{5.0704in}{1.7608in}{0pt}{\Qcb{%
A cylinder over a triangle}}{\Qlb{pic8}}{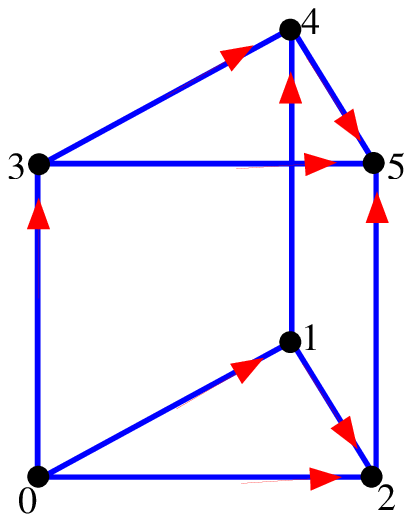}{\special{language
"Scientific Word";type "GRAPHIC";maintain-aspect-ratio TRUE;display
"USEDEF";valid_file "F";width 5.0704in;height 1.7608in;depth
0pt;original-width 6.3027in;original-height 2.2355in;cropleft "0";croptop
"1";cropright "1";cropbottom "0";filename 'pic8.eps';file-properties
"XNPEU";}}

Since $2$-path $e_{012}$ is $\partial $-invariant on $X$, lifting it to the
cylinder, we obtain the following $\partial $-invariant $3$-path on $%
\limfunc{Cyl}X$: 
\begin{equation*}
e_{0345}-e_{0145}+e_{0125}.
\end{equation*}
\end{example}

\begin{example}
\RM The cylinder over the graph $X=\ ^{0}\bullet \rightarrow \bullet ^{1}$
is a square: 
\begin{equation*}
\begin{array}{ccc}
^{2}\bullet & \longrightarrow & \bullet ^{3} \\ 
\ \uparrow &  & \uparrow \  \\ 
^{0}\bullet & \longrightarrow & \bullet ^{1}%
\end{array}%
\ 
\end{equation*}%
Lifting a $\partial $-invariant $1$-path $e_{01}$ on $X$ we obtain the
following $\partial $-invariant $2$-path on the square: $e_{013}-e_{023}.$

The cylinder over a square is a $3$-cube that is shown in Fig. \ref{pic9}.%
\FRAME{ftbhFU}{5.0704in}{1.9839in}{0pt}{\Qcb{A $3$-cube}}{\Qlb{pic9}}{%
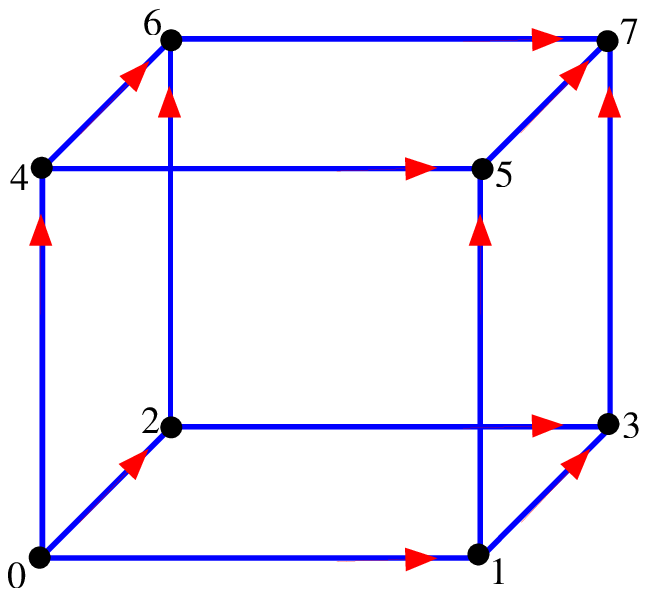}{\special{language "Scientific Word";type
"GRAPHIC";maintain-aspect-ratio TRUE;display "USEDEF";valid_file "F";width
5.0704in;height 1.9839in;depth 0pt;original-width 6.3027in;original-height
2.444in;cropleft "0";croptop "1";cropright "1";cropbottom "0";filename
'pic9.eps';file-properties "XNPEU";}}

Lifting the $2$-path $e_{013}-e_{023}$ we obtain the following $\partial $%
-invariant $3$-path on the $3$-cube:%
\begin{equation*}
e_{0457}-e_{0157}+e_{0137}-e_{0467}+e_{0267}-e_{0237}.
\end{equation*}%
Defining further $n$-cube for any positive integer $n$ by 
\begin{equation*}
\limfunc{Cube}\nolimits_{n}=\limfunc{Cyl}\limfunc{Cube}\nolimits_{n-1},
\end{equation*}%
we see that $\limfunc{Cube}\nolimits_{n}$ determines a $\partial $-invariant 
$n$-path that is a lifting of a $\partial $-invariant $\left( n-1\right) $%
-path from $\limfunc{Cube}\nolimits_{n-1}$ and that is an alternating sum of 
$n!$ elementary terms. It is easy to show that these terms correspond to
partitioning of a solid $n$-cube into $n!$ simplexes.

By (\ref{Hrpq}) all homology groups of $\limfunc{Cube}\nolimits_{n}$ are
trivial except for $H_{0}$.
\end{example}

\label{rem: partitioning cube into simplexes}

\subsection{Representation of $\partial $-invariant paths on product}

\label{Seccxy}As in Section \ref{SecXxY}, we work with two paths complexes $%
P\left( X\right) $, $P\left( Y\right) $ and their Cartesian product $P\left(
Z\right) =P\left( X\right) \boxplus P\left( Y\right) $ where $Z=X\times Y$.
The following statement is a partial converse of Proposition \ref{Puxv}.

\begin{proposition}
\label{Pexey}Any path $w\in \Omega _{\bullet }\left( Z\right) $ admits a
representation 
\begin{equation}
w=\sum_{x\in P\left( X\right) ,\ y\in P\left( Y\right) }c^{xy}\left(
e_{x}\times e_{y}\right)  \label{axy}
\end{equation}%
with some scalar coefficients $c^{xy}$ (only finitely many coefficients are
non-vanishing). Furthermore, the coefficients $c^{xy}$ are uniquely
determined by $w$.
\end{proposition}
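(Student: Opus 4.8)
The plan is to prove uniqueness first and then existence, reducing the latter to a statement about the coefficients of $w$ on a single ``block'' of step-like paths. Throughout I will use that the allowed elementary $r$-paths on $Z$ are precisely the step-like paths whose projections lie in $P\left( X\right) $ and $P\left( Y\right) $, and that these are partitioned into the disjoint sets $\Pi _{x,y}$ indexed by pairs $\left( x,y\right) $ with $x\in P_{p}\left( X\right) $, $y\in P_{q}\left( Y\right) $, $p+q=r$. By (\ref{uvz1}) the cross product $e_{x}\times e_{y}=\sum_{z\in \Pi _{x,y}}\left( -1\right) ^{L\left( z\right) }e_{z}$ is supported on $\Pi _{x,y}$ and is nonzero (the staircase with all horizontal couples before all vertical ones has $L=0$ and contributes its $e_{z}$ with coefficient $+1$). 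Since distinct pairs $\left( x,y\right) $ give disjointly supported nonzero vectors, the family $\left\{ e_{x}\times e_{y}\right\} $ is linearly independent, which immediately yields the uniqueness of the coefficients $c^{xy}$ in (\ref{axy}).

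For existence I would write $w=\sum_{z}w^{z}e_{z}$ over allowed step-like $z$. In view of the linear independence just noted, it suffices to show that on each block $\Pi _{x,y}$ the quantity $\left( -1\right) ^{L\left( z\right) }w^{z}$ is independent of $z$; calling this common value $c^{xy}$ and summing (\ref{uvz1}) over all blocks then reproduces $w$. The first step is to compare $w^{z}$ and $w^{z^{\prime }}$ for two paths $z,z^{\prime }\in \Pi _{x,y}$ that differ by a single elementary corner flip at some position $k$: that is, $z_{i}=z_{i}^{\prime }$ for all $i\neq k$, while the corner $z_{k-1}z_{k}z_{k+1}$ is in the ``right'' position and $z_{k-1}^{\prime }z_{k}^{\prime }z_{k+1}^{\prime }$ in the ``left'' position (or vice versa), exactly as in the diagrams of the proof of Proposition \ref{Propdusqv}. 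Such a flip leaves the projections $x,y$ unchanged, so indeed $z^{\prime }\in \Pi _{x,y}$, and it changes the elevation by exactly one, $L\left( z^{\prime }\right) =L\left( z\right) \pm 1$.

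The key point is to exploit $\partial $-invariance through the deleted path $z_{\left( k\right) }=z_{0}\dots \widehat{z_{k}}\dots z_{r}$, which coincides for $z$ and $z^{\prime }$. Because a corner vertex is removed, the vertices $z_{k-1}$ and $z_{k+1}$ become consecutive in $z_{\left( k\right) }$, and they are diagonal corners of a unit grid cell; as diagonal steps are not edges of the Cartesian product $Z$, the path $z_{\left( k\right) }$ is non-allowed. Since $w\in \Omega _{r}$ forces $\partial w\in \mathcal{A}_{r-1}$ by (\ref{Omdef}), we have $\left( \partial w\right) ^{z_{\left( k\right) }}=0$. I would then expand this component via (\ref{dv}), summing over all vertices inserted into $z_{\left( k\right) }$: any insertion at a position other than $k$ leaves the diagonal pair $z_{k-1}z_{k+1}$ intact, so the resulting path is still non-allowed and its $w$-component vanishes, whereas an insertion at position $k$ must fill the unit cell and yields exactly the two paths $z$ and $z^{\prime }$, each entering with sign $\left( -1\right) ^{k}$. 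Hence $\left( -1\right) ^{k}\left( w^{z}+w^{z^{\prime }}\right) =0$, that is $w^{z^{\prime }}=-w^{z}$, and combined with $L\left( z^{\prime }\right) =L\left( z\right) \pm 1$ this gives $\left( -1\right) ^{L\left( z^{\prime }\right) }w^{z^{\prime }}=\left( -1\right) ^{L\left( z\right) }w^{z}$.

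It remains to observe that any two staircases in $\Pi _{x,y}$ are joined by a finite chain of elementary corner flips, the standard elementary fact that the monotone lattice paths from $\left( 0,0\right) $ to $\left( p,q\right) $ are connected under swapping an adjacent horizontal-vertical pair into a vertical-horizontal one. Propagating the invariance of $\left( -1\right) ^{L\left( z\right) }w^{z}$ along such a chain shows it is constant on $\Pi _{x,y}$, which finishes existence. I expect the main obstacle to be the third paragraph: choosing $z_{\left( k\right) }$ as the right non-allowed test path and verifying that, apart from the two cell-filling insertions giving $z$ and $z^{\prime }$, every other insertion preserves the diagonal non-edge and therefore contributes nothing. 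This is precisely where the absence of diagonal edges in $X\times Y$ is essential.
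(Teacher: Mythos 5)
Your proposal is correct and takes essentially the same route as the paper's own proof: uniqueness via the pairwise disjointly supported, nonzero family $\left\{ e_{x}\times e_{y}\right\} $, and existence by showing that $\left( -1\right) ^{L\left( z\right) }w^{z}$ is invariant under an elementary corner flip --- exploiting that the deleted path $z_{\left( k\right) }$ is regular but non-allowed (not step-like), so $\left( \partial w\right) ^{z_{\left( k\right) }}=0$ and only the two cell-filling insertions survive in the expansion (\ref{dv}), giving $w^{z}+w^{z^{\prime }}=0$ --- combined with the connectivity of $\Pi _{x,y}$ under such flips. This matches the paper's argument step for step (cf. (\ref{dw0})--(\ref{z'z})).
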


\begin{proof}
Let us first show the uniqueness of $c^{xy}$, which is equivalent to the
linear independence of the family $\left\{ e_{x}\times e_{y}\right\} $
across all $x\in P\left( X\right) $ and $y\in P\left( Y\right) $. Indeed,
assume that, for some scalars $c^{xy}$,%
\begin{equation*}
\sum_{x\in P\left( X\right) ,y\in P\left( Y\right) }c^{xy}e_{x}\times
e_{y}=0,
\end{equation*}%
and prove that $c^{xy}=0$ for any couple $x,y$ as in the summation. Fix such
a couple $x,y$ and choose one $z\in \Pi _{x,y}$. Then by (\ref{uvz}) 
\begin{equation*}
\left( e_{x^{\prime }}\times e_{y^{\prime }}\right) ^{z}=\left\{ 
\begin{array}{ll}
\left( -1\right) ^{L\left( z\right) }, & x^{\prime }=x\text{ and }y^{\prime
}=y\text{,} \\ 
0, & \text{otherwise,}%
\end{array}%
\right.
\end{equation*}%
which implies that 
\begin{equation*}
\left( \sum_{x^{\prime }\in P\left( X\right) ,y^{\prime }\in P\left(
Y\right) }c^{x^{\prime }y^{\prime }}e_{x^{\prime }}\times e_{y^{\prime
}}\right) ^{z}=\left( -1\right) ^{L\left( z\right) }c^{xy}
\end{equation*}%
and, hence, $c^{xy}=0.$

Let us show existence of the representation (\ref{axy}) for any $w\in \Omega
_{r}\left( Z\right) $ and any $r\geq 0$. As before, for any elementary $r$%
-path $z$ on $Z$, $w^{z}$ denotes the $e_{z}$-coordinate of $w$. If $z$ is
an elementary $r^{\prime }$-path with $r^{\prime }\neq r$ then set $w^{z}=0$%
. For any $x\in P\left( X\right) $ and $y\in P\left( Y\right) $ choose some $%
z\in \Pi _{x,y}$ and set%
\begin{equation}
c^{xy}=\left( -1\right) ^{L\left( z\right) }w^{z}.  \label{cxyw}
\end{equation}%
Let us first show that the value of $c^{xy}$ in (\ref{cxyw}) is independent
of the choice of $z\in \Pi _{x,y}$. Set $z=i_{0}...i_{r}$. Let $k$ be an
index such that one of the couples $i_{k-1}i_{k}$, $i_{k}i_{k+1}$ is
vertical and the other is horizontal. If $i_{k-1}=\left( a,b\right) $ and $%
i_{k+1}=\left( a^{\prime },b^{\prime }\right) $ where $a,a^{\prime }\in X$
and $b,b^{\prime }\in Y$, then $i_{k}$ is either $\left( a^{\prime
},b\right) $ or $\left( a,b^{\prime }\right) $. Denote the other of these
two vertices by $i_{k}^{\prime }$, as, for example, on the diagram: 
\begin{equation*}
\begin{array}{cccccc}
\ \ \ \ \vdots &  &  &  & \vdots &  \\ 
\ ^{b^{\prime }}\bullet &  & \overset{i_{k}^{\prime }}{\bullet } & 
\longrightarrow & \overset{i_{k+1}}{\bullet } & \dots \\ 
\ \ \ \ \uparrow \  &  & \uparrow &  & \uparrow &  \\ 
\ \ ^{b}\bullet & \ \ \dots & \overset{i_{k-1}}{\bullet } & \longrightarrow
& \overset{i_{k}}{\bullet } &  \\ 
\ \ \ \ \ \ \ \vdots \ \ \ \  &  & \vdots &  &  &  \\ 
^{\ \ \overset{||}{y}} &  &  &  &  &  \\ 
\ \ \  & _{x=}\dots & \underset{a}{\bullet } & \longrightarrow & \underset{%
a^{\prime }}{\bullet }\  & \dots%
\end{array}%
\ 
\end{equation*}%
Replacing in the path $z=i_{0}...i_{r}$ the vertex $i_{k}$ by $i_{k}^{\prime
}$, we obtain the path $z^{\prime }=i_{0}...i_{k-1}i_{k}^{\prime
}i_{k+1}...i_{r}$ that clearly belongs to $\Pi _{x,y}$ and, hence, is
allowed. Since the $\left( r-1\right) $-path $i_{0}...i_{k-1}i_{k+1}...i_{r}$
is regular but non-allowed (as it is not step-like), while $\partial w$ is
allowed, we have%
\begin{equation}
\left( \partial w\right) ^{i_{0}...i_{k-1}i_{k+1}...i_{r}}=0.  \label{dw0}
\end{equation}%
On the other hand, we have by (\ref{dv})%
\begin{eqnarray}
\left( \partial w\right) ^{i_{0}...i_{k-1}i_{k+1}...i_{r}} &=&\sum_{j\in
Z}\left( \sum_{m=0}^{k-1}\left( -1\right)
^{m}w^{i_{0}...i_{m-1}ji_{m}...i_{k-1}i_{k+1}...i_{r}}\right.  \label{w1} \\
&&+\left( -1\right) ^{k}w^{i_{0}...i_{k-1}ji_{k+1}...i_{r}}  \label{w2} \\
&&+\left. \sum_{m=k+2}^{r+1}\left( -1\right)
^{m-1}w^{i_{0}...i_{k-1}i_{k+1}...i_{m-1}ji_{m}...i_{r}}\right) .  \label{w3}
\end{eqnarray}%
All the components of $w$ in the sums (\ref{w1}) and (\ref{w3}) vanish since
they correspond to non-allowed paths, while $w$ is allowed. The path $%
i_{0}...i_{k-1}ji_{k+1}...i_{r}$ in the term (\ref{w2}) is also non-allowed
the unless $j=i_{k}$ or $j=i_{k}^{\prime }$ (note that $i_{k}$ and $%
i_{k}^{\prime }$ are uniquely determined by $i_{k-1}$ and $i_{k+1}$). Hence,
the only non-zero terms in (\ref{w1})-(\ref{w3}) are $%
w^{i_{0}...i_{k-1}i_{k}i_{k+1}...i_{r}}=w^{z}$ and $%
w^{i_{0}...i_{k-1}i_{k}^{\prime }i_{k+1}...i_{r}}=w^{z^{\prime }}$.
Combining (\ref{dw0}) and (\ref{w1})-(\ref{w3}), we obtain%
\begin{equation*}
0=w^{z}+w^{z^{\prime }}.
\end{equation*}%
Since $L\left( z^{\prime }\right) =L\left( z\right) \pm 1$, it follows that 
\begin{equation}
\left( -1\right) ^{L\left( z^{\prime }\right) }w^{z^{\prime }}=\left(
-1\right) ^{L\left( z\right) }w^{z}.  \label{z'z}
\end{equation}

The transformation $z\mapsto z^{\prime }$ described above, allows us to
obtain from a given $z\in \Pi _{x,y}$ in a finite number of steps any other
path in $\Pi _{x,y}$. Since the quantity $\left( -1\right) ^{L\left(
z\right) }w^{z}$ does not change under this transformation, it follows that
it does not depend on a particular choice of $z\in \Pi _{x,y}$, which was
claimed. Hence, the coefficients $c^{xy}$ are well-defined by (\ref{cxyw}).

Finally, let us show that the identity (\ref{axy}) holds with the
coefficients $c^{xy}$ from (\ref{cxyw}). By (\ref{uvz1}) we have 
\begin{equation*}
e_{x}\times e_{y}=\sum_{z\in \Pi _{x,y}}\left( -1\right) ^{L\left( z\right)
}e_{z}.
\end{equation*}%
Using (\ref{cxyw}) we obtain 
\begin{eqnarray*}
\sum_{x\in P\left( X\right) ,\ y\in P\left( Y\right) }c^{xy}\left(
e_{x}\times e_{y}\right) &=&\sum_{x\in P\left( X\right) ,\ y\in P\left(
Y\right) }c^{xy}\sum_{z\in \Pi _{x,y}}\left( -1\right) ^{L\left( z\right)
}e_{z} \\
&=&\sum_{x\in P\left( X\right) ,\ y\in P\left( Y\right) }\sum_{z\in \Pi
_{x,y}}w^{z}e_{z} \\
&=&\sum_{z\in P\left( Z\right) }w^{z}e_{z}=w,
\end{eqnarray*}%
which finishes the proof.
\end{proof}

\begin{corollary}
\label{Corwuv}Any path $w\in \Omega _{\bullet }\left( Z\right) $ admits
representations 
\begin{equation}
w=\sum_{x\in P\left( X\right) }e_{x}\times u^{x}=\sum_{\ y\in P\left(
Y\right) }v^{y}\times e_{y}  \label{wuv}
\end{equation}%
where $u^{x}\in \Omega _{\bullet }\left( X\right) $ and $v^{y}\in \Omega
_{\bullet }\left( Y\right) $ are uniquely determined.
\end{corollary}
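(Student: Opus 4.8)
The plan is to read off both representations directly from Proposition \ref{Pexey} and then to verify that the factors are $\partial$-invariant. Starting from the expansion $w=\sum_{x,y}c^{xy}(e_x\times e_y)$ of \eqref{axy}, I would group the terms by the $X$-factor and set $u^x:=\sum_{y\in P(Y)}c^{xy}e_y$; bilinearity of the cross product then gives $w=\sum_{x}e_x\times u^x$ with each $u^x\in\mathcal A_\bullet(Y)$ (and symmetrically $v^y:=\sum_{x}c^{xy}e_x$ produces the second representation, so that the cross products in \eqref{wuv} are defined). Uniqueness is then immediate: since the allowed elementary $Y$-paths are linearly independent, the data of $u^x$ is the same as the data of the coefficients $\{c^{xy}\}_x$, which are already unique by Proposition \ref{Pexey}.

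The substantive claim is that each $u^x$ in fact lies in $\Omega_\bullet(Y)$, i.e.\ that $\partial u^x\in\mathcal A_\bullet(Y)$. Here I would invoke the product rule (Proposition \ref{Propdusqv}). Organizing $w$ by bidegree, write $w=\sum_{p+q=r}w_{p,q}$ with $w_{p,q}=\sum_{\dim x=p}e_x\times u^x$ and $u^x\in\mathcal A_q(Y)$; then
\[
\partial w_{p,q}=\sum_{\dim x=p}(\partial e_x)\times u^x+(-1)^p\sum_{\dim x=p}e_x\times(\partial u^x).
\]
In the first group the $Y$-factors $u^x$ are allowed, so every such cross product is supported on step-like paths with allowed $Y$-projection and hence contributes no component whose $Y$-projection is non-allowed. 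Any non-allowed $Y$-component can therefore arise only in the second group, through a term $e_x\times e_{y'}$ with $y'$ a regular non-allowed $Y$-path; by the definition of the product path complex such a cross product is an alternating sum of non-allowed elementary paths on $Z$.

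Now $w\in\Omega_r(Z)$ means exactly $\partial w\in\mathcal A_{r-1}(Z)$, so every non-allowed component of $\partial w$ vanishes. The key input that lets me isolate the second group is the support-disjointness of cross products established in the proof of Proposition \ref{Pexey}: distinct pairs $(x,y)$ give cross products $e_x\times e_y$ with disjoint supports, so $e_x\otimes e_y\mapsto e_x\times e_y$ is injective and carries the decomposition $\mathcal R_\bullet(Y)=\mathcal A_\bullet(Y)\oplus\mathcal N_\bullet(Y)$ on the $Y$-factor to the allowed/non-allowed splitting on $Z$. Consequently the vanishing of the non-allowed part of $\partial w$ forces $\sum_{\dim x=p}e_x\times\big((\partial u^x)\bmod\mathcal A_\bullet(Y)\big)=0$, and, since cross products with distinct allowed $e_x$ have disjoint supports, projecting onto a fixed allowed $e_x$ yields $(\partial u^x)\bmod\mathcal A_\bullet(Y)=0$, that is $\partial u^x\in\mathcal A_\bullet(Y)$. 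Hence $u^x\in\Omega_\bullet(Y)$, and the statement for $v^y$ follows by the same argument with the roles of $X$ and $Y$ interchanged.

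I expect this disentangling to be the only real obstacle. A priori the condition $\partial w\in\mathcal A(Z)$ couples the $X$- and $Y$-directions, and one must show it descends to a condition on the $Y$-factor alone. The product rule supplies the algebraic splitting of $\partial(e_x\times u^x)$ into an $X$-boundary piece and a $Y$-boundary piece, while the disjoint-support property from Proposition \ref{Pexey} guarantees that these pieces do not interfere and can be read off bidegree by bidegree; this converts $\partial$-invariance on $Z$ into $\partial$-invariance of each $u^x$. Everything else is bookkeeping.
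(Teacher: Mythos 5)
Your proof is correct, and its first half coincides with the paper's: group the expansion (\ref{axy}) of Proposition \ref{Pexey} by the $X$-factor, set $u^{x}=\sum_{y}c^{xy}e_{y}\in \mathcal{A}_{\bullet }\left( Y\right) $, and deduce uniqueness of the $u^{x}$ from uniqueness of the $c^{xy}$. (You also silently correct a slip in the statement: since the cross product takes an $X$-path and a $Y$-path to a $Z$-path, $u^{x}$ must lie in $\Omega _{\bullet }\left( Y\right) $ and $v^{y}$ in $\Omega _{\bullet }\left( X\right) $, exactly as in your argument.) Where you genuinely diverge is the proof that $\partial u^{x}\in \mathcal{A}_{\bullet }\left( Y\right) $. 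The paper invokes Proposition \ref{Pexey} a \emph{second} time, applied to $\partial w\in \Omega _{\bullet }\left( Z\right) $: it writes $\partial w=\sum_{x\in P\left( X\right) }e_{x}\times \widetilde{u}^{x}$ with allowed $\widetilde{u}^{x}$, expands $\partial w$ by the product rule using coefficients $\delta _{x^{\prime }}^{x}$ for $\partial e_{x}$, shows the part carried by non-allowed $X$-projections vanishes, and then compares coefficients to get $\partial u^{x}=\widetilde{u}^{x}-\sum_{x^{\prime }}\delta _{x^{\prime }}^{x}u^{x^{\prime }}\in \mathcal{A}_{\bullet }\left( Y\right) $. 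You instead argue directly on supports: a non-allowed elementary component of $\partial w$ fails allowedness either in its $X$-projection or in its $Y$-projection; those failing on the $Y$-side can arise only from the terms $e_{x}\times \partial u^{x}$, because every component of $\left( \partial e_{x}\right) \times u^{x}$ has allowed $Y$-projection; and since the supports $\Pi _{x,y}$ are pairwise disjoint over distinct pairs (the fact underlying linear independence in Proposition \ref{Pexey}, and the reason the two groups of terms cannot cancel against each other), allowedness of $\partial w$ annihilates the non-allowed part of each $\partial u^{x}$ termwise. Your route is the leaner one: it dispenses with the second application of Proposition \ref{Pexey} and with the $\delta $-reindexing, and it makes explicit the no-interference point that the paper's comparison step leaves implicit (the authors themselves flag that step with a marginal \textquotedblleft more details\textquotedblright\ note). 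What the paper's route buys in exchange is the explicit identification $\widetilde{u}^{x}=\sum_{x^{\prime }}\delta _{x^{\prime }}^{x}u^{x^{\prime }}+\partial u^{x}$ of the components of $\partial w$ in product coordinates -- a small extra dividend, though nothing later in the paper depends on it. Both arguments ultimately rest on the same two pillars, the product rule of Proposition \ref{Propdusqv} and the disjoint-support property of the family $\left\{ e_{x}\times e_{y}\right\} $, so your proof is best described as a streamlined variant rather than a new method.
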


\begin{proof}
It follows from (\ref{axy}) that%
\begin{equation*}
w=\sum_{x\in P\left( X\right) }e_{x}\times u^{x}
\end{equation*}%
where%
\begin{equation*}
u^{x}=\sum_{y\in P\left( Y\right) }c^{xy}e_{y}\in \mathcal{A}_{\bullet
}\left( X\right) .
\end{equation*}%
It is obvious that $u^{x}$ are uniquely determined as so are the
coefficients $c^{xy}.$ Let us show that, in fact, $u^{x}\in \Omega _{\bullet
}\left( X\right) .$ Let us define the coefficients $\delta _{x^{\prime
}}^{x}\in \left\{ 0,1,-1\right\} $ by%
\begin{equation*}
\partial e_{x}=\sum_{x^{\prime }\in R\left( X\right) }\delta _{x}^{x^{\prime
}}e_{x^{\prime }}.
\end{equation*}%
We have by the product rule%
\begin{eqnarray*}
\partial w &=&\sum_{x\in P\left( X\right) }\partial e_{x}\times
u^{x}+\sum_{x\in P\left( X\right) }e_{x}\times \partial u^{x} \\
&=&\sum_{x\in P\left( X\right) }\sum_{x^{\prime }\in R\left( X\right)
}\delta _{x}^{x^{\prime }}e_{x^{\prime }}\times u^{x}+\sum_{x\in P\left(
X\right) }e_{x}\times \partial u^{x} \\
&=&\sum_{x\in R\left( X\right) }\sum_{x^{\prime }\in P\left( X\right)
}\delta _{x^{\prime }}^{x}e_{x}\times u^{x^{\prime }}+\sum_{x\in P\left(
X\right) }e_{x}\times \partial u^{x} \\
&=&\sum_{x\in P\left( X\right) }e_{x}\times \left( \sum_{x^{\prime }\in
P\left( X\right) }\delta _{x^{\prime }}^{x}u^{x^{\prime }}+\partial
u^{x}\right) \\
&&+\sum_{x\in R\left( X\right) \setminus P\left( X\right) }e_{x}\times
\left( \sum_{x^{\prime }\in P\left( X\right) }\delta _{x^{\prime
}}^{x}u^{x^{\prime }}\right) .
\end{eqnarray*}%
Since $\partial w$ is allowed on $Z$, it follows that \label{rem: more
details} 
\begin{equation*}
\sum_{x\in R\left( X\right) \setminus P\left( X\right) }e_{x}\times \left(
\sum_{x^{\prime }\in P\left( X\right) }\delta _{x^{\prime }}^{x}u^{x^{\prime
}}\right) =0.
\end{equation*}
\textbf{\ }On the other hand, since $\partial w\in \Omega _{\bullet }\left(
Z\right) $, we have a representation%
\begin{equation*}
\partial w=\sum_{x\in P\left( X\right) }e_{x}\times \widetilde{u}^{x}
\end{equation*}%
where $\widetilde{u}^{x}\in \mathcal{A}_{\bullet }\left( X\right) $.
Comparison with the previous computation of $\partial w$ yields%
\begin{equation*}
\widetilde{u}^{x}=\sum_{x^{\prime }\in P\left( X\right) }\delta _{x^{\prime
}}^{x}u^{x^{\prime }}+\partial u^{x}.
\end{equation*}%
Since $u^{x^{\prime }}\in \mathcal{A}_{\bullet }\left( X\right) $, it
follows that $\partial u^{x}\in \mathcal{A}_{\bullet }\left( X\right) $,
which proves that $u^{x}\in \Omega _{\bullet }\left( X\right) $. The second
identity in (\ref{wuv}) is proved similarly.
\end{proof}

Let us introduce in $\mathcal{A}_{p}\left( X\right) $ the $\mathbb{K}$%
-scalar product as follows: for all $u,v\in \mathcal{A}_{p}\left( X\right) $%
\begin{equation*}
\left[ u,v\right] :=\sum_{x\in P\left( X\right) }u^{x}v^{x}.
\end{equation*}%
If $\mathbb{K}=\mathbb{R}$ then this is a proper scalar product, but for a
general field $\mathbb{K}$ there is no positivity property (in fact, it can
happen that $\left[ u,u\right] =0$). Set also%
\begin{equation*}
\Omega _{p}^{\bot }\left( X\right) =\left\{ u\in \mathcal{A}_{p}\left(
X\right) :\left[ u,v\right] =0\text{ for all }v\in \Omega _{p}\left(
X\right) \right\} .
\end{equation*}%
If $\mathbb{K}=\mathbb{R}$ then $\Omega _{p}^{\bot }$ is a proper orthogonal
complement of $\Omega _{p}$ in $\mathcal{A}_{p}$ and $\mathcal{A}_{p}=\Omega
_{p}\oplus \Omega _{p}^{\bot }.$ For a general $\mathbb{K}$, this is not
true, as $\Omega _{p}$ and $\Omega _{p}^{\bot }$ may have a non-trivial
intersection, but for any field $\mathbb{K}$ it is still true that%
\begin{equation*}
\dim \Omega _{p}+\dim \Omega _{p}^{\bot }=\dim \mathcal{A}_{p}.
\end{equation*}

\begin{lemma}
\label{Lembot}If $u\in \Omega _{p}^{\bot }\left( X\right) $ and $v\in 
\mathcal{A}_{q}\left( Y\right) $ then $u\times v\in \Omega _{r}^{\bot
}\left( Z\right) $ where $r=p+q.$ Also, if $u\in \mathcal{A}_{p}\left(
X\right) $ and $v\in \Omega _{q}^{\bot }\left( Y\right) $ then $u\times v\in
\Omega _{r}^{\bot }\left( Z\right) .$
\end{lemma}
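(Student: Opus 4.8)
The plan is to establish the first assertion by proving directly that $u\times v$ is orthogonal, with respect to the scalar product $[\cdot,\cdot]$ on $\mathcal{A}_{r}(Z)$, to \emph{every} $w\in\Omega_{r}(Z)$; the second assertion then follows verbatim with the roles of $X$ and $Y$ interchanged. I would deliberately avoid invoking Theorem \ref{TK}, since the present lemma is part of the preparation for that theorem and using it would be circular; the argument instead rests only on Proposition \ref{Pexey} and Corollary \ref{Corwuv}, which come immediately before.

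The first step is to expand $[u\times v,w]$ from the definition of the cross product. Only step-like paths $z$ contribute, and a summand can be nonzero only when the projections $x,y$ of $z$ form a $p$-path on $X$ and a $q$-path on $Y$, for otherwise $u^{x}v^{y}=0$. Grouping the sum over $z$ according to its pair of projections $(x,y)$ and factoring out $u^{x}v^{y}$ yields
\[
[u\times v,\,w]=\sum_{x\in P_{p}(X)}\ \sum_{y\in P_{q}(Y)}u^{x}v^{y}\sum_{z\in\Pi_{x,y}}(-1)^{L(z)}w^{z}.
\]
The inner sum then collapses: by Proposition \ref{Pexey}, for $w\in\Omega_{r}(Z)$ the quantity $(-1)^{L(z)}w^{z}$ equals a coefficient $c^{xy}$ that is independent of the choice of $z\in\Pi_{x,y}$, so $\sum_{z\in\Pi_{x,y}}(-1)^{L(z)}w^{z}=|\Pi_{x,y}|\,c^{xy}$. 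Since $|\Pi_{x,y}|=\binom{p+q}{p}$ depends only on the degrees $p,q$ and not on $x,y$, it factors out as a single scalar $N\in\mathbb{K}$, and whether or not $N$ vanishes is irrelevant because it will multiply $0$.

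The decisive step is to recognize the surviving coefficients as a $\partial$-invariant path on $X$. By Corollary \ref{Corwuv} one may write $w=\sum_{y\in P(Y)}\psi^{y}\times e_{y}$ with uniquely determined $\psi^{y}\in\Omega_{\bullet}(X)$; comparing with the expansion $w=\sum_{x,y}c^{xy}\,e_{x}\times e_{y}$ of Proposition \ref{Pexey} and using bilinearity of $\times$ identifies $\psi^{y}=\sum_{x}c^{xy}e_{x}$, i.e. $(\psi^{y})^{x}=c^{xy}$. For a $q$-path $y$ the path $\psi^{y}$ is homogeneous of degree $p=r-q$, so $\psi^{y}\in\Omega_{p}(X)$. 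Hence $\sum_{x}u^{x}c^{xy}=[u,\psi^{y}]$, and since $u\in\Omega_{p}^{\bot}(X)$ while $\psi^{y}\in\Omega_{p}(X)$, this pairing vanishes. Therefore
\[
[u\times v,\,w]=N\sum_{y\in P_{q}(Y)}v^{y}\,[u,\psi^{y}]=0
\]
for every $w\in\Omega_{r}(Z)$, which is exactly $u\times v\in\Omega_{r}^{\bot}(Z)$. For the second claim I would repeat the computation using the other representation $w=\sum_{x\in P(X)}e_{x}\times\varphi^{x}$ from Corollary \ref{Corwuv}, with $\varphi^{x}\in\Omega_{q}(Y)$, which reduces the pairing to $\sum_{x}u^{x}[v,\varphi^{x}]$ and lets $v\in\Omega_{q}^{\bot}(Y)$ finish it off.

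The bookkeeping of projections and degrees in expanding the scalar product is routine; the single substantive point, and the main obstacle, is the passage through Corollary \ref{Corwuv} that exhibits, for each fixed $y$, the family $(c^{xy})_{x}$ as the components of a genuine $\partial$-invariant $p$-path $\psi^{y}$ on $X$ — for only then does the orthogonality hypothesis $u\perp\Omega_{p}(X)$ become applicable. Everything else is a direct consequence of Proposition \ref{Pexey}.
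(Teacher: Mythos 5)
Your proof is correct and takes essentially the same route as the paper's: both expand $\left[ u\times v,w\right]$ over step-like paths and then invoke Corollary \ref{Corwuv} to recognize the slices of $w$ as paths in $\Omega _{p}\left( X\right)$ (resp.\ $\Omega _{q}\left( Y\right)$ for the second claim), reducing everything to the vanishing pairing $\left[ u,\psi ^{y}\right] =0$. Your explicit multiplicity $N=\binom{p+q}{p}$ is just the paper's redundant inner sum over $\Pi _{x,y}$ made explicit, so the two arguments coincide in substance.
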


\begin{proof}
We need to prove that, for any $w\in \Omega _{r}\left( Z\right) $,%
\begin{equation}
\left[ u\times v,w\right] =0,  \label{uvw=0}
\end{equation}%
assuming that $u\in \Omega _{p}^{\bot }\left( X\right) $ (the second claim
is proved similarly). We have%
\begin{eqnarray*}
\left[ u\times v,w\right] &=&\sum_{z\in P_{r}\left( Z\right) }\left( u\times
v\right) ^{z}w^{z} \\
&=&\sum_{z\in P_{r}\left( Z\right) }\left( -1\right) ^{L\left( z\right)
}u^{x}v^{y}w^{z}\ \ \text{(}x,y\text{ are projections of }z\text{)} \\
&=&\sum_{x\in P_{p}\left( X\right) }\sum_{u\in P_{q}\left( Y\right)
}\sum_{z\in \Pi _{x,y}}\left( -1\right) ^{L\left( z\right) }u^{x}v^{y}w^{z}.
\end{eqnarray*}%
By Corollary \ref{Corwuv}, $w$ is a sum of the terms $\varphi \times \psi $
where $\varphi \in \Omega _{\bullet }\left( X\right) $ and $\psi \in 
\mathcal{A}_{\bullet }\left( Y\right) $, so that it suffices to prove (\ref%
{uvw=0}) for $w=\varphi \times \psi .$ Let $\varphi \in \Omega _{p}\left(
X\right) $ and, hence, $\psi \in \mathcal{A}_{q}\left( Y\right) $. Then we
have%
\begin{equation*}
w^{z}=\left( -1\right) ^{L\left( z\right) }\varphi ^{x}\psi ^{y}
\end{equation*}%
and%
\begin{equation*}
\left[ u\times v,w\right] =\sum_{x\in P_{p}\left( X\right) }\sum_{y\in
P_{q}\left( Y\right) }\sum_{z\in \Pi _{x,y}}u^{x}\varphi ^{x}v^{y}\psi ^{y}.
\end{equation*}%
Since 
\begin{equation*}
\sum_{x\in P_{p}\left( X\right) }u^{x}\varphi ^{x}=\left[ u,\varphi \right]
=0,
\end{equation*}%
we obtain (\ref{uvw=0}). If $\varphi \in \Omega _{p^{\prime }}$ with $%
p^{\prime }\neq p$, then $w^{z}=0$ for any $z\in \Pi _{x,y}$ with $x\in
P_{p}\left( X\right) $, and (\ref{uvw=0}) is trivially satisfied.
\end{proof}

\subsection{Proof of K\"{u}nneth formula}

\label{SecK}The main part of the proof of Theorem \ref{TK} is contained in
the following theorem.

\begin{theorem}
\label{Tuivi}Let $P\left( X\right) $ and $P\left( Y\right) $ be two regular
path complexes and let $P\left( Z\right) =P\left( X\right) \boxplus P\left(
Y\right) $ be their Cartesian product. Then any $\partial $-invariant path $%
w $ on $Z$ admits a representation in the form%
\begin{equation}
w=\sum_{i=1}^{k}u_{i}\times v_{i}  \label{uivi}
\end{equation}%
for some finite $k$, where $u_{i}$ and $v_{i}$ are $\partial $-invariant
paths on $X$ and $Y$, respectively.
\end{theorem}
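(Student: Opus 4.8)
The plan is to build on Corollary \ref{Corwuv}, which already expresses any $w\in\Omega_\bullet(Z)$ as $w=\sum_{x\in P(X)}e_x\times u^x$ with each $u^x$ a $\partial$-invariant path on $Y$. Restricting to a homogeneous $w\in\Omega_r(Z)$, the component formula (\ref{uvz}) forces $u^x$ to be homogeneous of complementary degree, so $u^x\in\Omega_{r-|x|}(Y)$, where $|x|$ denotes the length of $x$. The $X$-factors $e_x$ here are only elementary allowed paths, not yet $\partial$-invariant, so the goal is to re-expand the sum so that both factors become $\partial$-invariant. First I would reorganize the sum: for each $q$ set $V_q=\limfunc{span}\{u^x:|x|=r-q\}\subseteq\Omega_q(Y)$, choose a basis $v_{q,1},\dots,v_{q,m_q}$ of $V_q$, and write $u^x=\sum_l a^x_{q,l}v_{q,l}$. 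Collecting the $e_x$ against each basis vector yields $w=\sum_{q,l}u_{q,l}\times v_{q,l}$, where $u_{q,l}=\sum_{|x|=r-q}a^x_{q,l}e_x\in\mathcal A_{r-q}(X)$ and, for each fixed $q$, the paths $\{v_{q,l}\}_l$ are linearly independent elements of $\Omega_q(Y)$.

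It remains to prove that each $u_{q,l}$ is in fact $\partial$-invariant, and this is where I expect the real work. Applying the product rule (\ref{dusqv}) gives $\partial w=\sum_{q,l}(\partial u_{q,l})\times v_{q,l}+\sum_{q,l}(-1)^{r-q}u_{q,l}\times(\partial v_{q,l})$. Every term of the second group is a cross product of an allowed path on $X$ with $\partial v_{q,l}\in\mathcal A_{q-1}(Y)$, hence allowed on $Z$, so these cannot obstruct invariance. The potential obstruction lies in the first group, where $\partial u_{q,l}$ may carry non-allowed (though regular) components $e_{x'}$ on $X$. The key observation is that such a component produces, under $\times v_{q,l}$, only $Z$-paths whose $X$-projection is the non-allowed $x'$, and these $Z$-paths are themselves non-allowed. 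I would then fix a non-allowed regular $(r-q-1)$-path $x'$ on $X$, an allowed $q$-path $y$ on $Y$, and a step-like $z\in\Pi_{x',y}$, and read off the $e_z$-component of $\partial w$. Tracking degrees shows that only the index matching the chosen $q$ can contribute to this component, so by (\ref{uvz}) the equation $(\partial w)^z=0$ (valid since $w\in\Omega_r(Z)$ makes $\partial w$ allowed while $z$ is non-allowed) reduces to $\sum_l(\partial u_{q,l})^{x'}(v_{q,l})^y=0$.

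Since this holds for every allowed $y$, the path $\sum_l(\partial u_{q,l})^{x'}v_{q,l}$ vanishes in $\mathcal A_q(Y)$, and the linear independence of $\{v_{q,l}\}_l$ then forces $(\partial u_{q,l})^{x'}=0$ for each $l$. As $x'$ ranges over all non-allowed regular paths, this says precisely that $\partial u_{q,l}$ has no non-allowed component, i.e. $\partial u_{q,l}\in\mathcal A_{r-q-1}(X)$ and hence $u_{q,l}\in\Omega_{r-q}(X)$; this yields the desired representation (\ref{uivi}) with all factors $\partial$-invariant. The main obstacle is the bookkeeping in the second paragraph: one must verify that a fixed non-allowed $Z$-component of $\partial w$ receives contributions from exactly one homogeneity index and only from the first group of terms, so that the linear-independence argument applies cleanly. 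The degree grading, together with the fact that the $X$-projections of all second-group terms are allowed, is exactly what makes this decoupling work. Note that this route relies only on Corollary \ref{Corwuv} and the product rule, and does not require the orthogonal-complement machinery of Lemma \ref{Lembot}.
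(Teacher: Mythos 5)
Your proof is correct, but it takes a genuinely different route from the paper's. The paper reduces Theorem \ref{Tuivi} to the dimension inequality $\dim \Omega _{r}\left( Z\right) \leq \dim \widetilde{\Omega }_{r}\left( Z\right) $ and proves it with the bilinear form $\left[ \cdot ,\cdot \right] $ and the orthogonality Lemma \ref{Lembot}: for $\mathbb{K}=\mathbb{R}$ or $\mathbb{Q}$ it decomposes $\widetilde{\mathcal{A}}_{r}\left( Z\right) =\Omega _{r}\left( Z\right) \oplus \Omega _{r}^{\bot }\left( Z\right) $ and compares this with $\widetilde{\Omega }_{r}\left( Z\right) +\Omega _{r}^{\bot }\left( Z\right) $, while for a general field -- where $\left[ u,u\right] =0$ can occur and $\mathcal{A}_{p}=\Omega _{p}\oplus \Omega _{p}^{\bot }$ may fail -- it runs a separate counting argument through $\dim \left( U\times V\right) =\dim U\cdot \dim V$ and the inclusion (\ref{OmAOm}). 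You instead argue constructively from Corollary \ref{Corwuv}: after regrouping $w=\sum_{x}e_{x}\times u^{x}$ against a basis $\left\{ v_{q,l}\right\} _{l}$ of the span of the degree-$q$ components, the product rule (\ref{dusqv}) and evaluation of $\left( \partial w\right) ^{z}$ at step-like $z\in \Pi _{x^{\prime },y}$ lying over a non-allowed regular $x^{\prime }$ isolate exactly the block $q=\left\vert y\right\vert $; the only other terms with matching degrees, namely $u_{q+1,l}\times \partial v_{q+1,l}$, vanish at $z$ because $\left( u_{q+1,l}\right) ^{x^{\prime }}=0$ for allowed $u_{q+1,l}$, and linear independence of the $v_{q,l}$ then forces $\left( \partial u_{q,l}\right) ^{x^{\prime }}=0$, i.e. $u_{q,l}\in \Omega _{r-q}\left( X\right) $. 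Your decoupling checks out, including the degree bookkeeping, the nonemptiness of $\Pi _{x^{\prime },y}$ (traverse $x^{\prime }$, then climb $y$), and the homogeneity of $u^{x}$ that you invoke, which indeed follows from (\ref{cxyw}) since $c^{xy}=\left( -1\right) ^{L\left( z\right) }w^{z}$ vanishes unless $\left\vert x\right\vert +\left\vert y\right\vert =r$. What your route buys is uniformity in the field: no case split, no degeneracy issues with the scalar product, and no need for Lemma \ref{Lembot} or the product-dimension lemma; it also produces the decomposition (\ref{uivi}) explicitly rather than extracting it from a dimension identity. What the paper's route buys is the auxiliary structure developed along the way -- the direct sum (\ref{ArApAq}) and the dimension counts -- which is reused in the proof of Theorem \ref{TK} for the injectivity of $\Phi $; with your argument one would instead cite the linear independence of $\left\{ e_{x}\times e_{y}\right\} $ from Proposition \ref{Pexey} directly at that point.
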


\begin{proof}
The representation (\ref{uivi}) is simple in a special case when the path
complexes $P\left( X\right) $ and $P\left( Y\right) $ are perfect, that is,
when all allowed paths are $\partial $-invariant. Indeed, by Proposition \ref%
{Pexey}, any $w\in \Omega _{r}\left( Z\right) $ admits a representation in
the form (\ref{axy}), where $e_{x}$ and $e_{y}$ are allowed paths on $X$ and 
$Y$ respectively. By the assumption of the perfectness of $P\left( X\right) $
and $P\left( Y\right) $, the paths $e_{x}$ and $e_{y}$ are $\partial $%
-invariant, so that (\ref{axy}) implies (\ref{uivi}).

For arbitrary path complexes $P\left( X\right) $ and $P\left( Y\right) $,
the previous argument does not work since $e_{x}\times e_{y}$ does not have
to be $\partial $-invariant. Hence, we need a more elaborated strategy.
Given two subspaces $U\subset \mathcal{A}_{p}\left( X\right) $ and $V\subset 
\mathcal{A}_{q}\left( Y\right) $, denote by $U\times V$ the subspace of $%
\mathcal{A}_{r}\left( Z\right) $ that is spanned by all products $u\times v$
with $u\in U$ and $v\in V$. For any $r\geq 0$ set 
\begin{equation*}
\widetilde{\Omega }_{r}\left( Z\right) =\sum_{p+q=r}\Omega _{p}\left(
X\right) \times \Omega _{q}\left( Y\right) ,
\end{equation*}%
that is, $\widetilde{\Omega }_{r}\left( Z\right) $ is the space of paths on $%
Z$ that is spanned by all paths of the form $u\times v$ where $u\in \Omega
_{p}\left( X\right) $ and $v\in \Omega _{q}\left( Y\right) $ with some $%
p,q\geq 0$ such that $p+q=r$. By Proposition \ref{Puxv}, we have $u\times
v\in \Omega _{r}\left( Z\right) $ whence it follows that%
\begin{equation*}
\widetilde{\Omega }_{r}\left( Z\right) \subset \Omega _{r}\left( Z\right) .
\end{equation*}%
The existence of the representation (\ref{uivi}) is equivalent to the
opposite inclusion, that is, to the identity%
\begin{equation*}
\widetilde{\Omega }_{r}\left( Z\right) =\Omega _{r}\left( Z\right) .
\end{equation*}%
In fact, it suffices to show that 
\begin{equation}
\dim \Omega _{r}\left( Z\right) \leq \dim \widetilde{\Omega }_{r}\left(
Z\right) .  \label{rXYZ}
\end{equation}%
Consider also the space 
\begin{equation*}
\widetilde{\mathcal{A}}_{r}\left( Z\right) =\sum_{p+q=r}\mathcal{A}%
_{p}\left( X\right) \times \mathcal{A}_{q}\left( Y\right) .
\end{equation*}%
By definition of the cross product, all paths in $\widetilde{\mathcal{A}}%
_{r}\left( Z\right) $ are allowed, that is, 
\begin{equation*}
\widetilde{\mathcal{A}}_{r}\left( Z\right) \subset \mathcal{A}_{r}\left(
Z\right) .
\end{equation*}%
By Proposition \ref{Pexey}, any path from $\Omega _{r}\left( Z\right) $ is a
linear combination of paths $e_{x}\times e_{y}$ with allowed $x,y$, which
means that 
\begin{equation*}
\Omega _{r}\left( Z\right) \subset \widetilde{\mathcal{A}}_{r}\left(
Z\right) .
\end{equation*}%
In particular, we have also%
\begin{equation*}
\widetilde{\Omega }_{r}\left( Z\right) \subset \widetilde{\mathcal{A}}%
_{r}\left( Z\right) .
\end{equation*}%
Fix some triple $p,q,r$ with $p+q=r$ and consider the spaces (cf. Section %
\ref{Seccxy}):

\begin{itemize}
\item $\Omega _{p}^{\bot }\left( X\right) $ -- an orthogonal complement of $%
\Omega _{p}\left( X\right) $ in $\mathcal{A}_{p}\left( X\right) $;

\item $\Omega _{q}^{\bot }\left( Y\right) $ -- an orthogonal complement of $%
\Omega _{q}\left( Y\right) $ in $\mathcal{A}_{q}\left( Y\right) $;

\item $\Omega _{r}^{\bot }\left( Z\right) $ -- an orthogonal complement of $%
\Omega _{r}\left( Z\right) $ in $\widetilde{\mathcal{A}}_{r}\left( Z\right)
. $
\end{itemize}

Consider first the case when the field $\mathbb{K}$ is $\mathbb{R}$ or $%
\mathbb{Q}.$ In this case, a linear space with a $\mathbb{K}$-scalar product
is represented as a direct sum of a subspace with its orthogonal complement.
For each $u\in \mathcal{A}_{p}\left( X\right) $ consider a decomposition 
\begin{equation}
u=u_{\Omega }+u_{\bot }  \label{u'}
\end{equation}%
where $u_{\Omega }\in \Omega _{p}\left( X\right) $ and $u_{\bot }\in \Omega
_{p}^{\bot }\left( X\right) $, and a similar decomposition $v=v_{\Omega
}+v_{\bot }$ for $v\in \mathcal{A}_{q}\left( Y\right) .$ Then we have%
\begin{equation*}
u\times v=u_{\Omega }\times v_{\Omega }+u_{\Omega }\times v_{\bot
}+u_{\Omega }\times v_{\bot }+u_{\bot }\times v_{\bot }.
\end{equation*}%
Here we have $u_{\Omega }\times v_{\Omega }\in \widetilde{\Omega }_{r}\left(
Z\right) $, while by Lemma \ref{Lembot} all other terms in the right hand
side belong to $\Omega _{r}^{\bot }\left( Z\right) $, whence it follows that%
\begin{equation*}
u\times v\in \widetilde{\Omega }_{r}\left( Z\right) +\Omega _{r}^{\bot
}\left( Z\right) .
\end{equation*}%
Since $\widetilde{\mathcal{A}}_{r}\left( Z\right) $ is spanned by the
products $u\times v$ where $u,v$ are allowed, we obtain that 
\begin{equation*}
\widetilde{\mathcal{A}}_{r}\left( Z\right) =\widetilde{\Omega }_{r}\left(
Z\right) +\Omega _{r}^{\bot }\left( Z\right) .
\end{equation*}%
Comparing with the decomposition%
\begin{equation*}
\widetilde{\mathcal{A}}_{r}\left( Z\right) =\Omega _{r}\left( Z\right)
\oplus \Omega _{r}^{\bot }\left( Z\right) ,
\end{equation*}%
we obtain (\ref{rXYZ}).

Consider now the most general case of an arbitrary field $\mathbb{K}$. Let
us introduce the following notation:%
\begin{eqnarray*}
a_{p} &=&\dim \mathcal{A}_{p}\left( X\right) ,\ a_{q}=\dim \mathcal{A}%
_{q}\left( Y\right) ,\ a_{r}=\dim \widetilde{\mathcal{A}}_{r}\left( Z\right)
, \\
\omega _{p} &=&\dim \Omega _{p}\left( X\right) ,\ \omega _{q}=\dim \Omega
_{q}\left( Y\right) ,\ \omega _{r}=\dim \Omega _{r}\left( Z\right) ,
\end{eqnarray*}%
and observe that%
\begin{equation*}
\dim \Omega _{p}^{\bot }\left( X\right) =a_{p}-\omega _{p},\ \dim \Omega
_{q}^{\bot }\left( Y\right) =a_{q}-\omega _{q},\ \dim \Omega _{r}^{\bot
}\left( Z\right) =a_{r}-\omega _{r}.
\end{equation*}%
Let us prove that%
\begin{equation}
a_{r}=\sum_{p+q=r}a_{p}a_{q}.  \label{apq}
\end{equation}%
Indeed, $\mathcal{A}_{p}\left( X\right) $ is spanned by all elementary paths 
$e_{x}$ with $x\in P_{p}\left( X\right) $ and $\mathcal{A}_{q}\left(
Y\right) $ is spanned by all elementary paths $e_{y}$ with $y\in P_{q}\left(
Y\right) $. Therefore, $\widetilde{\mathcal{A}}_{r}\left( Z\right) $ is
spanned by all products $e_{x}\times e_{y}$ for $x,y$ as above over all
possible $p,q$ such that $p+q=r$. The number of such products $e_{x}\times
e_{y}$ is equal to the right hand side of (\ref{apq}), so that the identity (%
\ref{apq}) follows from the linear independence of the family $\left\{
e_{x}\times e_{y}\right\} $ (cf. Proposition \ref{Pexey}).

It follows from the above argument that 
\begin{equation}
\dim \left( \mathcal{A}_{p}\left( X\right) \times \mathcal{A}_{q}\left(
Y\right) \right) =a_{p}a_{q}  \label{apaq}
\end{equation}%
and that 
\begin{equation}
\widetilde{\mathcal{A}}_{r}\left( Z\right) =\tbigoplus_{p+q=r}\left( 
\mathcal{A}_{p}\left( X\right) \times \mathcal{A}_{q}\left( Y\right) \right)
.  \label{ArApAq}
\end{equation}
Let us show that, for any subspaces $U\subset \mathcal{A}_{p}\left( X\right) 
$ and $V\subset \mathcal{A}_{q}\left( Y\right) $, 
\begin{equation}
\dim \left( U\times V\right) =\dim U\dim V.  \label{dimUxV}
\end{equation}%
Indeed, let $u_{1},u_{2},..u_{k}$ be a basis in $U$ and $v_{1},...v_{l}$ be
a basis in $V$. Then $U\times V$ is spanned by all products $u_{i}\times
v_{j}$, so that%
\begin{equation}
\dim \left( U\times V\right) \leq kl.  \label{kl}
\end{equation}
Let us complement the basis $\left\{ u_{i}\right\} $ to a basis in $\mathcal{%
A}_{p}\left( X\right) $ by adding additional paths $u_{1}^{\prime
},...,u_{k^{\prime }}^{\prime }$, and the similarly complement $\left\{
v_{j}\right\} $ to a basis in $\mathcal{A}_{q}\left( Y\right) $ by adding $%
v_{1}^{\prime },...,v_{l^{\prime }}^{\prime }$. Set $U^{\prime }=\limfunc{%
span}\left\{ u_{i}^{\prime }\right\} $ and $V^{\prime }=\limfunc{span}%
\left\{ v_{j}^{\prime }\right\} .$ Then%
\begin{equation*}
\mathcal{A}_{p}\left( X\right) \times \mathcal{A}_{q}\left( Y\right) =\left(
U+U^{\prime }\right) \times \left( V+V^{\prime }\right) =U\times V+U\times
V^{\prime }+U^{\prime }\times V+U^{\prime }\times V^{\prime }
\end{equation*}%
whence by (\ref{apaq}) and (\ref{kl}) 
\begin{eqnarray}
a_{p}a_{q} &\leq &\dim \left( U\times V\right) +\dim \left( U\times
V^{\prime }\right) +\dim \left( U^{\prime }\times V\right) +\dim \left(
U^{\prime }\times V^{\prime }\right)  \label{U1} \\
&\leq &kl+kl^{\prime }+k^{\prime }l+k^{\prime }l^{\prime }.  \notag
\end{eqnarray}%
However, the right hand side here is equal to $\left( k+k^{\prime }\right)
\left( l+l^{\prime }\right) =a_{p}a_{q}$, which implies that we must have
the equality case in (\ref{U1}), in particular, $\dim \left( U\times
V\right) =kl,$ which proves (\ref{dimUxV}).

By Lemma \ref{Lembot}, we have%
\begin{equation*}
\Omega _{p}^{\bot }\left( X\right) \times \mathcal{A}_{q}\left( Y\right)
\subset \Omega _{r}^{\bot }\left( Z\right)
\end{equation*}%
and%
\begin{equation*}
\mathcal{A}_{p}\left( X\right) \times \Omega _{q}^{\bot }\left( Y\right)
\subset \Omega _{r}^{\bot }\left( Z\right)
\end{equation*}%
so that%
\begin{equation}
\sum_{p+q=r}\left( \left( \Omega _{p}^{\bot }\left( X\right) \times \mathcal{%
A}_{q}\left( Y\right) \right) +\left( \mathcal{A}_{p}\left( X\right) \times
\Omega _{q}^{\bot }\left( Y\right) \right) \right) \subset \Omega _{r}^{\bot
}\left( Z\right) .  \label{OmAOm}
\end{equation}%
It follows from (\ref{ArApAq}) and (\ref{OmAOm}) that%
\begin{equation*}
\sum_{p+q=r}\dim \left( \left( \Omega _{p}^{\bot }\left( X\right) \times 
\mathcal{A}_{q}\left( Y\right) \right) +\left( \mathcal{A}_{p}\left(
X\right) \times \Omega _{q}^{\bot }\left( Y\right) \right) \right) \leq \dim
\Omega _{r}^{\bot }\left( Z\right) .
\end{equation*}%
Note that the subspaces $\Omega _{p}^{\bot }\left( X\right) \times \mathcal{A%
}_{q}\left( Y\right) $ and $\mathcal{A}_{p}\left( X\right) \times \Omega
_{q}^{\bot }\left( Y\right) $ have intersection $\Omega _{p}^{\bot }\left(
X\right) \times \Omega _{q}^{\bot }\left( Y\right) $\label{rem: details},
which implies that%
\begin{eqnarray*}
&&\dim \left( \left( \Omega _{p}^{\bot }\left( X\right) \times \mathcal{A}%
_{q}\left( Y\right) \right) +\left( \mathcal{A}_{p}\left( X\right) \times
\Omega _{q}^{\bot }\left( Y\right) \right) \right) \\
&=&\dim \left( \Omega _{p}^{\bot }\left( X\right) \times \mathcal{A}%
_{q}\left( Y\right) \right) +\dim \left( \mathcal{A}_{p}\left( X\right)
\times \Omega _{q}^{\bot }\left( Y\right) \right) \\
&&-\dim \left( \Omega _{p}^{\bot }\left( X\right) \times \Omega _{q}^{\bot
}\left( Y\right) \right) \\
&=&\left( a_{p}-\omega _{p}\right) a_{q}+a_{p}\left( a_{q}-\omega
_{q}\right) -\left( a_{p}-\omega _{p}\right) \left( a_{q}-\omega _{q}\right)
\\
&=&a_{p}a_{q}-\omega _{p}\omega _{q}.
\end{eqnarray*}%
Hence,%
\begin{equation*}
\sum_{p+q=r}\left( a_{p}a_{q}-\omega _{p}\omega _{q}\right) \leq
a_{r}-\omega _{r}.
\end{equation*}%
Combining with (\ref{apq}) we obtain%
\begin{equation*}
\omega _{r}\leq \sum_{p+q=r}\omega _{p}\omega _{q}.
\end{equation*}%
Finally, we are left to observe that 
\begin{equation*}
\sum_{p+q=r}\omega _{p}\omega _{q}=\dim \widetilde{\Omega }_{r}\left(
Z\right) ,
\end{equation*}%
whence (\ref{rXYZ}) follows.
\end{proof}

\begin{proof}[Proof of Theorem \protect\ref{TK}]
The isomorphism (\ref{HXYZ}) follows from (\ref{Omrpq}) and the K\"{u}nneth
theorem (\ref{KAB}), so we only need to prove (\ref{Omrpq}). Define the
tensor product of graded linear spaces%
\begin{equation*}
\mathcal{A}_{\bullet }\left( X,Y\right) =\mathcal{A}_{\bullet }\left(
X\right) \otimes \mathcal{A}_{\bullet }\left( Y\right)
\end{equation*}%
and a linear mapping%
\begin{equation*}
\Phi :\mathcal{A}_{r}\left( X,Y\right) \rightarrow \mathcal{A}_{r}\left(
Z\right)
\end{equation*}%
that is defined by%
\begin{equation*}
\Phi \left( e_{x}\otimes e_{y}\right) =e_{x}\times e_{y}
\end{equation*}%
for all $x\in P_{p}\left( X\right) $ and $y\in P_{q}\left( Y\right) $ with $%
p+q=r$. In fact, we have%
\begin{equation*}
\Phi \left( \mathcal{A}_{r}\left( X,Y\right) \right) =\widetilde{\mathcal{A}}%
_{r}\left( Z\right)
\end{equation*}%
where $\widetilde{\mathcal{A}}_{r}\left( Z\right) $ is defined by (\ref%
{ArApAq}). It follows from the argument in the proof of Theorem \ref{Tuivi}
that the mapping $\Phi $ is injective.

Consider now the tensor product of the chain complexes%
\begin{equation*}
\Omega _{\bullet }\left( X,Y\right) =\Omega _{\bullet }\left( X\right)
\otimes \Omega _{\bullet }\left( Y\right)
\end{equation*}%
and observe that%
\begin{equation*}
\Phi \left( \Omega _{r}\left( X,Y\right) \right) =\widetilde{\Omega }%
_{r}\left( Z\right) .
\end{equation*}%
Since by Theorem \ref{Tuivi} 
\begin{equation}
\widetilde{\Omega }_{r}\left( Z\right) =\Omega _{r}\left( Z\right) ,
\label{rr}
\end{equation}%
we obtain that the mapping $\Phi $ provides a linear isomorphism of the
spaces $\Omega _{\bullet }\left( X,Y\right) $ and $\Omega _{\bullet }\left(
Z\right) $. Moreover, since $\Phi $ commutes with $\partial $ by (\ref{utsv}%
) and the product rule of Proposition \ref{Propdusqv}, $\Phi $ provides an
isomorphism of the chain complexes $\Omega _{\bullet }\left( X,Y\right) $
and $\Omega _{\bullet }\left( Z\right) $, which finishes the proof.
\end{proof}

\section{Minimal paths and hole detection}

\setcounter{equation}{0}\label{Sec8}The elements of $H_{p}\left( G\right) $
can be regarded as $p$-dimensional holes in the digraph $G$. To make this
notion more geometric, we can work with representatives of the homologies
classes, that are closed $p$-paths$.$ Recall that two closed $p$-paths $u$
and $v$ are homological, that is, represent the same homology class, if $u-v$
is exact. We write in this case $u\sim v$.

For any $p$-path $v$ define its length by%
\begin{equation*}
\ell \left( v\right) =\sum_{i_{0},...,i_{p}\in V}\left\vert
v^{i_{0}...i_{p}}\right\vert .
\end{equation*}%
Given a closed $p$-paths $v_{0}$, consider the minimization problem%
\begin{equation}
\ell \left( v\right) \mapsto \min \ \ \text{for }v\sim v_{0}.  \label{vmin}
\end{equation}%
This problem has always a solution, although not necessarily unique. Any
solution of (\ref{vmin}) is called a \emph{minimal} $p$-path. It is hoped
that minimal $p$-paths (in a given homology class) match our geometric
intuition of what holes in a graph should be. In this section we give some
examples of minimal paths to support this claim.

\begin{example}
\RM Consider the digraph $G=\left( V,E\right) $ as on Fig. \ref{pic19}.%
\FRAME{ftbphFU}{5.0704in}{2.2494in}{0pt}{\Qcb{A digraph with $14$ vertices
and $23$ edges}}{\Qlb{pic19}}{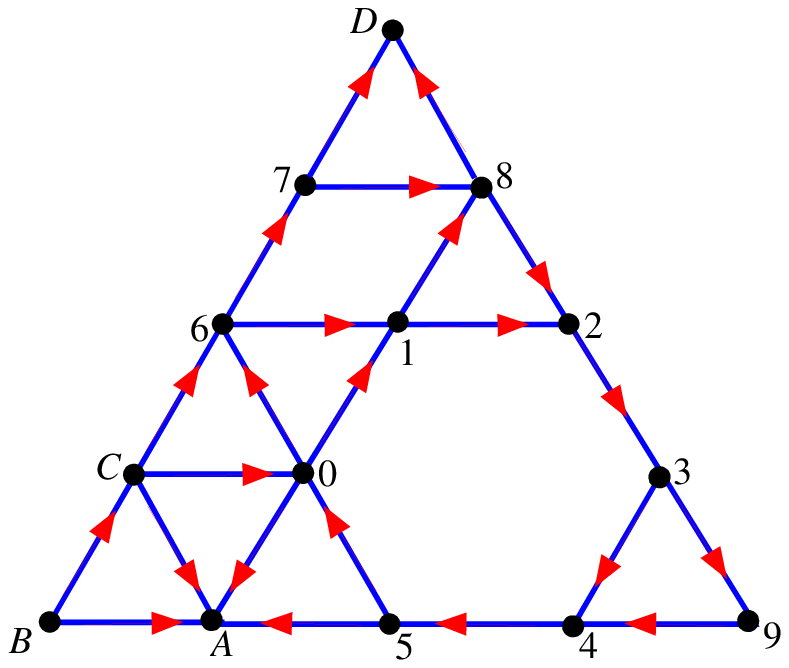}{\special{language "Scientific
Word";type "GRAPHIC";maintain-aspect-ratio TRUE;display "USEDEF";valid_file
"F";width 5.0704in;height 2.2494in;depth 0pt;original-width
6.3027in;original-height 2.0124in;cropleft "0";croptop "1";cropright
"1";cropbottom "0";filename 'pic19.eps';file-properties "XNPEU";}}

By Theorem \ref{Tabac}, we can remove successively the vertices $9,B,C,A,D$
(and their adjacent edges) without changing the homologies. Then by Theorem %
\ref{Tcab} we can remove the vertices $7,6,8$ equally without changing the
homologies. We are left with the graph $G^{\prime }=\left( V^{\prime
},E^{\prime }\right) $ where $V^{\prime }=\left\{ 0,1,2,3,4,5\right\} $ and $%
E^{\prime }=\left\{ 01,12,23,34,45,50\right\} $. By Proposition \ref%
{Propcycle} we obtain $\dim H_{1}\left( G\right) =1,$ while $H_{p}\left(
G\right) =\left\{ 0\right\} $ for all $p\geq 2$.

The following closed $1$-path is a minimal path in the non-trivial homology
class of $H_{1}\left( G\right) $: 
\begin{equation*}
v=e_{01}+e_{12}+e_{23}+e_{34}+e_{45}+e_{50},
\end{equation*}%
that is obviously associated with a hexagonal hole on Fig. \ref{pic19}.
\end{example}

\begin{example}
\RM Consider a digraph $G=\left( V,E\right) $ on Fig. \ref{pic6a}.\FRAME{%
ftbhFU}{5.0704in}{2.2952in}{0pt}{\Qcb{A digraph $G$ with $12$ vertices and $%
32$ edges.}}{\Qlb{pic6a}}{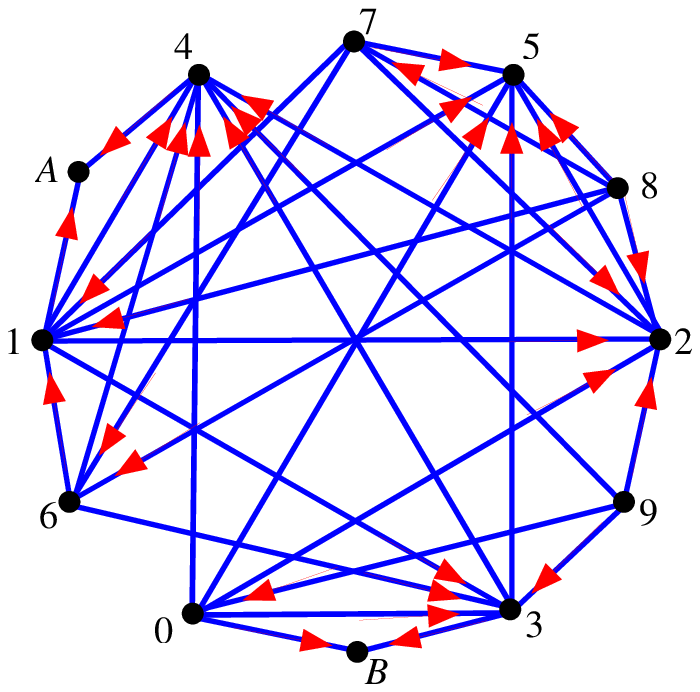}{\special{language "Scientific
Word";type "GRAPHIC";maintain-aspect-ratio TRUE;display "USEDEF";valid_file
"F";width 5.0704in;height 2.2952in;depth 0pt;original-width
6.3027in;original-height 2.6671in;cropleft "0";croptop "1";cropright
"1";cropbottom "0";filename 'pic6a.eps';file-properties "XNPEU";}}

Removing successively the vertices $A,B,8,9,6,7$ by Theorem \ref{Tabac}, we
obtain a digraph $G^{\prime }=\left( V^{\prime },E^{\prime }\right) $ with $%
V^{\prime }=\left\{ 0,1,2,3,4,5\right\} $ and $E^{\prime }=\left\{
02,03,04,05,12,13,14,15,24,25,34,35\right\} $ that has the same homologies
as $G$. The digraph $G^{\prime }$ is shown in two ways on Fig. \ref{pic6b}.
Clearly, the second representation of this graph is reminiscent of an
octahedron. \FRAME{ftbhFU}{4.8784in}{1.9138in}{0pt}{\Qcb{Two representations
of the digraph $G^{\prime }$}}{\Qlb{pic6b}}{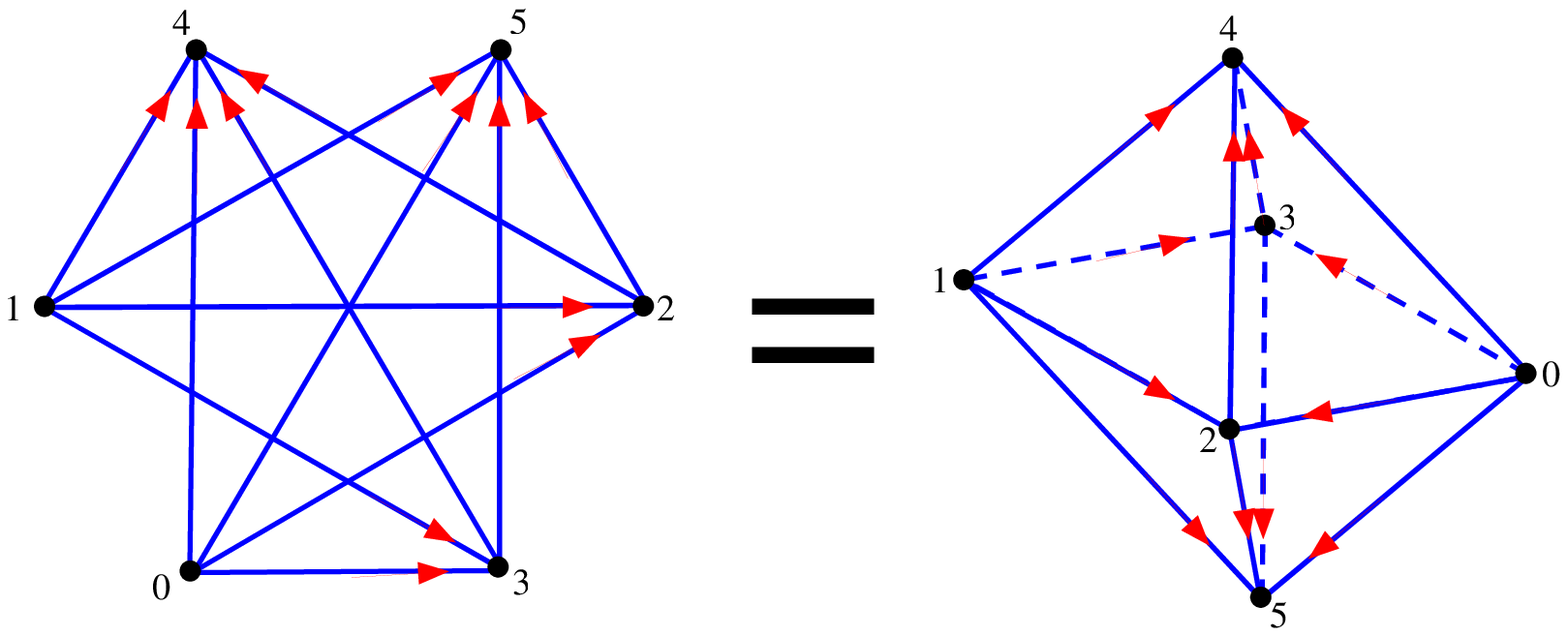}{\special{language
"Scientific Word";type "GRAPHIC";maintain-aspect-ratio TRUE;display
"USEDEF";valid_file "F";width 4.8784in;height 1.9138in;depth
0pt;original-width 6.3027in;original-height 2.5685in;cropleft "0";croptop
"1";cropright "1";cropbottom "0";filename 'pic6b.eps';file-properties
"XNPEU";}}

The digraph $G^{\prime }$ is the same as the $2$-dimensional sphere-graph of
Example \ref{Expic13} (cf. Fig. \ref{pic13}). Hence, we obtain by (\ref{G012}%
) that $\dim H_{2}\left( G\right) =1$ while $H_{p}\left( G\right) =\left\{
0\right\} $ for $p=1$ and $p>2$.

The following closed $2$-path is a minimal path in the non-trivial homology
class of $H_{2}\left( G\right) $: 
\begin{equation*}
v=e_{024}-e_{025}-e_{034}+e_{035}-e_{124}+e_{125}+e_{134}-e_{135},
\end{equation*}%
that is a $2$-path that determines a $2$-dimensional hole in $G$ given by
the octahedron. Note that on Fig. \ref{pic6a} this octahedron is hardy
visible, but it can be computed purely algebraically as shown above.
\end{example}

\section{Appendix: Elements of homological algebra}

\setcounter{equation}{0}\label{Sec3}

\subsection{Cochain complexes}

\label{Seccochain}A\ \emph{cochain complex} $X$ is a sequence 
\begin{equation}
\begin{array}{cccccccccccc}
0 & \rightarrow & X^{0} & \overset{d}{\rightarrow } & X^{1} & \overset{d}{%
\rightarrow } & \dots & \overset{d}{\rightarrow } & X^{p-1} & \overset{d}{%
\rightarrow } & X^{p} & \overset{d}{\rightarrow }\dots%
\end{array}
\label{Xp}
\end{equation}%
of vector spaces $\left\{ X^{p}\right\} _{p=0}^{\infty }$ over a field $%
\mathbb{K}$ and linear mappings $d:X^{p}\rightarrow X^{p+1}$ with the
property that $d^{2}=0$ at each level. To distinguish the operators $d$ on
different spaces, we will denote by $d|_{X^{p}}$ the operator $%
d:X^{p}\rightarrow X^{p+1}$. The condition $d^{2}=0$ means that 
\begin{equation*}
\func{Im}d|_{X^{p-1}}\subset \ker d|_{X^{p}}.
\end{equation*}
This allows to define the de Rham cohomologies of the complex $X$ by%
\begin{equation*}
H^{p}\left( X\right) =\ker d|_{X^{p}}\left/ \func{Im}d|_{X^{p-1}}\right.
\end{equation*}%
where $X^{-1}:=\left\{ 0\right\} $. The sequence (\ref{Xp}) is called exact
if $H^{p}\left( X\right) =\left\{ 0\right\} $ for all $p\geq 0$.

We always assume that the spaces $X^{p}$ are finitely dimensional.

\begin{lemma}
\label{LemdimHX}We have for any $p\geq 0$ 
\begin{eqnarray}
\dim H^{p}\left( X\right) &=&\dim X^{p}-\dim dX^{p}-\dim dX^{p-1}
\label{dimHX} \\
&=&\dim \ker d|_{X^{p}}+\dim \ker d|_{X^{p-1}}-\dim X^{p-1}.  \label{dimHX2}
\end{eqnarray}
\end{lemma}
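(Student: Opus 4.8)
The plan is to prove both displayed identities using only the rank–nullity theorem applied to the maps $d|_{X^p}$, together with the definition of $H^p(X)$ and the inclusion $\func{Im} d|_{X^{p-1}} \subset \ker d|_{X^p}$ guaranteed by $d^2 = 0$.

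First I would establish \eqref{dimHX}. By definition $H^p(X) = \ker d|_{X^p}/\func{Im} d|_{X^{p-1}}$, so since we are over a field and all spaces are finite-dimensional,
\[
\dim H^p(X) = \dim \ker d|_{X^p} - \dim \func{Im} d|_{X^{p-1}} = \dim \ker d|_{X^p} - \dim dX^{p-1}.
\]
Now I apply rank–nullity to the map $d|_{X^p} : X^p \to X^{p+1}$, which gives $\dim \ker d|_{X^p} = \dim X^p - \dim dX^p$. Substituting this into the previous line yields
\[
\dim H^p(X) = \dim X^p - \dim dX^p - \dim dX^{p-1},
\]
which is \eqref{dimHX}.

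Next I would derive \eqref{dimHX2} from the same ingredients, simply by eliminating the image dimensions in favor of kernel dimensions. Again by rank–nullity applied to $d|_{X^p}$ we have $\dim dX^p = \dim X^p - \dim \ker d|_{X^p}$, and applied to $d|_{X^{p-1}}$ we have $\dim dX^{p-1} = \dim X^{p-1} - \dim \ker d|_{X^{p-1}}$. Substituting both into \eqref{dimHX} gives
\[
\dim H^p(X) = \dim X^p - \bigl(\dim X^p - \dim \ker d|_{X^p}\bigr) - \bigl(\dim X^{p-1} - \dim \ker d|_{X^{p-1}}\bigr),
\]
and the two occurrences of $\dim X^p$ cancel, leaving
\[
\dim H^p(X) = \dim \ker d|_{X^p} + \dim \ker d|_{X^{p-1}} - \dim X^{p-1},
\]
which is \eqref{dimHX2}.

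There is essentially no main obstacle here: the result is a direct bookkeeping consequence of rank–nullity and the definition of cohomology, with the only mild point being the convention $X^{-1} = \{0\}$ so that the case $p=0$ reads $\dim H^0(X) = \dim X^0 - \dim dX^0$, consistent with $d|_{X^{-1}} = 0$. I would just note that all formulas remain valid at $p=0$ under this convention, and that finite-dimensionality (assumed throughout) is what lets me pass freely between dimensions of quotients, kernels, and images.
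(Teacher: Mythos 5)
Your proof is correct and takes essentially the same approach as the paper: the definition of $H^{p}\left( X\right) $ plus the rank--nullity theorem applied to $d|_{X^{p}}$ gives (\ref{dimHX}), and the companion rank--nullity identity for $d|_{X^{p-1}}$ gives (\ref{dimHX2}). The only cosmetic difference is that the paper substitutes $\dim \func{Im}d|_{X^{p-1}}=\dim X^{p-1}-\dim \ker d|_{X^{p-1}}$ directly into $\dim H^{p}\left( X\right) =\dim \ker d|_{X^{p}}-\dim \func{Im}d|_{X^{p-1}}$ rather than into (\ref{dimHX}), which is the same computation.
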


\begin{proof}
By definition, we have%
\begin{equation}
\dim H^{p}\left( X\right) =\dim \ker d|_{X^{p}}-\dim \func{Im}d|_{X^{p-1}}.
\label{k-i}
\end{equation}%
Applying the nullity-rank theorem to the mapping $d:X^{p}\rightarrow X^{p+1}$%
, we obtain 
\begin{equation*}
\dim \ker d|_{X^{p}}=\dim X^{p}-\dim \func{Im}d|_{X^{p}}.
\end{equation*}%
Substituting into (\ref{k-i}), we obtain (\ref{dimHX}).

In the same way, substituting into (\ref{k-i}) the identity 
\begin{equation*}
\dim \func{Im}d|_{X^{p-1}}=\dim X^{p-1}-\dim \ker d|_{X^{p-1}},
\end{equation*}%
we obtain (\ref{dimHX2}).
\end{proof}

\begin{lemma}
\label{Lemsumsum}For a finite cochain complex%
\begin{equation}
\begin{array}{cccccccccccc}
0 & \rightarrow & X^{0} & \overset{d}{\rightarrow } & X^{1} & \overset{d}{%
\rightarrow } & \dots & \overset{d}{\rightarrow } & X^{n-1} & \overset{d}{%
\rightarrow } & X^{n} & \overset{d}{\rightarrow }0%
\end{array}%
,  \label{Xn0}
\end{equation}%
the following identity is satisfied%
\begin{equation}
\sum_{k=0}^{n}\left( -1\right) ^{k}\dim H^{k}\left( X\right)
=\sum_{k=0}^{n}\left( -1\right) ^{k}\dim X^{k}.  \label{sum=sum}
\end{equation}%
In particular, if the sequence \emph{(\ref{Xn0})} is exact, then%
\begin{equation}
\sum_{k=0}^{n}\left( -1\right) ^{k}\dim X^{k}=0.  \label{sum=0}
\end{equation}
\end{lemma}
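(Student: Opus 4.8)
The plan is to feed the dimension formula from Lemma~\ref{LemdimHX} into the alternating sum on the left-hand side of (\ref{sum=sum}) and let the boundary-dimension terms telescope. Recall that (\ref{dimHX}) reads $\dim H^{k}(X)=\dim X^{k}-\dim dX^{k}-\dim dX^{k-1}$ for every $k\geq 0$. Here I would fix the two conventions forced by the shape of the complex (\ref{Xn0}): since $X^{-1}=\{0\}$ we have $dX^{-1}=\{0\}$, so $\dim dX^{-1}=0$; and since the complex terminates with $X^{n}\to 0$, the image $dX^{n}$ sits inside $X^{n+1}=\{0\}$, so $\dim dX^{n}=0$. These two endpoint facts are the only points that need watching.

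Writing $a_{k}=\dim dX^{k}$, I would first record
\begin{equation*}
\sum_{k=0}^{n}(-1)^{k}\dim H^{k}(X)=\sum_{k=0}^{n}(-1)^{k}\dim X^{k}-\sum_{k=0}^{n}(-1)^{k}a_{k}-\sum_{k=0}^{n}(-1)^{k}a_{k-1},
\end{equation*}
which is just (\ref{dimHX}) summed with alternating signs. The task is then to show that the last two sums add to zero. Reindexing the final sum by $m=k-1$ gives $\sum_{k=0}^{n}(-1)^{k}a_{k-1}=-\sum_{m=0}^{n-1}(-1)^{m}a_{m}$, where the would-be term $m=-1$ drops out because $a_{-1}=0$. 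Combining, the two correction sums contribute
\begin{equation*}
-\sum_{k=0}^{n}(-1)^{k}a_{k}+\sum_{m=0}^{n-1}(-1)^{m}a_{m}=(-1)^{n+1}a_{n}=(-1)^{n+1}\dim dX^{n}=0,
\end{equation*}
the last equality being the endpoint fact $\dim dX^{n}=0$. This establishes (\ref{sum=sum}).

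For the exact case, I would simply invoke the definition of exactness: $H^{k}(X)=\{0\}$ for all $k$, so $\dim H^{k}(X)=0$ and the left-hand side of (\ref{sum=sum}) vanishes, which yields (\ref{sum=0}) at once. I do not anticipate any genuine obstacle here; the argument is pure bookkeeping, and the only place where care is required is in handling the two boundary indices $k=-1$ and $k=n$, where the correct reading of $dX^{-1}$ and $dX^{n}$ as zero spaces is what makes the telescoping collapse cleanly rather than leaving a stray term.
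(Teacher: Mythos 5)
Your proof is correct and follows essentially the same route as the paper's: both apply the dimension formula (\ref{dimHX}) termwise, sum with alternating signs, and let the two boundary-dimension sums cancel by reindexing, using $dX^{-1}=\{0\}$ and $dX^{n}=\{0\}$ at the endpoints. Your explicit isolation of the stray term $(-1)^{n+1}\dim dX^{n}=0$ is just a slight rearrangement of the paper's telescoping, and the deduction of (\ref{sum=0}) from exactness is identical.
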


\begin{proof}
We have by (\ref{dimHX})%
\begin{eqnarray*}
\sum_{k=0}^{n}\left( -1\right) ^{k}\dim H^{k}\left( X\right)
&=&\sum_{k=0}^{n}\left( -1\right) ^{k}\dim X^{k}-\sum_{k=0}^{n}\left(
-1\right) ^{k}\dim dX^{k}-\sum_{k=0}^{n}\left( -1\right) ^{k}\dim dX^{k-1} \\
&=&\sum_{k=0}^{n}\left( -1\right) ^{k}\dim X^{k}-\sum_{k=0}^{n-1}\left(
-1\right) ^{k}\dim dX^{k}-\sum_{j=0}^{n-1}\left( -1\right) ^{j+1}\dim dX^{j}
\\
&=&\sum_{k=0}^{n}\left( -1\right) ^{k}\dim X^{k},
\end{eqnarray*}%
whence (\ref{sum=sum}) follows. If in addition the sequence (\ref{Xn0}) is
exact then the left hand side of (\ref{sum=sum}) vanishes, whence (\ref%
{sum=0}) follows.
\end{proof}

For any finite cochain complex (\ref{Xn0}), define its Euler characteristic
by%
\begin{equation*}
\chi \left( X\right) =\sum_{p=0}^{n}\left( -1\right) ^{p}\dim X^{p}.
\end{equation*}%
Then (\ref{sum=sum}) implies 
\begin{equation*}
\chi \left( X\right) =\sum_{k=0}^{n}\left( -1\right) ^{k}\dim H^{k}\left(
X\right) .
\end{equation*}

\subsection{Chain complexes}

Given a cochain complex (\ref{Xp}) with finite-dimensional spaces $X^{p}$,
denote by $X_{p}$ the dual space to $X^{p}$ and by $\partial $ the dual
operator to $d$. Then we obtain a chain complex 
\begin{equation}
\begin{array}{cccccccccccc}
0 & \leftarrow & X_{0} & \overset{\partial }{\leftarrow } & X_{1} & \overset{%
\partial }{\leftarrow } & \dots & \overset{\partial }{\leftarrow } & X_{p-1}
& \overset{\partial }{\leftarrow } & X_{p} & \overset{\partial }{\leftarrow }%
\dots%
\end{array}
\label{Xpdual}
\end{equation}%
Denoting by $\left( \cdot ,\cdot \right) $ the natural pairing of dual
spaces.

For $\omega \in X^{p}$ and $v\in X_{p}$ we write $\omega \bot v$ if $\left(
\omega ,v\right) =0.$ If $S$ is a subset of $X^{p}$ then $S^{\bot }$ denotes
the annihilator in the dual space $X_{p}$, that is, 
\begin{equation*}
S^{\bot }=\left\{ v\in X_{p}:\omega \bot v\ \ \ \forall \omega \in S\right\}
.
\end{equation*}%
Clearly, $S^{\bot }$ is a linear subspace of $X_{p}$. In the same way one
defines the annihilator of subsets of $X_{p}$.

By definition we have%
\begin{equation*}
\left( d\omega ,v\right) =\left( \omega ,\partial v\right)
\end{equation*}%
for all $\omega \in X^{p}$ and $v\in X_{p+1}$. Since $d^{2}=0$, it follows
that also $\partial ^{2}=0.$ Hence, one can define the \emph{homologies} of
the chain complex (\ref{Xpdual}) by%
\begin{equation*}
H_{p}\left( X\right) =\ker \partial |_{X_{p}}\left/ \func{Im}\partial
|_{X_{p+1}}\right. .
\end{equation*}%
By duality we have%
\begin{equation}
\ker \partial |_{X_{p}}=\left( \func{Im}d|_{X^{p-1}}\right) ^{\bot },\ \ \ \
\ \ \ker d|_{X^{p}}=\left( \func{Im}\partial |_{X_{p+1}}\right) ^{\bot }\ .
\label{ker=Im}
\end{equation}

\begin{lemma}
\label{LemHdual}The spaces $H^{p}\left( X\right) $ and $H_{p}\left( X\right) 
$ are dual. In particular, $\dim H^{p}\left( X\right) =\dim H_{p}\left(
X\right) .$
\end{lemma}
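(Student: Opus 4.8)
The plan is to define a natural bilinear pairing between $H^{p}\left( X\right) $ and $H_{p}\left( X\right) $ induced by the given pairing $\left( \cdot ,\cdot \right) $ of $X^{p}$ and $X_{p}$, and then to show that this pairing is non-degenerate. Since both spaces are finite-dimensional, a non-degenerate pairing exhibits each as the dual of the other, and the equality of dimensions follows at once.

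First I would abbreviate $Z^{p}=\ker d|_{X^{p}}$, $B^{p}=\func{Im}d|_{X^{p-1}}$, $Z_{p}=\ker \partial |_{X_{p}}$, and $B_{p}=\func{Im}\partial |_{X_{p+1}}$, so that $H^{p}\left( X\right) =Z^{p}/B^{p}$ and $H_{p}\left( X\right) =Z_{p}/B_{p}$. For $\omega \in Z^{p}$ and $v\in Z_{p}$ I would set $\langle \lbrack \omega ],[v]\rangle :=\left( \omega ,v\right) $. Well-definedness is the first thing to check: replacing $\omega $ by $\omega +d\eta $ changes the value by $\left( d\eta ,v\right) =\left( \eta ,\partial v\right) =0$ since $v\in \ker \partial $, and replacing $v$ by $v+\partial u$ changes it by $\left( \omega ,\partial u\right) =\left( d\omega ,u\right) =0$ since $\omega \in \ker d$. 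Hence the pairing descends to a bilinear map $H^{p}\left( X\right) \times H_{p}\left( X\right) \rightarrow \mathbb{K}$.

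Next I would prove non-degeneracy using the duality identities (\ref{ker=Im}) together with the double-annihilator identity $\left( S^{\bot }\right) ^{\bot }=S$, which holds for subspaces of finite-dimensional spaces. If $[\omega ]$ pairs to zero with every class in $H_{p}\left( X\right) $, then $\omega \bot Z_{p}$, that is, $\omega \in \left( Z_{p}\right) ^{\bot }$. By (\ref{ker=Im}) we have $Z_{p}=\left( B^{p}\right) ^{\bot }$, whence $\left( Z_{p}\right) ^{\bot }=\left( \left( B^{p}\right) ^{\bot }\right) ^{\bot }=B^{p}$, so $\omega \in B^{p}$ and $[\omega ]=0$. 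Symmetrically, if $[v]$ pairs to zero with all of $H^{p}\left( X\right) $, then $v\in \left( Z^{p}\right) ^{\bot }$, and since $Z^{p}=\left( B_{p}\right) ^{\bot }$ by (\ref{ker=Im}), the double-annihilator identity gives $v\in B_{p}$, so $[v]=0$.

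Having a non-degenerate bilinear pairing between the finite-dimensional spaces $H^{p}\left( X\right) $ and $H_{p}\left( X\right) $, I would conclude that the induced linear map $H^{p}\left( X\right) \rightarrow \left( H_{p}\left( X\right) \right) ^{\prime }$ is injective, and by symmetry so is $H_{p}\left( X\right) \rightarrow \left( H^{p}\left( X\right) \right) ^{\prime }$; non-degeneracy on both sides forces both maps to be isomorphisms, so the two spaces are dual and in particular $\dim H^{p}\left( X\right) =\dim H_{p}\left( X\right) $. The only delicate point is the bookkeeping of the annihilators, namely applying (\ref{ker=Im}) in the correct degree so that the double-annihilator identity closes the argument; but no genuine obstacle arises, as everything takes place in finite dimensions.
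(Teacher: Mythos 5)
Your proof is correct and follows essentially the same route as the paper: the same descended pairing $\langle [\omega],[v]\rangle =(\omega ,v)$, the same well-definedness check, and non-degeneracy via the identities (\ref{ker=Im}). The only difference is cosmetic — you make explicit the double-annihilator step $\left( \left( B^{p}\right) ^{\bot }\right) ^{\bot }=B^{p}$ that the paper invokes implicitly when it passes from $\omega \bot \ker \partial |_{X_{p}}$ to $\omega \in \func{Im}d|_{X^{p-1}}$.
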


\begin{proof}
Let $\omega \in X^{p}$ be a representative of an element of $H^{p}\left(
X\right) $ and $v\in X_{p}$ be a representative of an element of $H\left(
X_{p}\right) $. Let us show that the pairing $\left( \omega ,v\right) $ of $%
\omega $ and $v$ in the dual spaces $X^{p}$ and $X_{p}$ is also well-defined
for the elements of $H^{p}\left( X\right) $ and $H_{p}\left( X\right) $.
Indeed, $v$ is defined $\func{mod}\func{Im}\partial |_{X_{p+1}}$, that is, $%
v $ and $v+\partial u$ represent the same element of $H_{p}\left( X\right) $
for any $u\in X_{p+1}$. Since $\omega \in \ker d|_{X^{p}}$, we obtain%
\begin{equation*}
\left( \omega ,v\right) =\left( \omega ,v+\partial u\right) =\left( \omega
,v\right) +\left( d\omega ,u\right) =\left( \omega ,v\right) .
\end{equation*}%
In the same way, $\left( \omega ,v\right) $ does not change when adding $%
d\varphi $ to $\omega $.

If $\left( \omega ,v\right) =0$ for all $v\in \ker \partial |_{X_{p}}$ then $%
\omega \bot \ker \partial |_{X_{p}}$ whence by (\ref{ker=Im}) $\omega \in 
\func{Im}d|_{X^{p-1}}$, that is, $\omega $ represent the zero element of $%
H^{p}\left( X\right) $. In the same way, if $\left( \omega ,v\right) =0$ for
all $\omega \in \ker d|_{X^{p}}$ then $v$ represents the zero element of $%
H_{p}\left( X\right) $. Hence, $\left( \omega ,v\right) $ is a pairing
between $H^{p}\left( X\right) $ and $H_{p}\left( X\right) $, whence the
duality of these spaces follows.
\end{proof}

\begin{lemma}
\label{LemdimHX1}We have for any $p\geq 0$ 
\begin{eqnarray}
\dim H_{p}\left( X\right) &=&\dim X_{p}-\dim \partial X_{p}-\dim \partial
X_{p+1}  \label{dimHP2} \\
&=&\dim \ker \partial |_{X_{p}}+\dim \ker \partial |_{X_{p+1}}-\dim X_{p+1}.
\notag
\end{eqnarray}
\end{lemma}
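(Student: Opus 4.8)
The plan is to imitate the proof of Lemma \ref{LemdimHX} line for line, with the coboundary $d$ replaced by the boundary $\partial$ and the grading running downward instead of upward. Everything needed is already in place: the definition $H_p(X)=\ker \partial|_{X_p}\big/\func{Im}\partial|_{X_{p+1}}$, the rank-nullity theorem, and the fact that all $X_p$ are finite dimensional (inherited from finiteness of the $X^p$). So this should be almost entirely bookkeeping rather than genuine mathematics.

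First I would record the immediate consequence of the definition of homology, namely $\dim H_p(X)=\dim\ker \partial|_{X_p}-\dim\func{Im}\partial|_{X_{p+1}}$, which plays the role that (\ref{k-i}) does in the cochain case. Then, to obtain the first displayed identity (\ref{dimHP2}), I would apply the rank-nullity theorem to the map $\partial:X_p\to X_{p-1}$, giving $\dim\ker \partial|_{X_p}=\dim X_p-\dim\partial X_p$, and substitute this in; writing $\func{Im}\partial|_{X_{p+1}}=\partial X_{p+1}$ turns the expression into $\dim X_p-\dim\partial X_p-\dim\partial X_{p+1}$, exactly as claimed. For the second identity I would instead apply rank-nullity to $\partial:X_{p+1}\to X_p$, which yields $\dim\partial X_{p+1}=\dim X_{p+1}-\dim\ker \partial|_{X_{p+1}}$; substituting this into the same starting relation gives $\dim\ker \partial|_{X_p}+\dim\ker \partial|_{X_{p+1}}-\dim X_{p+1}$.

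As an alternative (and a useful sanity check) I would note that the whole statement can be deduced from Lemma \ref{LemdimHX} by duality: Lemma \ref{LemHdual} gives $\dim H_p(X)=\dim H^p(X)$, duality of the spaces gives $\dim X_p=\dim X^p$, and the fact that $\partial$ and $d$ are dual operators gives $\dim\partial X_p=\dim dX^{p-1}$ (dual linear maps have equal rank). Matching these against (\ref{dimHX}) and (\ref{dimHX2}) reproduces (\ref{dimHP2}) after reindexing. I expect no real obstacle here; the only point requiring care is keeping the index directions straight, since $\partial$ lowers degree while $d$ raises it, so the two applications of rank-nullity must be taken for the maps out of $X_p$ and out of $X_{p+1}$ respectively, and the role played by $X^{p-1}$ in the cochain version is played by $X_{p+1}$ here.

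\begin{proof}
By definition of the homology groups we have
\begin{equation*}
\dim H_p(X)=\dim\ker \partial|_{X_p}-\dim\func{Im}\partial|_{X_{p+1}}.
\end{equation*}
Applying the rank-nullity theorem to $\partial:X_p\to X_{p-1}$ gives
\begin{equation*}
\dim\ker \partial|_{X_p}=\dim X_p-\dim\partial X_p,
\end{equation*}
and substituting this into the previous line yields (\ref{dimHP2}), since $\func{Im}\partial|_{X_{p+1}}=\partial X_{p+1}$. Applying instead the rank-nullity theorem to $\partial:X_{p+1}\to X_p$ gives
\begin{equation*}
\dim\partial X_{p+1}=\dim X_{p+1}-\dim\ker \partial|_{X_{p+1}},
\end{equation*}
and substituting this into the first displayed relation yields
\begin{equation*}
\dim H_p(X)=\dim\ker \partial|_{X_p}+\dim\ker \partial|_{X_{p+1}}-\dim X_{p+1},
\end{equation*}
which is the second identity.
\end{proof}
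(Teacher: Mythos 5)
Your proof is correct and is exactly what the paper intends: the paper's own "proof" consists of the single remark that it is similar to Lemma \ref{LemdimHX}, and your argument is precisely that mirrored computation (rank-nullity applied to $\partial:X_p\to X_{p-1}$ and $\partial:X_{p+1}\to X_p$), with the indices handled correctly. The duality-based alternative you sketch is also valid but unnecessary here.
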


The proof is similar to Lemma \ref{LemdimHX}\textbf{.}

\subsection{Tensor product of chain complexes}

\label{SecTensor}Let $\left\{ A_{n}\right\} $ be a sequence of finite
dimensional linear spaces over $\mathbb{K}$ enumerated by an integer
parameter $n$. We denote usually by $A_{\bullet }$ the whole sequence. Now
it will be convenient to denote by $A_{\bullet }$ the direct sum of all $%
A_{n}$, that is%
\begin{equation*}
A_{\bullet }=\tbigoplus_{n}A_{n}
\end{equation*}%
so that $A_{\bullet }$ is a graded linear space. If $\left\{ A_{n}\right\} $
is a chain complex with the boundary operator $\partial _{A}$ then $\partial
_{A}$ extends linearly to an operator in $A_{\bullet }$ that respect a
graded structure. In this case we will denote the chain complex also by $%
A_{\bullet }$. The homologies $H_{n}\left( A_{\bullet }\right) $ of the
chain complex $A_{\bullet }$ form also a graded linear space $H_{\bullet
}\left( A_{\bullet }\right) .$

Given two graded linear spaces $A_{\bullet }$ and $B_{\bullet }$ as above,
define their tensor product by 
\begin{equation*}
A_{\bullet }\otimes B_{\bullet }=\tbigoplus_{p,q}\left( A_{p}\otimes
B_{q}\right) ,
\end{equation*}%
where $A_{p}\otimes B_{q}$ is the tensor product over $\mathbb{K}$ of the
linear spaces $A_{p}$ and $B_{q}$. In other words, $A_{\bullet }\otimes
B_{\bullet }=C_{\bullet }$ where%
\begin{equation*}
C_{r}=\tbigoplus_{\left\{ p,q:p+q=r\right\} }\left( A_{p}\otimes
B_{q}\right) .
\end{equation*}

If $A_{\bullet }$ and $B_{\bullet }$ are chain complexes with the boundary
operators $\partial _{A}$ and $\partial _{B}$, respectively, then define the
boundary operator $\partial _{C}$ in $C_{\bullet }$ by by 
\begin{equation}
\partial _{C}\left( u\otimes v\right) =\left( \partial _{A}u\right) \otimes
v+\left( -1\right) ^{p}u\otimes \left( \partial _{B}v\right) ,  \label{utsv}
\end{equation}%
for all $u\in A_{p}$ and $v\in B_{q}$. It is well-known that $\partial
_{C}^{2}=0$ so that $C_{\bullet }$ with $\partial _{C}$ is a chain complex.
Furthermore, by a theorem of K\"{u}nneth, we have the following identity for
homologies:%
\begin{equation}
H_{\bullet }\left( C_{\bullet }\right) \cong H_{\bullet }\left( A_{\bullet
}\right) \otimes H_{\bullet }\left( B_{\bullet }\right)  \label{KAB}
\end{equation}%
that is,%
\begin{equation*}
H_{r}\left( C_{\bullet }\right) \cong \tbigoplus_{\left\{ p,q:p+q=r\right\}
}H_{p}\left( A_{\bullet }\right) \otimes H_{q}\left( B_{\bullet }\right)
\end{equation*}%
(see \cite{MacLane}\label{rem: citation for Kuenneth theorem}).

Let $A^{n}$ be a dual space to $A_{n}$. Then the graded linear space 
\begin{equation*}
A^{\bullet }=\tbigoplus_{n}A^{n}
\end{equation*}%
is dual to $A_{\bullet }$. Indeed, it follows from the following definition
of the pairing $\left( \omega ,v\right) $ between elements $\omega \in
A^{\bullet }$ and $v\in A_{\bullet }.$ If $\omega \in A^{n}$ and $v\in A_{m}$
then in the case $n=m$ then $\left( \omega ,v\right) $ coincides the pairing
between $A^{n}$ and $A_{n}$, whereas in the case $n\neq m$ set $\left(
\omega ,v\right) =0.$ Then extend this definition by bilinearity to all $%
\omega \in A^{\bullet }$ and $v\in A_{\bullet }$.

Finally, observe that if $A_{\bullet }$ and $B_{\bullet }$ are two graded
linear spaces then $A^{\bullet }\otimes B^{\bullet }$ is dual to $A_{\bullet
}\otimes B_{\bullet }$ using the following pairing:%
\begin{equation*}
\left( \varphi \otimes \psi ,u\otimes v\right) =\left( \varphi ,u\right)
\left( \psi ,v\right)
\end{equation*}%
for $\varphi \in A^{\bullet },\psi \in B^{\bullet },u\in A_{\bullet },v\in
B_{\bullet }$.\label{rem: more careful language since A is infity-
dimensional}

\subsection{Sub-complexes and quotient complexes}

Let $X$ be a cochain complex as in (\ref{Xp}), and assume that each $X^{p}$
has a subspace $J^{p}$ so that $d$ is invariant on $\left\{ J^{p}\right\} $,
that is, $dJ^{p}\subset J^{p+1}.$ Then we have a cochain \emph{sub-complex} $%
J$ as follows:%
\begin{equation}
\begin{array}{cccccccccccc}
0 & \rightarrow & J^{0} & \overset{d}{\rightarrow } & J^{1} & \overset{d}{%
\rightarrow } & \dots & \overset{d}{\rightarrow } & J^{p-1} & \overset{d}{%
\rightarrow } & J^{p} & \overset{d}{\rightarrow }\dots%
\end{array}
\label{Jp}
\end{equation}%
Since the operator $d$ is well defined also on the quotient spaces $%
X^{p}/J^{p},$ we obtain also a cochain \emph{quotient complex} $X/J$:%
\begin{equation}
\begin{array}{cccccccccccc}
0 & \rightarrow & X^{0}/J^{0} & \overset{d}{\rightarrow } & X^{1}/J^{1} & 
\overset{d}{\rightarrow } & \dots & \overset{d}{\rightarrow } & 
X^{p-1}/J^{p-1} & \overset{d}{\rightarrow } & X^{p}/J^{p} & \overset{d}{%
\rightarrow }\dots%
\end{array}
\label{X/Jp}
\end{equation}%
Consider the annihilator of $J^{p}$, that is the space%
\begin{equation*}
\left( J^{p}\right) ^{\bot }=\left\{ v\in X_{p}:v\bot J^{p}\right\} .
\end{equation*}

\begin{lemma}
\label{LemJbot}The dual operator $\partial $ of $d$ is invariant on $\left\{
\left( J^{p}\right) ^{\bot }\right\} $, and the chain sub-complex 
\begin{equation}
\begin{array}{cccccccccccc}
0 & \leftarrow & \left( J^{0}\right) ^{\bot } & \overset{\partial }{%
\leftarrow } & \left( J^{1}\right) ^{\bot } & \overset{\partial }{\leftarrow 
} & \dots & \overset{\partial }{\leftarrow } & \left( J^{p-1}\right) ^{\bot }
& \overset{\partial }{\leftarrow } & \left( J^{p}\right) ^{\bot } & \overset{%
\partial }{\leftarrow }\dots%
\end{array}
\label{Jback}
\end{equation}%
is dual to the cochain quotient complex \emph{(\ref{X/Jp})}.
\end{lemma}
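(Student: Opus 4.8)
The plan is to exhibit an explicit natural pairing between the chain complex $\{(J^p)^\bot\}$ of (\ref{Jback}) and the quotient cochain complex $\{X^p/J^p\}$ of (\ref{X/Jp}), and to show that this pairing is perfect at each level and compatible with the differentials. First I would record the two facts I need about annihilators in the finite-dimensional duality between $X^p$ and $X_p$. On the one hand, $(J^p)^\bot$ is by definition the subspace of $X_p$ annihilating $J^p$, so the restriction of the pairing $(\cdot,\cdot)$ to $X^p \times (J^p)^\bot$ descends to a well-defined pairing on $(X^p/J^p) \times (J^p)^\bot$: if $\omega_1 = \omega_2 \func{mod} J^p$ then $\omega_1 - \omega_2 \in J^p$, and since any $v \in (J^p)^\bot$ kills $J^p$ we get $(\omega_1,v) = (\omega_2,v)$. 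On the other hand, the standard finite-dimensional identity $\dim (J^p)^\bot = \dim X_p - \dim J^p = \dim X^p - \dim J^p = \dim (X^p/J^p)$ tells me the dimensions match.

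\textbf{Invariance of $\partial$.} The first genuine step is to verify that $\partial$ maps $(J^p)^\bot$ into $(J^{p-1})^\bot$, so that (\ref{Jback}) is indeed a chain complex. I would argue directly from the adjunction $(d\omega,v) = (\omega,\partial v)$ valid for $\omega \in X^{p-1}$, $v \in X_p$. Take $v \in (J^p)^\bot$; to show $\partial v \in (J^{p-1})^\bot$ I must check $(\omega,\partial v) = 0$ for every $\omega \in J^{p-1}$. But $(\omega,\partial v) = (d\omega,v)$, and $d\omega \in J^p$ by the hypothesis $dJ^{p-1} \subset J^p$, so $(d\omega,v) = 0$ because $v$ annihilates $J^p$. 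This gives the invariance, and hence (\ref{Jback}) is a legitimate subcomplex of the dual complex (\ref{Xpdual}).

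\textbf{Compatibility with differentials and perfectness.} Next I would check that the pairing intertwines the two differentials in the expected dual sense: for $\bar\omega = \omega \func{mod} J^{p} \in X^{p}/J^{p}$ and $v \in (J^{p+1})^\bot$, the identity
\begin{equation*}
(d\bar\omega, v) = (d\omega, v) = (\omega, \partial v) = (\bar\omega, \partial v)
\end{equation*}
holds, where the middle equality is the original adjunction and the outer equalities are well-definedness of the descended pairing (note $\partial v \in (J^p)^\bot$ by the previous step, so the final pairing makes sense). This says precisely that $d$ on the quotient complex is dual to $\partial$ on (\ref{Jback}). Finally, to upgrade to a genuine duality of complexes I must show the descended pairing on $(X^p/J^p) \times (J^p)^\bot$ is nondegenerate. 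Nondegeneracy in the second argument is immediate, since any $v \in (J^p)^\bot$ pairing to zero against all of $X^p$ is already zero in $X_p$. For the first argument, if $\bar\omega$ pairs to zero against every $v \in (J^p)^\bot$, then $\omega \in ((J^p)^\bot)^\bot = J^p$ by the double-annihilator identity in finite dimensions, so $\bar\omega = 0$; combined with the matching dimensions this makes the pairing perfect.

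\textbf{Main obstacle.} The argument is almost entirely a bookkeeping exercise in finite-dimensional linear duality, so there is no serious analytic or combinatorial difficulty. The one place that warrants care is keeping the two senses of "descend" straight: the pairing must be shown well-defined on the quotient in the cochain variable while remaining an honest pairing against the subspace $(J^p)^\bot$ in the chain variable, and the differential-compatibility computation must respect both descents simultaneously. I expect that to be the only point where a hasty write-up could slip, so I would display the chain of equalities above explicitly rather than assert it.
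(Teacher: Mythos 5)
Your proof is correct and takes essentially the same route as the paper: the same adjunction computation $(\omega,\partial v)=(d\omega,v)=0$ for the $\partial$-invariance, the same descended pairing checked to be well defined modulo $J^{p}$, and a dimension count to upgrade to a perfect pairing. The paper phrases the last step as injectivity (hence, by equal dimensions, surjectivity) of the induced map $\left( J^{p}\right) ^{\bot }\rightarrow \left( X^{p}/J^{p}\right) ^{\prime }$, which carries the same content as your nondegeneracy argument via the double annihilator, so the difference is purely presentational.
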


\begin{proof}
If $v\in \left( J^{p}\right) ^{\bot }$ then, for any $\omega \in J^{p-1},$
we have $d\omega \in J^{p}$ and, hence,%
\begin{equation*}
\left( \omega ,\partial v\right) =\left( d\omega ,v\right) =0,
\end{equation*}%
which implies $\partial v\in \left( J^{p-1}\right) ^{\bot }$. Hence, $%
\partial $ maps $\left( J^{p}\right) ^{\bot }$ to $\left( J^{p-1}\right)
^{\bot }$, so that the complex (\ref{Jback}) is well-defined.

To prove the duality of (\ref{X/Jp}) and (\ref{Jback}), observe that $\left(
J^{p}\right) ^{\bot }$ is naturally isomorphic to the dual space $\left(
X^{p}/J^{p}\right) ^{\prime }.$ Indeed, each $v\in \left( J^{p}\right)
^{\bot }$ defines a linear functional on $X^{p}/J^{p}$ simply by $\omega
\mapsto \left( \omega ,v\right) $ where $\omega \in X^{p}$ is a
representative of an element of $X^{p}/J^{p}$. If $\omega _{1}\,=\omega
_{2}\ \func{mod}J^{p}$ then $\omega _{1}-\omega _{2}\in J^{p}$ whence $%
\left( \omega _{1}-\omega _{2},v\right) =0$ and $\left( \omega _{1},v\right)
=\left( \omega _{2},v\right) $. Clearly, the mapping $\left( J^{p}\right)
^{\bot }\rightarrow \left( X^{p}/J^{p}\right) ^{\prime }$ is injective and,
hence, surjective because of the identity of the dimensions of the two
spaces. Finally, the duality of the operators $d$ and $\partial $ on the
complexes (\ref{X/Jp}) and (\ref{Jback}) is a trivial consequence of their
duality on the complexes $X^{\bullet }$ and $X_{\bullet }$.
\end{proof}

Let us describe a specific method of constructing of $d$-invariant subspaces.

\begin{lemma}
\label{LemdE}Given any subspace $N^{p}$ of $X^{p}$, set%
\begin{equation}
J^{p}=N^{p}+dN^{p-1}.  \label{JS+S}
\end{equation}%
Then $d$ is invariant on $\left\{ J^{p}\right\} $. Besides, we have the
following identity%
\begin{equation}
\left( J^{p}\right) ^{\bot }=\left\{ v\in \left( N^{p}\right) ^{\bot
}:\partial v\in \left( N^{p-1}\right) ^{\bot }\right\} .  \label{Jbot=}
\end{equation}
\end{lemma}

\begin{proof}
The first claim follows from $d^{2}=0$ since%
\begin{equation*}
dJ^{p}\subset dN^{p}+d^{2}N^{p-1}=dN^{p}\subset J^{p+1}.
\end{equation*}%
The condition $v\in \left( J^{p}\right) ^{\bot }$ means that%
\begin{equation}
v\bot N^{p}\ \ \text{and\ \ }v\bot dN^{p-1}.  \label{vbS}
\end{equation}%
Clearly, the first condition here is equivalent to $v\in \left( N^{p}\right)
^{\bot }$, while the second condition is equivalent to%
\begin{equation*}
\left( d\omega ,v\right) =0\ \ \forall \omega \in N^{p-1}\Leftrightarrow
\left( \omega ,\partial v\right) =0\ \forall \omega \in
N^{p-1}\Leftrightarrow \partial v\bot N^{p-1}\Leftrightarrow \partial v\in
\left( N^{p-1}\right) ^{\bot },
\end{equation*}%
which proves (\ref{Jbot=}).
\end{proof}

\subsection{Zigzag Lemma}

Consider now three cochain complexes $X,Y,Z$ connected by vertical linear
mappings as on the diagram:

\begin{equation}
\begin{array}{cccccccc}
&  & 0 &  & 0 &  & 0 &  \\ 
&  & \downarrow &  & \downarrow &  & \downarrow &  \\ 
0 & \rightarrow & Y^{0} & \overset{d}{\rightarrow } & Y^{1} & \overset{d}{%
\rightarrow } & Y^{2} & \rightarrow \dots \\ 
&  & \downarrow ^{\alpha } &  & \downarrow ^{\alpha } &  & \downarrow
^{\alpha } &  \\ 
0 & \rightarrow & X^{0} & \overset{d}{\rightarrow } & X^{1} & \overset{d}{%
\rightarrow } & X^{2} & \rightarrow \dots \\ 
&  & \downarrow ^{\alpha } &  & \downarrow ^{\alpha } &  & \downarrow
^{\alpha } &  \\ 
0 & \rightarrow & Z^{0} & \overset{d}{\rightarrow } & Z^{1} & \overset{d}{%
\rightarrow } & Z^{2} & \rightarrow \dots \\ 
&  & \downarrow &  & \downarrow &  & \downarrow &  \\ 
&  & 0 &  & 0 &  & 0 & 
\end{array}
\label{ABCD}
\end{equation}%
Each horizontal mapping is denoted by ~$d$ and each vertical mapping is
denoted by $\alpha $. We assume that the diagram is commutative. Let us also
assume that each column in (\ref{ABCD}) is an exact sequence, that is, the
mapping $\alpha :Y^{p}\rightarrow X^{p}$ is an injection, and $\alpha
:X^{p}\rightarrow Z^{p}$ a surjection with the kernel $Y^{p}$. In this case
we can identify $Y^{p}$ with a subspace of $X^{p}$ and $Z^{p}$ with the
quotient $X^{p}/Y^{p}$.

\begin{proposition}
\label{Plong}\emph{(Zigzag Lemma)} Under the above conditions the sequence 
\begin{equation}
0\rightarrow H^{0}\left( Y\right) \rightarrow H^{0}\left( X\right)
\rightarrow H^{0}\left( Z\right) \rightarrow \dots \rightarrow H^{p}\left(
Y\right) \rightarrow H^{p}\left( X\right) \rightarrow H^{p}\left( Z\right)
\rightarrow H^{p+1}\left( Y\right) \rightarrow \dots  \label{long}
\end{equation}%
is exact.
\end{proposition}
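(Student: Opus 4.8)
Let me sketch how I would prove the exactness of the long cohomology sequence (\ref{long}) arising from the short exact sequence of cochain complexes $0 \to Y \to X \to Z \to 0$.

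The plan is to construct the connecting homomorphism $\delta : H^p(Z) \to H^{p+1}(Y)$ explicitly by a diagram chase, and then to verify exactness at each of the three types of nodes, namely at $H^p(Y)$, at $H^p(X)$, and at $H^p(Z)$. Throughout I will use the identification of $Y^p$ with a subspace of $X^p$ and of $Z^p$ with $X^p/Y^p$, so that $\alpha : Y^p \to X^p$ is inclusion and $\alpha : X^p \to Z^p$ is the quotient projection, and I will write $\alpha$ for the induced maps on cohomology as well. The commutativity $\alpha d = d \alpha$ of (\ref{ABCD}) guarantees that $\alpha$ descends to cohomology, since $\alpha$ sends cocycles to cocycles and coboundaries to coboundaries.

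First I would define $\delta$. Given a class in $H^p(Z)$ represented by a cocycle $z \in Z^p$ with $dz = 0$, I lift $z$ to some $x \in X^p$ with $\alpha(x) = z$, which is possible by surjectivity. Then $\alpha(dx) = d\alpha(x) = dz = 0$, so $dx \in \ker(\alpha : X^{p+1} \to Z^{p+1}) = Y^{p+1}$; call this element $y$. Since $dy = d(dx) = 0$, the element $y$ is a cocycle in $Y^{p+1}$, and I set $\delta[z] = [y]$. The verification that $\delta$ is well defined --- independent of the choice of representative $z$ and of the lift $x$ --- is the first routine chase: changing $x$ by an element of $Y^p$ changes $y$ by a coboundary in $Y^{p+1}$, and changing $z$ by $dz'$ adjusts $x$ accordingly, again only by a coboundary. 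Linearity of $\delta$ is immediate from the linearity of all maps involved.

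With $\delta$ in hand, exactness is checked at each node by the standard arguments. At $H^p(X)$, the inclusion $\operatorname{Im}(\alpha : H^p(Y) \to H^p(X)) \subset \ker(\alpha : H^p(X) \to H^p(Z))$ follows because the composite $Y \to X \to Z$ is zero; the reverse inclusion requires showing that a cocycle $x \in X^p$ with $\alpha(x)$ a coboundary in $Z^p$ can be adjusted by a coboundary to land in $Y^p$. At $H^p(Z)$, I use the construction of $\delta$ directly: $\ker \delta$ consists precisely of those $[z]$ whose lift has $dx \in dY^p$, which after correcting $x$ by an element of $Y^p$ makes $x$ a cocycle surjecting onto $z$. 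At $H^{p+1}(Y)$, exactness relating $\operatorname{Im}\delta$ and $\ker(\alpha : H^{p+1}(Y) \to H^{p+1}(X))$ follows because $\delta[z] = [y]$ with $y = dx$ exact in $X^{p+1}$, and conversely any $y$ that becomes exact in $X$ arises this way. The main obstacle --- really the only part demanding care rather than bookkeeping --- is keeping the well-definedness chase for $\delta$ and the two inclusions at each node organized so that every ``adjust by a coboundary'' step is justified by exactness of the columns; the finite-dimensionality hypothesis is not even needed here, as the argument is purely formal. I would therefore present $\delta$ first, then dispatch the six inclusions in sequence.
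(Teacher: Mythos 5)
Your proposal is correct: it is the standard diagram-chase proof of the zigzag lemma, constructing the connecting homomorphism $\delta\colon H^{p}(Z)\rightarrow H^{p+1}(Y)$ by lifting a cocycle, applying $d$, and pulling back into $Y^{p+1}$, then verifying exactness at the three types of nodes. The paper itself gives no proof of Proposition \ref{Plong} — it only indicates that the maps $H^{p}(Y)\rightarrow H^{p}(X)\rightarrow H^{p}(Z)$ are induced by $\alpha$ and defers the ``more tricky'' connecting map to \cite{MacLane} — and your argument is precisely the standard one found there, correctly including the observation that finite-dimensionality plays no role.
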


The sequence (\ref{long}) is called a \emph{long exact sequence in
cohomology. }A similar result holds for homologies of chain complexes.

The meaning of the statement is that the mappings denoted in (\ref{long}) by
arrows, can be defined so that this sequence is exact. For example, the
mappings 
\begin{equation*}
H^{p}\left( Y\right) \rightarrow H^{p}\left( X\right) \rightarrow
H^{p}\left( Z\right)
\end{equation*}%
are obvious extensions of the mapping $\alpha $ in (\ref{ABCD}), whereas the
mapping $H^{p}\left( Z\right) \rightarrow H^{p+1}\left( Y\right) $ is
defined in a more tricky way. The details of the proof can be found in \cite%
{MacLane}.

One normally applies Proposition \ref{Plong} in the following form: if $X$
is a cochain complex (\ref{Xp}) and $J$ is its sub-complex (\ref{Jp}), then
the following long sequence is exact:%
\begin{equation}
0\rightarrow \dots \rightarrow H^{p}(J)\rightarrow H^{p}(X)\rightarrow
H^{p}(X/J)\rightarrow H^{p+1}(J)\rightarrow \dots  \label{longXJ}
\end{equation}%
Similarly, if $X$ is a chain complex (\ref{Xpdual}) and $J$ its sub-complex,
then the following long sequence is exact:%
\begin{equation}
0\leftarrow \dots \leftarrow H_{p}(X/J)\leftarrow H_{p}(X)\leftarrow
H_{p}(J)\leftarrow H_{p+1}(X/J)\leftarrow \dots  \label{longXJi}
\end{equation}

\addcontentsline{toc}{section}{References}
{\footnotesize

}%

\endofdocument%

\section*{To-dos}

\label{SecTodo}

Directions of research:

\begin{enumerate}
\item Strong product with diagonal - K\"{u}nneth formula, cup product of
homologies, Lefschitz fix point theorem

\item Persistent homologies

\item $\varepsilon $-net - computing homologies of manifolds

\item Homotopy, $\pi _{1}$, loop spaces

\item Random graphs

\item Finitely generated groups and Cayley graphs

\item Optimal representatives of homology

\item \v{C}ech cohomology for graphs

\item Vector bundles and curvature
\end{enumerate}

\bigskip ======================================

Some problems:

\begin{enumerate}
\item $\times $ Notation $e$ for $\left( -1\right) $-path and $\left(
-1\right) $-form. Consistent use, simplifying some proofs.

\item simplicial complex and poset in homologies

\item Graphs as category with morphisms. Mappings between graphs and induced
mapping between homologies: make it clear. Also, restate equivalent of
homologies as induced by the mapping between graphs.

\item Mention operation bouquet for which $H^{k}\left( X\vee Y\right)
=H^{k}\left( X\right) \oplus H^{k}\left( Y\right) .$ It allows constructing
digraphs with prescribed values of $H^{1},H^{2},...,H^{n}.$

\item ? $\dim H_{0}=C$ via $H_{0}=\Omega _{0}/\partial \Omega _{1}$

\item Generators of $H_{0}$ and $\widetilde{H}_{0}$

\item ? If $\dim \Omega _{n}\leq 1$ then $\dim $ $\Omega _{n+1}=0$ via $%
\partial $

\item Relation to cup product on simplicial complexes.

\item Relation to direct product of simplicial complexes.

\item Cubical complexes as path complexes

\item $\times $ change $\mathcal{R}_{p}$ to $\widetilde{\mathcal{R}}_{p}$
and $R_{p}$ to $\mathcal{R}_{p}$

\item Homology of undirected graph. Define first edge direction
monotonically, using any order of vertices. Conjecture: the homologies do
not depend on the choice of the order of the vertices. Not true if there is
a square. For which graphs this may be true? Triangulations?

\item Proof of K\"{u}nneth formula for diagonal product. Use it to define
cup product in homologies as pullback from $X\rightarrow X\times X$. Use the
latter to prove Lefschitz fix point theorem.

\item Poincar\'{e} duality in the form $H_{p}\left( G\right) \cong
H_{q}\left( G^{\ast }\right) $ where the dual graph $G^{\ast }$ is yet to be
defined.

\item $\times $ Poincar\'{e} duality does not work for $G^{\ast }=G$ as
there is a graph with a prescribed values of $H^{k}$.

Let $\dim H_{N}=1$ and let $v$ be a generating element of $H_{N}.$ Fix $%
p+q=N $ and define for any $\varphi \in H^{p}$ a path $u\in H_{q}$ as
follows: $u$ acts on any $\psi \in H_{q}^{\ast }=H^{q}$ by%
\begin{equation*}
\langle u,\psi \rangle =\langle \varphi \psi ,v\rangle .
\end{equation*}%
Hence, we obtain mapping $H^{p}\rightarrow H_{q}.$ Questions:

\begin{enumerate}
\item When this mapping is isomorphism? It suffices to prove that it is
monomorphism since in the same way $H^{q}\rightarrow H_{p}$ would be
monomorphism and, hence, $H^{p}\rightarrow H_{q}$ epimorphism.

\item This mapping takes $d$ to $\partial $. If image of $d\varphi $ is $w$
then%
\begin{equation*}
\langle w,\psi \rangle =\left( \left( d\varphi \right) \psi ,v\right) .
\end{equation*}%
Why $\partial u$ satisfies this, that is,%
\begin{equation*}
\langle \partial u,\psi \rangle =\left( \left( d\varphi \right) \psi
,v\right) ?
\end{equation*}%
We have%
\begin{equation*}
\langle \partial u,\psi \rangle =\langle u,d\psi \rangle =\langle \varphi
d\psi ,v\rangle =\langle d\left( \varphi \psi \right) \pm \left( d\varphi
\right) \psi ,v\rangle =\pm \langle \left( d\varphi \right) \psi ,v\rangle .
\end{equation*}%
Here we have used that $\langle d\left( \varphi \psi \right) ,v\rangle
=\langle \varphi \psi ,\partial v\rangle =0$ because $\partial v=0.$
\end{enumerate}

\item Duality between $\times $ and $\ast $ products of digraphs

\item Disprove that $\varphi \psi =\left( -1\right) ^{pq}\psi \varphi $ for
cup product $H^{p}\cup H^{q}$

\item Modify the $\partial $ in order to include all polyhedra

\item \v{C}ech cohomology on graph using covering of graph by paths or
trees. Compare to digraph cohomology

\item Use that $H_{n}=\ker \partial |_{\mathcal{A}_{n}}/\left( \mathcal{A}%
_{n}\cap \partial \mathcal{A}_{n+1}\right) .$ We can modify $\partial $ to $%
\widetilde{\partial }$ so that $\widetilde{\partial }:\mathcal{A}%
_{n+1}\rightarrow \mathcal{A}_{n}$ and 
\begin{equation*}
\widetilde{\partial }\mathcal{A}_{n+1}=\mathcal{A}_{n}\cap \partial \mathcal{%
A}_{n+1}
\end{equation*}%
and, hence, $\partial \widetilde{\partial }=0$ and%
\begin{equation*}
H_{n}=\ker \partial |_{\mathcal{A}_{n}}/\widetilde{\partial }\mathcal{A}%
_{n+1}.
\end{equation*}

\item take a regular path complex, de-regularize it and show that the
regular homologies of the regular path complex are the same and non-regular
homologies of de-regularized path complex

\item Include Hochschild

\item Theorem \ref{Tcab}: can one remove the condition that characteristic
of $\mathbb{K}$ is $0$. If the characteristic is $c$ then consider
separately the case $\left\vert J\right\vert =0\func{mod}c$.
\end{enumerate}

\end{document}